\newcommandx{\task}[2][1=]{\todo[linecolor=blue,backgroundcolor=blue!30,#1]{#2}}
\newcommandx{\note}[2][1=]{\todo[linecolor=green,backgroundcolor=green!30,#1]{#2}}
\newcommandx{\error}[2][1=]{\todo[linecolor=red,backgroundcolor=red!30,#1]{#2}}
\newcommand{\indic}{\mathbbm{1}}
\newtheorem{theorem}{Theorem}[section]
\newtheorem{lemma}[theorem]{Lemma}
\newtheorem{proposition}[theorem]{Proposition}
\newtheorem{corollary}[theorem]{Corollary}
\theoremstyle{definition}
  \newtheorem{definition}[theorem]{Definition}
\theoremstyle{remark}
  \newtheorem{remark}[theorem]{Remark}
\newcommand{\dt}{\di t}
\newcommand{\dx}{\di x}
\newcommand{\dH}{\di \mathcal{H}^2}
\newcommand{\eps}{\varepsilon}
\newcommand{\N}{\mathbb{N}}
\newcommand{\Z}{\mathbb{Z}}
\newcommand{\R}{\mathbb{R}}
\newcommand{\C}{\mathbb{C}}
\newcommand{\Id}{\mathbf{Id}}
\newcommand{\id}{\mathbf{id}}
\newcommand{\vphi}{\varphi}
\newcommand{\weakly}{\rightharpoonup}
\newcommand{\weaklystar}{\stackrel{*}{\rightharpoonup}}
\newcommand{\defas}{\coloneqq}
\newcommand{\pl}{\partial}
\newcommand{\haus}{\mathcal{H}}
\newcommand{\aC}{C_0}
\newcommand{\ac}{c_0}
\newcommand{\CW}{\mathbb{C}_{W^{\rm el}}}
\newcommand{\CD}{\mathbb{C}_R}
\newcommand{\cdddot}{\mathrel{\Shortstack{{.} {.} {.}}}}
\newcommand*{\di}{\mathop{}\!\mathrm{d}}
\newcommand{\bigo}{\mathcal{O}}
\DeclareMathOperator{\dist}{dist}
\DeclareMathOperator{\diver}{div}
\newcommand{\sym}{{\rm sym}}
\newcommand{\skw}{{\rm skew}}
\newcommand{\EEE}{\color{black}}
\newcommand{\ZZZ}{\color{black}}
\newcommand{\QQQ}{\color{black}}
\newcommand{\LLL}{\color{black}}
\newcommand{\GGG}{\color{black}}
\newcommand{\OOO}{\color{black}}
\newcommand{\REV}{\color{black}}
\newcommand{\cplen}{\mathcal{W}^{\mathrm{cpl}}_h}
\newcommand{\mechen}{\mathcal{M}_h}
\newcommand{\toten}{\mathcal{E}}
\newcommand{\inten}{W^{\mathrm{in}}}
\newcommand{\diss}{\mathcal{R}_h}
\newcommand{\elpot}{W^{\mathrm{el}}}
\newcommand{\cplpot}{W^{\mathrm{cpl}}}
\newcommand{\hypot}{H}
\newcommand{\felpot}{W}
\newcommand{\disspot}{R}
\newcommand{\Wid}{\mathcal{W}_{\id}^{\OOO h \EEE}}
\newcommand{\hc}{\mathbb{K}}
\newcommand{\hcm}{\mathcal{K}}
\newcommand{\hcmnoh}{\mathcal{K}}
\newcommand{\drate}{\xi}
\DeclareMathOperator{\esssup}{ess\sup}
\DeclareMathOperator{\essinf}{ess\inf}
\DeclarePairedDelimiterX\setof[1]\{\}{#1}
\DeclarePairedDelimiterX\abs[1]\lvert\rvert{#1}
\DeclarePairedDelimiterX\norm[1]\lVert\rVert{#1}
\DeclarePairedDelimiterX\sprod[2]\langle\rangle{#1, #2}
\numberwithin{equation}{section}
\setlist[enumerate,1]{font=\normalfont}
\setlist[itemize,1]{font=\normalfont}
\newlist{thmlist}{enumerate}{1}
\setlist[thmlist]{label=(\roman{thmlisti}),
	ref=(\roman{thmlisti}),font=\normalfont,
	noitemsep}
\title{Derivation of   a   von K\'arm\'an plate theory for thermoviscoelastic solids}
\subjclass[2020]{35A15, \GGG  35A23, \EEE 35Q74, \GGG 35Q79, \EEE 74A15,   74D10}
\keywords{\ZZZ Thermoviscoelasticity, dimension reduction, \GGG frame indifferent \EEE viscous stresses,   generalized  Korn's inequality.}
\author[R.~Badal]{Rufat Badal}
\address[Rufat Badal]{
  Department of Mathematics \\
  Friedrich-Alexander Universit\"at Erlangen-N\"urnberg \\
  Cauerstr.~11, D-91058 Erlangen, Germany
}
\email{rufat.badal@fau.de}
\author[M.~Friedrich]{Manuel Friedrich} 
\address[Manuel Friedrich]{%
  Department of Mathematics \\
  Friedrich-Alexander Universit\"at Erlangen-N\"urnberg \\
  Cauerstr.~11, D-91058 Erlangen, Germany \\
  \&  Applied \EEE Mathematics M\"{u}nster \\
  University of M\"{u}nster \\
  Einsteinstr.~62, D-48149 M\"{u}nster, Germany
}
\email{manuel.friedrich@fau.de}
\author[L.~Machill]{Lennart Machill}
\address[Lennart Machill]{
 Applied \EEE Mathematics M\"{u}nster \\
  University of M\"{u}nster \\
  Einsteinstr.~62, D-48149 M\"{u}nster, Germany  
  }
\email{lennart.machill@uni-muenster.de}
\begin{document}

\maketitle
\begin{abstract}
We derive a von Kármán plate theory from a three-dimensional quasistatic nonlinear model for nonsimple thermoviscoelastic materials in the Kelvin-Voigt rheology, in which the elastic and the viscous stress tensor comply with a frame indifference principle \cite{MielkeRoubicek20Thermoviscoelasticity}.
In a dimension-reduction limit, we show that weak solutions to the nonlinear system \GGG of equations \EEE converge to weak solutions of an effective two-dimensional system featuring \GGG mechanical equations \EEE for viscoelastic von Kármán plates\GGG, previously \EEE derived in \cite{FK_dimred}, \QQQ coupled with a linear \EEE heat-transfer equation. The main challenge lies in deriving a priori estimates for rescaled displacement fields and temperatures, which requires the adaptation of generalized Korn's inequalities and bounds for heat equations with $L^1$-data to thin domains. 
\end{abstract}

\section{Introduction}  
Understanding and predicting the complex   behavior  of materials undergoing deformation is a pivotal challenge in theoretical and applied sciences. Many three-dimensional models in continuum mechanics are nonlinear and nonconvex, resulting in numerical approximations of high computational cost. Therefore, the derivation of simplified models maintaining the essential features of the original ones plays a significant role in current research. A prominent example in this direction is the variational derivation of plate models, where a rigorous relationship between full three-dimensional descriptions and their lower-dimensional counterparts is achieved by means of $\Gamma$-convergence \cite{DalMaso93AnIntroduction}. This theory has been developed thoroughly in the last two decades starting from the celebrated results by {\sc Friesecke, James, and M\"uller} \GGG \cite{FJM_hierarchy, FrieseckeJamesMueller:02} \EEE on a hierarchy of effective models. Yet, applications to the static setting have largely overshadowed the investigation of evolutionary problems. Based on recent advancements for time-dependent problems  \cite{RBMFMK, FK_dimred, MielkeRoubicek20Thermoviscoelasticity}, in this work we aim \ZZZ at performing \EEE a  dimension reduction of a thermodynamically-consistent model for nonlinear \emph{thermoviscoelastic} solids \GGG in the Kelvin-Voigt rheology \EEE which couples the \GGG balance of momentum in its quasistatic variant \EEE with a \GGG nonlinear \EEE heat-transfer equation.

We start by introducing the large strain setting analyzed in \cite{MielkeRoubicek20Thermoviscoelasticity}. We consider a model for second-grade nonsimple materials in the  Kelvin-Voigt rheology without inertia which is governed by the system of equations
\begin{equation}\label{viscoel_nonsimple}
g^{3D}  =  - \diver\left(
      \partial_F W(\nabla w, \theta)
      - {\rm div}(\partial_G H(\nabla^2 w))
      + \partial_{\dot{F}}R(\nabla w, \partial_t \nabla   w, \theta)
    \right)
\qquad \text{in $[0,T]\times \Omega$.}
\end{equation}
Here, $[0, T]$ is a process time interval for some \GGG time horizon \EEE $T > 0$, $\Omega \subset \R^3$ is a bounded domain representing the reference configuration, $w \colon [0, T] \times \Omega \to \R^3$ indicates a Lagrangian \emph{deformation},  and $\theta$ denotes the \emph{temperature} inside the material. By $g^{3D} \colon [0, T] \times \Omega \to \R^3$ we indicate a volume density of \emph{external forces} \EEE acting on $\Omega$. The elastic stress tensor  
\GGG involves \EEE the first Piola-Kirchhoff stress tensor $\partial_F W$, where $F \in \R^{3\times 3}$ is the placeholder of the deformation gradient $\nabla w$. \GGG This tensor can be \ZZZ deduced from a \EEE \GGG frame indifferent \EEE \emph{energy density} \EEE $W \colon \R^{3\times 3} \times [0, \infty) \to \R \cup \setof{+\infty}$. 
Moreover, $R \colon \R^{3 \times 3} \times \R^{3 \times 3} \times [0, \infty) \to \R $ denotes a (pseudo)potential of dissipative forces \GGG and induces the viscous stress tensor $\partial_{\dot F} R$, \EEE where \GGG $\dot F \in \R^{3\times 3}$ \EEE stands for the time derivative of $F$. \ZZZ It \EEE complies with a  \GGG dynamical \EEE
frame indifference principle, meaning that for all $F$ it holds that
\begin{equation*}
  R(F,\dot F,\theta) = \hat R(C,\dot C,\theta)
\end{equation*}
for some nonnegative function $\hat R$, where $C \defas F^T F$ is the right Cauchy-Green tensor with derivative in time $\dot C \defas \dot F^T F + F^T \dot F$. For a thorough discussion of the latter principle in the context of viscous stresses, we refer to {\sc Antman} \cite{Antmann04Nonlinear}.
Already without a thermodynamical coupling,   existence results \EEE for models respecting frame indifference are  scarce\EEE, we refer here, e.g., to \cite{Lewick} for local in-time existence and to \cite{demoulini} for an existence result in the space of measure-valued solutions.
To date, weak solutions in finite-strain isothermal viscoelasticity can only be guaranteed by \GGG resorting \EEE to higher-order regularizing terms.
Following \cite{FriedrichKruzik18Onthepassage, MielkeRoubicek20Thermoviscoelasticity}, we therefore include an additional term in the elastic stress tensor, usually called \emph{hyperstress} \ZZZ $\partial_G H$, \EEE via a potential $H \colon \R^{3 \times 3 \times 3}\to [0,\infty)$ which depends on the second gradient $\nabla^2 w$ \ZZZ with corresponding placeholder $G$. \EEE The concept of second-grade nonsimple materials goes back to \EEE {\sc Toupin} \cite{Toupin62Elastic,Toupin64Theory} and indeed proved to be useful in mathematical continuum mechanics, see e.g.~\cite{BallCurrieOlver81Null, Batra76Thermodynamics, HealeyKroemer09Injective, MielkeRoubicek16Rateindependent, Podio02Contact}.
The approach of \cite{FriedrichKruzik18Onthepassage, MielkeRoubicek20Thermoviscoelasticity} has been extended in various directions over the last years, including models allowing for self-contact \GGG \cite{gravina, kroemrou}, \EEE a nontrivial coupling with a diffusion equation \cite{liero}, homogenization \cite{gahn}, inertial effects \cite{Schwarzacher}, or applications to fluid-structure interactions \cite{Schwarzacher}.
 While the aforementioned results are formulated using the Lagrangian approach, several recent works employ the alternative Eulerian perspective instead \cite{Roubicek23Eulerian, Roubicek23Eulerian2, Roubicek23Eulerian3, Roubicek23Eulerian4}. \EEE

In contrast \ZZZ to the  models mentioned previously,  \EEE
 we  are interested in a nonlinear coupling of  \eqref{viscoel_nonsimple} \EEE with a heat-transfer
equation of the form
\begin{equation}\label{heat}
  c_V(\nabla w,\theta) \, \partial_t\theta =
    \diver(\mathcal{K}(\nabla w, \theta) \nabla\theta)
    + \xi(\nabla w, \partial_t \nabla w, \theta)
    + \theta \partial_{F \theta} W^{\rm cpl}(\nabla w, \theta) : \partial_t \nabla   w \qquad  \text{in $[0,T]\times \Omega$}. 
\end{equation}
Following \cite{MielkeRoubicek20Thermoviscoelasticity}, we assume that $W$ is the sum of a purely elastic potential $W^{\rm el}$ only depending on the deformation and a coupling potential $W^{\rm cpl}$ depending additionally on the temperature.
To be more precise, we assume
\begin{align*}
W(F,\theta) = W^{\rm el}(F) + W^{\rm cpl}(F,\theta).
\end{align*}
With \GGG regard \EEE to the heat-transfer equation, $c_V(F,\theta) = -\theta \partial^2_\theta W^{\rm cpl}(F, \theta)$ is the \emph{heat capacity} \GGG and \EEE $\mathcal{K}$ denotes the \emph{heat conductivity} tensor, \OOO initially \GGG defined in the deformed configuration and \EEE pulled back to the reference configuration. \GGG The \emph{dissipation \GGG rate} is defined by \EEE
\begin{equation*}
  \xi(\nabla w, \partial_t \nabla w, \theta) \defas \partial_{\dot F} R(\nabla w, \partial_t\nabla   w, \theta) : \partial_t \nabla  w,
\end{equation*}
and the last term \GGG in \eqref{heat} \EEE \GGG corresponds to an \emph{adiabatic} effect, playing the role of a heat source or sink, respectively. \EEE From a technical point of view, the term $\xi$ is particularly delicate as it only has $L^1$-regularity and therefore requires to resort to 
\GGG weak \EEE solution concepts for heat equations.  \EEE
Equation \eqref{heat} can be derived from  \emph{Fourier's law} formulated in the \ZZZ deformed \EEE configuration, see e.g.~\cite{MielkeRoubicek20Thermoviscoelasticity}.
The coupled system \eqref{viscoel_nonsimple}--\eqref{heat} is complemented by suitable initial and boundary conditions, see Section~\ref{sec:model}.
Global-in-time weak solutions to the above system have been derived recently via a minimizing movements scheme \cite{MielkeRoubicek20Thermoviscoelasticity}, see also \GGG \cite{RBMFMK} \EEE for an alternative proof using \GGG an improved \EEE approximation scheme without the use of regularizations.

We are interested in the derivation of effective simplified models of \eqref{viscoel_nonsimple}--\eqref{heat}.
Whereas in \cite{RBMFMK, FriedrichKruzik18Onthepassage}  the linearization of large-to-small strains at small temperatures has been addressed, in the present paper we tackle the more challenging problem of deriving a dimensionally reduced model. Some results on dimension reduction in the isothermal case have been treated recently in the literature. We mention results in nonlinear elastodynamics, both for the case of plates \GGG \cite{Abels3, AMM_vKaccel} \EEE and rods \cite{Abels2}, and models in nonlinear viscoelasticity \cite{FK_dimred, MFLMDimension2D1D, MFLMDimension3D1D}, neglecting inertial effects but allowing for viscous damping.  Our work is closest to \cite{FK_dimred} where a von Kármán plate theory is derived starting from \eqref{viscoel_nonsimple} without temperature dependence. Our goal is to extend this result by performing a dimension reduction for the coupled system \eqref{viscoel_nonsimple}--\eqref{heat}. Although  models of thin thermoviscoelastic plates have been investigated, see e.g.\ \cite{plateref2, plateref1}, to the best of our knowledge, rigorous dimension-reduction results for models with nontrivial mechanical-thermal couplings are currently unavailable in the literature.
 
We now describe our result in more \GGG detail. \EEE We consider the system \eqref{viscoel_nonsimple}--\eqref{heat} on a thin body $\Omega = \Omega_h = \Omega' \times (-h/2,h/2)$ of thickness $h > 0$. As shown in \cite{FK_dimred} (see \cite{FJM_hierarchy} for the purely static case), for forces $g^{3D}$ scaling like $\sim h^3$ and initial deformations suitably close to the identity, which corresponds to an energy per
thickness of $\sim h^4$, the \GGG three-dimensional \EEE problem can be related rigorously to  \GGG two-dimensional \EEE limiting mechanical equations for viscous von \GGG Kármán \EEE plates,   in terms of an  \emph{in-plane displacement field} $u \colon \Omega' \to \R^2 $  and an \emph{out-of-plane displacement} $v \colon \Omega' \to \R^2$, \GGG where the corresponding \GGG three-dimensional \EEE displacements are \EEE suitably rescaled by $\frac{1}{h^2}$ and $\frac{1}{h}$, respectively. Inspired by the linearization result \cite{RBMFMK}, for the \emph{temperature} variable $\theta$, we allow for different scalings $\sim h^\alpha$ in terms of a  parameter $\alpha >0$, and \GGG let
$\mu  \colon \GGG \Omega' \EEE \to [0,\infty)$ be the limit variable of the corresponding rescaled temperature. \EEE \GGG The limiting model  depends on the choice of $\alpha$ and, as we will see later, is only meaningful for $\alpha \in [2,4]$. \EEE
 In the limit of small thickness  $h \to 0$, we identify  \EEE the effective system of equations  \EEE \GGG on \EEE $[0,T]\times \Omega'$ \GGG as \EEE
    \begin{equation}\label{eq:mech2Dintro}
\begin{aligned}
  \diver\Big(\C^2_{\GGG W^{\rm el} \EEE}\big( e(u)  +  \frac{1}{2} \nabla' v \otimes \nabla' v  \big) + \mu (\mathbb{B}^{(\alpha)})'' + \ZZZ \C^2_R \EEE \big( e(\partial_t u) +   \partial_t \nabla'  v   \odot \nabla' v \big) \Big) &= 0, \\
  -\diver\Big(
    \Big(
      \C^2_{\GGG W^{\rm el} \EEE}\big( e(u)  + \frac{1}{2} \nabla' v \otimes \nabla' v  \big)
      + \mu (\mathbb{B}^{(\alpha)})'' 
      + \ZZZ \C^2_R \EEE \big( e( \GGG \partial_t \EEE u) + \partial_t \nabla'  v \odot \nabla' v \big) \Big) \nabla' v
    \Big) \\
  + \tfrac{1}{12}  \diver \diver \Big(
      \C^2_{\GGG W^{\rm el} \EEE} (\nabla')^2 v + \ZZZ \C^2_R \EEE \partial_t(\nabla')^2  v
    \Big)
  &= f^{2D}
\end{aligned}
\end{equation}
and 
\begin{equation}\label{eq:temperature2Dintro}
\begin{aligned}
  & \CD^{2,\alpha}   \Big(
      \sym( \partial_t \nabla'   u)
      +  \partial_t \nabla'   v \odot \nabla' v
    \Big) : \Big(
      \sym( \partial_t \nabla'   u)
      +  \partial_t \nabla'   v \odot \nabla' v
    \Big) \\
  &\quad= \bar c_V \partial_t \mu 
    - \diver(\tilde{\mathbb{K}} \nabla \mu) 
    - \frac{1}{12}\CD^{2,\alpha} \partial_t (\nabla')^2  v : \partial_t (\nabla')^2.
\end{aligned}
\end{equation}
\ZZZ Here,  we write $\nabla'$ and $(\nabla')^2$ \EEE for the in-plane gradient and Hessian, respectively. \EEE  
The    mechanical evolution  \eqref{eq:mech2Dintro} features membrane and bending contributions both in the elastic and the viscous stress, in terms of the linear strain  $e(u) \defas  (\nabla' u + (\nabla' u)^T) / 2$, where  the tensors of elastic constants $\C^2_{\GGG W^{\rm el} \EEE}$ and viscosity coefficients $\CD^{2}$ are given by the \EEE  second order derivatives \EEE of ${\GGG W^{\rm el} \EEE}$ and $R$ evaluated at \GGG $\Id$ and \EEE $(\Id, 0)$, \GGG respectively. \EEE 
Moreover,   $(\mathbb{B}^{(\alpha)})'' $ represents a thermal expansion matrix which is only active in the case $\alpha = 2$ \GGG and suitably related to $W^{\rm cpl}$, \EEE and $f^{2D}$ denotes an effective force\GGG, which in turn is linked to $g^{3D}$, \EEE see Sections \ref{a new subsection} and \ref{sec:quadraticforms}  for  details. In the  heat-transfer equation \eqref{eq:temperature2Dintro}, \GGG $\tilde{\mathbb{K}}$ \EEE represents the limiting heat conductivity tensor, \GGG $\overline{c}_V$ \EEE denotes the constant heat capacity at zero strain and temperature,  and $\CD^{2,\alpha}$ is given by $\CD^{2}$ for $\alpha = 4$ and zero otherwise. As in the linearization result \cite{RBMFMK}, we observe that  the equations are only  one-sided coupled for $\alpha \in \lbrace 2, \, 4\rbrace$ and that there is no coupling  for $\alpha \in (2,4)$. Moreover,  we mention that the limiting problem \OOO contains \EEE no  spatial gradients of \GGG $\nabla' u$, \EEE although the initial nonlinear model was  formulated \EEE for a nonsimple material. \EEE

 Our main goal consists in showing that weak solutions to \eqref{viscoel_nonsimple}--\eqref{heat} converge to  \eqref{eq:mech2Dintro}--\eqref{eq:temperature2Dintro} in a suitable sense, see  Theorem~\ref{maintheorem} below. As a byproduct, we also obtain existence of weak \GGG solutions \EEE to the limiting \GGG two-dimensional \EEE problem.
 \REV We mention that, as in  \cite{RBMFMK}, we perform  a linearization at zero temperature.  The case of temperatures in the vicinity of a positive critical temperature is more challenging. \REV Addressing this would require further technical estimates, which we omit to avoid exceeding the scope of the paper. However, we note that recent work \cite{RBMFMKLM} provides a detailed linearization for positive temperatures on thick domains. \EEE Moreover, strictly speaking, except for the case $\alpha = 4$, we need to slightly regularize the heat equation \eqref{heat} in order to rigorously perform the dimension reduction, \ZZZ see \eqref{diss_rate_truncated} for details. \EEE As in \cite{RBMFMK}, however, this  regularization does not affect the limiting problem. Indeed, on a formal level, the regularized as well as the \GGG nonregularized \EEE equation converge towards the same effective equation \eqref{eq:temperature2Dintro}.

Let us comment on the main \GGG difficulties \EEE and \GGG novelties \EEE of this paper.
The geometric rigidity result \cite{FrieseckeJamesMueller:02} is the cornerstone for proving $\Gamma$-converging results in the static setting, see e.g.\ \cite{DalMasoNegriPercivale02Linearized, Freddi2012, Freddi2013, FJM_hierarchy, FrieseckeJamesMueller:02, FriJaMoMu, Mora, Mora2},  or for passing to the limit in  equilibrium equations \cite{mora-scardia, MP_thin_plates}. Indeed, it  allows  to control the local deviations of deformations from approximating rigid motions which implies a compactness result for rescaled displacement fields.   In the  evolutionary setting, this estimate is still of utmost importance but the situation is more delicate as suitable a priori estimates with optimal scaling in $h$ are also needed for the time derivatives $\partial_t u$ and $\partial_t v$, as well as \GGG for \EEE the temperature. In the case of a \emph{fixed} domain, such estimates have been derived in \EEE \cite{RBMFMK} and \cite{MielkeRoubicek20Thermoviscoelasticity}. Our challenge lies in refining them to the setting of thin domains in order to ensure the correct scaling of all quantities with respect to the \ZZZ  thickness $h$. \EEE Since we deal with a heat equation with $L^1$-data, on several occasions we need to resort  to test functions in the spirit of {\sc Boccardo and Gallouët} \cite{BoccardoGallouet89Nonlinear}, see also \cite{RBMFMK, MielkeRoubicek20Thermoviscoelasticity}. In order to derive a priori bounds on the strain rates, we  ought \EEE to employ a generalization of Korn's inequality due to {\sc Pompe} \cite{Pompe} in the version of \cite[Corollary 3.4]{MielkeRoubicek20Thermoviscoelasticity}. For both issues, we  must \EEE \GGG investigate \EEE the dependence of  constants on the \ZZZ thickness $h$. \EEE In particular, in Theorem~\ref{pompescaling} we derive a generalized Korn's inequality on thin domains with optimal scaling of the constant. This result might be of independent interest.

 Concerning \EEE the limiting passage, \EEE we rely on the techniques employed in \cite{RBMFMK} as well as the ones from \GGG \cite{AMM_vKaccel, MP_thin_plates}. Since the latter \ZZZ work \EEE does not account for viscous effects, a further novelty of our \ZZZ paper \EEE compared to \cite{AMM_vKaccel} is the derivation of the limit $h \to 0$ of the viscous stress $\partial_{\dot{F}}R(\nabla w^{h}, \partial_t \nabla w^{h}, \theta^{h})$.
Formally, one can proceed similarly to \GGG \cite{AMM_vKaccel, MP_thin_plates}  \EEE in the case of the elastic stress. However, in the mathematically rigorous derivation, several technical difficulties arise. Among others, the most severe one is the necessity of strong convergence of the rescaled viscous stress. 
The same challenge has already been encountered in \GGG \cite{RBMFMK, MielkeRoubicek20Thermoviscoelasticity} \EEE in the passage from time-discrete to time-continuous solutions and we solve the issue by a careful adaptation of the arguments therein to the setting of dimension reduction. \ZZZ In particular, as an auxiliary step, \EEE we pass to the limit in an energy balance related to the mechanical equations \eqref{viscoel_nonsimple} and \eqref{eq:mech2Dintro}.
\REV Notice that nonsimple materials are used not only for the existence of weak solutions, but also for the derivation of a priori estimates as well as for the limiting passage, see e.g.\ Lemma~\ref{lemma:rigidity} and \eqref{conv:stress:visc}. \EEE

\REV
Let us highlight that our techniques serve as a starting point for a dimension reduction and may be applicable to other rheological models.
While linearization has been performed for energetic solutions of the Poynting–Thomson model \cite{Poynting} including Maxwell \emph{and} Kelvin-Voigt elements, 
one might also perform a dimension reduction of this model for nonsimple materials by also including thermal effects, see e.g.\ \cite{Meo}.
As a further step, plastic effects \cite{MielkeRossiSavare,Ulisse} could also be incorporated.
\EEE

Before closing this introduction, we mention that in the isothermal setting our result reduces to the  purely viscoelastic case. In this \GGG paper,  we \EEE provide   an alternative \EEE proof of the results in \cite{FK_dimred} where the proof techniques were based on evolutionary  $\Gamma$-convergence   \cite{Mielke, S1,S2}. \EEE   In the present setting, however,  it is not clear \EEE how this technique can be generalized to systems of equations allowing also for  thermodynamical coupling in \eqref{viscoel_nonsimple}--\eqref{heat}.   Therefore, we use a different proof strategy here which consists in \EEE  deriving the  effective model  by passing to the limit directly on the PDE level without resorting to  (evolutionary) $\Gamma$-convergence. \EEE

The plan of the paper is as follows. In Section~\ref{sec:model}, we introduce the three- and two-dimensional models in more detail and state our main results.
Section~\ref{sec:korn} is devoted to the proof of the optimal scaling of the generalized Korn's inequality. Section~\ref{sec:existenceapriorisection} provides  a priori estimates for solutions in the thin domain \ZZZ with precise dependence on  $h$. \EEE Finally, Section~\ref{sec:proofofcompactness} addresses the dimension reduction.

\section{The model and main results}\label{sec:model}
\subsection*{Notation}

In what follows, we use standard notation for Lebesgue and Sobolev spaces. If the target space is a Banach space $E \neq \R$, we use the usual notion of Bochner-Sobolev spaces, written $W^{k,p}(\Omega;E)$, containing weak derivatives up to the $k$-th order, that are integrable with the $p$-th power (if $1\leq p < +\infty$) or essentially bounded (if $p = +\infty$). Denoting by $d \in \{2, 3\}$ the dimension,   the $d$-dimensional Lebesgue measure of a measurable set $U\subset \R^d$ is indicated by $\vert U \vert$, and the mean integral is written as $\fint_U$. By   $\nabla$ and $\nabla^2$ we \EEE denote the spatial gradient and Hessian, respectively.  If a function only depends on two  spatial  variables $x' \defas (x_1,x_2)$, we use the notation $\nabla'$ and $(\nabla')^2$. Frequently, we extend such  functions constantly to the third dimension without relabeling.

  The lower index $_+$  indicates \EEE nonnegative elements and functions, respectively. Given $a, \, b \in \R$ we set $a \wedge b \defas \min\setof{a, b}$ and $a \vee b \defas \max\{a, b\}$.  We let $\Id \in \R^{d \times d}$  be the identity matrix, and   $\id(x) \defas x$ stands for the identity map on $\R^d$. We define   the subsets  $SO(d) \defas \setof{A \in \R^{d \times d} \colon A^T A = \Id, \, \det A = 1  }$, $GL^+(d) \defas \setof{F \in \R^{d \times d} \colon \det(F) > 0}$,  and  $\R^{d \times d}_\sym \defas \setof{A \in \R^{d \times d} \colon A^T = A}$. Furthermore, the inverse of the transpose of $F$ will be shortly written as $F^{-T} \defas (F^{-1})^T=(F^T)^{-1}$.  The symbol \EEE $\vert A \vert$   stands for   the Frobenius norm  of a matrix $A \in \R^{d\times d}$,   and ${\rm sym}(A) = \frac{1}{2}(A^T + A)$ and ${\rm skew}(A) = \frac{1}{2}(A - A^T)$ indicate the symmetric and skew-symmetric part, respectively.   By $\delta_{ij}$ we denote \EEE the Kronecker delta function.
We write the scalar product between vectors, matrices, or 3rd-order tensors as $\cdot$, $:$, and $\cdddot$, respectively.
Given $a, \, b \in \R^d$, the symmetrized tensor product is defined as $a \odot b = (a \otimes b + b \otimes a)/2$, where
$a \otimes b = a b^T  \in \R^{d \times d} $.
We use $\{e_1,e_2,e_3 \}$ for standard unit vectors in $\R^3$.
As usual, in the proofs, a generic  constant \EEE $C$ may vary from line to line.
 In the following,  $0 < c_0 <C_0 < \infty$ denote  fixed \EEE   constants.

\subsection{The three-dimensional  setting\EEE}
Given a bounded Lipschitz domain $\Omega' \subset \R^2$ and \ZZZ a \EEE  thickness \EEE $h \GGG > 0 \EEE$, the reference configuration of a 3-dimensional thin \ZZZ plate \EEE is denoted by $\Omega_h \defas \Omega' \times (-h/2, h/2)$.
Let $\Gamma'_D \subset \Gamma' \defas \partial \Omega'$ be an open subset.
We then set $ \Gamma_h \defas \Gamma' \times (-h/2, h/2)$ and $\Gamma_D^h \defas \Gamma'_D \times (-h/2 , h/2)$.
We will prescribe Dirichlet boundary conditions on $\Gamma_D^h$.
More precisely, given $\OOO p > 4\EEE$, the space of \emph{admissible deformations} of the thin plate is given by
\begin{equation}\label{admissibledeformations}
{   \mathcal{W}^h_\id  \defas \{ w \in W^{2, p}(\Omega_h;\R^3) \colon w = \id \text{ on } \Gamma_D^h \}.}
\end{equation}
 By $T > 0$ we denote a fixed time horizon and shortly write $I \defas [0,T]$ for the time interval. We start by introducing a  variational model of \emph{thermoviscoelasticity} \ZZZ studied \EEE in \cite{RBMFMK, MielkeRoubicek20Thermoviscoelasticity}, where in the present setting the space dimension is chosen to be $d = 3$.

\subsection*{Mechanical and coupling energy}
The \emph{mechanical energy} associated \ZZZ to \EEE a deformation $w \in \mathcal{W}^h_\id$ is given by
\begin{equation}\label{mechanical}
\mathcal{M}_h\EEE(w)\defas \int_{\Omega_h} W^{\rm el} (\nabla w(x) )  + \EEE H(\nabla^2 w(x)) \dx.
\end{equation}
The above energy depends on an elastic potential $W^{\rm el}$, as well as on a strain gradient term $H$, adopting the concept of 2nd-grade nonsimple materials, see \cite{Toupin62Elastic, Toupin64Theory}. Given $\OOO p>4\EEE$, the \emph{elastic potential} $\elpot \colon GL^+(d) \to \R_+$ satisfies usual assumptions in nonlinear (hyper-)elasticity, i.e., we require:
\begin{enumerate}[label=(W.\arabic*)]
  \item \label{W_regularity} $\elpot$ is continuous and $C^3$ in a neighborhood of $SO(3)$.
  \item \label{W_frame_invariace} Frame indifference: $\elpot(QF) = \elpot(F)$ for all $F \in GL^+(3)$ and $Q \in SO(3)$.
  \item \label{W_lower_bound} Lower bound: $W^{\rm el}(F) \ge \ac \big(|F|^2 + \det(F)^{-q}\big) - \aC$ for all $F \in GL^+(3)$, where $q \ge \frac{3p}{p-3}$.
  \item \label{W_lower_bound_spec} $\elpot(F) \geq \ac \dist^2(F, SO(3))$ for all $F \in GL^+(3)$ and $\elpot(F) = 0$ if $F \in SO(3)$.
\end{enumerate}

The \emph{strain gradient  energy} term $\hypot \colon \R^{3 \times 3 \times 3} \to \R_+$ has the following properties:
\begin{enumerate}[label=(H.\arabic*)]
  \item \label{H_regularity} $\hypot$ is convex and $C^1$.
  \item \label{H_frame_indifference} Frame indifference: $\hypot(QG) = \hypot(G)$ for all $G \in \R^{3 \times 3 \times 3}$ and $Q \in SO(3)$.
  \item \label{H_bounds2} $H(0) = 0$.
  \item \label{H_bounds} $\ac \abs{G}^p \leq H(G) \leq \aC (1+ \abs{G}^p)$ and $\abs{\pl_G H(G)} \leq \aC \abs{G}^{p-1}$ for all $G \in \R^{3 \times 3 \times 3}$.
\end{enumerate}

As described in the introduction, the energy also depends on a temperature variable $\vartheta \in L^1_+(\Omega_h)$.
We now introduce a \emph{coupling energy} $ \cplen \EEE \colon \Wid \times L^1_+(\Omega_h) \to \R$ given by
\begin{equation*}
  \cplen(w, \vartheta) \defas \int_{\Omega_h} \cplpot(\nabla w, \vartheta) \di x,
\end{equation*}
where $\cplpot \colon GL^+(3) \times \R_+ \to \R$ describes mutual interactions of mechanical and thermal effects, see e.g.~\cite{GurtinFriedAnand10Themechanics}, and satisfies:
\begin{enumerate}[label=(C.\arabic*)]
  \item \label{C_regularity} $\cplpot$ is continuous  and   $C^2$ in $GL^+(3) \times (0, \infty)$\EEE.
  \item \label{C_frame_indifference} $\cplpot(QF, \vartheta) = \cplpot(F, \vartheta)$ for all $F \in GL^+(3)$, $\vartheta \geq 0$, and $Q \in SO(3)$.
  \item \label{C_zero_temperature} $\cplpot(F, 0) = 0$ for all $F \in GL^+(3)$.
  \item \label{C_lipschitz} $|\cplpot(F,\vartheta) - \cplpot(\tilde{F}, \vartheta)| \le \aC(1 + |F| + |\tilde{F}|)|F - \tilde{F}|$ for all $F, \, \tilde F \in GL^+(3)$, and $\vartheta \geq 0$.
  \item \label{C_bounds} For all $F \in GL^+(3) $ and $\vartheta \OOO > 0\EEE$ it holds that
	\begin{align*}
		\abs{\partial_F^2 W^{\rm cpl}(F,\vartheta)} & \le \aC, &
		\abs{\pl_{F \vartheta} \cplpot(F, \vartheta)}
			& \leq \frac{\aC(1+|F|)}{\vartheta \vee 1}, &
		\ac & \leq -\vartheta \pl_\vartheta^2 \cplpot(F, \vartheta) \leq \aC.
	\end{align*}
	  \item \label{C_heatcap_cont} The \emph{heat capacity} $c_V(F, \vartheta) \defas - \vartheta \pl_\vartheta^2 \cplpot(F, \vartheta)$ for $F \in GL^+(3)$ and $\vartheta > 0$ as well as $\pl_{F \vartheta} \cplpot$ can be continuously extended to $GL^+(3) \times \R_+$. 
\end{enumerate}
Notice that, by \ref{C_zero_temperature} and the second bound in \ref{C_bounds}, $\pl_F \cplpot$ can be continuously extended to zero temperatures via $\pl_F W^{\rm cpl}(F, 0) = 0$. For $F \in GL^+(3)$ and $\vartheta\geq 0$, we define the \emph{total free energy potential}
\begin{align}\label{eq: free energy}
  \felpot(F, \vartheta) \defas \elpot(F) + \cplpot(F, \vartheta).
\end{align}

\subsection*{Internal   energy}
We define the \textit{internal energy} $\inten \colon GL^+(3) \times (0, \infty) \to \R$ as
\begin{equation}\label{Wint}
  \inten(F, \vartheta) \defas \cplpot(F, \vartheta) - \vartheta \pl_\vartheta \cplpot(F, \vartheta).
\end{equation}
By the third bound in \ref{C_bounds}, the internal energy is controlled by the temperature in the  sense  \EEE
\begin{align}\label{sec_deriv}
  \partial_{\vartheta} \inten (F, \vartheta)
  = -\vartheta \pl_\vartheta^2 \cplpot(F, \vartheta) \in [\ac, \aC]
  \qquad \text{for all $F \in GL^+(3)$ and $\vartheta > 0$}
\end{align}
which along with \ref{C_zero_temperature} yields
\begin{equation}\label{inten_lipschitz_bounds}
  \ac \vartheta \leq \inten(F, \vartheta) \leq \aC \vartheta.
\end{equation}
 Then,  we can continuously extend  $\inten$ \EEE by setting $\inten(F, 0) = 0$ for all $F \in GL^+(3)$.

 
\subsection*{Dissipation mechanism}
 We introduce a \emph{potential of dissipative forces}  \EEE $\disspot \colon  GL^{\OOO +\EEE}(3) \EEE \times \R^{3 \times 3} \times \R_+ \to \R_+$  satisfying
\begin{enumerate}[label=(D.\arabic*)]
  \item \label{D_quadratic} $\disspot(F, \dot F, \vartheta) \defas \frac{1}{2} D(C, \vartheta)[\dot C, \dot C] \defas \frac{1}{2} \dot C : D(C, \vartheta) \dot C$, where $C \defas F^T F$, $\dot C \defas \dot F^T F + F^T \dot F$, and $D \in C(\R^{3 \times 3}_\sym \times \R_+; \R^{3 \times 3 \times 3 \times 3})$ with $D_{ijkl} = D_{jikl}= D_{klij}$ for $1 \le i,j,k,l \le 3$.
  \item \label{D_bounds} $\ac \abs{\dot C}^2 \leq \dot C : D(C, \vartheta) \dot C \leq \aC \abs{\dot C}^2$ for all $C, \, \dot C \in \R^{3 \times 3}_\sym$, and $\vartheta \geq 0$.
\end{enumerate}
The associated \emph{dissipation functional} $ \diss \EEE \colon \Wid \times H^1(\Omega_h) \times  L^1_+(\Omega_h)  \to \R_+$  defined on time-dependent deformations and temperatures \EEE is given by
\begin{equation*}
  \diss(w,  \partial_t \EEE w, \vartheta)
  \defas \int_{\Omega_h} \disspot(\nabla w,   \partial_t \EEE \nabla w, \vartheta) \di x.
\end{equation*}
\EEE Notice that the fact that $\disspot$ can be rewritten as a function depending on the right Cauchy-Green tensor $C = F^T F$ and its time derivative $\dot C$ is equivalent to \emph{dynamic frame indifference}, see e.g.~\cite{Antmann98Physically}. As a consequence  of \EEE \ref{D_quadratic},  we can express the viscous stress tensor $\partial_{\dot F} R$ as \EEE
\begin{equation}\label{chain_rule_Fderiv}
  \pl_{\dot F} \disspot(F, \dot F, \vartheta) = 2 F (D(C, \vartheta) \dot C),
\end{equation}
see e.g.~\cite[Equation~(2.8)]{RBMFMK} for further details.
Although   $\pl_{\dot F} \disspot(F, \dot F, \vartheta)$ is linear in the time derivative $\dot C$, we stress that nonlinearities arise due to $D(C,\vartheta)$, $F$, and $\dot C$ itself.
Multiplying the viscous stress tensor from the right with the strain rate yields the \emph{dissipation rate} $\drate \colon \R^{3 \times 3} \times \R^{3 \times 3} \times \R_+ \to \R_+$  \OOO given by \EEE
\begin{equation}\label{diss_rate}
  \drate(F, \dot F, \vartheta)
  \defas \pl_{\dot F} \disspot(F, \dot F, \vartheta) : \dot F
  = D(C, \vartheta) \dot C : \dot C = 2 R(F, \dot F,\vartheta),
\end{equation}
 where the last two identities  follow from \EEE    \eqref{chain_rule_Fderiv} and the symmetries in \ref{D_quadratic}.

\subsection*{Heat flux and heat conductivity}  The  \emph{heat \OOO transfer}  is \OOO governed by the Fourier   law in an Eulerian description and transformed to a Lagrangian formulation via a pull-back operator, see  \cite[Equation~(2.24)]{MielkeRoubicek20Thermoviscoelasticity}. \EEE More precisely, given  the  \emph{heat-conductivity tensor} $\hc \colon  \R_+ \to \R^{3 \times 3}_\sym$,  \GGG  a  deformation $w$, and temperature $\vartheta$, \EEE      \OOO the heat flux \EEE $q$ \ZZZ is given by \EEE 
$  q = \EEE -  \ZZZ \hcm( \EEE\nabla w, \vartheta) \nabla \vartheta$, where  \ZZZ $\hc$ \EEE is transformed to Lagrangian coordinates by \EEE
\begin{equation}\label{hcm}
 \ZZZ  \hcm( \EEE F, \vartheta) \defas \det(F) F^{-1} \hc( \vartheta) F^{-T}.
\end{equation}
 Here, \EEE we assume that $\hc$ is continuous, symmetric, uniformly positive definite, and bounded,  i.e., \EEE for all $\vartheta \geq 0$ \ZZZ  we \EEE require that
\begin{equation}\label{spectrum_bound_K}
  \ac \leq \hc(\vartheta) \leq \aC,
\end{equation}
where the inequalities are meant in the eigenvalue sense. 

\ZZZ We highlight again that the described model coincides with the one studied in \cite{RBMFMK, MielkeRoubicek20Thermoviscoelasticity}. The only difference is that we  neglect $x$-dependence  of $\hc$ for simplicity and  we ask for the stronger assumption $p>4$ instead of   $p > 3$. In principle, our arguments  would also work for $p>3$ at the expense of an $h$-dependent prefactor for $H$ in \eqref{mechanical}\OOO, see also \cite[Equation (2.14)]{FK_dimred} \ZZZ \GGG for \QQQ a model \GGG in the isothermal case up to a change of notation. \EEE As this  would lead to heavier notation throughout the paper, we refrain from treating the case $3 < p \le 4$. \EEE

\subsection*{Equations of nonlinear thermoviscoelasticity} 
 We are now in the position to formulate the system of PDEs for which we intend to perform a dimension reduction. We consider the coupled system \EEE
\begin{align}\label{quasistaticequation3D}
-{\rm div}\Big( \partial_F W(\nabla w,\vartheta) +   {\rm div} (\partial_G H(\nabla^2 w)) + \partial_{\dot{F}}R(\nabla w, \partial_t \EEE \nabla w, \vartheta)  \Big) =  g^{3D}_h e_3 \qquad \text{in $I \times \Omega_h$,}
\end{align}
\begin{align}\label{thermoequation3D}
c_V(\nabla w, \vartheta)  \partial_t \EEE \vartheta = {\rm div} \big(   \hcmnoh (\nabla w, \vartheta)  \nabla \vartheta \big) +  \xi(\nabla w,  \partial_t \EEE \nabla w, \vartheta) +\vartheta \partial_{F \vartheta} W^{\rm cpl}(\nabla w,\vartheta) :  \partial_t \EEE \nabla w  \quad  \text{ in $I \times \Omega_h$},
\end{align}
 where $ g^{3D}_h \EEE \colon I \times \Omega_h \to \R$ denotes a time-dependent \emph{body-force} acting vertically on the material.  The \emph{mechanical equation }\eqref{quasistaticequation3D} is a quasistatic version of the Kelvin-Voigt rheological model \EEE (neglecting inertia), and corresponds to the sum of elastic and viscous stress. \ZZZ As it is customary for dimension-reduction problems in the von Kármán regime \cite{FJM_hierarchy}, we focus on purely  vertical body forces,  referring to \cite{lecumberry} for a thorough discussion of other scenarios. \EEE
 
%
%

 The \emph{heat-transfer equation} \eqref{thermoequation3D}  \EEE is derived from the  entropy equation
\begin{align*}
\vartheta  \partial_t \EEE s = \xi - \diver q \qquad \text{in $I \times \Omega_h$},
\end{align*}
where $s= - \partial_\vartheta W^{\rm cpl}(\nabla w,\vartheta)$ denotes the \emph{entropy} \OOO and \EEE $\xi$ the dissipation rate  introduced in \eqref{diss_rate}\OOO. \EEE  The term  $c_V(\nabla w, \vartheta) = -\vartheta \partial_{\vartheta^2} W^{\rm cpl}(\nabla w,\vartheta)$ defined in \ref{C_heatcap_cont}  corresponds to the \emph{heat capacity}    and the last term in \eqref{thermoequation3D} is an \emph{adiabatic  term},  playing the role \ZZZ of \EEE a heat source or sink, respectively. \EEE
We  refer to \cite{MielkeRoubicek20Thermoviscoelasticity} or to \cite[Section 8.1]{KruzikRoubicek19Rate-independent}   for \OOO further \EEE details.
Notice that \GGG the \EEE purely mechanical stored energy  $W^{\rm el}$, see \eqref{mechanical},  does not influence the heat production \OOO nor the \EEE transfer in \eqref{thermoequation3D}. \EEE

We complete the above  equations \ZZZ by \EEE boundary conditions on  $I \times \Gamma_h$. \EEE Besides Dirichlet boundary conditions on $ I \times  \Gamma^h_D\EEE $, see \eqref{admissibledeformations}, we assume zero Neumann boundary conditions \GGG for the stress and hyperstress \EEE on $ I \times \EEE (\partial \Omega_h \setminus  \Gamma^h_D)\EEE $ \GGG since no surface forces are applied. \EEE \GGG Due to  the second deformation gradient, there arise additional natural Neumann conditions on $I \times \partial \Omega_h$  \EEE and, for \EEE the heat flux, we suppose that  \EEE  $-  \hcmnoh \EEE(\nabla w, \vartheta) \nabla \vartheta \cdot \nu = \kappa (\vartheta -  \vartheta_\flat^h\EEE)$  on $I \times \Gamma_h$. \ZZZ Here, \EEE $\nu$ is the outward pointing unit normal on $ \GGG \Gamma_h \EEE$, $\kappa \geq 0 $ is a phenomenological \emph{heat-transfer coefficient}, and $\vartheta_\flat^h\EEE \in \ZZZ L^{2}(I;L^2_+(\Gamma_h)) \EEE $  denotes an \emph{external temperature}.   We refer to  \cite[Equation (2.14)]{MielkeRoubicek20Thermoviscoelasticity} for details. Note that we do not assign boundary conditions at the top and the bottom of $\Omega_h$, i.e., on $ \Omega' \EEE \times \{-h, h\}$, neither for the deformations nor for the temperature.

\subsection*{Weak formulation in three dimensions}
  We now introduce the weak formulation of \eqref{quasistaticequation3D}--\eqref{thermoequation3D}. For purely technical reasons \OOO arising in \EEE the derivation of a priori estimates, \EEE
we introduce truncated versions of the dissipation rate by

\begin{align}\label{diss_rate_truncated}
\xi^{(\alpha)}(F, \dot F, \vartheta) \defas \begin{cases}
\xi(F,\dot F,\vartheta) &\xi \leq 1, \\
\xi(F,\dot F,\vartheta)^{\alpha/4} &\text{otherwise},
\end{cases}
\end{align}
 where the parameter $\alpha \in [2,4]$ is related to the scaling exponent of the temperature, see \ZZZ the discussion preceding \eqref{eq:mech2Dintro}.  \EEE
 We emphasize that no regularization is applied in the case $\alpha = 4$ \ZZZ whereas  for \EEE $\alpha \in [2,4)$ the dissipation is changed for large strain rates. Since in the von Kármán regime we deal with small strains and strain rates, we  heuristically \EEE have $\xi \le 1$ and  thus this regularization does essentially not   affect the system.  In particular, it has no influence on the effective model in   \eqref{eq:temperature2Dintro} \ZZZ as the latter is deduced from a linearization at $F = \Id$ and $\dot{F} = 0$. \EEE

\begin{definition}[Weak solution of the nonlinear system]\label{def:weak_formulation}
\ZZZ Consider \EEE initial values $w_{0}^h \in \mathcal{W}^h_\id$ and $\vartheta_{0}^h \in L^2_+(\Omega_h)$, \ZZZ and data $g^{3D}_h \in W^{1,1}(I;L^2( \Omega_h))$ and $\vartheta_\flat^h  \in   L^{2}(I;L^2_+(\Gamma_h))$. Then, \EEE a pair $(w^h, \vartheta^h) \colon I \times \Omega_h \to \R^3 \times \R$ is called a \emph{weak solution} to \eqref{quasistaticequation3D} and \eqref{thermoequation3D} with associated natural boundary conditions  if \EEE $w^h \in L^\infty(I; \Wid) \cap H^1(I; H^1(\Omega_h; \R^3))$ with $w^h(0, \cdot)=  w^h_0$, $\vartheta^h \in L^1(I; W^{1,1}(\Omega_h))$ with $\vartheta^h \ge 0$ a.e., and if the following \ZZZ equations \EEE are satisfied:
\begin{subequations}
\begin{equation}\label{weak_form_mech_res_unrescaled}
\begin{aligned}
  &\int_I \int_{\Omega_h}  \Big(
      \pl_F \felpot(\nabla w^h, \vartheta^h)
      + \pl_{\dot F} \disspot(\nabla w^h,  \partial_t \EEE \nabla w^h, \vartheta^h)
    \Big) : \nabla {\varphi_w} + \pl_G
    \hypot(\nabla^2 w^h) \cdddot \nabla^2 {\varphi_w}
     \di x \di t \\
    &\quad =  \int_I \int_{\Omega_h} g_h^{3D}  ({\varphi_w})_3 \EEE \di x \di t
\end{aligned}
\end{equation}
for any test function ${\varphi_w} \in C^\infty(I \times   \overline{\Omega_h}; \EEE \R^3)$ with ${\varphi_w} = 0$ on $I \times \Gamma_D^h$, as well as
\begin{equation}\label{weak_form_heat_unrescaled}
\begin{aligned}
  &\int_I \int_{\Omega_h}
     \hcmnoh \EEE(\nabla w^h, \vartheta^h) \nabla \vartheta^h \cdot \nabla {\varphi_{\vartheta}}
    - \Big(
      \xi^{(\alpha)}(\nabla w^h,  \partial_t \EEE \nabla w^h, \vartheta^h)
      + \partial_F W^{\rm{cpl}}(\nabla w^h, \vartheta^h) :  \partial_t \EEE \nabla w^h
    \Big) {\varphi_{\vartheta}} \di x \di t \\
  & \ \, - \int_I \int_\Omega W^{\rm{in}}(\nabla w^h, \vartheta^h)   \partial_t \EEE {{\varphi_{\vartheta}}}  \di  x \di t
  - \int_{\Omega_h} W^{\rm in}(\nabla w_{0}^h, \vartheta_{0}^h) {\varphi_{\vartheta}}(0) \di x \GGG = \kappa \int_I \int_{\Gamma_h} ( \vartheta_\flat^h -  \vartheta^h )\EEE {\varphi_{\vartheta}} \di \haus^2  \di t  \EEE \\ 
\end{aligned}
\end{equation}
for any test function ${\varphi_{\vartheta}} \in C^\infty(I \times  \overline{\Omega_h})\EEE$ with ${\varphi_{\vartheta}}(T) = 0$.
\end{subequations}
\end{definition}

\ZZZ The identities \EEE \eqref{weak_form_mech_res_unrescaled} and \eqref{weak_form_heat_unrescaled} arise naturally from the classical formulation \eqref{quasistaticequation3D}--\eqref{thermoequation3D}  by replacing the \EEE dissipation rate $\xi$   with \EEE its regularized version $\xi^{(\alpha)}$. Then, one can indeed show that sufficiently regular weak solutions  coincide with solutions to \eqref{quasistaticequation3D}--\eqref{thermoequation3D} \EEE along with the   imposed  \EEE  boundary conditions,  we refer to  \cite{MielkeRoubicek20Thermoviscoelasticity} for details. \EEE Existence of weak solutions described above was already shown in \GGG \cite[Theorem~2.2]{MielkeRoubicek20Thermoviscoelasticity} and \EEE \cite[Proposition~2.5(ii)]{RBMFMK}, \GGG where the latter result \GGG explicitly \EEE takes the truncation of $\xi$ into account. \EEE  (Note that in \cite{RBMFMK} the parameter $\alpha$ is \ZZZ replaced \EEE by $\alpha/2$.) \EEE
 
\begin{proposition}[Existence of weak solutions]
    For any $h > 0$ \ZZZ there \EEE exists a weak solution $(w^h, \vartheta^h)$ to \eqref{quasistaticequation3D} and \eqref{thermoequation3D} in the sense of Definition \ref{def:weak_formulation}.
\end{proposition}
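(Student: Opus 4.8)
The plan is to obtain the statement as a direct consequence of the existence theory already available for this model on a fixed three-dimensional domain, namely \cite[Theorem~2.2]{MielkeRoubicek20Thermoviscoelasticity} and \cite[Proposition~2.5(ii)]{RBMFMK}. Fix $h > 0$ once and for all. Then $\Omega_h = \Omega' \times (-h/2,h/2)$ is a fixed bounded Lipschitz domain and, exactly as in \cite{MielkeRoubicek20Thermoviscoelasticity, RBMFMK}, the Dirichlet part $\Gamma_D^h = \Gamma'_D \times (-h/2,h/2)$ together with the data $w_0^h \in \mathcal{W}^h_\id$, $\vartheta_0^h \in L^2_+(\Omega_h)$, $g^{3D}_h \in W^{1,1}(I;L^2(\Omega_h))$ and $\vartheta_\flat^h \in L^2(I;L^2_+(\Gamma_h))$ meets the standing requirements of those works. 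The first step is therefore to verify that the structural assumptions collected above — \ref{W_regularity}--\ref{W_lower_bound_spec} on $W^{\rm el}$, \ref{H_regularity}--\ref{H_bounds} on $H$, \ref{C_regularity}--\ref{C_heatcap_cont} on $W^{\rm cpl}$, \ref{D_quadratic}--\ref{D_bounds} on $R$, and \eqref{spectrum_bound_K} on the heat conductivity — coincide with, or are strictly stronger than, the corresponding hypotheses in \cite{MielkeRoubicek20Thermoviscoelasticity, RBMFMK}. As already noted after \eqref{spectrum_bound_K}, the only deviations are that here $\mathbb{K}$ is taken $x$-independent, which is merely a special case and leaves the uniform ellipticity used in the heat-equation estimates intact, and that we impose $p > 4$ rather than $p > 3$, which only strengthens the coercivity \ref{H_bounds} of the strain-gradient term; hence both existence results apply without modification.

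The second step concerns the regularization entering the weak formulation. For $\alpha = 4$ no truncation is performed in \eqref{diss_rate_truncated}, so $\xi^{(\alpha)} = \xi$ and the assertion is exactly \cite[Theorem~2.2]{MielkeRoubicek20Thermoviscoelasticity}. For $\alpha \in [2,4)$ the dissipation rate appearing in \eqref{weak_form_heat_unrescaled} is the truncated one, which is precisely the modification analyzed in \cite[Proposition~2.5(ii)]{RBMFMK}; note that the truncation exponent there is parametrized by $\alpha/2$ in our notation, so the present range $\alpha \in [2,4)$ corresponds to the range $[1,2)$ of their parameter. The underlying proof — a staggered time-discretization (minimizing-movements) scheme in which deformation and temperature are updated alternately, combined with energy and internal-energy balances, Boccardo--Gallou\"et-type test functions of the form $\varphi_\vartheta \approx 1 - (1+\vartheta)^{-\eta}$ to absorb the $L^1$-term $\xi^{(\alpha)}$, a priori bounds yielding weak and a.e.\ compactness of the discrete solutions, and identification of the limit in the weak formulation — carries over verbatim, since $\xi^{(\alpha)} \le \max\{\xi, \xi^{\alpha/4}\}$ retains the nonnegativity and integrability exploited there and the body force $g^{3D}_h e_3$ is treated exactly as a general $L^2$-in-space, $W^{1,1}$-in-time forcing.

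There is no genuine obstacle: the proof amounts to checking that the present model is an instance of one for which global-in-time existence has been established, and the only points requiring (minor) attention are the reparametrization of the truncation exponent between this paper and \cite{RBMFMK} and the verification that the $x$-independent conductivity still satisfies the ellipticity bounds used in the heat-transfer estimates — both immediate. If one instead wished to reproduce the argument from scratch, the delicate ingredient would be, as usual for this coupled system, the low regularity of the dissipation rate $\xi^{(\alpha)}$, whose mere $L^1$-bound forces the use of Boccardo--Gallou\"et truncations both in the discrete scheme and in the limit passage of the heat equation.
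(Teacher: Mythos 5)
Your proposal is correct and follows exactly the route the paper takes: the proposition is obtained by invoking \cite[Theorem~2.2]{MielkeRoubicek20Thermoviscoelasticity} for $\alpha=4$ and \cite[Proposition~2.5(ii)]{RBMFMK} for the truncated dissipation rate when $\alpha\in[2,4)$, with the same observations that the present assumptions ($p>4$, $x$-independent $\hc$) are special cases of those in the cited works and that the truncation parameter there corresponds to $\alpha/2$. Your additional remarks on the structure of the underlying minimizing-movements proof are consistent with, but not required beyond, what the paper records.
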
 
We stress that the  above \EEE existence of weak solutions in the large-strain setting is not reliant on the regularized dissipation rate $\xi^{(\alpha)}$.
However, for the rigorous  dimension reduction, \EEE  the regularization seems to be unavoidable.

\subsection{Rescaling  to \GGG a \EEE fixed domain} As customary in dimension-reduction problems, it is convenient to reformulate the model on a fixed domain. \EEE 
We shortly write $\Omega \coloneqq \Omega_1 = \Omega' \times (-1/2, 1/2)$, $\Gamma_D \defas \Gamma_D^1$, and $\Gamma \coloneqq \GGG \Gamma_1 \EEE$.
For any $x = (x_1, x_2, x_3) \in \R^3$, we  write \EEE  $x'$ for the first two components $(x_1, x_2)$ of $x$.
Given $h > 0$, a deformation $w^h \colon I \times \Omega_h \to \R^3$, and  a \EEE temperature $\vartheta^h \colon I \times  \Omega_h \EEE \to \R_+$, we denote by $y^h \colon I \times \Omega \to \R^3$ the rescaled deformation and by $\theta^h \colon I \times \Omega \to \R_+$ the rescaled temperature, defined by
\begin{align*}
  y^h(t, x', x_3) &\defas w^h(t, x', h x_3), & \theta^h(t, x', x_3) &\coloneqq \vartheta^h(t, x', hx_3).
\end{align*}
 Similarly, we define the rescaled initial  data, external temperature, and body force  by   
\begin{align}\label{eq: data}
 y^h_0(x', x_3)  &\defas w^h_0(x', h x_3),  &  \theta^h_0(x', x_3) &\defas \vartheta^h_0(x', hx_3),\notag \\ 
     f_h^{3D}(t, x) &\defas g_h^{3D}(t, x', hx_3), & \theta_\flat^h\EEE &\defas \vartheta_\flat^h\EEE(t, x', h x_3). \EEE
\end{align}
\EEE The set of admissible configurations, see \eqref{admissibledeformations}, takes the form
\begin{align}\label{eq: nonlinear boundary conditions}
\mathscr{S}^{3D}_h = \Big\{ y \in W^{2,p}(\Omega;\R^3) \colon y(x',x_3) = \begin{pmatrix} x' \\ hx_3 \end{pmatrix}
\text{ for } x \in \Gamma_D \Big\}.
\end{align} 
 In the isothermal case \cite{FK_dimred},  slightly more general boundary conditions are considered. \EEE
We restrict ourselves to functions that coincide with $\id$ at the boundary as this is the setting for which existence results  in \GGG three-dimensional \EEE nonlinear thermoviscoelasticity are available, see  \EEE \cite{RBMFMK, MielkeRoubicek20Thermoviscoelasticity}. For a smooth function $y\colon \Omega \to \R^3$, the scaled gradient  of $y$ is given by $\nabla_h y = ({y,}_1, {y,}_2, \tfrac{{y,}_3}{h})$, where the subscript indicates the directional derivative along the $i$-th unit vector.
Moreover, $\nabla_h^2$ denotes the scaled Hessian defined by
\begin{align*}
  (\nabla_h^2y)_{ijk} &\defas h^{-\delta_{3j} - \delta_{3k}} (\nabla^2y)_{ijk} \quad\text{for $i,j,k \in \{1,2,3\}$}, & (\nabla^2 y)_{ijk} &\defas (\nabla^2 y_i)_{jk}.
\end{align*}
In order to avoid possible confusion,  we \EEE denote the gradient and  Hessian \EEE  of  functions \EEE defined on the \GGG two-dimensional \EEE domain \EEE $\Omega'$ by $\nabla'$ and $(\nabla')^2$, respectively.   For convenience, we denote the mechanical energy of the rescaled deformation by
\begin{align*}
 \mathcal{M}(y^h) :=      \int_\Omega W^{\rm el} (\nabla_h y^h  ) \dx +  \int_\Omega H(\nabla_h^2 y^h) \dx. 
 \end{align*}

\begin{remark}[Weak formulation \ZZZ of \EEE the rescaled problem]\label{rem:weakformresc}
 If $(w^h,\vartheta^h)$ are solutions in the sense of Definition~\ref{def:weak_formulation} for initial values $(w_0^h, \vartheta_0^h)$, the rescaled pair $(y^h, \theta^h)$ satisfies the identities
\begin{subequations}
\begin{equation} \label{weak_form_mech_res}
\begin{aligned}
  &\int_I \int_\Omega
      \partial_F W(\nabla_h y^h, \theta^h) : \nabla_h {\varphi_{y}}
      + \partial_G H(\nabla^2_h y^h) \cdddot \nabla^2_h {\varphi_{y}}
      + \partial_{\dot F} R(\nabla_h y^h,   \partial_t \EEE  \nabla_h \EEE  y^h, \theta^h) : \nabla_h {\varphi_{y}}
    \di x \di t \\
  &\quad= \int_I \int_\Omega  f^{3D}_h  ({\varphi_{y}})_3 \EEE \di x \di t
\end{aligned}
\end{equation}
for all ${\varphi_{y}} \in C^\infty(I \times \overline{\Omega};\R^3)$ with ${\varphi_{y}} = 0$ on $I \times \Gamma_D$ and
\begin{align}\label{weak_form_heat_res} 
  &\int_I \int_\Omega
    \mathcal{K}(\nabla_h y^h, \theta^h) \nabla_h \theta^h \cdot \nabla_h {\varphi_{\theta}}
    - \Big(
      \xi^{(\alpha)}(\nabla_h y^h,  \partial_t \EEE \ZZZ \nabla_h \EEE y^h, \theta^h)
      + \partial_F W^{\rm{cpl}}(\nabla_h y^h, \theta^h) :  \partial_t \EEE \ZZZ \nabla_h \EEE  y^h 
    \Big) {\varphi_{\theta}} \di x \di t \notag \\
  & \quad - \int_I \int_\Omega
    W^{\rm{in}}(\nabla_h y^h,\theta^h)  \partial_t \EEE {\varphi_{\theta}} \di x \di t
    -  \int_\Omega W^{\rm in}(\nabla_h y_0^h, \theta_0^h) {\varphi_{\theta}}(0) \di x   = \kappa \int_I \int_\Gamma (\theta_\flat^h - \theta^h )\EEE {\varphi_{\theta}} \di \haus^2 \di t   
\end{align}
\end{subequations}
for all ${\varphi_{\theta}}\in C^\infty( I\times \overline{\Omega})$ with ${\varphi_{\theta}}(T) = 0$, where  we refer to \eqref{eq: data} for the definition of the rescaled data. \EEE  
\end{remark}

\subsection{Compactness and limiting variables}\label{a new subsection}
The limiting variables are identified via a compactness argument. Following \cite{FJM_hierarchy}, we derive compactness for the rescaled deformations $(y^h)_h$ in terms of averaged  and scaled \ZZZ in-plane \EEE and out-of-plane displacements, denoted by
\begin{align}\label{def:displacements}
  u^h(t, x')
  &\defas \frac{1}{h^2} \int_{-\frac{1}{2}}^{\frac{1}{2}} \left( \ZZZ \begin{pmatrix}
	y_1^h(t,x',x_3) \\
	y_2^h(t,x',x_3)
	\end{pmatrix} \EEE - x' \right)\, \di x_3, &
  v^h(t, x')
  &\defas \frac{1}{h} \int_{-\frac{1}{2}}^{\frac{1}{2}} y^h_3(t, x', x_3) \di x_3.
\end{align}
 Here, the different scaling in terms of $\frac{1}{h^2}$ and $\frac{1}{h}$ corresponds to the  von K\'arm\'an scaling regime. We consider different regimes for the temperature in terms of an exponent $\alpha \in [2,4]$: given      temperatures $(\theta^h)_h$,  we define  the averaged and scaled temperature as \EEE
\begin{align}\label{def:scaledtemp}
  \mu^h(t, x')
  &\defas \frac{1}{h^\alpha}  \int_{-1/2}^{1/2} \EEE \theta^h(t,  x', x_3) \di x_3 \EEE.
\end{align}
 As mentioned in the introduction, the  definition of  $\mu^h$ corresponds to a linearization around temperature zero. 
\OOO For the sake of simplicity,  we consider external body forces $f^{3D}_h$ independent  \ZZZ  of $x_3$, and \OOO require \EEE  
\begin{align}
\sup_{h > 0} h^{-3} \Vert f^{3D}_h \Vert_{W^{1,1}(I;L^2(\ZZZ \Omega') \EEE )} < \infty \quad \text{ and } \quad
\ZZZ \frac{1}{h^3} \EEE f_h^{3D} \to f^{2D} \quad \text{strongly in } L^2(I \times \ZZZ \Omega') \EEE \label{forcebound} \tag{E.1} 
\end{align}
for some $f^{2D} \in L^2 (I \times\Omega')$. \EEE
Finally, we define  the averaged scaled \ZZZ external \EEE temperature as 
\begin{equation*}
    \mu_\flat^h(t, x') \defas h^{-\alpha}  \int_{-1/2}^{1/2}  \theta_\flat^h\EEE(t, x', x_3) \di x_3,
\end{equation*}
 and \OOO suppose  that there exist $\mu_\flat \in L^2(I; \L^2_+(\Gamma'))$ such that
\begin{align}
\sup_{h > 0} h^{-\alpha} \lVert\theta_\flat^h\EEE\rVert_{\ZZZ L^{2}(I;L^2_+(\Gamma)) \EEE}< \infty \quad \text{ and }\quad
\mu_\flat^h \rightharpoonup \mu_\flat \qquad \text{weakly in }  L^2(I \times \Gamma') \EEE. \label{assumption:temp} \tag{E.2}
\end{align}
\EEE
 
 \begin{proposition}[Compactness]\label{prop:compactness}
Suppose that $ \sup\nolimits_{h>0} \GGG ( \EEE h^{-4}\mathcal{M}(y_0^h) + \ZZZ  h^{-2\alpha} \EEE \Vert \theta_0^h \Vert_{L^{ 2\EEE}(\Omega)}^{ 2 \EEE} \GGG ) \EEE  < + \infty $, \ZZZ and that \eqref{forcebound}--\eqref{assumption:temp} hold. \EEE   Then, there exists a sequence of weak solutions    $((y^h,\theta^h))_h$   to \eqref{weak_form_mech_res} and \eqref{weak_form_heat_res} in the sense of Definition \ref{def:weak_formulation} and   in-plane and out-of-plane   displacements \EEE $u \in \ZZZ  H^1(I;H^1(\Omega';\R^2))$ and $v \in \ZZZ H^1(I; H^2(\Omega'))$\OOO, respectively, \EEE satisfying
\begin{align}\label{eq: bccond}
    u(t, x') &=0, & v(t,x') &= 0, & \nabla' v(t,x') &= 0
             & &\text{for almost every } (t,x') \in I \times \Gamma_D'
\end{align}
such that, up to selecting a subsequence,  the mappings $u^h$ and  $v^h$  defined in \eqref{def:displacements}  satisfy \EEE  
\begin{subequations}
\begin{align}
u^h &\weaklystar u \qquad  \quad \; \text{weakly* in }  L^\infty(I; H^1(\Omega'; \R^2)), \label{convuh} \\
 \partial_t \EEE u^h &\rightharpoonup  \partial_t \EEE u \qquad \ \text{weakly in }  L^s(I; W^{1, s}(\Omega';\R^2)), \label{convuhdot} \\
v^h & \weaklystar v \qquad \quad \; \text{weakly* in } L^\infty(I; H^1(\Omega')), \label{convvh} \\
 \partial_t \EEE v^h &\rightharpoonup  \partial_t \EEE v \qquad \ \text{weakly in } L^2(I;H^1(\Omega')), \label{convvhdot}
\end{align}
\end{subequations}
for $s= 1+ (3-8/p)^{-1} \in [1,2)$. Moreover, there  exists a \GGG temperature \EEE $\mu \in L^q(I \times \Omega')$ \GGG with \EEE $\nabla' \mu \in L^r(I \times \Omega'; \R^2)$ for any $q \in [1, 5/3)$ and $r \in [1, 5/4)$ such that, up to selecting a further subsequence, \ZZZ the mappings $\mu^h$ defined in \eqref{def:scaledtemp} satisfy \EEE
\begin{subequations}
\begin{align}
\mu^h &\to \mu \qquad \quad \text{strongly in } L^q(I \times \Omega'), \label{convmuh} \\
\mu^h &\rightharpoonup \mu \qquad \quad \text{weakly in } L^r(I; W^{1,r}(\Omega')). \label{convmuhnabla}
\end{align}
\end{subequations}
\end{proposition}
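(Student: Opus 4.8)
The plan is to combine the $h$-explicit a priori estimates of Section~\ref{sec:existenceapriorisection} with geometric rigidity and standard compactness theorems. First, invoking the existence result above (see \cite{MielkeRoubicek20Thermoviscoelasticity, RBMFMK}), I would fix for each $h>0$ a weak solution $(y^h,\theta^h)$ in the sense of Definition~\ref{def:weak_formulation}. Testing the rescaled mechanical equation \eqref{weak_form_mech_res} with $\partial_t y^h$ and the rescaled heat equation \eqref{weak_form_heat_res} with the constant function $1$, and adding the two balances so that the adiabatic coupling term $\partial_F W^{\rm cpl}(\nabla_h y^h,\theta^h):\partial_t\nabla_h y^h$ cancels, produces a total energy balance. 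Combined with the dissipation lower bound \ref{D_bounds}, the hypothesis $h^{-4}\mathcal{M}(y_0^h)+h^{-2\alpha}\|\theta_0^h\|_{L^2(\Omega)}^2\le C$ and the force scaling \eqref{forcebound}, this would yield
\[
\sup_{t\in I}\mathcal{M}(y^h(t)) + \int_I\int_\Omega R(\nabla_h y^h,\partial_t\nabla_h y^h,\theta^h)\,\di x\,\di t \le C h^4,
\qquad
\sup_{t\in I}\|\theta^h(t)\|_{L^1(\Omega)} \le C h^\alpha.
\]
Making these formal tests rigorous requires mollification in time and, for the heat equation with its $L^1$ right-hand side, Boccardo--Gallou\"et-type truncated test functions as in \cite{RBMFMK, MielkeRoubicek20Thermoviscoelasticity}, and this is precisely where the dependence of all constants on $h$ must be tracked.

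Second, from $\sup_t\mathcal{M}(y^h(t))\le Ch^4$ and the coercivity assumptions \ref{W_lower_bound_spec} and \ref{H_bounds} one obtains $\|\dist(\nabla_h y^h(t),SO(3))\|_{L^2(\Omega)}^2\le Ch^4$ and $\|\nabla_h^2 y^h(t)\|_{L^p(\Omega)}^p\le Ch^4$ uniformly in $t$. Applying the geometric rigidity estimate of \cite{FrieseckeJamesMueller:02} on the thin plate, together with the Dirichlet boundary condition, in the by-now standard von K\'arm\'an fashion (cf.\ \cite{FJM_hierarchy, FK_dimred}) then forces $\|u^h(t)\|_{H^1(\Omega')}+\|v^h(t)\|_{H^2(\Omega')}\le C$ for a.e.\ $t$. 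For the time derivatives I would use $\int_I\int_\Omega R(\nabla_h y^h,\partial_t\nabla_h y^h,\theta^h)\,\di x\,\di t\le Ch^4$ together with the generalized Korn inequality on thin domains, Theorem~\ref{pompescaling}, to bound $\partial_t\nabla_h y^h$ in $L^2(I\times\Omega)$ with the correct power of $h$; interpolating this with the $L^p$ bound on $\nabla_h^2 y^h$ and passing to the rescaled displacements exactly as in \cite{FK_dimred} gives boundedness of $\partial_t v^h$ in $L^2(I;H^2(\Omega'))$ and of $\partial_t u^h$ in $L^s(I;W^{1,s}(\Omega';\R^2))$ with $s=1+(3-8/p)^{-1}$. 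Weak-$*$ and weak compactness then produce $u\in H^1(I;H^1(\Omega';\R^2))$ and $v\in H^1(I;H^2(\Omega'))$ together with \eqref{convuh}--\eqref{convvhdot}; the boundary conditions \eqref{eq: bccond} survive in the limit since they are trace conditions preserved under weak convergence. The classical Friesecke--James--M\"uller identification also shows that the limits of $u^h$, $v^h$ coincide with the in- and out-of-plane displacements extracted from the rigidity decomposition.

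Third, for the temperature I would work with $\mu^h=h^{-\alpha}\int_{-1/2}^{1/2}\theta^h(\cdot,\cdot,x_3)\,\di x_3$ as in \eqref{def:scaledtemp}. Because of the truncation \eqref{diss_rate_truncated} and the dissipation bound, $\xi^{(\alpha)}(\nabla_h y^h,\partial_t\nabla_h y^h,\theta^h)$ is of order $h^\alpha$ in $L^1$, and so is $\partial_F W^{\rm cpl}(\nabla_h y^h,\theta^h):\partial_t\nabla_h y^h$ by \ref{C_bounds}; feeding this into the thin-domain, $h$-explicit Boccardo--Gallou\"et estimates for the heat equation developed in Section~\ref{sec:existenceapriorisection} gives $\mu^h$ bounded in $L^q(I\times\Omega')$ and $\nabla'\mu^h$ bounded in $L^r(I\times\Omega';\R^2)$ for all $q<5/3$, $r<5/4$ (the exponents of the three-dimensional parabolic Boccardo--Gallou\"et theory), as well as $\mu^h$ bounded in $L^\infty(I;L^1(\Omega'))$. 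Testing the heat equation \eqref{weak_form_heat_res} with time-dependent test functions yields a bound on the time derivative of a truncation of $\mu^h$ in a negative Sobolev space, and an Aubin--Lions--Simon argument in the spirit of \cite{BoccardoGallouet89Nonlinear, RBMFMK} upgrades the weak $W^{1,r}$ bound to the strong convergence $\mu^h\to\mu$ in $L^q(I\times\Omega')$ of \eqref{convmuh}, with $\mu\ge 0$ a.e.\ inherited from $\theta^h\ge 0$; the statement \eqref{convmuhnabla} is then the weak-compactness consequence of the $W^{1,r}$ bound.

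The step I expect to be the main obstacle is obtaining all of the above a priori estimates with the sharp power of $h$: the strain-rate bound, which hinges on the thin-domain generalized Korn inequality (Theorem~\ref{pompescaling}) because the dissipation potential only controls the symmetric part of $\partial_t\nabla_h y^h$, and the thin-domain Boccardo--Gallou\"et bounds for a heat equation with merely $L^1$ data, whose constants must be shown to scale correctly in $h$. Turning the resulting $L^1$-type time control of the temperature into genuine strong $L^q$-compactness of $\mu^h$ is the second delicate point. By contrast, once the dissipation is controlled, the displacement compactness is a fairly routine adaptation of the static von K\'arm\'an machinery.
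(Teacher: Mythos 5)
Your overall strategy (the $h$-explicit a priori bounds of Section~\ref{sec:existenceapriorisection}, geometric rigidity, the generalized Korn inequality, and an Aubin--Lions argument for the temperature) is the same as the paper's, but three steps would fail as written. First, you claim uniform bounds $\|v^h(t)\|_{H^2(\Omega')}\le C$ and boundedness of $\partial_t v^h$ in $L^2(I;H^2(\Omega'))$; neither is available. The only control of second in-plane derivatives of $y_3^h$ is $\|\nabla_h^2 y^h\|_{L^p(\Omega)}\le Ch^{4/p}$ from \ref{H_bounds} and \eqref{boundres:mech}, so $(\nabla')^2 v^h=h^{-1}\int_{-1/2}^{1/2}(\nabla')^2y_3^h\,\di x_3$ is at best of order $h^{4/p-1}$, which degenerates for $p>4$, and there is no estimate whatsoever on $\partial_t\nabla^2 y^h$. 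Consequently the regularity $u\in H^1(I;H^1(\Omega';\R^2))$ and $v\in H^1(I;H^2(\Omega'))$ asserted in the proposition cannot be obtained from uniform bounds on the sequences: in the paper, $v\in L^\infty(I;H^2(\Omega'))$ comes from identifying $\nabla'v$ with the entries $A_{31},A_{32}$ of the limit $A$ of $h^{-1}(R^h-\Id)$, which lies in $L^\infty(I;H^1(\Omega'))$ by Lemma~\ref{lemma:rigidity}, and the time-derivative regularity is deliberately deferred to Lemma~\ref{lem:conv_strain_stress}, where $\partial_t\sym(G)\in L^2(I\times\Omega)$ and the identity \eqref{sym_dotG''} are exploited. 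Your route only yields $\partial_t u^h$ bounded in $L^s(I;W^{1,s})$ with $s<2$ and $\partial_t v^h$ bounded in $L^2(I;H^1(\Omega'))$, which does not reach $u\in H^1(I;H^1)$ or $v\in H^1(I;H^2)$.

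Second, the condition $\nabla'v=0$ on $I\times\Gamma_D'$ is not a ``trace condition preserved under weak convergence'': $v^h$ is only $H^1$ in space, so $\nabla'v^h$ has no trace, and even formally the clamping of $y^h$ on $\Gamma_D^h$ constrains only tangential derivatives, not the normal in-plane derivative of $v^h$. The paper obtains this condition from the first moment $\mathcal Z^h(t,x')=\int_{-1/2}^{1/2}x_3\,(y^h-(x',hx_3)^T)\,\di x_3$, which vanishes on $\Gamma_D'$ and for which $h^{-2}\mathcal Z^h\rightharpoonup -\tfrac1{12}(\nabla'v,0)^T$ weakly in $H^1(\Omega';\R^3)$ (following \cite{FJM_hierarchy}), combined with compactness of the trace operator; some argument of this type is indispensable. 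Third, the Aubin--Lions step for the temperature cannot be run on $\mu^h$ or a truncation of it: the weak formulation \eqref{weak_form_heat_res} controls the time derivative of the internal energy $\zeta^h=W^{\rm in}(\nabla_h y^h,\theta^h)$, cf.\ \eqref{forAubin-Lion3}, not of $\theta^h$. One must first prove strong convergence of $h^{-\alpha}\zeta^h$ (via \eqref{boundres:temperature}--\eqref{forAubin-Lion3}, Aubin--Lions, and Vitali) and then transfer it to $h^{-\alpha}\theta^h$ by inverting $W^{\rm in}(F,\cdot)$, using the heat-capacity bounds \eqref{sec_deriv}, the continuity required in \ref{C_heatcap_cont}, and $\nabla_h y^h\to\Id$ uniformly from \eqref{rig:closetoidinfty}; only afterwards does averaging in $x_3$ give \eqref{convmuh}. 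These are genuine gaps rather than omitted routine details.
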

 Note \GGG that \EEE the regularity of the limits $ \GGG  u \EEE$ and $ \GGG v \EEE$ cannot be deduced \GGG directly from \GGG \eqref{convuh}--\eqref{convvhdot}. \EEE
\OOO Instead, we will further \ZZZ exploit \EEE \GGG compactness properties of the rescaled strain and stress, see \EEE Lemma~\ref{lem:conv_strain_stress}. \EEE

\subsection{The two-dimensional model}\label{sec:two-dimensional models}
 The main goal of this paper is to identify a limiting  system \EEE of equations governing the evolution of $u$, $v$, and $\mu$,  see Theorem~\ref{maintheorem} below. We proceed by introducing the effective two-dimensional problem. \EEE  

\subsection*{Effective tensors}\label{sec:quadraticforms}
As a preparation,  we \EEE introduce effective lower dimensional tensors related to $W^{\rm el}$, $R$, and $W^{\rm cpl}$ , respectively. \EEE
We define $Q^3_{W^{\rm el} }\colon \R^{3\times3} \to \R$ and $Q^3_R\colon \R^{3\times3} \to \R$ by
\begin{align}\label{eq:quadraticformsnotred}
 	Q^3_{W^{\rm el}}(A) &\defas \partial^2_{F^2} {W^{\rm el} }(\Id)[A,A],  &
  Q^3_R(A) &\defas 2 R(\Id,A,0)
\end{align}
 for  any \EEE $A \in \R^{3 \times 3}$. The quadratic forms $Q_{\elpot}^3$ and $Q_R^3$ induce fourth-order tensors denoted by $\C_{W^{\rm el}}^3$ and $\C_R^3$, respectively. \ZZZ In particular, recalling  \ref{D_quadratic} it holds that  
 \begin{align}\label{DDDDD}
 \C_R^3 = 4D(\Id,0).
 \end{align} \EEE
 Moreover, we define    
 \begin{align}\label{alpha_dep}
   \mathbb{B}^{(\alpha)} \EEE &= \begin{cases}
    \partial_{F\theta} \cplpot(\Id,0) & \text{if } \alpha=2, \\
    0  & \text{if } \alpha \in (2,4],
  \end{cases} &
   \CD^{3, \alpha}\EEE &= \begin{cases}
    0 & \text{if } \alpha \in [2,4), \\
    \CD^3  & \text{if } \alpha =4,
  \end{cases}
\end{align}
where  the second-order tensor $\mathbb{B}^{(\alpha)}$ plays the role of a \emph{thermal expansion matrix}.    The dependence of the tensors on the scaling parameter $\alpha$ has already been noticed in  \cite[Equation~(2.34)]{RBMFMKLM}. \EEE
 We further define reduced quadratic forms by minimizing among stretches in the vertical direction.
 More precisely, for any  $A \in \R^{2 \times 2}$ \EEE let
 \begin{equation}\label{def:quadraticforms}
 	Q^2_S( A \EEE) \defas \min \left\{Q^3_S( A^* \EEE) \colon  A^* \EEE \in \R^{3\times 3}_\sym, \,  A^*_{ij} \EEE = A_{ij} \, \text{ for } i,j =1,2 \right\} \qquad \text{for }  S \in \{W^{\rm el},R\}.
 \end{equation}\EEE
The associated tensors are denoted by $\C_{W^{\rm el}}^2$ and $\C_R^2$, respectively.
 In a similar fashion, we can also define the reduced tensor $\CD^{2, \alpha}$ associated to $\CD^{3, \alpha}$.  Moreover, we set \EEE
\begin{align}\label{barcvandk}
 \overline c_V \defas c_V(\Id, 0),
\end{align}
 where the definition above is well-defined due to \ZZZ \ref{C_heatcap_cont}. \ZZZ We \EEE shortly write $\mathbb K \defas \mathcal{K}(\Id, 0)$ for the heat conductivity tensor  at the identity and zero \EEE temperature, \ZZZ and introduce \EEE  the effective $2$-dimensional heat conductivity $\tilde{\mathbb{K}}$ \ZZZ as \EEE
 \begin{equation}\label{heattensorredd}
  \tilde{\mathbb{K}} \coloneqq \mathbb{K}''
    - \frac{1}{\mathbb{K}_{33}} \begin{pmatrix} \mathbb{K}_{31} \\ \mathbb{K}_{32} \end{pmatrix} \otimes \begin{pmatrix} \mathbb{K}_{31} \\ \mathbb{K}_{32} \end{pmatrix},
\end{equation}
where  here and in the following \EEE for any matrix $A \in \R^{3 \times 3}$ we denote its upper-left $2\times 2$-minor as $A''$.

By Taylor expansion, polar decomposition, and frame indifference (see \ref{W_regularity}, \ref{W_frame_invariace}\OOO, \EEE and \ref{D_quadratic}) one can observe that all the above  introduced quadratic forms \EEE only depend on the symmetric part of  the strain and strain rate, respectively\EEE. Furthermore, the quadratic forms $Q_S^i$, $i \in \OOO\{\EEE 2,3 \}\EEE$, $S  \in \{\EEE \elpot, R\}\EEE$, are positive definite whenever restricted to symmetric matrices, see \ref{W_lower_bound_spec} and \ref{D_bounds}.  Similarly, \ref{C_frame_indifference} implies that $\mathbb{B}^{(\alpha)} $ is symmetric. \EEE

\subsection*{Compatibility conditions}
In  the  spirit of \EEE  \cite{FK_dimred, MFLMDimension2D1D,MFLMDimension3D1D},  we require some compatibility conditions of the quadratic forms  and their reduced versions. In this regard, w\EEE e assume that we can decompose $Q_S^3$, for $S  \in \{ \EEE \elpot, R  \}\EEE$, in the following way: there exist  quadratic forms $Q_S^*$ \EEE such that for all $A \in \R^{3\times 3}_\sym$ it holds that  
 \begin{align}
 Q_S^3(A) = Q_S^2(A'') + Q_S^*(\tilde A), \tag{F.1} \label{quadraticformsassumption}
 \end{align}
 where $\tilde A \in \R^{3\times 3}_\sym$ is given by $\tilde A_{ij}=0$ for $i,j  \in \{\EEE 1,2\}\EEE$, and $\tilde A_{km}= A_{km} $ for $k=3$ and $m \in \{ \EEE1,2,3 \}\EEE$.

This  is restrictive \EEE from a modeling point of view since the above condition is only satisfied for materials with \textit{ zero Poisson ratio}.  As first noted in  \cite[Section 2.2]{FK_dimred}, such an assumption \EEE is crucial  for \EEE our analysis, as the limiting strain component of the upper-left $2\times 2$ minor  must   not influence the remaining components.   (We will exploit this fact in \EEE Step 1 of the proof of Theorem~\ref{maintheorem}, \GGG see \EEE \eqref{CWCD} below.) \EEE 
For the  very \EEE same reason,  in the case $\alpha = 2$, \ZZZ we \EEE require    that  
\begin{equation}\label{heattensorassumption}
  \mathbb{B}^{(\alpha)} = \begin{bmatrix}
     (\mathbb{B}^{(\alpha)})''  & 0 \\
    0 & 0
  \end{bmatrix}.\tag{F.2}
\end{equation}
Intuitively speaking, this guarantees that the induced stress generated by  changes in \EEE temperature does not  influence \EEE the vertical displacement.

\subsection*{Equations of thermoviscoelasticity for von Kármán plates} \EEE Depending on the scaling $\alpha \in [2,4]$ of the temperature, we derive different limit evolutions for thermoviscoelastic von Kármán plates for a triple $(u, v, \mu)\colon I\times \Omega' \to \R^2 \times \R \times \R_{\OOO +\EEE}$.
Let us first describe \emph{the mechanical equations} in their strong formulation: in $ I \EEE \times \Omega'$, we have  
\begin{equation}\label{eq:mechanical2d}
\left\{
\begin{aligned}
  \diver\Big(\C^2_{W^{\rm el}}\big( e(u)  +  \tfrac{1}{2} \nabla' v \otimes \nabla' v  \big) + \mu (\mathbb{B}^{(\alpha)})'' + \C^2_D \big( e( \partial_t \EEE u) +    \partial_t \EEE \nabla' v   \odot \nabla' v \big) \Big) &= 0, \\
  -\diver\Big(
    \Big(
      \C^2_{W^{\rm el}}\big( e(u)  + \tfrac{1}{2} \nabla' v \otimes \nabla' v  \big)
      + \mu (\mathbb{B}^{(\alpha)})''
      + \C^2_D \big( e( \partial_t \EEE u) +  \partial_t \EEE \nabla' v \odot \nabla' v \big) \Big) \nabla' v
    \Big) \\
  + \tfrac{1}{12}  \diver \diver \Big(
      \C^2_{W^{\rm el}} (\nabla')^2 v + \C^2_D  \partial_t \EEE (\nabla')^2 v
    \Big)
  &= f^{2D},
\end{aligned}
\right.
\end{equation}
where $e(u) \defas \sym (\nabla' u)$ is the symmetrized gradient.  The system is complemented with initial and boundary conditions, \ZZZ namely \EEE
\begin{equation*}
\left\{
\begin{aligned}
  u&= 0, & v &= 0, & \nabla'v &=0 & &\text{on } I \times \Gamma_D', \\  
  u(0,\cdot) &= u_0, &  v(0,\cdot) &= v_0 & &&  &\text{in } \Omega',  
\end{aligned}
\right.
\end{equation*}
for some $u_0\colon I \times \Omega' \to \R^2  $ and $v_0 \colon I \times \Omega' \to \R$, along with a natural Neumann boundary condition on $I \times  \GGG \Gamma' \EEE \setminus \Gamma'_D \EEE$ which we do not state explicitly for convenience.  

%
%
%

With regard to the thermal evolution in $I \times \Omega'$,  we obtain the effective \emph{heat-transfer equation} \EEE 
\begin{equation*}
\begin{aligned}
\overline c_V  \partial_t \EEE \mu
    - \diver(\tilde{\mathbb{K}}  \nabla' \EEE \mu)  =   &  \CD^{2, \alpha} \EEE   \Big(
      \sym( \partial_t \EEE \nabla' u)
      +  \partial_t \EEE \nabla' v \odot \nabla' v
    \Big) : \Big(
      \sym( \partial_t \EEE \nabla' u)
      +  \partial_t \EEE \nabla' v \odot \nabla' v
    \Big)  \\ & \quad  +   \frac{1}{12} \EEE \CD^{2,\alpha} \partial_t \EEE (\nabla')^2 v :  \partial_t \EEE (\nabla')^2 v,
\end{aligned}
\end{equation*}
 complemented with  the initial and boundary conditions \EEE 
\begin{equation}\label{eq:temp2D_bdry_init}
\left\{
\begin{aligned}
  \tilde{\mathbb{K}} \nabla \mu \cdot \nu + \kappa \mu
  &= \kappa \mu_\flat  & &\text{on } I \times \GGG \Gamma', \EEE \\
  \mu(0,\cdot) &= \mu_0  & &\text{in } \Omega', \\
\end{aligned}
\right.
\end{equation}
for some $\mu_0 \colon I \times \Omega' \to \R$,  i.e., \EEE we find a standard linear heat equation with Robin boundary conditions.  Here, by $\nu$  we now denote the outward pointing unit normal on $\GGG \Gamma' \EEE$. \EEE

We will prove that the system \eqref{eq:mechanical2d}--\eqref{eq:temp2D_bdry_init} admits a weak solution.
In fact, we will show that appropriate rescaling of weak solutions of the \GGG three-dimensional \EEE problems (as given in \GGG Remark~\ref{rem:weakformresc} and \EEE Proposition~\ref{prop:compactness})
converge to a weak solution of the two-dimensional problem in a suitable sense.
The latter ones are defined as follows.

\begin{definition}[Weak solution of thermoviscoelastic von Kármán plates]\label{def:weak_form_vK}
\ZZZ Consider \EEE initial values $u_0 \in H^1(\Omega';\R^2) $, $v_0 \in \ZZZ H^2(\Omega') \EEE $, and $\mu_0 \in L^2(\Omega')$, \ZZZ as well as data $f^{2D} \in L^2 (I \times\Omega')$ and  $\mu_\flat \in L^1(I; L^1_+(\Gamma'))$.  \EEE A triple $(u, v, \mu) \colon I \times   \Omega' \EEE \to \R^2 \times \R \times \R_+$ is called a \emph{weak solution} to the initial-boundary-value problem \eqref{eq:mechanical2d}--\eqref{eq:temp2D_bdry_init} \ZZZ if \EEE  
\begin{enumerate}
\item $u \in \ZZZ H^1(I;H^1(\Omega';\R^2)) \EEE $ with $u=0$ a.e.~on $I \times \Gamma_D'$  and $u(0) = u_0$ a.e.~in $\Omega'$\EEE , 
\item $v \in \ZZZ H^1(I; H^2(\Omega') ) \EEE $ with $v = 0$, $\nabla' v = 0$ a.e.~on $I \times \Gamma_D'$  and $v(0) = v_0$ a.e.~in $\Omega'$\EEE ,
\item $\mu \in L^1(I; W^{1,1}( \Omega') )$  with $\mu \geq 0$  a.e.~on $I\times \Omega'$, \EEE
\end{enumerate}
and if it satisfies
\begin{subequations}\label{eq: weak equation}
\begin{equation}
\int_{ I} \int_{\Omega'} \Big(\C^2_{W^{\rm el}}\big( e(u)  + \tfrac{1}{2} \nabla' v \otimes \nabla' v  \big)  +\mu (\mathbb{B}^{(\alpha)})''+  \C^2_D \big( e( \partial_t \EEE u ) +   \partial_t \EEE \nabla' v   \odot  \nabla' v \big) \Big) : \nabla' \varphi_u \di x' \di t = 0 \label{eq: weak equation1}
\end{equation}
for all $\varphi_u \in C^\infty(I \times   \overline{\Omega'} ; \EEE \R^2)$ with $\varphi_u = 0$ on $I \times \Gamma_D'$,
\begin{align}\label{eq: weak equation2} 
 &\int_{ I} \int_{\Omega'} \Big(\C^2_{W^{\rm el}}\big( e(u)  + \tfrac{1}{2} \nabla' v \otimes \nabla' v  \big)  + \mu (\mathbb{B}^{(\alpha)})''+  \C^2_D \big( e( \partial_t \EEE u ) +   \partial_t \EEE \nabla' v  \odot\nabla' v \big) \Big) : \big(\nabla' v  \odot  \nabla' \varphi_v  \big) \di x' \di t \notag \\
 &\quad +\frac{1}{12} \int_0^T \int_{\Omega'} \Big(\C^2_{W^{\rm el}} (\nabla')^2 v + \C^2_D  \partial_t \EEE (\nabla')^2 v \Big) : (\nabla')^2 \varphi_v \di x' \di t = \int_0^T \int_{\Omega'} f^{2D}\varphi_v \di x' \di t
\end{align}
\end{subequations}
for all $\varphi_v \in C^\infty(I \times  \overline{\Omega'})$ with $\varphi_v = 0$ on $I \times \Gamma_D'$, and
\begin{align}\label{eq:temperature2dweak}
  &\int_I \int_{\Omega'} \hspace{-0.1cm}  \CD^{2, \alpha} \EEE \Big(
      \sym( \partial_t \EEE \nabla' u)
      +  \partial_t \EEE \nabla' v \odot \nabla' v
    \Big)\hspace{-0.1cm}  : \hspace{-0.1cm}  \Big(
      \sym( \partial_t \EEE \nabla' u)
      +  \partial_t \EEE \nabla' v \odot \nabla' v
    \Big) {\varphi_{\mu}} \di x' \di t \notag \\
    &\quad + \int_I \int_{\Omega'} \frac{1}{12} \Big(\CD^{2,\alpha}   \partial_t \EEE (\nabla')^2 v
      : \partial_t \EEE (\nabla')^2 v  
    \Big)  {\varphi_{\mu}} \di x' \di t
    + \kappa \int_I \int_{\GGG \Gamma' \EEE}  (\mu_\flat -\mu){\varphi_{\mu}} \di \haus^1(x') \di t + \overline c_V \int_{\Omega'} \mu_0 {\varphi_{\mu}}(0) \di x' \notag \\
    &\qquad = \int_I \int_{\Omega'} \Big( \tilde{\mathbb{K}}  \nabla' \mu :
    \nabla' {\varphi_{\mu}} - \overline c_V \mu  \partial_t \EEE {\varphi_{\mu}}  \Big)\di x' \di t
\end{align}
for all ${\varphi_{\mu}} \in C^\infty(I \times \overline{\Omega'})$ with ${\varphi_{\mu}}(T) = 0$.
\end{definition}

It is a standard matter to check that sufficiently smooth weak solutions  coincide with classical solutions of the system \eqref{eq:mechanical2d}--\eqref{eq:temp2D_bdry_init}, complemented with  additional natural Neumann \ZZZ  conditions \EEE for $(u,v)$.

\subsection{Main convergence result}

We consider the two dimensional elastic energy, defined by
\begin{align}\label{eq: phi0}
{\phi}^{\rm el}_0(u,v) \defas \int_{\Omega'} \bigg( \frac{1}{2}Q_{W^{\rm el}}^2\Big( e(u) + \frac{1}{2} \nabla'v \otimes \nabla'v \Big) + \frac{1}{24}Q_{W^{\rm el}}^2((\nabla')^2 v) \bigg) \, {\rm d}x',
\end{align}
\GGG and recall the definition of $\mathscr{S}^{3D}_h$ in \eqref{eq: nonlinear boundary conditions}. \EEE
 The main result of this paper is as follows: \EEE
\begin{theorem}[Convergence to the two-dimensional system]\label{maintheorem}
Let $(\EEE(y_0^h, \theta_0^h))_h\EEE$ be a sequence of initial data with $y_0^h \in \mathscr{S}^{3D}_h$.  Denote by $((u_0^h,v_0^h))_h \EEE $  and  $( \mu^h_0 )_h \EEE$  \EEE their rescaled versions given by \eqref{def:displacements} and \eqref{def:scaledtemp}, respectively. Let $(u_0,v_0,\mu_0) \ZZZ \in H^1(\Omega';\R^2) \times   H^2(\Omega') \times  L^2(\Omega') \EEE $ be limiting initial values and assume that  $(u_0^h,v_0^h) \rightharpoonup (u_0,v_0) $ in $H^1(\Omega';\R^3)$,   $\mu_0^h \to \mu_0$  strongly in $L^2(I \times \Omega')$, and \EEE
\begin{equation}\label{well-preparednessinitial}
h^{-4}  \mathcal{M} (y_0^h) \EEE  \to \phi_0^{\rm el} (u_0,v_0), \quad \quad     \Vert h^{-\alpha} \theta^h_0\Vert_{L^2(\Omega)} \to \Vert \mu_0 \Vert_{L^2(\Omega')}. \EEE  
\end{equation}
\ZZZ Suppose that   \eqref{forcebound}--\eqref{assumption:temp} \GGG and \eqref{quadraticformsassumption}--\eqref{heattensorassumption} \EEE hold.  \EEE   Then, there exists a sequence of  solutions \EEE $(\EEE (y^h,\theta^h))_h\EEE$ to \eqref{weak_form_mech_res}--\EEE\eqref{weak_form_heat_res}, converging in the sense of Proposition~\ref{prop:compactness}  to  some \EEE $(u,v,\mu)$, and $(u,v,\mu)$  is a weak solution to \eqref{eq:mechanical2d}--\eqref{eq:temp2D_bdry_init}  as described  in   Definition~\ref{def:weak_form_vK}. \OOO Moreover, it satisfies the energy balance \EEE
\begin{align}\label{balance2Dmaintheorem}
&\phi_0^{\rm el}(u(t),v(t)) - \phi_0^{\rm el}(u(0),v(0)) + \int_0^t \int_{\Omega'} \Big( Q_R^2 \big( \sym( \partial_t \EEE \nabla' u) +   \partial_t \EEE \nabla' v \odot \nabla' v
    \big)   + \frac{1}{12}  Q_R^2 \big(  \partial_t \EEE (\nabla')^2 v \big) \Big) \, {\rm d}x \di s \notag  \\
    &\quad + \int_0^t \int_{\Omega'} \mu (\mathbb{B}^{(\alpha)})'' \big( \sym( \partial_t \EEE \nabla' u) +  \partial_t \EEE \nabla' v \odot \nabla' v
     \big) \, {\rm d}x \di s =\int_0^t \int_{\Omega'} f^{2D}  \partial_t \EEE v \, {\rm d}x' \di s
\end{align}
for  every $t\in I$. \EEE 
\end{theorem}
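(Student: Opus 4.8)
The starting point is Proposition~\ref{prop:compactness}: along a subsequence we have a sequence of weak solutions $((y^h,\theta^h))_h$ with the convergences \eqref{convuh}--\eqref{convmuhnabla} for the rescaled displacements $u^h,v^h$ and rescaled temperature $\mu^h$, and limits $u\in H^1(I;H^1(\Omega';\R^2))$, $v\in H^1(I;H^2(\Omega'))$, $\mu\in L^q(I\times\Omega')$ with $\nabla'\mu\in L^r$. The proof then proceeds in five steps: (i) identify the weak limits of the rescaled nonlinear strain and of the elastic, hyperelastic and viscous stresses; (ii) upgrade the weak convergence of the rescaled viscous stress to \emph{strong} convergence, which is the crucial point; (iii) pass to the limit in the mechanical weak form \eqref{weak_form_mech_res} to obtain \eqref{eq: weak equation1}--\eqref{eq: weak equation2}; (iv) pass to the limit in the heat equation \eqref{weak_form_heat_res} to obtain \eqref{eq:temperature2dweak}; (v) record the energy balance \eqref{balance2Dmaintheorem}. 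The $h$-dependence of all constants, in particular in Korn's inequality (Theorem~\ref{pompescaling}) and in the a priori estimates of Section~\ref{sec:existenceapriorisection}, is what makes every scaling below work out.

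\textbf{Step 1: limits of strains and stresses.} Using the geometric rigidity estimate underlying Proposition~\ref{prop:compactness} together with Lemma~\ref{lem:conv_strain_stress}, one writes $\nabla_h y^h=R^h(\Id+hG^h+\text{lower order})$ with $R^h$ (piecewise) rotations and $G^h$ bounded; passing to the limit identifies the symmetric $2\times2$ block of $\lim h^{-1}\operatorname{sym}(R^{h,T}\nabla_h y^h-\Id)$ with the von K\'arm\'an strain $e(u)+\tfrac12\nabla'v\otimes\nabla'v$ and the $3$-direction entries with an affine function of $x_3$ built from $(\nabla')^2v$, which in particular forces the regularity $v\in H^1(I;H^2(\Omega'))$. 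A Taylor expansion of $\partial_F W=\partial_F W^{\rm el}+\partial_F W^{\rm cpl}$ around $(\Id,0)$, using \ref{W_regularity}, \ref{C_regularity}, \ref{C_zero_temperature}, \ref{C_bounds} and frame indifference \ref{W_frame_invariace}, \ref{C_frame_indifference}, shows that $h^{-2}\partial_F W(\nabla_h y^h,\theta^h)$ converges weakly, modulo the vertical reduction in \eqref{def:quadraticforms}, to $\C^2_{W^{\rm el}}\big(e(u)+\tfrac12\nabla'v\otimes\nabla'v\big)+\mu(\mathbb{B}^{(\alpha)})''$, where the thermal correction is $\smallo(1)$ unless $\alpha=2$ and is extracted via \eqref{convmuh}. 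The hyperstress term $h^{-2}\partial_G H(\nabla_h^2 y^h)$ is shown to vanish in the limit by \ref{H_bounds} and the $h^{-4}$ energy bound, which is why no spatial derivatives of $\nabla'u$ survive in \eqref{eq:mechanical2d}. Finally, the compatibility condition \eqref{quadraticformsassumption} decouples the $2\times2$ block in the reduced quadratic forms and \eqref{heattensorassumption} ensures the thermal stress does not feed into the vertical displacement.

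\textbf{Step 2: strong convergence of the viscous stress (the main obstacle).} This is the heart of the argument and the principal novelty with respect to \cite{AMM_vKaccel, MP_thin_plates}, which do not see viscous effects. Adapting the time-discrete--to--time-continuous technique of \cite{RBMFMK, MielkeRoubicek20Thermoviscoelasticity}, I test the three-dimensional mechanical equation \eqref{weak_form_mech_res} with $\varphi_y=\partial_t y^h$ (admissible, after approximation in time, since the Dirichlet datum on $\Gamma_D$ is time-independent), obtaining a three-dimensional mechanical energy balance. Passing to the limit in this balance using weak lower semicontinuity of the viscous dissipation, the convergences from Step~1, and the well-preparedness \eqref{well-preparednessinitial}, and comparing with the balance produced by formally testing the limit equations \eqref{eq: weak equation1}--\eqref{eq: weak equation2} with $(\partial_t u,\partial_t v)$, yields exactly \eqref{balance2Dmaintheorem} and, in particular, that the rescaled dissipation $h^{-4}\int_0^t\!\!\int_\Omega\xi(\nabla_h y^h,\partial_t\nabla_h y^h,\theta^h)$ converges to $\int_0^t\!\!\int_{\Omega'}\big(Q^2_R(\operatorname{sym}\partial_t\nabla'u+\partial_t\nabla'v\odot\nabla'v)+\tfrac1{12}Q^2_R(\partial_t(\nabla')^2 v)\big)$. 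Since $R$ is uniformly convex in $\dot C$ by \ref{D_bounds}, this convergence of energies upgrades the weak convergence of the rescaled symmetric strain rates to \emph{strong} convergence in $L^2$, whence, via the chain rule \eqref{chain_rule_Fderiv} and continuity of $D$, the rescaled viscous stress $h^{-2}\partial_{\dot F}R(\nabla_h y^h,\partial_t\nabla_h y^h,\theta^h)$ converges strongly to $\C^2_R\big(e(\partial_t u)+\partial_t\nabla'v\odot\nabla'v\big)$ plus the corresponding bending contribution. I expect the delicate points to be the bookkeeping of the exact $h$-powers in each term of the energy balance (which uses Theorem~\ref{pompescaling} and Section~\ref{sec:existenceapriorisection}) and the fact that $\partial_t y^h$ is only $H^1$ in time, so the balance must be justified by a density argument; obtaining the strong convergence is the only place where ``weak'' compactness is genuinely insufficient, precisely because the viscous stress appears nonlinearly in the dissipation rate entering the heat equation.

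\textbf{Steps 3--5: passage to the limit and energy balance.} For the mechanical equations, insert von K\'arm\'an recovery test functions $\varphi_y^h\approx\big(h^2\varphi_u-h x_3\nabla'\varphi_v,\,h\varphi_v\big)$, with a suitable $x_3$-corrector, into \eqref{weak_form_mech_res}, divide by the appropriate power of $h$, and pass to the limit using the strong/weak convergences of Steps~1--2; the in-plane choice recovers \eqref{eq: weak equation1}, the out-of-plane one recovers \eqref{eq: weak equation2}, with the factor $\tfrac1{12}=\int_{-1/2}^{1/2}x_3^2\,\di x_3$. For the heat equation, test \eqref{weak_form_heat_res} with $x_3$-independent $\varphi_\theta$: the flux term converges, after the vertical reduction \eqref{heattensorredd}, to $\int\tilde{\mathbb{K}}\nabla'\mu\cdot\nabla'\varphi_\mu$; the internal-energy term converges by \eqref{sec_deriv}--\eqref{inten_lipschitz_bounds} and \eqref{convmuh} to $-\overline c_V\int\mu\,\partial_t\varphi_\mu$, with the initial term giving $\overline c_V\int_{\Omega'}\mu_0\varphi_\mu(0)$; the boundary term converges by \eqref{assumption:temp}; the adiabatic term is $\smallo(1)$ for all $\alpha\in[2,4]$ by scaling; and the dissipation-rate term $h^{-\alpha}\xi^{(\alpha)}$ — noting that the truncation in \eqref{diss_rate_truncated} is inactive for small $h$ because the rescaled strain rates are bounded, so $\xi^{(\alpha)}=\xi$, and using the \emph{strong} convergence of the strain rates from Step~2 — converges to $\CD^{2,\alpha}\big(\operatorname{sym}\partial_t\nabla'u+\partial_t\nabla'v\odot\nabla'v\big):\big(\operatorname{sym}\partial_t\nabla'u+\partial_t\nabla'v\odot\nabla'v\big)+\tfrac1{12}\CD^{2,\alpha}\,\partial_t(\nabla')^2 v:\partial_t(\nabla')^2 v$, which vanishes unless $\alpha=4$. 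The sign constraint $\mu\ge0$ passes to the limit from $\theta^h\ge0$ via \eqref{convmuh}. Together with the regularity from Step~1 this shows $(u,v,\mu)$ is a weak solution in the sense of Definition~\ref{def:weak_form_vK}, and the energy balance \eqref{balance2Dmaintheorem}, already obtained in Step~2, holds for every $t\in I$ after observing that all quantities involved admit continuous-in-time representatives.
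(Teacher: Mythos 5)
Your overall route is essentially the paper's: identify the weak limits of the rescaled strains and stresses, pass to the limit in the (linear-in-stress) weak formulations, and upgrade the weak convergence of the symmetrized strain rates to strong $L^2$ convergence by comparing the limit of the three-dimensional balance \eqref{balance3D} with the two-dimensional balance obtained by testing the limit equations with $(\partial_t u,\partial_t v)$, exploiting the uniform positivity of $D$; this is exactly Steps 5--6 of the paper, and your treatment of the heat equation matches its Step 7. There is, however, a genuine gap in your Step 1. You assert that $h^{-2}\partial_F W(\nabla_h y^h,\theta^h)$ converges, ``modulo the vertical reduction in \eqref{def:quadraticforms}'', to $\C^2_{W^{\rm el}}\big(e(u)+\frac12\nabla'v\otimes\nabla'v\big)+\mu(\mathbb{B}^{(\alpha)})''$. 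What the Taylor expansion actually yields is the three-dimensional limit $\C^3_{W^{\rm el}}\sym(G)+\mu\,\mathbb{B}^{(\alpha)}$ (and $\C^3_R\,\partial_t\sym(G)$ for the viscous part), and in the viscoelastic setting the reduction to the planar tensors is \emph{not} an instantaneous minimization over vertical stretches. One must first show that the limiting stress annihilates $e_3$ (by testing \eqref{weak_form_mech_res} with antiderivative-type test functions $\int_0^{x_3}\Phi\,\di\tilde x_3$), and then, thanks to \eqref{quadraticformsassumption}--\eqref{heattensorassumption}, this relation becomes a linear ODE in time for $\sym(G)_{13},\sym(G)_{23},\sym(G)_{33}$ (see \eqref{corkequation}); their vanishing requires identifying $\sym(G(0))$ and showing its vertical components are zero, which is precisely where the well-preparedness \eqref{well-preparednessinitial} enters (a $\Gamma$-liminf chain forced into equalities), followed by Gronwall. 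Your proposal invokes \eqref{well-preparednessinitial} only inside the energy balance and never supplies this mechanism, so the appearance of $\C^2_{W^{\rm el}},\C^2_R$ in your limit equations is asserted rather than proved.

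Two further points. First, your ordering is circular as written: your Step 2 obtains \eqref{balance2Dmaintheorem} by comparison with ``the balance produced by formally testing the limit equations \eqref{eq: weak equation1}--\eqref{eq: weak equation2}'', but those equations are only derived in your Step 3, which in turn claims to use the strong convergence from Step 2. The fix is the paper's ordering: the mechanical limit equations require only the weak convergences \eqref{conv:stress:el}--\eqref{conv:stress:visc} (plus control of the skew part of the averaged stress), so derive them first, then the 2D balance, then the strong convergence, and only then the heat equation, where the strong convergence is genuinely needed because the dissipation rate is quadratic in the strain rate. Second, your claim that the truncation in \eqref{diss_rate_truncated} is ``inactive for small $h$ because the rescaled strain rates are bounded'' is not justified: the available bound \eqref{boundres:strainrate} is only in $L^2(I\times\Omega)$, not pointwise, so one may not set $\xi^{(\alpha)}=\xi$; the limit of $h^{-\alpha}\xi^{(\alpha)}$ has to be computed from the definition of the truncation together with the strong convergence \eqref{Chdot_strong_conv}. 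Relatedly, the appearance of $\tilde{\mathbb{K}}$ requires first identifying the weak limit of $h^{-1-\alpha}\partial_3\theta^h$ through first-moment test functions $x_3\psi(t,x')$, a step your sketch glosses over.
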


  Note that condition \eqref{well-preparednessinitial} ensures  \emph{well-preparedness of initial data}.
 In other words,  it corresponds to the existence of a recovery sequence  in \EEE the $\Gamma$-convergence result of $h^{-4} \mechen$  to \EEE $\phi_0^{\rm el}$, \ZZZ see  \EEE \cite[Theorem 5.6]{FK_dimred}.   

We close this section by giving a further outline of the paper. In Section~\ref{sec:korn}, \ZZZ  we address a generalized version of Korn's inequality \cite{Pompe} and derive an optimal scaling of the \GGG Korn's \EEE constant \GGG in \EEE thin domains.  \EEE   Section~\ref{sec:existenceapriorisection} is devoted to \ZZZ deriving a priori estimates  of solutions in the \GGG three-dimensional \EEE setting in terms of the thickness $h$.  In Section~\ref{sec:proofofcompactness}, we treat   the dimension reduction and show convergence of solutions from the \GGG three-dimensional \EEE to the \GGG two-dimensional \EEE setting. \EEE

\EEE

\section{A generalized Korn's inequality on thin domains}\label{sec:korn}

 This section is devoted to a generalized Korn's inequality on thin domains. More precisely, we will revisit the estimate established in \cite{MielkeRoubicek20Thermoviscoelasticity, Neff, Pompe} by  investigating the scaling of the constant in thin domains.  The inequality  will be instrumental \OOO for the derivation of a priori estimates in the next section  (see Proposition~\ref{prop:Existenceof3dsolutions}), but  may also  be of independent interest beyond our application to a model in thermoviscoelasticity. \ZZZ We highlight that identifying optimal scalings of Korn's constants in thin domains \GGG is an issue motivated and studied in the context of linear elastostatics (see e.g.\ \cite{Korn:Scalingreference, kohnvogelius1085}) and fluid mechanics (see e.g.\ \cite{LewickaMueller}), \EEE but has not been performed yet for the generalized version needed for our  \EEE  model.

 As in Section \ref{sec:model}, \EEE given  $h > 0$ and a   Lipschitz \EEE domain $\Omega' \subset \R^2$, we consider $\Omega_h \defas \Omega' \times (-h/2, h/2)$  and \EEE set $\Gamma_D^h \defas \Gamma_D' \times (-h/2, h/2)$, where $\Gamma_D' \subset \Gamma' \defas \partial \Omega'$ is an arbitrary but fixed open subset.
It is a well-known result, see e.g.~\cite[Theorem A.1(ii)]{Korn:Scalingreference}  or \cite{ kohnvogelius1085}, \EEE   that there exists a constant $C = C(\Omega')$ independent of the thickness $h$ such that for all $u \in H^1(\Omega_h; \R^3)$ with $u = 0$ on $\Gamma_D^h$ it holds that
\begin{equation}\label{standardkornscaling}
  \int_{\Omega_h} \vert \nabla u(x) \vert^2 \di x
    \leq\frac{C}{h^2} \int_{\Omega_h} \vert \sym ( \nabla u ) \vert^2 \di x,
\end{equation}
where $\sym ( \nabla u ) \defas (\nabla u^T + \nabla u)/2$. \ZZZ Note \EEE that the \EEE scaling $h^{-2}$ is optimal. \ZZZ In our work, we need the   following   generalization   of Korn's inequality. \EEE

\begin{theorem}[Generalized \ZZZ Korn's \EEE inequalities on thin domains]\label{pompescaling}
Let $\Omega'$, $\Omega_h$, $\Gamma_D'$, and $\Gamma_D^h$ be as described in the beginning of this section.
\ZZZ Given \EEE $p > 3$ and $\rho > 0$, let $z \in W^{2,p}(\Omega_h; \R^3)$ be such that $\det(\nabla z) \geq \rho$ in $\Omega_h$, $\Vert \nabla z \Vert_{L^\infty({\Omega_h})} \leq 1/\rho$, and $\Vert \nabla^2 z \Vert_{L^p({\Omega_h})} \leq 1/\rho$.
Then, the following holds true:
\begin{enumerate}[label=(\roman*)]
  \item \label{item:kornsecondthin} There exists a constant $C = C(\Omega', \rho, p) > 0$ such that for all $h$ sufficiently small and $u \in H^1({\Omega_h}; \R^3)$ we can find a matrix $A \in \R^{3 \times 3}_\skw$ satisfying
  \begin{equation}\label{kornsecondthin}
    \left\Vert \nabla u - A \nabla z \right\Vert_{L^2(\Omega_h)} \leq	\frac{C}{h} \Vert \sym ((\nabla z)^T \nabla u ) \Vert_{L^2(\Omega_h)}.
  \end{equation}
  \item \label{item:kornsecondthin_dirichlet} Moreover, there exists a constant $C = C(\Omega', \Gamma_D', \rho, p) > 0$ such that for all $h$ sufficiently small and $u \in H^1({\Omega_h}; \R^3)$ with $u = 0$ on $\Gamma_D^h$ it holds that
  \begin{equation}\label{kornsecondthin_dirichlet}
    \left\Vert \nabla u \right\Vert_{L^2(\Omega_h)} \leq	\frac{C}{h} \Vert \sym ((\nabla z)^T \nabla u ) \Vert_{L^2(\Omega_h)}.
  \end{equation}
\end{enumerate}
\end{theorem}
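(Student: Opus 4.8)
The plan is to reduce the generalized Korn inequality on the thin domain to the known fixed-domain version. The key tool is the rescaling $x = (x', x_3) \mapsto (x', h x_3)$ that maps $\Omega = \Omega_1$ onto $\Omega_h$, together with the corresponding rescaled gradient $\nabla_h$. Concretely, given $u \in H^1(\Omega_h;\R^3)$, set $\tilde u(x', x_3) \defas u(x', hx_3)$ and $\tilde z(x',x_3) \defas z(x',hx_3)$, so that $\nabla u$ evaluated at $(x', hx_3)$ equals $\nabla_h \tilde u$ at $(x',x_3)$, and likewise for $z$. The point is that $\tilde z$ satisfies the same type of bounds as $z$ but with constants that may now depend on $h$ in a controlled way --- in particular $\det(\nabla_h \tilde z) \ge \rho$, $\Vert \nabla_h \tilde z\Vert_{L^\infty(\Omega)} \le 1/\rho$, and (after accounting for the $h^{-\delta}$ factors in $\nabla_h^2$) $\Vert \nabla^2 \tilde z\Vert_{L^p(\Omega)}$ stays bounded, noting that the $h^{-1}$ and $h^{-2}$ prefactors in $\nabla_h^2$ go the \emph{right} way since one differentiates in $x_3$ and picks up factors of $h^{-1}$, but the $x_3$-integration contributes $h$ --- so one should check that the relevant norm of $\nabla^2 z$ on $\Omega_h$, when pulled back, is controlled by $\Vert\nabla^2 z\Vert_{L^p(\Omega_h)}$ up to powers of $h$ that do not blow up. I would therefore first prove the analogue of \eqref{kornsecondthin} on the \emph{fixed} domain $\Omega$ but with $\nabla$ replaced by $\nabla_h$, tracking the $h$-dependence of the constant.

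\textbf{Step 1: the fixed-domain generalized Korn inequality.} Recall the non-scaled generalized Korn inequality (Pompe / Neff, in the form of \cite[Corollary 3.4]{MielkeRoubicek20Thermoviscoelasticity}): for $z \in W^{2,p}(\Omega;\R^3)$ with $\det \nabla z \ge \rho$, $\Vert \nabla z\Vert_\infty \le 1/\rho$, $\Vert \nabla^2 z\Vert_{L^p} \le 1/\rho$, there is $C = C(\Omega,\rho,p)$ such that for every $u \in H^1(\Omega;\R^3)$ there is $A \in \R^{3\times3}_\skw$ with $\Vert \nabla u - A\nabla z\Vert_{L^2(\Omega)} \le C \Vert \sym((\nabla z)^T \nabla u)\Vert_{L^2(\Omega)}$, and the Dirichlet version without the $A$. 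The plan is to apply this \emph{not} directly but in a two-scale fashion: split $\nabla_h$ into in-plane part $\nabla'$ and the scaled normal derivative $h^{-1}\partial_3$, and exploit that the thin direction is short. Alternatively --- and this is the cleaner route --- apply the fixed-domain inequality to the pulled-back fields and then carefully unscale, counting powers of $h$: the symmetrized quantity $\sym((\nabla z)^T\nabla u)$ on $\Omega_h$ pulls back to $\sym((\nabla_h\tilde z)^T\nabla_h\tilde u)$ on $\Omega$, whose components in the $(3,3)$, $(i,3)$ and $(i,j)$ blocks scale differently in $h$. One then absorbs the worst scaling, which is $h^{-1}$ coming from the mixed $\partial_3$ terms on the left compared to the $(i,j)$ block on the right, exactly as in the classical linear Korn scaling \eqref{standardkornscaling}. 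This yields \eqref{kornsecondthin} with the stated $h^{-1}$ on the right-hand side.

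\textbf{Step 2: the Dirichlet version.} For \ref{item:kornsecondthin_dirichlet}, I would argue as in the passage from the classical second Korn inequality to the one with boundary conditions: given the matrix $A$ from part \ref{item:kornsecondthin}, one must show $|A| \le \frac{C}{h}\Vert\sym((\nabla z)^T\nabla u)\Vert_{L^2(\Omega_h)}$ and then combine with the triangle inequality $\Vert\nabla u\Vert \le \Vert\nabla u - A\nabla z\Vert + \Vert A\nabla z\Vert \le \Vert \nabla u - A\nabla z\Vert + |A|\Vert\nabla z\Vert_\infty|\Omega_h|^{1/2}$. To bound $|A|$, use that $u = 0$ on $\Gamma_D^h = \Gamma_D'\times(-h/2,h/2)$: on this set $\nabla u$ has vanishing tangential-in-$\Gamma_D'$ and vanishing $\partial_3$ components, so $A\nabla z$ is pointwise close (in $L^2(\Gamma_D^h)$, via a trace/Poincaré estimate on the thin strip) to a field that is small in the same norm; since $\nabla z$ is invertible with $\det \ge \rho$, this forces $A$ to be small. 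The key quantitative point is that the trace inequality on the thin strip $\Gamma_D^h$, relating boundary values to the $H^1(\Omega_h)$-norm, and the Poincaré-type estimate controlling $u$ near $\Gamma_D^h$, both carry the same $h^{-1}$ scaling, so no further loss occurs. This is precisely the structure of the proof of the Dirichlet Korn inequality in \cite{MielkeRoubicek20Thermoviscoelasticity, Pompe}, transplanted to thin domains.

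\textbf{Main obstacle.} The delicate part is \emph{not} the algebra of counting powers of $h$ but verifying that the constant in the fixed-domain generalized Korn inequality, when applied to the pulled-back frame field $\tilde z$, does not secretly depend on $h$ through the $W^{2,p}$-norm of $\tilde z$: one has $\nabla_h^2\tilde z$ involving factors $h^{-1}$ on mixed derivatives and $h^{-2}$ on $\partial_3^2$, which would blow up. The resolution is that in the von Kármán / dimension-reduction setting the frame field $z$ one actually applies this to (in Proposition~\ref{prop:Existenceof3dsolutions}) is essentially $x$-independent in $x_3$ up to lower-order corrections, i.e. $\partial_3 z$ and $\partial_3^2 z$ are themselves small in $h$, so the bad prefactors are compensated --- but to keep Theorem~\ref{pompescaling} self-contained one must instead run the Korn argument \emph{intrinsically on $\Omega_h$}, extending $z$ and $u$ by reflection/scaling arguments and applying a covering of $\Omega_h$ by cubes of side $\sim h$, on each of which the local (Rellich-type, non-scaled) generalized Korn inequality holds with a uniform constant, then summing. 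Tracking how the skew matrices $A$ on neighboring cubes differ --- a "rigidity of approximate rotations" estimate, the same mechanism as in \cite{FrieseckeJamesMueller:02} --- and paying one factor of $h^{-1}$ in that patching is where the real work lies; this is the step I expect to be the main obstacle, and it is presumably handled by the argument of \cite{Korn:Scalingreference, kohnvogelius1085} suitably generalized from $\sym\nabla u$ to $\sym((\nabla z)^T\nabla u)$.
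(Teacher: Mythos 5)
Your final strategy for part \ref{item:kornsecondthin} (cover $\Omega_h$ by cubes of side $\sim h$, apply a local generalized Korn inequality with $h$-independent constant, and patch the cube-wise skew matrices at the cost of one factor $h^{-1}$) is indeed the architecture of the paper's proof, but the two ingredients you defer are precisely where the proof lives, and neither is available off the shelf. First, the uniform local constant for $\sym((\nabla z)^T\nabla u)$ with a \emph{variable} frame $z$ is not covered by the classical scaling results \cite{Korn:Scalingreference, kohnvogelius1085}; the paper has to prove it (Proposition~\ref{thm:secondpompegeneralized}) by a compactness/contradiction argument on the unit cube — using the kernel characterization $u=Az+a$ of Lemma~\ref{rigiddisplacementlemma} and the fixed-domain inequality of Proposition~\ref{lem:kernelchar} to upgrade weak to strong $L^2$-convergence of the gradients — followed by an \emph{isotropic} rescaling $\tilde z(x)=h^{-1}z(a+hx)$ under which $\Vert\nabla^2\tilde z\Vert_{L^p}=h^{(p-3)/p}\Vert\nabla^2 z\Vert_{L^p}$, so the hypotheses on the frame are preserved exactly because $p>3$. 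This also dissolves the "main obstacle" you flag: the dangerous $h^{-1},h^{-2}$ factors arise only in your first route (the anisotropic pull-back to the unit-thickness plate, which indeed does not work — nor does the classical thin-domain Korn scaling follow from such a naive change of variables), not in the cube-to-unit-cube rescaling actually needed. Second, you never construct the single constant skew matrix $A$ on all of $\Omega_h$: the paper interpolates the local matrices with a partition of unity satisfying $|\nabla\zeta_j|\le Ch^{-1}$, applies a Poincaré inequality (with $h$-independent constant) to $\tilde A=\sum_j\zeta_j A_j$, and estimates $\Vert\tilde A-A\Vert_{L^2}$ through the differences $|A_j-A_k|$ on overlapping patches; it is exactly this patching that produces the factor $h^{-1}$ in \eqref{kornsecondthin}.

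For part \ref{item:kornsecondthin_dirichlet} your sketch contains a genuine flaw: you propose to compare $\nabla u$ with $A\nabla z$ \emph{on} $\Gamma_D^h$, but $\nabla u - A\nabla z$ is only an $L^2(\Omega_h)$ field (and $\nabla u$ itself has no trace for $u\in H^1$), so the claim that "$A\nabla z$ is close in $L^2(\Gamma_D^h)$ to a small field" cannot be reached by a trace or Poincaré estimate at this regularity; any repair (e.g. working with $w=u-Az$ and its trace) requires a quantitative nondegeneracy of $z$ restricted to the shrinking strip $\Gamma_D^h$, which you do not address. The paper avoids traces altogether: it extends $u$ by zero across $\Gamma_D^h$ into a bulge $B_r'(x')\times(-h/2,h/2)$ attached outside $\Omega'$, extends $z$ by a Sobolev extension preserving the hypotheses with $\rho/2$, applies part \ref{item:kornsecondthin} on the enlarged domain, and bounds $|A|$ by integrating $|A\nabla z|^2=|\nabla u-A\nabla z|^2$ over the bulge where $\nabla u\equiv 0$, which gives $|A|\le Ch^{-3/2}\Vert\sym((\nabla z)^T\nabla u)\Vert_{L^2(\Omega_h)}$ and then \eqref{kornsecondthin_dirichlet} by the triangle inequality.
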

\GGG  It is worth noting that the asymptotic  behavior of the generalized versions \eqref{kornsecondthin} and \eqref{kornsecondthin_dirichlet} is consistent with \eqref{standardkornscaling}. Inequality \eqref{kornsecondthin_dirichlet} has been addressed in \cite[Theorem~3.3]{MielkeRoubicek20Thermoviscoelasticity} and \cite[Corollary~4.1]{Pompe} without analyzing the constant in terms of  the thickness. Inequality \eqref{kornsecondthin} in turn emerges as a byproduct of our analysis and has, to our knowledge, not been proved in this specific form. Our proof   crucially relies on the following generalization of Korn's inequality.  \EEE
\begin{proposition}[Generalized Korn's inequality]\label{lem:kernelchar}
   Given a Lipschitz domain \EEE $\Omega$, $\rho > 0$, and $\lambda \in (0, 1]\EEE$, there \EEE exists a constant $\GGG C \EEE>0$ \ZZZ depending on  $\Omega$, $\rho$, and $\lambda$ \EEE such that for all $u \in H^1(\Omega; \R^3)$ and $F \in C^{0, \lambda}(\Omega; \R^{3\times 3})$ satisfying $\det F \geq \rho$ in $\Omega$ and $\Vert F \Vert_{C^{ 0,\EEE \lambda}(\Omega)} \leq 1/\rho$ it holds that
  \begin{align*}
    \Vert u \Vert_{H^1(\Omega)}
      \leq  \GGG C \EEE \Big(
        \Vert u \Vert_{L^2(\Omega)}
        + \Vert \sym (F^T \nabla u ) \Vert_{L^2(\Omega)}
      \Big).
  \end{align*}
\end{proposition}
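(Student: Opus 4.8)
The plan is to argue by contradiction, exploiting the precompactness of the class of admissible coefficient fields together with the classical Korn inequality for a \emph{fixed} continuous coefficient matrix. Assume the assertion is false. Then for every $n\in\N$ there exist $u_n\in H^1(\Omega;\R^3)$ and $F_n\in C^{0,\lambda}(\Omega;\R^{3\times3})$ with $\det F_n\ge\rho$ in $\Omega$ and $\Vert F_n\Vert_{C^{0,\lambda}(\Omega)}\le1/\rho$ such that
\[
  \Vert u_n\Vert_{H^1(\Omega)} > n\big(\Vert u_n\Vert_{L^2(\Omega)} + \Vert\sym(F_n^T\nabla u_n)\Vert_{L^2(\Omega)}\big).
\]
Normalizing, we may assume $\Vert u_n\Vert_{H^1(\Omega)}=1$, so that $\Vert u_n\Vert_{L^2(\Omega)}\to0$ and $\Vert\sym(F_n^T\nabla u_n)\Vert_{L^2(\Omega)}\to0$. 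Since $\Omega$ is bounded and the $F_n$ are equibounded and equicontinuous---their common modulus of continuity being controlled by $r\mapsto\rho^{-1}r^\lambda$---the Arzel\`a--Ascoli theorem provides a (not relabeled) subsequence with $F_n\to F$ uniformly on $\overline\Omega$. The limit $F$ still lies in the admissible class: $\det F\ge\rho$ by continuity of the determinant, and $\Vert F\Vert_{C^{0,\lambda}(\Omega)}\le1/\rho$ by lower semicontinuity of the H\"older seminorm under uniform convergence.

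Next I would freeze the coefficient field by replacing $F_n$ with the fixed $F$ at the cost of a vanishing error. As $\sym(\cdot)$ is $1$-Lipschitz and $\Vert\nabla u_n\Vert_{L^2(\Omega)}\le\Vert u_n\Vert_{H^1(\Omega)}=1$,
\[
  \Vert\sym(F^T\nabla u_n)\Vert_{L^2(\Omega)}
    \le \Vert\sym(F_n^T\nabla u_n)\Vert_{L^2(\Omega)} + \Vert F-F_n\Vert_{L^\infty(\Omega)}\,\Vert\nabla u_n\Vert_{L^2(\Omega)} \longrightarrow 0 .
\]
Now $F$ is a fixed, continuous, uniformly positive definite matrix field, so the generalized Korn inequality with non-constant continuous coefficients---as established in \cite{Neff, Pompe}---applies with a constant $C_F$ depending only on $\Omega$ and $F$, giving
\[
  1 = \Vert u_n\Vert_{H^1(\Omega)} \le C_F\big(\Vert u_n\Vert_{L^2(\Omega)} + \Vert\sym(F^T\nabla u_n)\Vert_{L^2(\Omega)}\big) \longrightarrow 0 ,
\]
a contradiction. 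The structure of the argument shows that a constant valid for the entire class can be taken to depend only on $\Omega$, $\rho$, and $\lambda$.

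The only genuinely delicate point is the compactness step: the argument uses $\lambda>0$ in an essential way to obtain precompactness of $\{F_n\}$ in $C^0(\overline\Omega)$, and one has to verify that the constraints $\det F\ge\rho$ and $\Vert F\Vert_{C^{0,\lambda}}\le1/\rho$ survive the uniform limit. The apparent circularity---invoking a Korn inequality to prove one---is harmless, because in the limiting problem the coefficient field is \emph{fixed}, so the classical variable-coefficient estimate (whose constant need not be uniform in $F$) may be quoted. Should a self-contained proof be preferred, one can instead localize: cover $\overline\Omega$ by finitely many balls on each of which $F$ oscillates by less than a prescribed $\delta$ (feasible since $\operatorname{osc}_{B_r}F\le\rho^{-1}r^\lambda$), apply on every patch the classical scale-invariant Korn second inequality to $F(x_0)^T u$---legitimate because $\sym(F_0^T\nabla u)=\sym\nabla(F_0^T u)$ for a constant invertible $F_0$---and absorb the resulting $\delta\,\Vert\nabla u\Vert_{L^2}$ terms after summing over the finitely overlapping covering; here the technical heart is that, by the Lipschitz regularity of $\partial\Omega$, the Korn constants on the boundary patches can be taken uniformly bounded.
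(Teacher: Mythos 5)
Your argument is correct. Note, however, that the paper does not really give a proof of this proposition: it disposes of it in one line by combining Pompe's Theorem~2.2 (the generalized Korn inequality for a \emph{fixed} continuous coefficient field with uniformly positive determinant) with Theorem~3.3 of Mielke--Roub\'{\i}\v{c}ek, which is precisely the statement that the constant can be chosen uniformly over the class $\{\det F\ge\rho,\ \Vert F\Vert_{C^{0,\lambda}}\le 1/\rho\}$. What you have done is reconstruct the mechanism behind that uniform version: a contradiction/compactness argument in which the H\"older bound enters only through Arzel\`a--Ascoli (this is exactly where the dependence of $C$ on $\lambda$ comes from), the constraints are shown to survive the uniform limit (in fact only continuity and $\det F\ge\rho$ of the limit field are needed to invoke the fixed-coefficient theorem, so the lower-semicontinuity check for the H\"older seminorm is a harmless extra), and the fixed-coefficient inequality is then quoted for the limit $F$ --- which, as you say, is not circular, since that result is taken from Pompe/Neff just as the paper takes it. So in substance your proof follows the same route as the literature the paper cites, merely spelled out; it buys a self-contained justification of the uniformity in $F$, at the price of an inexplicit constant. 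The closing localization sketch (small-oscillation patches, constant-coefficient Korn applied to $F_0^Tu$, absorption via finite overlap) is a plausible alternative but remains a sketch --- the uniformity of the Korn constants on the boundary patches and the bookkeeping of the covering would need to be carried out --- and it is not needed for the verdict on the main argument.
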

\begin{proof}
 The proof follows by combining \cite[Theorem 2.2]{Pompe} and \cite[Theorem 3.3]{MielkeRoubicek20Thermoviscoelasticity}. \EEE
\end{proof} 

\GGG
We will first address    \eqref{kornsecondthin} on cubes and afterwards  we pass to $\Omega_h$ \EEE via a covering argument. 
 
\begin{proposition}[Generalized Korn's second inequality on cubes]\label{thm:secondpompegeneralized}
  Given an open cube $Q$ of side length $h \in (0, 1]$, $\rho > 0$, and $p > 3$, let $z \in W^{2,p}(Q; \R^3)$ be such that $\det(\nabla z) \geq \rho$ in $Q$, $\Vert \nabla z \Vert_{L^\infty(Q)} \leq 1/\rho$, and $\lVert \nabla^2 z \rVert_{L^p(Q)} \leq 1/\rho$.
  Then, there exists a constant $C = C(\rho, p)$, independent of $h$, such that for all $u \in H^1(Q; \R^3)$  it holds that \EEE
  \begin{align}
     \left\Vert \nabla v \right\Vert_{L^2(Q)}
      &\leq C \Vert \sym ((\nabla z)^T \nabla u) \Vert_{L^2(Q)}, \label{secondkorn_grad} \\
    \left\Vert v - \fint_Q v \di x \right\Vert_{L^2(Q)}
      &\leq C h \Vert \sym ((\nabla z)^T \nabla u) \Vert_{L^2(Q)}, \label{secondkorn_l2}
  \end{align}
  where
  \begin{equation*}
    v \defas u  - \left( \fint_Q \skw (\nabla u (\nabla z)^{-1}) \di x \right) z.
  \end{equation*}
\end{proposition}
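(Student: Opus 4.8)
The plan is to reduce the generalized Korn's second inequality on the cube $Q$ to the generalized Korn's inequality of Proposition~\ref{lem:kernelchar}, applied after rescaling $Q$ to the unit cube $\hat Q$ of side length $1$. First I would introduce the change of variables $\hat x = x/h$ mapping $Q$ to $\hat Q$, and set $\hat u(\hat x) \defas h^{-1} u(h\hat x)$ and $\hat z(\hat x) \defas z(h\hat x)$, so that $\nabla_{\hat x}\hat u(\hat x) = \nabla u(h\hat x)$ and $\nabla_{\hat x}\hat z = h\,\nabla z(h\hat x)$. To get an $O(1)$-bounded coefficient field I would actually work with $F \defas \nabla z$ rescaled appropriately: note $\nabla z$ is already $L^\infty$-bounded by $1/\rho$, but to apply Proposition~\ref{lem:kernelchar} I need Hölder control. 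Since $\nabla^2 z \in L^p(Q)$ with $p>3$, Morrey's embedding on the three-dimensional cube gives $\nabla z \in C^{0,\lambda}(\bar Q)$ with $\lambda = 1 - 3/p \in (0,1)$, and — this is the key scaling point — the rescaled field $\hat F(\hat x) \defas \nabla z(h\hat x)$ satisfies $\lVert \hat F\rVert_{C^{0,\lambda}(\hat Q)} \le \lVert\nabla z\rVert_{L^\infty} + h^\lambda [\nabla z]_{C^{0,\lambda}(Q)} \le 1/\rho + h^\lambda \cdot C(\rho,p) \le C(\rho,p)$ uniformly for $h \le 1$, because the rescaling of the seminorm picks up a favorable factor $h^\lambda$. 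Likewise $\det\hat F \ge \rho$ is preserved pointwise.

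Next I would apply Proposition~\ref{lem:kernelchar} on $\hat Q$ with this $\hat F$ and with $\hat u$ replaced by $\hat v \defas \hat u - A\,\hat z$ for a suitable constant skew matrix $A$, or rather directly following the statement: the natural choice is $A = \fint_{\hat Q} \skw(\nabla_{\hat x}\hat u\,(\nabla_{\hat x}\hat z)^{-1})\,\di\hat x$, which up to the rescaling is exactly $\fint_Q \skw(\nabla u\,(\nabla z)^{-1})\,\di x$ since $\nabla_{\hat x}\hat z = h\nabla z$ and the $h$-factors cancel in $\nabla_{\hat x}\hat u\,(\nabla_{\hat x}\hat z)^{-1}$. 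The role of subtracting $A z$ is to kill the "infinitesimal rotation" component so that the $L^2$ term $\lVert \hat v\rVert_{L^2}$ on the right-hand side of Proposition~\ref{lem:kernelchar} can be absorbed: indeed, $\sym(\hat F^T\nabla\hat v) = \sym(\hat F^T\nabla\hat u) - \sym(\hat F^T A\hat F)$, and one checks that $\sym(\hat F^T A \hat F)$ is controlled in $L^2$ by $\lVert\sym(\hat F^T\nabla\hat u)\rVert_{L^2}$ using the definition of $A$ together with a Poincaré–Wirtinger argument for $\skw(\nabla\hat u\,\hat F^{-1})$ about its mean — here $\hat F^{-1}$ is controlled in $C^{0,\lambda}$ via $\det\hat F\ge\rho$ and $\lVert\hat F\rVert_{C^{0,\lambda}}\le C$. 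Then Proposition~\ref{lem:kernelchar} gives $\lVert\hat v\rVert_{H^1(\hat Q)} \le C(\lVert\hat v\rVert_{L^2(\hat Q)} + \lVert\sym(\hat F^T\nabla\hat v)\rVert_{L^2(\hat Q)})$, and subtracting the mean $\fint_{\hat Q}\hat v$ (which does not change $\nabla\hat v$ or the strain) lets one invoke the standard Poincaré inequality on $\hat Q$ to bound $\lVert\hat v - \fint_{\hat Q}\hat v\rVert_{L^2}$ by $C\lVert\nabla\hat v\rVert_{L^2}$ and hence close the estimate to obtain $\lVert\nabla\hat v\rVert_{L^2(\hat Q)} + \lVert\hat v - \fint_{\hat Q}\hat v\rVert_{L^2(\hat Q)} \le C\lVert\sym(\hat F^T\nabla\hat u)\rVert_{L^2(\hat Q)}$, with $C=C(\rho,p)$ independent of $h$.

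Finally I would scale back: $\nabla_{\hat x}\hat v(\hat x) = \nabla v(h\hat x)$ and $\hat v(\hat x) = h^{-1}v(h\hat x)$, and $\di\hat x = h^{-3}\di x$, so $\lVert\nabla\hat v\rVert_{L^2(\hat Q)}^2 = h^{-3}\lVert\nabla v\rVert_{L^2(Q)}^2$ while $\lVert\sym(\hat F^T\nabla\hat u)\rVert_{L^2(\hat Q)}^2 = h^{-3}\lVert\sym((\nabla z)^T\nabla u)\rVert_{L^2(Q)}^2$; the $h^{-3}$ factors cancel, giving \eqref{secondkorn_grad}. For \eqref{secondkorn_l2}, from $\lVert\hat v - \fint_{\hat Q}\hat v\rVert_{L^2(\hat Q)} \le C\lVert\sym(\hat F^T\nabla\hat u)\rVert_{L^2(\hat Q)}$ and $\hat v(\hat x) = h^{-1}v(h\hat x)$ one gets $h^{-1}\cdot h^{-3/2}\lVert v - \fint_Q v\rVert_{L^2(Q)} \le C\cdot h^{-3/2}\lVert\sym((\nabla z)^T\nabla u)\rVert_{L^2(Q)}$, i.e.\ the extra factor $h$ in \eqref{secondkorn_l2}. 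The main obstacle I anticipate is the bookkeeping around the matrix $A$: verifying carefully that $A \in \R^{3\times 3}_\skw$ as required, that $v$ as defined in the statement agrees with $\hat u - A\hat z$ under rescaling, and above all that the cross term $\sym(\hat F^T A\hat F)$ is genuinely absorbable — this is where the Poincaré inequality for the skew part and the uniform Hölder bound on $\hat F^{-1}$ must be combined, and it is the only place where the geometry of $z$ (through $\det\nabla z\ge\rho$ and the second-gradient bound) really enters. Everything else is routine scaling; the payoff is that the $h^\lambda$-gain in the rescaled Hölder seminorm is exactly what makes the constant in Proposition~\ref{lem:kernelchar} $h$-independent on the unit cube.
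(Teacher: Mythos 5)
Your rescaling step is sound and is essentially the paper's own Step~2: passing to the unit cube picks up the favorable factor $h^{\lambda}$ (equivalently $h^{(p-3)/p}$ on $\Vert \nabla^2 z\Vert_{L^p}$), so the hypotheses of the generalized Korn inequality hold on the unit cube with constants depending only on $\rho$ and $p$, and scaling back converts the unit-cube estimate into \eqref{secondkorn_grad} and, via Poincar\'e with constant $C_P h$, into \eqref{secondkorn_l2}. The genuine gap is in the unit-cube argument itself. Proposition~\ref{lem:kernelchar} gives $\Vert \hat v\Vert_{H^1} \leq C(\Vert \hat v\Vert_{L^2} + \Vert \sym(\hat F^T\nabla \hat v)\Vert_{L^2})$, and your plan to eliminate the lower-order term is circular: replacing $\hat v$ by $\hat v - \fint \hat v$ and bounding $\Vert \hat v - \fint\hat v\Vert_{L^2}$ by Poincar\'e yields $\Vert \nabla\hat v\Vert_{L^2} \leq C\Vert \nabla \hat v\Vert_{L^2} + C\Vert \sym(\hat F^T\nabla\hat v)\Vert_{L^2}$ with a constant $C$ that is in no way small, so nothing can be absorbed; and bounding $\Vert \hat v - \fint\hat v\Vert_{L^2}$ directly by the strain is precisely (part of) the inequality you are trying to prove. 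Your proposed mechanism --- controlling the ``cross term'' $\sym(\hat F^T A\hat F)$ --- does not help here: since $A$ is skew, $\sym(\hat F^T A\hat F) = \hat F^T\sym(A)\hat F = 0$ identically, so the strain of $\hat v$ simply equals that of $\hat u$ (a fact the paper uses), and no Poincar\'e--Wirtinger argument on $\skw(\nabla\hat u\,\hat F^{-1})$ produces the missing quantitative control. What is actually needed, and what the paper supplies, is a compactness/contradiction argument on the unit cube: normalize $\tilde v_k = (v_k - \fint v_k)/\Vert v_k - \fint v_k\Vert_{H^1}$, use Proposition~\ref{lem:kernelchar} applied to differences $\tilde v_k - \tilde v_l$ to upgrade weak to strong $H^1$ convergence, then invoke Lemma~\ref{rigiddisplacementlemma} to identify the limit as $Az+a$, and use the two normalizations built into $v$ (vanishing mean and vanishing mean of $\skw(\nabla v(\nabla z)^{-1})$) to force $A=0$, $\tilde v = 0$, contradicting $\Vert\tilde v\Vert_{H^1}=1$. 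Your proposal never uses the rigid-displacement lemma and offers no substitute for this step, so as written the proof does not close.

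Two smaller points. First, your bookkeeping of $A$ is slightly off: with $\hat z(\hat x)=z(h\hat x)$ one has $\nabla_{\hat x}\hat u(\nabla_{\hat x}\hat z)^{-1} = h^{-1}\nabla u(\nabla z)^{-1}$, so $A = h^{-1}\fint_Q \skw(\nabla u(\nabla z)^{-1})\di x$ rather than the mean itself; the resulting $\hat v$ is nonetheless the correct rescaling $h^{-1}v(h\,\cdot)$, so this is harmless (the paper avoids the issue by rescaling $z$ as $\tilde z = h^{-1}z(a+h\,\cdot)$). Second, the place where ``the geometry of $z$ really enters'' is not the vanishing cross term but the kernel characterization of $\sym((\nabla z)^T\nabla\,\cdot\,)$, i.e.\ Lemma~\ref{rigiddisplacementlemma}, which is exactly the ingredient your argument is missing.
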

 For the proof, we recall the following \EEE  characterization of the kernel of $\sym(\OOO(\EEE\nabla z\OOO)\EEE^T \nabla u)$ for $u$ and $z$ as given in   Proposition   \ref{thm:secondpompegeneralized}.
\begin{lemma}[Infinitesimal rigid \ZZZ displacements\EEE]\label{rigiddisplacementlemma}
Let $\Omega \subset \R^3$ be a Lipschitz domain and $u \in H^1(\Omega; \R^3)$.
Moreover, for $p > 3$ and $\rho > 0$, let $z \in W^{2,p}(\Omega; \R^3)$ with $\det \nabla z \geq \rho$ in $\Omega$ and
  \begin{equation*}
    \sym \left((\nabla z)^T \nabla u \right) = 0 \quad \text{a.e.~in $\Omega$}.
  \end{equation*}
  Then, there exist \GGG some \EEE $a \in \R^3$ and  $A \in \R^{3\times 3}_{\rm skew}$ such that $u = A z + a$ a.e.~in $\Omega$.
\end{lemma}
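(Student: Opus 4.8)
The statement generalizes the classical fact that the kernel of $\sym\nabla u$ consists of infinitesimal rigid motions $u = Ax+a$ with $A$ skew; here the identity $\id$ is replaced by the deformation $z$. The plan is to reduce the problem to the classical case by a pointwise algebraic manipulation, exploiting the invertibility of $\nabla z$ (guaranteed by $\det\nabla z\ge\rho$) and the Sobolev embedding $W^{2,p}\hookrightarrow C^1$ for $p>3$, which makes $\nabla z$ continuous. First I would set $M \defas \nabla u\,(\nabla z)^{-1} \in L^2(\Omega;\R^{3\times3})$; this is well-defined a.e.\ since $(\nabla z)^{-1}$ is bounded (indeed $\Vert\nabla z\Vert_{L^\infty}$ is finite and $\det\nabla z\ge\rho$, so $(\nabla z)^{-1}$ is in $L^\infty$). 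The hypothesis $\sym((\nabla z)^T\nabla u)=0$ a.e.\ rewrites as $(\nabla z)^T\nabla u = -(\nabla u)^T\nabla z$, i.e.\ $(\nabla z)^T(\nabla z) M = -M^T(\nabla z)^T(\nabla z)$; equivalently, writing $S \defas (\nabla z)^T\nabla z$ (symmetric positive definite), we have $SM + M^T S = 0$, which says exactly that $SM$ is skew-symmetric, i.e.\ $\sym(SM)=0$.

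The key step is then to show $M$ is (a.e.\ equal to) a \emph{constant} skew matrix. For this I would argue that $\nabla u = M\nabla z$ together with $\nabla(\nabla u) \in W^{-1,p}$-type regularity forces a compatibility/curl condition on $M$. Concretely: since $u \in H^1$ and $\nabla u = M\,\nabla z$ with $\nabla z \in W^{1,p}\hookrightarrow C^0$, and since mixed partials of $u$ commute in the distributional sense, one obtains $\partial_j(M_{ik}\partial_k z_l \text{-type terms})$ symmetry relations. A cleaner route: differentiate the relation $\nabla u = M \nabla z$. Formally, $\partial_j\partial_i u = \partial_j(M\nabla z)$; symmetry in $i\leftrightarrow j$ combined with the skew-symmetry of $SM$ yields, after contracting with $(\nabla z)^{-1}$ and its derivatives, that $\nabla M = 0$ in the sense of distributions, hence $M$ is constant; and $\sym(SM)=0$ with $S$ pointwise-varying but $M$ constant forces $M$ itself to be skew (evaluate at a point where $S=\Id$ after a change of variables, or note $SM$ skew and $M$ constant, $S$ continuous non-constant, implies $M$ skew by a limiting/density argument using that $S$ takes a positive-definite value). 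Setting $A \defas M$ and integrating $\nabla u = A\nabla z = \nabla(Az)$ on the connected Lipschitz domain $\Omega$ gives $u = Az + a$ for some constant $a \in \R^3$.

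The \textbf{main obstacle} is making the ``$\nabla M = 0$'' step rigorous at low regularity: $M = \nabla u (\nabla z)^{-1}$ is only $L^2$ a priori, so differentiating it requires care. The fix is to use the variational characterization rather than pointwise differentiation: from $\sym((\nabla z)^T\nabla u) = 0$ and Proposition~\ref{lem:kernelchar} (generalized Korn) applied on balls $B\Subset\Omega$, one gets $\Vert u\Vert_{H^1(B)} \le C\Vert u\Vert_{L^2(B)}$, which bootstraps: testing against smooth vector fields and using that the right-hand side of Korn vanishes shows the map $u\mapsto \nabla u$ lands in a finite-dimensional space locally, namely the $6$-dimensional space $\{Az+a\}$. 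More directly, one can mollify: let $u_\varepsilon = u * \eta_\varepsilon$ and $z_\varepsilon = z*\eta_\varepsilon$ on interior subdomains; then $\sym((\nabla z)^T\nabla u)=0$ does not pass exactly to the mollifications, but the commutator estimates (Friedrichs' lemma, using $\nabla z\in C^0$) show $\sym((\nabla z_\varepsilon)^T\nabla u_\varepsilon)\to 0$ in $L^2_{\loc}$, and for the smooth pair the classical argument (differentiate, use symmetry of second derivatives) applies verbatim to give $u_\varepsilon = A_\varepsilon z_\varepsilon + a_\varepsilon$ approximately; passing to the limit, using $\det\nabla z\ge\rho$ to control $A_\varepsilon$, yields the claim. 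I would present the mollification argument as the cleanest rigorous path, invoking $W^{2,p}\hookrightarrow C^1$ for $p>3$ throughout to justify the uniform continuity of $\nabla z$ needed for the commutator estimates.
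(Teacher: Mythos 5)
Your overall skeleton is the natural one, and since the paper itself does not prove this lemma but simply cites \cite[Theorem 2.7]{Lankeit2013} and \cite[Theorem 2.5]{Anicic}, any self-contained argument is by definition a different route. However, as written your plan has a concrete algebraic error and, more importantly, does not close the one step that makes the statement nontrivial. The algebra first: substituting $\nabla u = M\nabla z$ with $M \defas \nabla u(\nabla z)^{-1}$ into the hypothesis gives $(\nabla z)^T(M+M^T)\nabla z=0$, hence $\sym(M)=0$ a.e., i.e.\ $M$ is \emph{pointwise skew}; it does not give $SM+M^TS=0$ with $S=(\nabla z)^T\nabla z$ (you commuted $M$ past $\nabla z$). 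This slip is what forces your confused closing step about extracting skewness of a constant $M$ from ``$SM$ skew'' (which for a single positive definite $S$ does not imply $M$ skew, and there is no reason $S=\Id$ anywhere); with the correct reduction that whole step disappears.

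The genuine gap is the constancy of $M$ at the stated regularity. For smooth data the mixed-partials computation does work: $(\partial_j M)\partial_i z=(\partial_i M)\partial_j z$ together with skewness of each $\partial_i M$ forces $\nabla M=0$ (a short cross-product argument in $d=3$), and then $u=Mz+a$ on the connected domain. But $M$ is only in $L^2$, and neither of your proposed fixes repairs this. The Korn ``bootstrap'' is vacuous: the inequality $\Vert u\Vert_{H^1(B)}\le C\Vert u\Vert_{L^2(B)}$ obtained from Proposition~\ref{lem:kernelchar} when the symmetric term vanishes carries no information about the kernel being the six-dimensional family $\{Az+a\}$ --- identifying that kernel is exactly what is to be proved. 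The mollification route also stalls where the difficulty sits: Friedrichs' commutator lemma indeed gives $\sym((\nabla z)^T\nabla u_\eps)\to0$ in $L^2_{\loc}$, but the classical argument does not ``apply verbatim approximately,'' because it expresses derivatives of the skew part through \emph{first derivatives} of the symmetric part, which are not small in any norm you control; one must exploit the divergence-form structure of that identity so the derivatives land on test functions, now with variable coefficients only in $C^{0,\lambda}$ with $W^{1,p}$ derivatives --- and this low-regularity bookkeeping is precisely the content of the cited theorems. Finally, the assertion that approximate vanishing yields $u_\eps=A_\eps z_\eps+a_\eps$ ``approximately'' is a quantitative rigidity statement of the type of Proposition~\ref{thm:secondpompegeneralized}, which in this paper is \emph{deduced from} the present lemma, so invoking it here would be circular. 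As it stands the proposal is a plausible outline, not a proof; either carry out the distributional argument for $\nabla M=0$ in detail or, as the paper does, quote \cite{Lankeit2013} or \cite{Anicic}.
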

\begin{proof}
  The proof can be found in \cite[Theorem 2.7]{Lankeit2013}, see also \cite[Theorem 2.5]{Anicic}.
\end{proof}


\GGG
We now prove   Proposition~\ref{thm:secondpompegeneralized} by performing a usual argument    by contradiction, which in this case relies on   Proposition~\ref{lem:kernelchar}  and  Lemma~\ref{rigiddisplacementlemma}.  \EEE

\begin{proof}[Proof of  Proposition \EEE \ref{thm:secondpompegeneralized}]
The proof is divided into two parts.
 We will first prove \EEE the statement for the  unit cube\EEE.  By a rescaling argument we will extend the result to an arbitrary cube.\EEE

  \textit{Step 1 (Special case of the unit cube)}:
  Let $Q = (0, 1)^3$ be  the unit cube\EEE.
  \EEE Assume that there exist sequences  $(u_k)_k \subset H^1(Q; \R^3)$ and $(z_k)_k \subset W^{2,p}(Q; \R^3)$ \EEE such that \OOO for all $k \in \N$ we have that \EEE $\det \nabla z_k \geq \rho$  in $Q$, \EEE $\Vert \nabla z_k \Vert_{L^\infty(Q)} \leq 1/\rho$, $\Vert \nabla^2 z_k \Vert_{L^p(Q)} \leq 1/\rho$, and
  \begin{equation}\label{contr}
    \left\Vert v_k - \fint_Q v_k \di x \right\Vert_{H^1(Q)} \geq k \Vert \sym ((\nabla z_k)^T \nabla u_k) \Vert_{L^2(Q)},
  \end{equation}
  where
  \begin{equation*}
    v_k \defas u_k  - M_k z_k, \qquad M_k \defas \fint_Q \skw (\nabla u_k (\nabla z_k)^{-1}) \di x \in \R^{3 \times 3}_{\rm skew}.
  \end{equation*}
  Setting
  \begin{equation*}
    \tilde v_k \defas \frac{v_k - \fint_Q v_k \di x}{ \lVert v_k - \fint_Q v_k \di x  \rVert_{H^1(Q)}},
  \end{equation*}
  by Rellich-Kondrachov, we can find $\tilde v \in H^1(Q; \R^3)$ such that, up to taking a subsequence,
  \begin{equation}\label{weakstrongwk}
    \nabla \tilde v_k \weakly \nabla \tilde v \text{ weakly in } L^2(Q ; \R^{3 \times 3} \EEE), \qquad
    \tilde v_k \to \tilde v \text{ strongly in } L^2(Q; \R^3).
  \end{equation}
  By definition of $\tilde v_k$ we have that $\int_Q \tilde v_k \di x = 0$ for all $k$ and therefore
  \begin{equation}\label{meanwk}
    \int_Q \tilde v \di x = 0
  \end{equation}
  by the strong convergence of $(\tilde v_k)_k$ in $L^2(Q; \ZZZ \R^3) \EEE $.  As $M_k$ is skew-symmetric,  we get \EEE   
  \begin{align}
    \int_Q \skw(\nabla v_k (\nabla z_k)^{-1}) \di x
    &= \int_Q \skw((\nabla u_k - M_k \nabla z_k) (\nabla z_k)^{-1}) \di x  = 0. \EEE \label{zeroskew}
  \end{align}
   We  define \EEE $\tilde z_k \defas z_k - \fint_Q z_k \di x$.
  As $\nabla \tilde z_k = \nabla z_k$, by our assumptions on $z_k$ and Poincaré's inequality we see 
  \begin{equation*}
    \lVert \tilde z_k \rVert_{W^{2, p}(Q)} \leq C \lVert \nabla z_k \rVert_{W^{1, p}(Q)} \leq \frac{C}{\rho},
  \end{equation*}
  where $C$ is a constant independent of $k$.
  By Morrey's embedding and by possibly increasing $C$ we have $\lVert \tilde z_k \rVert_{C^{1, \lambda}(Q)} \leq C / \rho$ for all $k$, where $\lambda \defas 1 - 3/p$.
  Hence, up to taking a further subsequence, $\tilde z_k \to z$ strongly in  $W^{1, \infty}(Q;\R^3)$ \EEE for some $z \in W^{2, p}(Q; \R^3)$ with $\det \nabla z \geq \rho$ in $Q$.
  In particular, this implies that $(\nabla z_k)^{-1} \to (\nabla z)^{-1}$ strongly in $L^\infty(Q; \R^{3 \times 3})$.
  Using weak-strong convergence,  \eqref{weakstrongwk}, \EEE and \eqref{zeroskew}, we  derive \EEE
  \begin{equation}\label{zeroskewlim}
    \int_Q \skw(\nabla \tilde v (\nabla z)^{-1}) \di x = 0.
  \end{equation}
 By \EEE the definition of $v_k$,  the identity $\sym((\nabla z_k)^T M_k \nabla z_k) = (\nabla z_k)^T \sym(M_k) \nabla z_k$, and the \GGG skew-symmetry \EEE of $M_k$, \EEE   it follows that
  \begin{equation*}
    \sym((\nabla z_k)^T \nabla u_k)
    = \sym((\nabla z_k)^T \nabla v_k + (\nabla z_k)^T M_k \nabla z_k)
    = \sym((\nabla z_k)^T \nabla v_k).
  \end{equation*}
  Dividing \eqref{contr} by \ZZZ $ \GGG k \ZZZ \Vert v_k - \fint_Q v_k \di x \Vert_{H^1(Q)}$ \EEE  then leads to
  \begin{equation}\label{contr2}
    \Vert \sym ((\nabla z_k)^T \nabla \tilde v_k) \Vert_{L^2(Q)} \leq 1/k.
  \end{equation}
\OOO As $\|\nabla \tilde v_k\|_{L^2(Q)} \leq 1$ for all $k \in \N$ and  $\nabla z_k \to \nabla z$ in $W^{1,\infty}(Q;\R^{3\times 3})$ \OOO this \EEE shows
  \begin{equation*}
 \limsup_{k, \, l\to \infty}   \Vert \sym ((\nabla z_l)^T \nabla \tilde v_k) \Vert_{L^2(Q)} =  0. 
  \end{equation*}
  \EEE   This \OOO limit \EEE allows us to improve the first convergence in \eqref{weakstrongwk} to strong convergence in $L^2(Q;\R^{3 \times 3})$.
  In fact, by the second convergence in \eqref{weakstrongwk} \OOO and \EEE \ZZZ Proposition \EEE \ref{lem:kernelchar} for $u \defas \tilde v_k -  \tilde v_l\EEE$  for $k, \, l\in \N$ \EEE and $F \defas \nabla z_l$ we derive that $(\nabla \tilde v_k)_k$ is a  Cauchy sequence \EEE in $L^2(Q; \R^{3 \times 3})$.   Then, strong convergence follows. As a consequence, \EEE since $\Vert \tilde v_k \Vert_{H^1(Q)} = 1$ for each $k \in \N$, we  obtain \EEE $\Vert \tilde v\Vert_{H^1(Q)} = 1$. \EEE
  Additionally, with \eqref{contr2}   and the fact that $\nabla z_k \to \nabla z$ in $\ZZZ L^\infty \EEE (Q;\R^{3\times 3})$   it follows that
  \begin{equation*}
    \sym((\nabla z)^T \nabla \tilde v) = 0 \quad \text{a.e.\ in } Q.
  \end{equation*}
    Then, \EEE b\EEE y Lemma \ref{rigiddisplacementlemma} there exist $a \in \R^3$ and a   skew-symmetric \EEE \EEE $A \in \R^{3 \times 3}_{\rm skew}$ such that $\tilde v = A z + a$ a.e.~in $Q$.
  Taking the gradient on both sides, multiplying by $(\nabla z)^{-1}$ from the right and using \eqref{zeroskewlim}, it follows that $A = \fint_Q \skw(\nabla \tilde v (\nabla z)^{-1}) \di x = 0$.
  In particular, $\tilde v$ is constant.
  With \eqref{meanwk} this leads to $\tilde v = 0$ a.e.~in  $Q$ \EEE which contradicts $\lVert \tilde v \rVert_{H^1(Q)} = 1$.

  \textit{Step 2 (General case by rescaling)}: In the first step, we have shown that there exists a constant  $\tilde C>0$ \EEE  such that \eqref{secondkorn_grad} and \eqref{secondkorn_l2}  hold true in the case of the unit cube $\tilde Q \defas (0, 1)^3$.
  Consider now a general cube $Q = a + h \tilde Q$ of side length $h$ for some $a \in \R^3$.
  Let $u$, $z$, and $v$ be as in the statement.
  We define $\tilde u(x) \defas u(a + hx)$ and $\tilde z(x) \defas h^{-1} z(a + hx)$ for $x \in \tilde Q$. Note that Step 1 applies to this choice of $\tilde u$ and $\tilde z$  since \EEE by a change of coordinates
  \begin{gather*}
    \det(\nabla \tilde z(x)) = \det(\nabla z(a + hx)) \geq \rho > 0 \quad \text{for all } x \in \tilde Q, \\
    \lVert \nabla \tilde z \rVert_{L^\infty(\tilde Q)} =  \lVert \nabla z \rVert_{L^\infty(Q)} \leq \frac{1}{\rho}, \quad \text{\OOO and} \quad
    \lVert \nabla^2 \tilde z \rVert_{L^p(\tilde Q)} =  h^{(p-3)/p} \EEE \lVert \nabla^2 z \rVert_{L^p(Q)} \leq \frac{1}{\rho}.
  \end{gather*}
  Let us further define $\tilde v(x) \defas v(a + hx)$ for $x \in \tilde Q$.
  Again, changing coordinates we derive that
  \begin{equation*}
    \fint_{\tilde Q} \skw(\nabla \tilde u (\nabla \tilde z)^{-1}) \di x
    = h \fint_{Q} \skw(\nabla u (\nabla z)^{-1}) \di y
  \end{equation*}
  and therefore $\tilde v = \tilde u - (\fint_{\tilde Q} \skw(\nabla \tilde u (\nabla \tilde z)^{-1}) \di x ) \tilde z$ in $\tilde Q$.
  Then, by Step 1  and change of coordinates \EEE it follows that
  \begin{equation*}
    \lVert \nabla v \rVert_{L^2(Q)}
    = \sqrt{h} \lVert \nabla \tilde v \rVert_{L^2(\tilde Q)}
    \leq \tilde C \sqrt{h} \lVert \sym((\nabla \tilde z)^T \nabla \tilde u) \rVert_{L^2(\tilde Q)}
    = \tilde C \lVert \sym((\nabla z)^T \nabla u) \rVert_{L^2(Q)},
  \end{equation*}
  which shows \eqref{secondkorn_grad}.
  The estimate in \eqref{secondkorn_l2} can be shown using Poincaré's inequality
  \begin{equation*}
    \left\lVert v - \fint_Q v \di x \right\rVert_{L^2(Q)} \leq C_P h \lVert \nabla v \rVert_{L^2(Q)} \leq \tilde C C_P h \lVert \sym((\nabla z)^T \nabla u) \rVert_{L^2(Q)},
  \end{equation*}
   where $C_P$ denotes the constant of Poincaré's inequality \EEE in the unit cube.
\end{proof}

Our next goal is to transfer our generalization of Korn's second inequality from $cubes$ to sets $U$ that are Bilipschitz equivalent to cubes. \ZZZ Here, a \EEE set $U$ is  called \EEE Bilipschitz equivalent to a cube $Q$ if there exists a Lipschitz bijection $\Phi \colon Q \to U$ with Lipschitz inverse.  Notice that even in the classical case of Korn's second inequality this is \emph{not} purely a matter of changing coordinates  since, given $u\colon U\to \R^3$, ${\rm sym}(\nabla (u \circ \Phi))$ is in general not controlled in terms of ${\rm sym}(\nabla u)$.   The statement in the generalized setting is as follows. \EEE

\begin{proposition}[Generalized Korn's second inequality for sets Bilipschitz equivalent to cubes]\label{lem:secondpompebilipschitz}
Let $U  \subset \R^3\EEE$ be Bilipschitz equivalent to a cube of side length $h > 0$ with controlled Lipschitz constants  independent of $h$. \EEE
 Let \EEE  $z \in W^{2,p}(U; \R^3)$ be such that $\det(\nabla z) \geq \rho$ in $U$, $\Vert \nabla z \Vert_{L^\infty(U)} \leq 1/\rho$, and $\Vert \nabla^2 z \Vert_{L^p(U)} \leq 1/\rho$ for some $\rho > 0$ and $p > 3$.
Then, there exists a constant $C = C(\rho, p) > 0$, independent of $h$, such that for all $u \in H^1(U; \R^3)$ we can find a matrix $ A \EEE\in \R^{3\times 3}_\skw$ satisfying
 \begin{equation*}
   \Vert \nabla u - A \nabla z \Vert_{L^2(U)}
      \leq C  \Vert \sym ((\nabla z)^T \nabla u ) \Vert_{L^2(U)}.
  \end{equation*}
\end{proposition}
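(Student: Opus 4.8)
The strategy is to transport the problem to a reference cube via the bilipschitz map and then invoke Proposition~\ref{thm:secondpompegeneralized}, being careful that the symmetric combination $\sym((\nabla z)^T\nabla u)$ is \emph{not} covariant under change of variables, so a naive pull-back does not immediately work. Let $\Phi\colon Q\to U$ be the bilipschitz homeomorphism, with $Q$ a cube of side length $h$ and $\Vert\nabla\Phi\Vert_\infty,\Vert\nabla\Phi^{-1}\Vert_\infty\le L$ for some $L$ independent of $h$. First I would set $\hat u\defas u\circ\Phi$ and $\hat z\defas z\circ\Phi$ on $Q$ and record the transformation rules: $\nabla\hat u=(\nabla u\circ\Phi)\nabla\Phi$ and $\nabla\hat z=(\nabla z\circ\Phi)\nabla\Phi$, so $(\nabla\hat z)^{-1}\nabla\hat u=(\nabla\Phi)^{-1}\big((\nabla z)^{-1}\nabla u\big)\circ\Phi\,\nabla\Phi$. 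The key observation is that Proposition~\ref{thm:secondpompegeneralized} applied to $(\hat u,\hat z)$ on $Q$ controls $\Vert\nabla\hat u-\hat A\nabla\hat z\Vert_{L^2(Q)}$ by $\Vert\sym((\nabla\hat z)^T\nabla\hat u)\Vert_{L^2(Q)}$, where $\hat A=\fint_Q\skw(\nabla\hat u(\nabla\hat z)^{-1})$. However $\Phi$ need not satisfy the $W^{2,p}$ and determinant bounds, so I cannot apply the proposition directly to $\hat z$; instead I would apply the proposition with $\hat z$ only assumed $W^{1,\infty}$ and—crucially—this is where a different route is needed.

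The cleaner route, and the one I would actually carry out, avoids pulling back $z$ altogether. I would use a \emph{covering} argument on $U$ itself: since $U$ is bilipschitz equivalent to a cube of side $h$, one can cover $U$ by a controlled number $N=N(L)$ (independent of $h$) of cubes $Q_1,\dots,Q_N\subset\R^3$ of side length comparable to $h$, with uniformly bounded overlap, such that each $Q_i\cap U$ is itself bilipschitz equivalent (with controlled constants) to a cube of side $\sim h$, and the union is connected in the sense that consecutive pieces have overlap of measure $\gtrsim h^3$. Actually the sharpest implementation: pull $U$ back to the reference cube $\tilde Q=(0,1)^3$, subdivide $\tilde Q$ into $N$ dyadic subcubes (fixed $N$), push them forward to get sets $U_i\subset U$ each bilipschitz to a cube of side $\sim h$. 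On each $U_i$, apply Proposition~\ref{thm:secondpompebilipschitz}? No—that is circular. So the genuine base case must be cubes (Proposition~\ref{thm:secondpompegeneralized}), and the bilipschitz-to-cube result is obtained from it by a single covering step where the "cubes" are honest cubes inside $U$. I would therefore argue: (1) choose finitely many honest cubes $Q_1,\dots,Q_N$ covering $U$ with $Q_i\subset\R^3$ of side $\sim h$, controlled overlaps, and $\bigcup_i Q_i\supset U$ while each $Q_i\cap U$ has measure $\gtrsim h^3$ and $Q_i\cap Q_{i+1}\cap U$ has measure $\gtrsim h^3$; actually to keep things simple, arrange that $\tilde Q$ is covered by $N$ open cubes with connected nerve, transport by the (rescaled) bilipschitz map, and then \emph{further} cover each image piece by honest small cubes—this nesting is routine.

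\textbf{Main steps.} (i) Apply Proposition~\ref{thm:secondpompegeneralized} on each cube $Q_i$ (intersected appropriately with $U$; one extends $U$ or works on $Q_i\subset U$ when $U$ contains such cubes—using that bilipschitz images of cubes contain honest cubes of comparable size) to get skew matrices $A_i$ with $\Vert\nabla u-A_i\nabla z\Vert_{L^2(Q_i)}\le C\Vert\sym((\nabla z)^T\nabla u)\Vert_{L^2(Q_i)}$. (ii) On overlaps $Q_i\cap Q_j$ of measure $\gtrsim h^3$, estimate $\Vert(A_i-A_j)\nabla z\Vert_{L^2(Q_i\cap Q_j)}\le\Vert\nabla u-A_i\nabla z\Vert_{L^2(Q_i\cap Q_j)}+\Vert\nabla u-A_j\nabla z\Vert_{L^2(Q_i\cap Q_j)}$, which combined with $\det\nabla z\ge\rho$ (so $|B\nabla z|\ge c|B|$ pointwise after using $|B\nabla z|^2\ge$ something, more precisely $\Vert B\nabla z\Vert_{L^2(V)}^2\ge c(\rho)|V|\,|B|^2$ for skew $B$—this uses the lower bound on $\det\nabla z$ and the $L^\infty$ bound together with a measure-theoretic argument, or simply $|B\nabla z|\ge\rho^?|B|$ fails pointwise but integrates fine because $\nabla z$ is close to invertible on a set of full measure; one can make this rigorous via $\int_V|B\nabla z|^2=\int_V (\nabla z)^T B^T B\nabla z\ge\rho^2\Vert B^T\Vert^{-2}\cdots$—I would instead use the standard fact $\int_V|B F|^2\,dx\ge c\,|V|\,|B|^2$ when $\det F\ge\rho$, $|F|\le 1/\rho$, which follows by diagonalizing) gives $|A_i-A_j|\le C h^{-3/2}\Vert\sym((\nabla z)^T\nabla u)\Vert_{L^2(Q_i\cup Q_j)}\cdot h^{?}$—tracking powers of $h$ carefully, the $h$'s cancel because both sides scale the same way. (iii) Chain these estimates through the connected nerve (length $\le N$, independent of $h$) to bound $|A_i-A_1|$ uniformly, set $A\defas A_1$, and sum over $i$: $\Vert\nabla u-A\nabla z\Vert_{L^2(U)}^2\le\sum_i\Vert\nabla u-A\nabla z\Vert_{L^2(Q_i)}^2\le\sum_i 2\Vert\nabla u-A_i\nabla z\Vert_{L^2(Q_i)}^2+2|A_i-A|^2\Vert\nabla z\Vert_{L^2(Q_i)}^2$, and each term is $\lesssim\Vert\sym((\nabla z)^T\nabla u)\Vert_{L^2(U)}^2$ with constants depending only on $N$, $\rho$, $p$, $L$—hence only on $\rho$, $p$, and the bilipschitz constants, as claimed.

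\textbf{Main obstacle.} The delicate point is the comparison of the skew matrices $A_i$ on overlaps while keeping the constant independent of $h$: one needs the coercivity estimate $\Vert B\nabla z\Vert_{L^2(V)}\ge c(\rho)\,|V|^{1/2}\,|B|$ for skew $B$ on any measurable $V$, uniformly in the admissible $z$'s, and then to verify that the $h$-powers from $|V|\sim h^3$, from $\Vert\nabla z\Vert_{L^2(Q_i)}\sim h^{3/2}/\rho$, and from the right-hand sides all balance. A secondary technical annoyance is that $U$ is only bilipschitz to a cube, so it need not \emph{contain} honest cubes aligned nicely; one circumvents this by first transporting to the reference unit cube, covering that by a fixed finite family of open cubes with connected nerve, and then observing that the relevant estimate on each piece follows from Proposition~\ref{thm:secondpompegeneralized} after a change of variables that affects $\sym((\nabla z)^T\nabla u)$ only through $L^\infty$-bounded factors (since the Jacobian of the transport map is bilipschitz, $\sym$ of the conjugated matrix and $\sym$ of the original are comparable in $L^2$ up to constants depending only on $L$)—this last point is exactly the "not purely a matter of changing coordinates" subtlety flagged in the text, and I would address it by writing the pulled-back symmetric part as a bounded linear image of the original one plus an error that is itself controlled, using the identity $\sym(M^T\nabla\hat u)$ for the transported frame $M=(\nabla z\circ\Phi)\nabla\Phi$ and noting $M$ still satisfies $\det M\ge\rho/L^3$, $\Vert M\Vert_\infty\le L/\rho$, though only $W^{1,\infty}\cap$ (no $W^{2,p}$)—so in fact I should apply on each piece not Proposition~\ref{thm:secondpompegeneralized} but a version for merely Lipschitz frames, which is Proposition~\ref{lem:kernelchar} combined with a compactness/contradiction argument as in Step~1 of the proof of Proposition~\ref{thm:secondpompegeneralized}; the cleanest exposition keeps $z$ un-pulled-back and covers $U$ directly by honest cubes contained in $U$ (which exist with side $\sim h$ and controlled number because $U$ is bilipschitz to a cube), making Proposition~\ref{thm:secondpompegeneralized} directly applicable and relegating bilipschitzness to the combinatorics of the cover only.
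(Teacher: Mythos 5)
Your patching machinery is sound as far as it goes: the coercivity $\Vert B\nabla z\Vert_{L^2(V)}\ge c(\rho)\,|V|^{1/2}|B|$ for skew $B$ (from $\det\nabla z\ge\rho$ and $|\nabla z|\le 1/\rho$), the comparison of skew matrices on overlaps, and the bookkeeping of the $h$-powers are essentially the same devices used at the uniform scale in Steps 2--4 of the proof of Theorem~\ref{pompescaling}. The gap is in the covering on which you finally settle. A set that is merely Bilipschitz equivalent to a cube cannot, in general, be covered by a controlled \emph{finite} number of honest cubes \emph{contained in} $U$ of side comparable to $h$: a finite union of open cubes has boundary contained in finitely many planes, whereas $\partial U$ need not be (think of a ball, or of the sets $Q(x_j',h)\cap\Omega_h$ to which this proposition is actually applied in Step 1 of the proof of Theorem~\ref{pompescaling}); any finite family of same-scale cubes inside $U$ necessarily misses a neighbourhood of $\partial U$. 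Enlarging the cubes beyond $U$ does not help, because one would then have to extend $u$ with control on $\sym((\nabla z)^T\nabla u)$, which is precisely what Korn-type inequalities do not permit. Your fallback route --- pulling back by $\Phi$ and invoking a ``Lipschitz-frame'' version of Proposition~\ref{lem:kernelchar} together with the compactness argument of Step 1 of Proposition~\ref{thm:secondpompegeneralized} --- also does not go through as stated: Proposition~\ref{lem:kernelchar} requires $F\in C^{0,\lambda}$, and the contradiction argument needs the uniform $W^{2,p}$ (hence $C^{1,\lambda}$) bound on the frames to obtain strong $L^\infty$ convergence of $\nabla z_k$; the transported frame $(\nabla z\circ\Phi)\nabla\Phi$ is only $L^\infty$, so neither tool applies. (Incidentally, the covariance worry at the start is a red herring: $\sym((\nabla\hat z)^T\nabla\hat u)=(\nabla\Phi)^T\,[\sym((\nabla z)^T\nabla u)\circ\Phi]\,\nabla\Phi$, so the \emph{generalized} quantity does transform well under pull-back; the genuine obstruction is the loss of regularity of the frame, which you notice only at the end.)

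What is missing is the idea the paper actually relies on: a \emph{Whitney} covering of $U$ by cubes contained in $U$ whose side length is comparable to the distance to $\partial U$ --- hence infinitely many cubes of variable scale, each of which is an honest cube so that Proposition~\ref{thm:secondpompegeneralized} applies --- combined with a \emph{weighted} Poincar\'e inequality (the paper cites Lewicka--M\"uller, Theorem~B.4) to glue the resulting skew matrices into a single $A$ despite the degenerating cube sizes near the boundary; the weight compensates exactly for that degeneration, and tracking the bilipschitz constants shows the final constant is independent of $h$. Your finite, single-scale chaining along a bounded nerve cannot be repaired without this (or an equivalent) device, so as written the proposal has a genuine gap at its central step.
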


 The proof of Proposition \ref{lem:secondpompebilipschitz} follows by using a Whitney covering argument and a weighted Poincaré inequality from \cite[Theorem B.4]{LewickaMueller}. As  it  is rather standard and similar to the proof of Theorem \ref{pompescaling}   (or the proof of \cite[Theorem 3.1]{FrieseckeJamesMueller:02}),   we omit it here. We proceed with the proof of Theorem~\ref{pompescaling}. \EEE 

\begin{proof}[Proof of Theorem~\ref{pompescaling}]
The proof is divided into five  steps. \EEE   We first introduce \EEE a suitable partition of the domain $\Omega_h$ such that Proposition~\ref{lem:secondpompebilipschitz}  applies \EEE on each element of the partition. In Step 2, we construct  a \EEE skew-symmetric matrix serving as a suitable \OOO candidate \EEE for  $A$ in \EEE \eqref{kornsecondthin}.  Step 3 and Step 4 are devoted to the proof of \EEE \eqref{kornsecondthin}.  Lastly,  in Step 5, \EEE we use  \eqref{kornsecondthin} \EEE to show \eqref{kornsecondthin_dirichlet}.

\textit{Step 1 (Covering of  $\Omega_h$):}
 For \EEE $x' \in \R^2$ and $r > 0$  we \EEE write $Q(x', r) \defas (x', 0)^T + (-r, r)^3$ and $U(x', r) \defas Q(x', r) \cap \Omega_h$  for shorthand. \EEE Given $h>0$, let $ J_h  \EEE \defas \{i \in h\Z^2 \colon Q(i, h/2) \cap \Omega_h \neq \emptyset \}$.
For each $j \in J_h$, we fix some $(x_j',0) \in (\Omega' \times \{0\}) \cap Q(j, h/2)$.
By the Lipschitz regularity of $\Omega'$ and by passing to a sufficiently small $h>0$, \EEE the sets $U_j^h \defas U(x_j',  h \EEE)$ are Bilipschitz equivalent to a cube of side length $h$ with controlled Lipschitz constants \ZZZ  independently \EEE of $j$ and $h$.
 The family  $(U_j^h)_j$ \EEE satisfies
\begin{enumerate}[label=(\roman*)]
\item \label{u_cover1} $\Omega_h = \bigcup_{j \in J_h} U_j^h$;
\item \label{u_intersec_number1} For all $j$ we  have  $ |M_j| \leq \OOO 25$, \EEE \ZZZ where \EEE  $\ZZZ M_j \defas \EEE \big\{ k \in J_h \colon  \vert {U_j^h} \cap {U_k^h} \vert >0 \big\} $.

\end{enumerate}

\textit{Step 2  (Definition \EEE  of $A$):}
Applying Proposition \ref{lem:secondpompebilipschitz} in each set $U_j^h$ yields a matrix $A_j \in \R^{3\times 3}_\skw$ such that
\begin{align}\label{coveringsecondpompe2}
\Vert \nabla u - A_j \nabla z \Vert_{L^2(U_j^h)} \leq C  \Vert \sym ( (\nabla z)^T \nabla u) \Vert_{L^2(U_j^h)},
\end{align}
for a constant $C>0$ independent of $j$  and $h$. \EEE
We  smoothly \EEE interpolate between  the \EEE matrices $( A_j)_j$  as follows. \OOO By Property \ref{u_cover1} we can find a smooth partition of unity \EEE $(\zeta_j)_{\OOO j\EEE}$ subordinate to the open sets $(U_j^h)_{j}$, i.e.,
\begin{align}\label{partitionofunity2}
\left\{
\begin{aligned}
&0 \leq \zeta_j \leq 1, \quad \zeta_j \in C_c^\infty(U_j^h; [0, 1]), \\
&\sum_{j\in  J_h\EEE} \zeta_j = 1 \quad \text{on } \Omega_h, \\
&\vert \nabla \zeta_j \vert \leq C h^{-1}.
\end{aligned}
\right.
\end{align}
 We \EEE define ${\tilde A} \colon \Omega_h \to \R^{3\times 3}_\skw$  by \EEE ${\tilde A} \defas \sum_{j \in J_h} \zeta_j A_j$.
Using Poincar\'e\OOO's \EEE inequality, there exists $A \in \R^{3\times 3}_\skw$ such that
\begin{equation}\label{poincare2}
\int_{\Omega_h} \vert {\tilde A} - A \vert^2 \dx \leq C \int_{\Omega_h} \vert \nabla {\tilde A} \vert^2 \dx,
\end{equation}
 where it is well-known that the constant depends on $\Omega'$ but is \EEE independent of $h>0$.

\textit{Step 3 (Proof of \eqref{kornsecondthin} with $\tilde A$ \ZZZ in place \EEE of $A$):}
The desired estimate \eqref{kornsecondthin}  with $\tilde A$ in place \EEE of $A$ directly follows from  \eqref{coveringsecondpompe2},  \eqref{partitionofunity2}, and Properties \ref{u_cover1} and \ref{u_intersec_number1}. In fact, we have
\begin{align*}
 \int_{\Omega_h} \vert \nabla u - {\tilde A} \nabla z \vert^2 \dx
  &= \int_{\Omega_h} \Big\vert \sum_{j \in J_h} \zeta_j \nabla u - \sum_{j \in J_h}\zeta_j A_j \nabla z \Big\vert^2 \dx   \leq C \sum_{ j \in J_h}  \int_{U_j^h} \vert  \nabla u - A_j \nabla z \vert^2 \dx \\
  & \leq  C \sum_{ j \in J_h} \int_{U_j^h} \vert  \sym ( (\nabla z)^T \nabla u ) \vert^2 \dx
  \leq C  \int_{\Omega_h} \vert  \sym ( (\nabla z)^T \nabla u ) \vert^2 \dx,
\end{align*}
where Property \ref{u_intersec_number1} was used in the  first and last inequality. \EEE  

\textit{Step 4 (Bound on  the $L^2$-distance between $A$ and $\tilde A$)\EEE:}
Notice that the desired result follows once we have shown that
\begin{equation}\label{AminustildeA2}
  \lVert \tilde A - A \rVert_{L^2(\Omega_h)} \leq C h^{-1}\lVert \sym((\nabla z)^T \nabla u) \rVert_{L^2(\Omega_h)}.
\end{equation}
In fact, by Step 3 and $\lVert \nabla z \rVert_{L^\infty(\Omega_h)} \leq 1/\rho$ we  then \EEE derive that
\begin{equation*}
   \Vert \nabla u - A \nabla z \Vert_{L^2(\Omega_h)} \leq  \Vert \nabla u - {\tilde A} \nabla z \Vert_{L^2(\Omega_h)} +  \Vert ({\tilde A} - A )\nabla z \Vert_{L^2(\Omega_h)}
      \leq C  h^{-1}\Vert \sym ((\nabla z)^T \nabla u ) \Vert_{L^2(\Omega_h)}.
\end{equation*}
It remains to show \eqref{AminustildeA2}.
 Let \EEE $j \neq k$ be such that $U_j^h \cap U_k^h \neq \emptyset$ with corresponding   skew-symmetric \EEE \EEE matrices $A_j$ and $A_k$.
 Our \EEE assumptions on $z$ directly  give \EEE $\lVert (\nabla z)^{-1} \rVert_{L^\infty( \Omega_h)\EEE} \leq C$  for a constant depending on $\rho$ but independent of $h$\EEE. 
In view of \eqref{coveringsecondpompe2}, we obtain
\begin{align*}
\vert A_j - A_k \vert^2
&= \frac{1}{\vert U_j^h \cap U_k^h \vert} \int_{U_j^h \cap U_k^h} \vert A_j - A_k \vert^2 \dx  \leq \frac{C}{\vert U_j^h \cap U_k^h \vert} \int_{U_j^h \cap U_k^h} \vert  A_j \nabla z - \nabla u  + \nabla u -  A_k \nabla z \vert^2 \dx \\
&\leq \frac{C}{\vert U_j^h \cap U_k^h \vert} \int_{U_j^h \cup U_k^h} \vert \sym ( (\nabla z)^T \nabla u ) \vert^2 \dx.
\end{align*}
Consequently, with \eqref{partitionofunity2}, \eqref{poincare2},  and  \EEE Properties \ref{u_cover1} and \ref{u_intersec_number1}, it follows that
\begin{align*}
\int_{\Omega_h} \vert {\tilde A} - A \vert^2 \dx
&\leq C \int_{\Omega_h} \Big\vert \nabla \Big(\sum_{j \in J_h}\zeta_j A_j \Big) \Big\vert^2 \dx   \leq C \sum_{k \in J_h} \int_{U_k^h}\Big\vert \nabla \Big(\sum_{j \in J_h} \zeta_j A_j \Big) - \nabla \Big( \sum_{j \in J_h} \zeta_j A_k\Big) \Big\vert^2 \dx \\
&\leq C \sum_{ j, \, k \in J_h} \int_{U_j^h \cap U_k^h}   \vert\nabla \zeta_j\vert^2 \vert A_j - A_k\vert^2 \dx  \leq C h^{-2}\sum_{ j, \, k \EEE \in J_h} \int_{U_j^h \cap U_k^h} |A_j - A_k|^2 \di x \\
&\leq C h^{-2} \sum_{\substack{ j, \, k \EEE \in J_h, \\  U_j^h \cap U_k^h \neq \emptyset \EEE}}    \int_{U_j^h \cup U_k^h} \vert \sym ( (\nabla z)^T \nabla u ) \vert^2 \dx
\leq C h^{-2} \int_{ \Omega_h} \vert \sym ( (\nabla z)^T \nabla u ) \vert^2 \dx,
\end{align*}
\ZZZ where in the second step we used that $ \nabla ( \sum_{j \in J_h} \zeta_j) = 0$. \EEE
 This \EEE concludes the proof of  \eqref{kornsecondthin}\EEE.

\textit{Step 5 (Proof of  \eqref{kornsecondthin_dirichlet}\EEE):}
We use  \eqref{kornsecondthin} in order \EEE to prove the generalized version of Korn's first inequality, see \eqref{kornsecondthin_dirichlet}.
 As $\Gamma_D'$ is an open subset of $ \GGG \Gamma' = \EEE \partial \Omega'$, \EEE we can find $r > 0$ sufficiently small and $x' \in \Gamma_D'$ such that $B'_r( x')  \EEE \setminus \Omega'$ is connected,  $B'_r(x') \cap \Gamma' \subset \Gamma_D'$, \ZZZ and $\tilde \Omega_h \defas \Omega_h \cup \big(B'_r(x') \times (-h/2, h/2)\big)$ is Lipschitz, \EEE where $B'_r(x')$ denotes the two-dimensional ball centered at $x'$ with radius $r$.
Let $u \in H^1(\Omega_h; \R^3)$ with $u = 0$ on $\Gamma_D^h$.  We extend $u$  to \EEE the set $\tilde \Omega_h $ by zero.
\OOO Up to possibly decreasing $r$\EEE, by  a Sobolev extension argument,  see \OOO e.g.~\EEE \cite[Chapter~6.3,~Theorem~5]{Stein1970}, \EEE we can extend $z$ to $\tilde \Omega_h$ such that $z$ still satisfies the assumptions  of \EEE the statement with $\Omega_h$ replaced by $\tilde \Omega_h$ and $\rho$ replaced by $\rho/2$.  

By \eqref{kornsecondthin} applied on the set $\tilde \Omega_h$ there exists a matrix $A \in \R^{3\times 3}_{\skw}$ such that
\begin{align}\label{kornsecondthinXXXXX}
    \left\Vert \nabla u - A \nabla z \right\Vert_{L^2(\tilde \Omega_h)} \leq	\frac{C}{h} \Vert \sym ((\nabla z)^T \nabla u ) \Vert_{L^2(\tilde \Omega_h)}.
\end{align}
\ZZZ Combining \EEE the \OOO previous  inequality \EEE  with $\Vert (\EEE\nabla z)^{-1} \EEE \Vert_{L^\infty(\Omega_h)} \leq C$,  we discover that \EEE
\begin{align*}
\vert A \vert^2 &\leq \frac{C}{\vert \tilde \Omega_h \setminus \Omega_h\vert}\int_{\tilde \Omega_h \setminus \Omega_h} \vert   A\nabla z\EEE \vert^2 \di x    \le  \frac{C}{\vert \tilde \Omega_h \setminus \Omega_h\vert}\int_{\tilde \Omega_h} \vert \nabla u -  A\nabla z  \vert^2 \di x  \EEE \\
&\leq \frac{C}{\vert \tilde \Omega_h \setminus \Omega_h\vert}  \frac{1}{h^2} \int_{\tilde \Omega_h} \vert  \sym ((\nabla z)^T \nabla u ) \vert^2 \di x \EEE \leq \frac{C}{h^3} \int_{\Omega_h} \vert \sym ( (\nabla z )^T \nabla u ) \vert ^2 \di x,
\end{align*}
where  in the second and the last step \EEE we have used that $u=0 $ on $\tilde \Omega_h \setminus \Omega_h$. We can then use  \eqref{kornsecondthinXXXXX}, \EEE the  triangular inequality, \EEE and the fact that $\Vert \nabla z \Vert_{L^\infty(\GGG \tilde \Omega_h \EEE )} \leq \GGG 2\EEE/\rho$ to  derive \EEE 
\begin{align*}
\Vert \nabla u \Vert_{L^2(\Omega_h)} & = \Vert \nabla u - A \nabla z + A\nabla z \Vert_{L^2(\tilde \Omega_h)}  \leq C h^{-1} \Vert \sym ((\nabla z)^T \nabla u ) \Vert_{L^2( \GGG \tilde \Omega_h \EEE)}+ C h^{1/2}\vert A \vert.
\end{align*}
\GGG By using that $u=0 $ on $\tilde \Omega_h \setminus \Omega_h$ once again, this implies \EEE \eqref{kornsecondthin_dirichlet}.
\end{proof}

\section{ A priori bounds for solutions of the three-dimensional problem}\label{sec:existenceapriorisection}

In this section, we  derive a priori bounds  for solutions given  in Remark \ref{rem:weakformresc}  which are needed to pass to the dimension-reduction limit.  Although it should \ZZZ also \EEE be possible to derive  these  bounds   on the time-discrete level, as done in the case of linearization \cite{RBMFMK}, we prefer  here    to work purely in the time-continuous setting  for   the sake of convenience and easier presentation. Nevertheless, certain technical difficulties will arise requiring us to first work in a regularized setting.
More precisely, we will start by deriving a priori bounds for solutions of an associated regularized problem, \OOO inspired by the one from   \cite[Section 4]{MielkeRoubicek20Thermoviscoelasticity}. \EEE It will be crucial to ensure that the derived bounds are independent of the regularization parameter.
Once this is shown,  all bounds   will be  inherited by the weak solutions of the nonregularized system.  We start with the formulation of the main statement.    Recall the definition of $\mathscr{S}_h^{3D}$ in \eqref{eq: nonlinear boundary conditions}. \EEE

\begin{proposition}[A priori estimates]\label{prop:Existenceof3dsolutions}
Let $y_0^h \in \mathscr{S}^{3D}_h$ and $\theta_0^h \in L^2_+(\Omega)$ be such that $ \sup\nolimits_{h>0} \GGG( \EEE h^{-4}\mathcal{M}(y_0^h) + \ZZZ  h^{-2\alpha} \EEE \Vert \theta_0^h \Vert_{L^{ 2\EEE}(\Omega)}^{ 2 \EEE} \GGG ) \EEE < + \infty $. Moreover, suppose that $f^{3D}_h \in W^{1,1}(I;L^2(\Omega'))$ and \ZZZ $\theta_\flat^h \in L^2(I;L_+^{2}(\Gamma))$ \EEE are given and satisfy \eqref{forcebound}--\eqref{assumption:temp}. \EEE Then,  for \EEE sufficiently small $h > 0$ there exists  a weak solutions $(y^h, \theta^h)$  in the sense of Remark \ref{rem:weakformresc} \EEE  and a constant $C > 0$ \ZZZ independently of $h$ \EEE  such that the following bounds hold true: 
\begin{subequations}\label{boundres}
\begin{align}
  \esssup_{t \in I}  \mathcal{M}(y^h(t)) \EEE  &\leq Ch^4, \label{boundres:mech} \\
  \int_I \int_{\Omega} R(\nabla_h y^h,  \partial_t  \nabla_h \EEE y^h,  \EEE \theta^h) \dx \dt &\leq C h^4, \label{boundres:dissipation} \\
  \Vert  \partial_t  \nabla_h \EEE y^h \Vert_{L^2(I\times \Omega)} &\leq C h. \label{boundres:strainrate}
\end{align}
\end{subequations}  
Moreover, for any $q \in [1, 5/3)$ and $r \in [1, 5/4)$ we can find constants $C_q$ and $C_r$ \ZZZ independently \EEE of $h$ such that
\begin{subequations}\label{temp_apriori}
\begin{align}
  \Vert \theta^h \Vert_{L^q(I\times \Omega)} + \Vert \zeta^h \Vert_{L^q(I\times \Omega)} &\leq C_q h^\alpha, \label{boundres:temperature} \\
  \Vert \nabla_h \theta^h \Vert_{L^r(I \times \Omega)} + \Vert \nabla_h \zeta^h \Vert_{L^r(I \times \Omega)} &\leq C_r h^\alpha,  \label{boundres:temperaturegrad} \\
  \Vert  \partial_t \EEE \zeta^h \Vert_{L^1(I; \OOO(\EEE H^3\EEE(\Omega)\OOO)\EEE^*)} &\leq C h^\alpha, \label{forAubin-Lion3}
\end{align}
\end{subequations}
where $\zeta^h \defas W^{\rm in}(\nabla_h y^h,\theta^h)$.
\end{proposition}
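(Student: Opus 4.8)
The plan is to follow the scheme of \cite[Section~4]{MielkeRoubicek20Thermoviscoelasticity} and \cite{RBMFMK}: first derive all the bounds for solutions of a suitably regularized version of \eqref{weak_form_mech_res}--\eqref{weak_form_heat_res} (for which the test functions and integrations by parts below are legitimate and for which existence is classical), taking care that the constants do not depend on the regularization parameter, and then let the regularization vanish; the resulting pair is a weak solution in the sense of Remark~\ref{rem:weakformresc} and inherits \eqref{boundres:mech}--\eqref{forAubin-Lion3}. What is genuinely new here is keeping track of the power of $h$ at every step, so I only describe that part.

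\emph{Mechanical energy balance and strain-rate bound.} I would test the (regularized, rescaled) momentum equation with $\varphi_y=\partial_t y^h$, which is admissible since the Dirichlet datum in \eqref{eq: nonlinear boundary conditions} is time-independent, and use \ref{D_quadratic}, \ref{D_bounds}, \eqref{diss_rate} to obtain
\begin{equation*}
\mathcal{M}(y^h(t))+2\int_0^t\!\!\int_\Omega R(\nabla_h y^h,\partial_t\nabla_h y^h,\theta^h)\,\dx\ds=\mathcal{M}(y_0^h)+\int_0^t\!\!\int_\Omega f^{3D}_h(\partial_t y^h)_3\,\dx\ds-\int_0^t\!\!\int_\Omega\partial_F W^{\rm cpl}(\nabla_h y^h,\theta^h):\partial_t\nabla_h y^h\,\dx\ds.
\end{equation*}
A bootstrap argument (a crude energy bound, then \ref{W_lower_bound_spec}, \ref{H_bounds}, Morrey's embedding $W^{2,p}\hookrightarrow C^1$ valid for $p>3$, and the thin-domain geometric rigidity estimate as in \cite{FrieseckeJamesMueller:02,FK_dimred} together with the boundary conditions) shows that $z:=y^h(t)$ satisfies, for $h$ small and a fixed $\rho>0$, the hypotheses $\det\nabla_h z\ge\rho$, $\lVert\nabla_h z\rVert_{L^\infty}\le1/\rho$, $\lVert\nabla_h^2 z\rVert_{L^p}\le1/\rho$ of Theorem~\ref{pompescaling}. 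Applying the rescaled form of the generalized Korn's first inequality \eqref{kornsecondthin_dirichlet} to $u=\partial_t y^h(t)$ and using \ref{D_bounds}, \eqref{diss_rate} gives
\begin{equation*}
\lVert\partial_t\nabla_h y^h(t)\rVert_{L^2(\Omega)}^2\le\frac{C}{h^2}\bigl\lVert\sym\bigl((\nabla_h y^h)^T\partial_t\nabla_h y^h\bigr)(t)\bigr\rVert_{L^2(\Omega)}^2\le\frac{C}{h^2}\int_\Omega R(\nabla_h y^h,\partial_t\nabla_h y^h,\theta^h)(t)\,\dx,
\end{equation*}
which is exactly where the optimal $h$-scaling of the generalized Korn constant is needed. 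The first term on the right-hand side of the balance is $\le Ch^4$ by assumption; the force term is estimated via \eqref{forcebound}, Poincaré's inequality ($\partial_t y^h=0$ on $\Gamma_D$), the previous display and Young's inequality, absorbing a fraction of $\int_0^t\!\int_\Omega R$ and leaving $Ch^4$; for the coupling term I would use frame indifference \ref{C_frame_indifference} to write $\partial_F W^{\rm cpl}(F,\theta):\dot F=\partial_C\hat W^{\rm cpl}(C,\theta):\dot C$, so that only $\dot C^h=2\,\sym((\nabla_h y^h)^T\partial_t\nabla_h y^h)$, which by \ref{D_bounds} and the previous display is of order $h^2$ in $L^2(I\times\Omega)$, enters, and control $\lvert\partial_C\hat W^{\rm cpl}(C,\theta)\rvert$ through \ref{C_zero_temperature}, \ref{C_bounds} (it vanishes at $\theta=0$ and is $\le C(1+\lvert C\rvert)\theta$ for $\theta\le1$), then combine with the $L^1$-bound on $\theta^h$ from the next step and Young's inequality. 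Absorbing and using $\alpha\le4$ yields \eqref{boundres:mech}, \eqref{boundres:dissipation}, and integrating the Korn estimate in time gives \eqref{boundres:strainrate}.

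\emph{Temperature bounds.} Testing the heat equation formally with $\varphi_\theta\equiv1$ (rigorously via the $W^{\rm in}$-formulation and the regularization) and adding the result to the mechanical balance cancels the coupling terms; using $\xi^{(\alpha)}\le\xi=2R$ so that the remaining dissipation contributions have a sign, $W^{\rm in}\ge0$, \eqref{inten_lipschitz_bounds}, \eqref{forcebound}, \eqref{assumption:temp} and the previous step, one obtains $\esssup_{t\in I}\lVert\theta^h(t)\rVert_{L^1(\Omega)}\le Ch^\alpha$ — the very bound used above. For the integrability of $\theta^h$ and $\nabla_h\theta^h$ I would test \eqref{weak_form_heat_res} with truncation-type functions $\varphi_\theta=\varphi_\theta(\theta^h)$ in the spirit of {\sc Boccardo} and {\sc Gallou\"et} \cite{BoccardoGallouet89Nonlinear} (as in \cite[Section~4]{MielkeRoubicek20Thermoviscoelasticity} and \cite{RBMFMK}), using the $L^1$-bounds $\lVert\xi^{(\alpha)}\rVert_{L^1(I\times\Omega)}\le2\int_I\!\int_\Omega R\le Ch^4$ and $\lVert\partial_F W^{\rm cpl}(\nabla_h y^h,\theta^h):\partial_t\nabla_h y^h\rVert_{L^1(I\times\Omega)}\le Ch^\alpha$ (again via frame indifference and the $h^2$-bound on $\dot C^h$), \eqref{spectrum_bound_K}, \ref{C_bounds}, \ref{C_heatcap_cont} for the uniform parabolicity, and the $L^1$-bound just obtained; carrying out these arguments for the rescaled temperature $h^{-\alpha}\theta^h$, which solves a uniformly parabolic equation with an $h$-uniformly bounded $L^1$ right-hand side, reproduces the classical exponents $5/3=(n{+}2)/n$, $5/4=(n{+}2)/(n{+}1)$ with $n=3$ and yields \eqref{boundres:temperature}, \eqref{boundres:temperaturegrad}; the bounds for $\zeta^h=W^{\rm in}(\nabla_h y^h,\theta^h)$ follow from \eqref{sec_deriv}--\eqref{inten_lipschitz_bounds}. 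Finally, reading \eqref{weak_form_heat_res} as $\partial_t\zeta^h=\diver(\mathcal{K}(\nabla_h y^h,\theta^h)\nabla_h\theta^h)+\xi^{(\alpha)}+\partial_F W^{\rm cpl}(\nabla_h y^h,\theta^h):\partial_t\nabla_h y^h$ in $(H^3(\Omega))^*$ (with the Robin contribution $-\kappa(\theta^h-\theta_\flat^h)$ on $\Gamma$) and estimating each summand by its spatial $L^1$-norm against $\lVert\varphi\rVert_{W^{1,\infty}}\le C\lVert\varphi\rVert_{H^3}$, the bounds above, \eqref{assumption:temp} and the trace inequality give \eqref{forAubin-Lion3}.

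\emph{Main obstacle.} The delicate points are, first, that every constant coming from Korn-type and geometric rigidity inequalities on the slab $\Omega_h$ has to be controlled with its optimal power of $h$ — this is precisely the content of Theorem~\ref{pompescaling} and is what makes the strain-rate bound \eqref{boundres:strainrate} sharp — and, second, the thermomechanical coupling term, which must be recast through frame indifference in terms of $\dot C^h$ (of order $h^2$) rather than $\partial_t\nabla_h y^h$ (of order $h$) before it can be absorbed. A further technical nuisance is performing the Boccardo--Gallou\"et estimates on a thin domain with the correct $h^\alpha$-scaling; this is why the truncated dissipation rate $\xi^{(\alpha)}$ from \eqref{diss_rate_truncated} is convenient and why one works first with a regularized system, deriving all bounds uniformly in the regularization parameter.
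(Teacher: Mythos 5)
Your overall architecture coincides with the paper's: regularize as in Definition~\ref{def:weak_formulation_reg}, prove all bounds uniformly in the regularization parameter and pass to the limit; test the momentum equation with $\partial_t y^h$; use Theorem~\ref{pompescaling}\ref{item:kornsecondthin_dirichlet} with its $h^{-1}$ constant both for the force term and for \eqref{boundres:strainrate}; rewrite the adiabatic term through frame indifference in terms of $\dot C^h$ (this is \eqref{writecplviadotc}--\eqref{avoidKorn}); obtain the $L^\infty(I;L^1)$ temperature bound by testing the heat equation with $1$ and adding it to the mechanical balance; and run Boccardo--Gallou\"et test functions plus an anisotropic Gagliardo--Nirenberg interpolation (Lemma~\ref{lem:gagliardonirenberg}) for \eqref{boundres:temperature}--\eqref{boundres:temperaturegrad}, and a duality argument in $(H^3(\Omega))^*$ for \eqref{forAubin-Lion3}. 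However, there is a genuine gap in how you close the mechanical estimates when $\alpha\in[2,4)$. After recasting the coupling term via $\dot C^h$ and applying Young's inequality to absorb a fraction of the dissipation, the leftover is controlled only by quantities of the type $\int_I\int_\Omega (\theta^h\wedge 1)^2\,\di x\di t\le \int_I \Vert \theta^h(t)\Vert_{L^1(\Omega)}\di t\le C h^{\alpha}$ (rescaled domain), and $h^\alpha\gg h^4$ for $\alpha<4$; the $L^2$-type control $\int_I\int_\Omega (\theta^h)^2\,\di x \di t\le Ch^{2\alpha}$ that would be needed to reach $h^4$ is not available a priori (the temperature bounds only reach $L^q$, $q<5/3$). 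Hence your scheme only yields $\esssup_t\mathcal M(y^h(t))+\int_I\int_\Omega R\le Ch^\alpha$, and then, via the Korn inequality, $\Vert\partial_t\nabla_h y^h\Vert_{L^2(I\times\Omega)}\le Ch^{\alpha/2-1}$, which for $\alpha=2$ is only $O(1)$; this misses \eqref{boundres:mech}--\eqref{boundres:strainrate} except at $\alpha=4$.

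The missing ingredient is the paper's Step~3 in Proposition~\ref{lem: first a priori estimates}: the $\alpha$-weighted total energy $\toten^{(\alpha)}_h$ of \eqref{toten}, whose internal-energy part carries the exponent $4/\alpha$ precisely so that it scales like $h^5$ on $\Omega_h$. The optimal bound \eqref{bound:total} is obtained by testing the regularized heat equation not with $1$ but with the nonlinear function $\chi'(m_\eps)=(h^\alpha+m_\eps)^{4/\alpha-1}$, absorbing the dissipation contribution through $(\xi^{(\alpha)})^{4/\alpha}\le\xi$ --- this is the actual raison d'\^etre of the truncation \eqref{diss_rate_truncated}, not merely a convenience for the Boccardo--Gallou\"et step as you suggest --- and absorbing the adiabatic term through \eqref{avoidKorn} with Young exponents matched to $4/\alpha$, see \eqref{apriori_adiabatic}; a Gronwall argument for $\mathcal M_h+\mathcal W^{{\rm in},\alpha}_h$ together with the force estimates of Lemma~\ref{lem:forceestimate} then gives $\toten^{(\alpha)}_h\le Ch^5$ on the thin domain, from which \eqref{boundres:mech}, \eqref{boundres:dissipation} and, exactly as you argue, \eqref{boundres:strainrate} follow with the stated rates. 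Once this step is inserted, the remainder of your plan (including the improved temperature estimates, where the weighted gradient bound is performed for the internal energy $m_\eps$ and the thin-domain Gagliardo--Nirenberg inequality must be used with its explicit $h$-scaling) matches the paper's proof.
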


We remark that the bounds \eqref{boundres:mech}--\eqref{forAubin-Lion3} do not directly follow from the  bounds   already derived  in   \cite[Lemma 6.2 and  Proposition   6.3]{MielkeRoubicek20Thermoviscoelasticity}  (see also \cite[Lemma~3.18 and Theorem~3.20]{RBMFMK})  since  in  \cite{RBMFMK, MielkeRoubicek20Thermoviscoelasticity} the estimates are provided   for a \emph{fixed} domain.
 A naive application  to the present setting of domains with thickness $h$ might lead to constants depending on $h$.
 The crucial point of Proposition \ref{prop:Existenceof3dsolutions} is that   all constants appearing in the estimates are independent of the \ZZZ  thickness $h$. \EEE
In this regard, all constants throughout this sectio\OOO n m\EEE ay  vary from line to line,   but  are \EEE \emph{independent} of $h$.
 For convenience,  we will prove all  bounds on the thin domain $\Omega_h$, and the stated bounds in Proposition \ref{prop:Existenceof3dsolutions}  then easily follow by a change of coordinates.
\EEE

  In Section  \ref{sec: 4.1}, we introduce the regularized problem, and state some first auxiliary results. The relevant a priori estimates for the regularized problem are established in Sections \ref{sec: 4.2}--\ref{sec: 4.3}. The proof is concluded in Section \ref{sec: 4.4} by transferring the bounds to the original system of equations.   \EEE

\subsection{Regularization and auxiliary lemmas}\label{sec: 4.1}
We remind the reader that, similarly to the linearization  result in  thermoviscoelasticity \cite{RBMFMK}, in order to perform the dimension reduction we require a regularization of the dissipation rate $\xi$ depending on the temperature scale $\alpha$, see the definition of $\xi^{(\alpha)}$ in \eqref{diss_rate_truncated}\OOO. The regularization improves the a priori integrability of $\xi$, a fact that will be employed  in the proof of Proposition~\ref{lem: first a priori estimates} below. \EEE
To keep the argument concise, \OOO we further regularize $\xi^{(\alpha)}$ as \GGG it \OOO was done for $\xi$ \EEE in \cite[Section 4]{MielkeRoubicek20Thermoviscoelasticity}\OOO. \EEE
More precisely, given $\eps > 0$, we \OOO define \EEE for every $F \in GL^+(3)$, $\dot F \in \R^{3 \times 3}$, \ZZZ and \EEE $\vartheta \geq 0$: 
\begin{equation}\label{dissipation:trucated2}
\xi_{\eps, \alpha}^{\rm reg}(F, \dot F, \vartheta) \defas
\frac{\xi^{(\alpha)}(F,\dot F,\vartheta)}{1+ \eps \xi^{(\alpha)}(F,\dot F,\vartheta)}.
\end{equation}

Note that $\xi_{\eps, \alpha}^{\rm reg} \leq \eps^{-1}$ ensures even an $L^\infty$-bound on the regularized dissipation rate.  Moreover, we have  $\ZZZ \xi^{\rm reg}_{\eps, \alpha} \EEE \nearrow \xi^{(\alpha)}$ \ZZZ pointwise \EEE as $\eps \searrow 0$. For the reader's \GGG convenience, \EEE  let us start by \EEE repeating the notion of weak solutions in the $\eps$-regularized setting which is  similar \EEE to the  one in  \cite[Equations~(4.1)--(4.2)\EEE]{MielkeRoubicek20Thermoviscoelasticity}. \EEE

\begin{definition}[Weak solution of the $\eps$-regularized nonlinear system]\label{def:weak_formulation_reg}
Given $\eps > 0$, $w_0 \in \mathcal{W}^h_\id$, $\vartheta_0 \in L_+^{2}(\Omega_h)$, and  $\vartheta_\flat^h\EEE\in \ZZZ L^{2}(I;L^2_+(\Gamma_h)) \EEE$, let $w_{0,\eps}\defas w_0$, $\vartheta_{0,\eps}\defas \vartheta_0 (1+\eps \vartheta_0)^{-1}$, and $\vartheta_{\flat, \eps} \defas \vartheta_\flat^h\EEE(1 + \eps \vartheta_\flat^h\EEE)^{-1}$.
A pair $(w_\eps, \vartheta_\eps) \colon I \times \Omega_h \to \R^3 \times \R$ is said to be an $\eps$-\emph{regularized weak solution} to \eqref{quasistaticequation3D} and \eqref{thermoequation3D}  if \EEE $w_\eps \in L^\infty(I; \Wid) \cap H^1(I; H^1(\Omega_h; \R^3))$ with $w_\eps(0, \cdot) = w_{0,\eps}$, $\vartheta_\eps \in  L^{2}(I; H^1(\Omega_h)) $ with $\vartheta_\eps \ge 0$ a.e.\ and $\vartheta_\eps(0) = \vartheta_{0, \eps}$,  $ \partial_t \EEE m_\eps \in L^2(I;\REV (H^1(\Omega_h))^* \EEE)$, where $m_\eps \defas W^{\rm{in}}(\nabla w_\eps, \vartheta_\eps)$, \EEE and if it satisfies the identities
\begin{subequations}
\begin{equation}\label{weak_form_mech_res_unrescaled_reg}
\begin{aligned}
  &\int_I \int_{\Omega_h} \pl_G
    \hypot(\nabla^2 w_\eps) \cdddot \nabla^2 {\varphi_{w}}
    + \Big( \eps  \partial_t \EEE \nabla w_\eps +
      \pl_F \felpot(\nabla w_\eps, \vartheta_\eps)
      + \pl_{\dot F} \disspot(\nabla w_\eps,  \partial_t \EEE \nabla w_\eps, \vartheta_\eps)
    \Big) : \nabla   {\varphi_{w}} \di x \di t \\
    &\quad =  \int_I \int_{\Omega_h}  g_h^{3D} {(\varphi_{w})}_3 \EEE \di x \di t
\end{aligned}
\end{equation}
for any test function ${\varphi_{w}} \in C^\infty(I \times   \overline{\Omega_h}; \EEE \R^3)$ with ${\varphi_{w}} = 0$ on $I \times \Gamma_D^h$, as well as
\begin{align}\label{weak_form_heat_unrescaled_reg}
  &\int_I \int_{\Omega_h}
     \hcmnoh \EEE(\nabla w_\eps, \vartheta_\eps) \nabla \vartheta_\eps \cdot \nabla {\varphi_{\vartheta}}
    - \Big(
      \xi_{\eps, \alpha}^{\rm reg}(\nabla w_\eps,  \partial_t \EEE \nabla w_\eps, \vartheta_\eps)
      + \partial_F W^{\rm{cpl}}(\nabla w_\eps, \vartheta_\eps) :  \partial_t \EEE \nabla w_\eps
    \Big) {\varphi_{\vartheta}} \REV  \di x  + \langle \partial_t  m_\eps , {\varphi_{\vartheta}} \rangle \EEE \di t \nonumber\\
    &\quad + \kappa \int_I \int_{ \Gamma_h} \vartheta_\eps {\varphi_{\vartheta}} \di  \haus^2 \di t  = \kappa   \int_I \int_{ \Gamma_h} \vartheta_{\flat, \eps} {\varphi_{\vartheta}} \di \haus^2 \di t
    ,
\end{align}
for any test function ${\varphi_{\vartheta}} \in L^2(I;H^1(\Omega_h))$, \REV where $\langle \cdot, \cdot \rangle$ in \eqref{weak_form_heat_unrescaled_reg} denotes the dual pairing of $H^1(\Omega_h)$ and $(H^1(\Omega_h))^*$. \EEE
\end{subequations}
\end{definition}

\ZZZ For convenience,  in the definition above and in the remainder of this section we do not include the $h$-dependence of solutions $(w_\eps,\vartheta_\eps)$ in the  notation. \EEE \OOO We note that we choose for the second equation \eqref{weak_form_heat_unrescaled_reg} a class of test functions that is larger than the one in \cite[Equation (5.12)]{MielkeRoubicek20Thermoviscoelasticity}.  The equivalence of both definitions follows by a standard density argument. \EEE
In \cite[Propostion 5.1]{MielkeRoubicek20Thermoviscoelasticity}, \EEE existence of weak solution  to \EEE the regularized problem obeying an energy balance was shown.  The \EEE results  therein  naturally pass over to the present setting\EEE.
\OOO In fact\EEE, we have \ZZZ  the following. \EEE
\begin{proposition}\label{prop:existenceregularized}
For any $\eps, \ZZZ h \EEE > 0$, there exists a solution $(w_\eps,\vartheta_\eps)$ in the sense of Definition~\ref{def:weak_formulation_reg}.
Moreover, given any weak solution, the following energy balance holds true for    a.e.~\EEE $t \in I$:
\begin{align}\label{energybalanceregularized}
&\mathcal{M}_h(w_\eps(t)) + \int_0^t \int_{\Omega_h} \xi(\nabla w_\eps,  \partial_t \EEE \nabla w_\eps, \vartheta_\eps) + \eps \vert  \partial_t \EEE \nabla w_\eps \vert^2 \dx \di s\nonumber \\
&\quad= \mathcal{M}_h(w_\eps(0)) + \int_0^t \int_{\Omega_h}  g\EEE_h^{3D}(s) ({ \partial_t \EEE{w_\eps}})_3  \di x \EEE \di s - \int_0^t \int_{\Omega_h} \partial_F W^{\rm cpl} (\nabla w_\eps, \vartheta_\eps) :  \partial_t \EEE \nabla w_\eps \dx \di s.
\end{align}
\end{proposition}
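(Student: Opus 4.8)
The plan is to reduce the assertion to the existence and energy-balance theory already available for a \emph{fixed} reference configuration. For a fixed thickness $h>0$, the set $\Omega_h=\Omega'\times(-h/2,h/2)$ is a bounded Lipschitz domain, and the structural hypotheses \ref{W_regularity}--\ref{W_lower_bound_spec}, \ref{H_regularity}--\ref{H_bounds}, \ref{C_regularity}--\ref{C_heatcap_cont}, \ref{D_quadratic}--\ref{D_bounds} together with \eqref{spectrum_bound_K} coincide---up to the harmless simplifications noted after \eqref{spectrum_bound_K}, i.e.\ no $x$-dependence of $\mathbb{K}$ and the stronger requirement $p>4$---with those of \cite{MielkeRoubicek20Thermoviscoelasticity}. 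Moreover, by \eqref{dissipation:trucated2} and \ref{D_quadratic}--\ref{D_bounds} the regularized dissipation rate $\xi^{\rm reg}_{\eps,\alpha}$ is continuous, nonnegative, bounded by $\eps^{-1}$, and increases pointwise to $\xi^{(\alpha)}$ as $\eps\searrow 0$, which are precisely the properties used in \cite[Section~4]{MielkeRoubicek20Thermoviscoelasticity}. Hence \cite[Proposition~5.1]{MielkeRoubicek20Thermoviscoelasticity} (see also \cite{RBMFMK}) applies verbatim on $\Omega_h$ and yields, for every $\eps>0$, a weak solution $(w_\eps,\vartheta_\eps)$ in the sense of Definition~\ref{def:weak_formulation_reg}. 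At this stage all constants may depend on $h$; the $h$-uniform estimates are the separate content of Proposition~\ref{prop:Existenceof3dsolutions}.

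For orientation, existence is obtained from a Galerkin approximation in the deformation variable coupled to the heat equation (alternatively, a minimizing-movements scheme in time): the $\eps\,\partial_t\nabla w_\eps$ term makes the mechanical equation uniformly parabolic, giving an $L^2(I;H^1(\Omega_h;\R^3))$ bound on $\partial_t w_\eps$, while \eqref{mechanical}, \ref{W_lower_bound} and \ref{H_bounds} control $\nabla w_\eps$ and $\nabla^2 w_\eps$, and the truncation renders the right-hand side of the heat equation integrable. Passing to the limit in the approximation, using the convexity \ref{H_regularity} for the hyperstress and the continuity properties of $W^{\rm el}$, $W^{\rm cpl}$, $R$, $\mathbb{K}$, one recovers a solution; the details of \cite[Proposition~5.1]{MielkeRoubicek20Thermoviscoelasticity} carry over unchanged since $h$ is fixed.

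To derive \eqref{energybalanceregularized}, one tests \eqref{weak_form_mech_res_unrescaled_reg} with $\varphi_w=\partial_t w_\eps$, which is admissible since $w_\eps=\id$ on $I\times\Gamma_D^h$ forces $\partial_t w_\eps=0$ there. Writing $\partial_F\felpot=\partial_F\elpot+\partial_F\cplpot$, the chain rules $\partial_F\elpot(\nabla w_\eps):\partial_t\nabla w_\eps=\partial_t\bigl(\elpot(\nabla w_\eps)\bigr)$ and $\partial_G\hypot(\nabla^2 w_\eps)\cdddot\partial_t\nabla^2 w_\eps=\partial_t\bigl(\hypot(\nabla^2 w_\eps)\bigr)$ reproduce $\tfrac{\di}{\di t}\mathcal{M}_h(w_\eps)$; the identity $\partial_{\dot F}\disspot(\nabla w_\eps,\partial_t\nabla w_\eps,\vartheta_\eps):\partial_t\nabla w_\eps=\drate(\nabla w_\eps,\partial_t\nabla w_\eps,\vartheta_\eps)$ from \eqref{diss_rate} yields the $\xi$-term; the regularization contributes $\eps|\partial_t\nabla w_\eps|^2$; integrating over $[0,t]$ and moving $\partial_F\cplpot(\nabla w_\eps,\vartheta_\eps):\partial_t\nabla w_\eps$ to the right-hand side gives \eqref{energybalanceregularized}. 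The one nontrivial point---the main obstacle---is that $\partial_t w_\eps$, a priori only of class $L^2(I;H^1(\Omega_h;\R^3))$, is not literally an admissible test function because of the second-gradient term; this is handled, exactly as in \cite[Proposition~5.1]{MielkeRoubicek20Thermoviscoelasticity}, by testing with a temporal mollification of $\partial_t w_\eps$ and passing to the limit, using convexity of $\hypot$ (\ref{H_regularity}) to obtain the absolute continuity of the bending energy and the $C^3$-regularity of $\elpot$, $\cplpot$ near $SO(3)$ together with \ref{W_lower_bound} and the embedding $W^{2,p}(\Omega_h)\hookrightarrow C^1(\overline{\Omega_h})$ for $p>3$ to treat the membrane and coupling terms. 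Since $h$ is fixed, the product structure of $\Omega_h$ introduces no additional difficulty, and the argument is identical to the cited one.
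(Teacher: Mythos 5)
Your proposal is correct and follows essentially the same route as the paper: the paper's proof simply invokes \cite[Proposition~5.1]{MielkeRoubicek20Thermoviscoelasticity} on the fixed domain $\Omega_h$, observing that the only change is the truncation (so the regularized rate is still bounded and pointwise below $\xi$, which is all that proof needs), and obtains \eqref{energybalanceregularized} from the chain rule \cite[Proposition~3.6]{MielkeRoubicek20Thermoviscoelasticity}. Your additional sketch of the Galerkin/mollification details and of the admissibility issue for $\partial_t w_\eps$ is consistent with that cited argument and does not change the approach.
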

\begin{proof}
This is exactly \cite[Proposition~5.1]{MielkeRoubicek20Thermoviscoelasticity} in the case $\alpha = 4$  as then $\xi^{(\alpha)} = \xi$\EEE.
 For $\alpha < 4$, we   have $\xi^{(\alpha)} \leq \xi$.
Hence, the proof of \cite[Proposition~5.1]{MielkeRoubicek20Thermoviscoelasticity} still applies as the regularized dissipation rate  is only required to be in $L^\infty$ and pointwise below $\xi$.
The energy balance follows by an application of a chain rule\EEE, see also \cite[Proposition~3.6~and~Equation~(5.9)]{MielkeRoubicek20Thermoviscoelasticity}.
\end{proof}

\ZZZ Next, \EEE we also collect some  helpful properties that will be instrumental later on.

\begin{lemma}
For all $F \in GL^+(3)$ and $ \vartheta \EEE \geq 0$ it holds that
\begin{equation}\label{est:coupl}
\vert \partial_F W^{\rm cpl} (F,  \vartheta \EEE) \vert + \vert \partial_F W^{\rm in}(F, \vartheta \EEE) \vert \leq C ( \vartheta \EEE \wedge 1 ) (1 + \vert F \vert).
\end{equation}
\end{lemma}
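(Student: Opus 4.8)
The plan is to derive the bound \eqref{est:coupl} directly from the structural assumptions \ref{C_zero_temperature}, \ref{C_bounds}, and \ref{C_heatcap_cont} on the coupling potential $W^{\rm cpl}$, together with the definition \eqref{Wint} of the internal energy. The key observation is that both $\partial_F W^{\rm cpl}$ and $\partial_F W^{\rm in}$ vanish at $\vartheta = 0$ (the former by the remark following \ref{C_heatcap_cont}, the latter because $W^{\rm in}(F,0) = 0$ for all $F$ and hence its $F$-derivative vanishes there too), so we should expect linear-in-$\vartheta$ control for small temperatures, while a uniform bound in terms of $(1+|F|)$ should hold for all temperatures; the minimum $\vartheta \wedge 1$ then captures both regimes simultaneously.

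First I would treat $\partial_F W^{\rm cpl}$. By the fundamental theorem of calculus in the temperature variable and \ref{C_zero_temperature} (which gives $\partial_F W^{\rm cpl}(F,0) = 0$), write
\begin{equation*}
\partial_F W^{\rm cpl}(F,\vartheta) = \int_0^\vartheta \partial_{F\vartheta} W^{\rm cpl}(F,s)\, \di s,
\end{equation*}
which is legitimate since $\partial_{F\vartheta} W^{\rm cpl}$ extends continuously to $\R_+$ by \ref{C_heatcap_cont}. Using the second bound in \ref{C_bounds}, $|\partial_{F\vartheta} W^{\rm cpl}(F,s)| \le C(1+|F|)/(s \vee 1) \le C(1+|F|)$, integration over $s \in [0,\vartheta]$ gives $|\partial_F W^{\rm cpl}(F,\vartheta)| \le C\vartheta(1+|F|)$. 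For the alternative bound when $\vartheta \ge 1$, split the integral as $\int_0^1 + \int_1^\vartheta$: on $[0,1]$ the integrand is bounded by $C(1+|F|)$ contributing $C(1+|F|)$, and on $[1,\vartheta]$ the integrand is bounded by $C(1+|F|)/s$, contributing $C(1+|F|)\log\vartheta$. This only gives logarithmic growth, not a uniform bound, so instead I would use assumption \ref{C_lipschitz} with $\tilde F = \Id$ (or any fixed matrix) together with continuity of $\partial_F W^{\rm cpl}$; actually the cleaner route is: \ref{C_lipschitz} says $W^{\rm cpl}(\cdot,\vartheta)$ has at most quadratic growth with a Lipschitz-type constant $C(1+|F|+|\tilde F|)$, and combined with the first bound $|\partial_F^2 W^{\rm cpl}| \le C$ in \ref{C_bounds} (so $\partial_F W^{\rm cpl}$ is globally Lipschitz in $F$ uniformly in $\vartheta$), plus $\partial_F W^{\rm cpl}(F,0)=0$ — hmm, this still needs a temperature-independent handle. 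The correct argument: from \ref{C_lipschitz} applied with $\tilde F$ near $F$, dividing by $|F-\tilde F|$ and letting $\tilde F \to F$ yields $|\partial_F W^{\rm cpl}(F,\vartheta)| \le C(1+|F|)$ for \emph{all} $\vartheta \ge 0$. Combining with the linear bound, $|\partial_F W^{\rm cpl}(F,\vartheta)| \le C(\vartheta \wedge 1)(1+|F|)$.

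Next I would handle $\partial_F W^{\rm in}$. Differentiating \eqref{Wint} in $F$ gives $\partial_F W^{\rm in}(F,\vartheta) = \partial_F W^{\rm cpl}(F,\vartheta) - \vartheta\, \partial_{F\vartheta} W^{\rm cpl}(F,\vartheta)$. The first term is already controlled by $C(\vartheta\wedge 1)(1+|F|)$. For the second, the bound $|\partial_{F\vartheta} W^{\rm cpl}(F,\vartheta)| \le C(1+|F|)/(\vartheta \vee 1)$ from \ref{C_bounds} gives $\vartheta\,|\partial_{F\vartheta} W^{\rm cpl}(F,\vartheta)| \le C\vartheta(1+|F|)/(\vartheta\vee 1) = C(\vartheta \wedge 1)(1+|F|)$, since $\vartheta/(\vartheta\vee 1) = \vartheta\wedge 1$. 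Summing the two contributions yields $|\partial_F W^{\rm in}(F,\vartheta)| \le C(\vartheta\wedge 1)(1+|F|)$, and adding the two estimates gives \eqref{est:coupl}.

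I do not expect any serious obstacle here — the statement is a routine consequence of the hypotheses. The only mildly delicate point is obtaining the temperature-\emph{uniform} bound $|\partial_F W^{\rm cpl}(F,\vartheta)| \le C(1+|F|)$, which one must extract from \ref{C_lipschitz} (a statement about differences of $W^{\rm cpl}$ in the $F$-slot) by a difference-quotient argument using the $C^2$-regularity in \ref{C_regularity}; once that is in hand, the interpolation with the linear-in-$\vartheta$ bound via $\vartheta\wedge 1$ is immediate. Care should also be taken that all manipulations are justified at $\vartheta = 0$ by the continuous-extension statements in \ref{C_heatcap_cont} and the remark immediately following it.
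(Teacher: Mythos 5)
Your proposal is correct, and for the internal-energy term it is exactly the paper's argument: differentiate \eqref{Wint} in $F$ and use the second bound in \ref{C_bounds} together with $\vartheta/(\vartheta\vee 1)=\vartheta\wedge 1$. The difference lies in the coupling term: the paper does not prove the bound $\vert \partial_F W^{\rm cpl}(F,\vartheta)\vert \le C(\vartheta\wedge 1)(1+\vert F\vert)$ at all, but simply cites \cite[Lemma~3.4]{RBMFMK}, whereas you reconstruct it from the hypotheses of this paper — the fundamental theorem of calculus in the temperature variable (legitimate by the continuous extension in \ref{C_heatcap_cont} and $\partial_F W^{\rm cpl}(F,0)=0$ from the remark after \ref{C_heatcap_cont}) gives the small-temperature bound $C\vartheta(1+\vert F\vert)$, and a difference-quotient argument from \ref{C_lipschitz}, using that $GL^+(3)$ is open and $W^{\rm cpl}(\cdot,\vartheta)$ is $C^1$ there, gives the temperature-uniform bound $C(1+\vert F\vert)$; interpolating the two regimes yields the $\vartheta\wedge 1$ factor. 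You correctly abandon the dead end of splitting the $\vartheta$-integral (which only gives logarithmic growth), and the rest of the manipulations (including the behaviour at $\vartheta=0$) are justified by \ref{C_regularity} and \ref{C_heatcap_cont}. So your version buys a self-contained proof at the cost of a slightly longer argument, while the paper's version is a two-line reduction to an external lemma; both are valid.
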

\begin{proof}
The statement without the second term on the left-hand  side \EEE has  already been shown in \cite[Lemma~3.4]{RBMFMK}.
The same bound also holds true for the internal \ZZZ energy. \EEE
In fact, using \eqref{Wint}, \ref{C_bounds}, and \cite[Lemma~3.4]{RBMFMK} we have  that \EEE
\begin{align*}
\vert \partial_F W^{\rm in}(F, \vartheta \EEE) \vert \leq \vert \partial_F W^{\rm cpl}(F, \vartheta \EEE) \vert + \vert \vartheta \partial_{F \vartheta} W^{\rm cpl}(F,  \vartheta \EEE) \vert \leq C ( \vartheta \EEE \wedge 1 ) (1 + \vert F \vert),
\end{align*}
as desired.
\end{proof}

\begin{lemma}
  There exists a constant $C>0$ such that for all $F \in GL^+(3)$, $\dot F \in \R^{3 \times 3}$, and $ \vartheta \EEE \geq 0$ it holds
   \begin{align}
    \partial_F W^{\rm cpl} (F, \vartheta \EEE)  : \dot F &= \frac{1}{2} F^{-1}  \partial_F W^{\rm cpl} ( F,  \vartheta \EEE) : (F^T \dot F + \dot F^T F), \label{writecplviadotc} \\
      \vert \partial_F W^{\rm cpl} (F, \vartheta \EEE): \dot F \vert
    &\leq C ( \vartheta \EEE \wedge 1) |F^{-1}| (1 + |F|) \ZZZ (\xi(F, \dot F,  \vartheta))^{1/2}. \EEE \label{avoidKorn}
  \end{align}
\end{lemma}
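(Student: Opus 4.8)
The plan is to derive both statements from the frame indifference of $W^{\rm cpl}$ together with the structure assumptions on $R$ and $D$. First I would record the standard consequence of \ref{C_frame_indifference}: via the polar decomposition $F = QU$ with $Q\in SO(3)$ and $U = \sqrt{F^TF}$, one has $W^{\rm cpl}(F,\vartheta) = \hat W^{\rm cpl}(F^TF,\vartheta)$ where $\hat W^{\rm cpl}(C,\vartheta) \defas W^{\rm cpl}(\sqrt C,\vartheta)$; since the matrix square root is smooth on symmetric positive definite matrices and $W^{\rm cpl}$ is $C^2$ there by \ref{C_regularity}, the map $C \mapsto \hat W^{\rm cpl}(C,\vartheta)$ is $C^2$ for each fixed $\vartheta > 0$. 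Applying the chain rule and using the symmetry of $\partial_C\hat W^{\rm cpl}$ then yields the representation $\partial_F W^{\rm cpl}(F,\vartheta) = 2F\,\partial_C\hat W^{\rm cpl}(F^TF,\vartheta)$, in exact analogy with \eqref{chain_rule_Fderiv} for the dissipation potential $R$.

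With this representation at hand, \eqref{writecplviadotc} becomes a short computation. Writing $P \defas \partial_C\hat W^{\rm cpl}(F^TF,\vartheta)$, I would observe that $\partial_F W^{\rm cpl}(F,\vartheta) : \dot F = 2P : F^T\dot F = P : (F^T\dot F + \dot F^T F)$, where the last identity uses that $P$ is symmetric, so it only sees $\sym(F^T\dot F) = \tfrac{1}{2}(F^T\dot F + \dot F^T F)$; on the other hand $F^{-1}\partial_F W^{\rm cpl}(F,\vartheta) = 2P$, so $\tfrac{1}{2} F^{-1}\partial_F W^{\rm cpl}(F,\vartheta) : (F^T\dot F + \dot F^T F)$ produces exactly the same quantity. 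For $\vartheta = 0$ the identity is trivial since $\partial_F W^{\rm cpl}(\cdot,0) \equiv 0$ by the remark following \ref{C_heatcap_cont}.

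For \eqref{avoidKorn} I would simply estimate the right-hand side of \eqref{writecplviadotc}. The Cauchy--Schwarz inequality gives $|\partial_F W^{\rm cpl}(F,\vartheta) : \dot F| \le \tfrac{1}{2} |F^{-1}|\,|\partial_F W^{\rm cpl}(F,\vartheta)|\,|F^T\dot F + \dot F^T F|$. The middle factor is controlled by $C(\vartheta\wedge 1)(1+|F|)$ via \eqref{est:coupl}. The last factor equals $|\dot C|$ in the notation of \ref{D_quadratic}, and combining \eqref{diss_rate} with the coercivity bound $\ac|\dot C|^2 \le D(C,\vartheta)\dot C:\dot C$ from \ref{D_bounds} yields $|\dot C| \le \ac^{-1/2}\,\xi(F,\dot F,\vartheta)^{1/2}$. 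Multiplying the three estimates gives \eqref{avoidKorn}.

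The only step requiring genuine care is the factorization of $W^{\rm cpl}$ through the right Cauchy--Green tensor and the resulting $C^2$-regularity of $\hat W^{\rm cpl}$ that allows one to differentiate; everything else is routine bookkeeping. Since this factorization is the exact analogue of what is already recorded for $R$ in \eqref{chain_rule_Fderiv}, I do not anticipate a real obstacle — the lemma is essentially a packaging of frame indifference, the bound \eqref{est:coupl}, and the two-sided estimate \ref{D_bounds}.
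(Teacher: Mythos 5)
Your proposal is correct and follows essentially the same route as the paper: frame indifference gives the factorization $\partial_F W^{\rm cpl}(F,\vartheta)=2F\,\partial_C\hat W^{\rm cpl}(C,\vartheta)$ (which the paper simply imports from \cite[Equation~(3.17)]{RBMFMK} rather than rederiving via the chain rule as you do), the symmetry of $\partial_C\hat W^{\rm cpl}$ yields \eqref{writecplviadotc}, and \eqref{avoidKorn} then follows by taking absolute values and combining \eqref{est:coupl}, \eqref{diss_rate}, and the lower bound in \ref{D_bounds} exactly as you describe.
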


\begin{proof}
  By frame indifference, see \ref{C_frame_indifference}, there exists a potential  $\hat W^{\rm cpl} \colon (GL^+(3) \cap \EEE \R^{3 \times 3}_{\rm sym})  \times \R_+ \to \R$  such that for any $F \in  GL^+(3)\EEE$ and $\vartheta \geq 0$ it holds that \EEE
  \begin{equation*}
    \partial_F W^{\rm cpl} (F, \vartheta \EEE) = 2F \partial_C \hat W^{\rm cpl}(C, \vartheta \EEE),
  \end{equation*}
  where $C \ZZZ = \EEE F^T F$ is the Cauchy-Green tensor,   see \cite[ Equation~(3.17)\EEE]{RBMFMK}. Along these lines, by the symmetry of $\partial_C \hat W^{\rm cpl} $, \EEE we  can then show for \EEE any $\dot F \in \R^{3 \times 3}$
  \begin{equation*}
    F \partial_C \hat W^{\rm cpl}(C,  \vartheta \EEE) : \dot F = \frac{1}{2} \partial_C \hat W^{\rm cpl}(C, \vartheta \EEE) : (F^T \dot F + \dot F^T F).
  \end{equation*}
 Combining the previous \OOO two equalities \EEE we get  \EEE \eqref{writecplviadotc}.
  Then, \eqref{avoidKorn} follows by \EEE taking absolute values, using \eqref{est:coupl}, \eqref{diss_rate}, and the lower bound in \ref{D_bounds}.
\end{proof}

Recall \eqref{mechanical} \GGG and \eqref{eq: data}. \EEE \ZZZ Given \GGG $t \in I$, \EEE define $ \mathcal F^t_h(w) \EEE \defas \mathcal{M}_h(w) - \int_{\Omega_h}  g_h^{3D} (t) \EEE (w_3-x_3) \di x$  for all \OOO $w \in \mathcal{W}_{\id}^h$\EEE.
\begin{lemma}\label{lem:forceestimate}
  There exists a constant $C > 0$ such that for all $h > 0$, $t \in I$, and $w \in \mathcal{W}^h_\id$ it holds that  
  \begin{align}
\Vert  w_3 - x_3 \EEE \Vert_{H^1(\Omega_h)}^2 &\leq C h^{-2} \mathcal{M}_h(w), \label{ineq:force} \\
 \left\vert \int_{\Omega_h}  g^{3D}_h \EEE(t) (w_3  - x_3) \dx \right\vert
    &\leq  \min\{  \mathcal  F^t_h(w),  \EEE \mathcal{M}_h(w) \} +  C h^{5} .\label{forceestimate}
\end{align}
\end{lemma}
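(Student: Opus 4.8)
The plan is to first establish the Poincaré-type bound \eqref{ineq:force} and then use it together with Young's inequality and the scaling of the force from \eqref{forcebound} to deduce \eqref{forceestimate}. For \eqref{ineq:force}, I would apply the standard Korn inequality on thin domains \eqref{standardkornscaling} to the displacement $u \defas w - \id$, which vanishes on $\Gamma_D^h$; this gives $\Vert \nabla u \Vert_{L^2(\Omega_h)}^2 \le C h^{-2} \Vert \sym \nabla u \Vert_{L^2(\Omega_h)}^2$. Since $\sym \nabla u = \sym(\nabla w - \Id)$ and, by \ref{W_lower_bound_spec}, $W^{\rm el}(\nabla w) \ge c_0 \dist^2(\nabla w, SO(3)) \ge c_0' \vert \sym(\nabla w) - \Id \vert^2$ pointwise (the latter being a standard consequence of the fact that near $SO(3)$ the distance controls the symmetric part of $F - \Id$, i.e.\ $\dist(F,SO(3)) \ge c\,\vert\sqrt{F^TF}-\Id\vert \ge c'\vert\sym F - \Id\vert$ for $F$ close to $SO(3)$, while for $F$ far from $SO(3)$ the bound $W^{\rm el}(F)\ge c_0\dist^2(F,SO(3))$ together with the quadratic lower bound in \ref{W_lower_bound} still dominates $\vert\sym F-\Id\vert^2$), we obtain $\Vert \sym \nabla u \Vert_{L^2(\Omega_h)}^2 \le C \int_{\Omega_h} W^{\rm el}(\nabla w)\,\dx \le C\,\mathcal{M}_h(w)$. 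Combining with Korn and then Poincaré (again using $u = 0$ on $\Gamma_D^h$) to pass from $\Vert \nabla u\Vert_{L^2}$ to $\Vert u\Vert_{H^1}$, restricting to the third component $u_3 = w_3 - x_3$, yields \eqref{ineq:force}. One must keep track that the Poincaré constant on $\Omega_h$ with zero trace on $\Gamma_D^h$ is bounded independently of $h$ (it degenerates only in the thin direction, which is not the constrained one), so no extra power of $h$ appears beyond the $h^{-2}$ from Korn.

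For \eqref{forceestimate}, I would estimate
\begin{equation*}
\Big\vert \int_{\Omega_h} g_h^{3D}(t)(w_3 - x_3)\,\dx \Big\vert \le \Vert g_h^{3D}(t)\Vert_{L^2(\Omega_h)} \Vert w_3 - x_3\Vert_{L^2(\Omega_h)} \le C\,h^{3}\,\Vert w_3 - x_3\Vert_{H^1(\Omega_h)},
\end{equation*}
where the last step uses \eqref{forcebound} (together with $f_h^{3D}$ being independent of $x_3$, so that $\Vert g_h^{3D}(t)\Vert_{L^2(\Omega_h)} = h^{1/2}\Vert f_h^{3D}(t)\Vert_{L^2(\Omega')} \le C h^{1/2} h^{3} \le C h^{3}$ after absorbing the harmless $h^{1/2}$, and using the uniform $W^{1,1}(I)$-in-time bound to get a pointwise-in-$t$ $L^2$ bound). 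Then by \eqref{ineq:force} and Young's inequality, for any $\delta > 0$,
\begin{equation*}
C h^{3}\Vert w_3 - x_3\Vert_{H^1(\Omega_h)} \le C h^{3}\big(h^{-1}\mathcal{M}_h(w)^{1/2}\big) = C h^{2}\mathcal{M}_h(w)^{1/2} \le \delta\,\mathcal{M}_h(w) + \frac{C^2}{4\delta} h^{4}.
\end{equation*}
Choosing $\delta = \tfrac12$ gives $\vert \int g_h^{3D}(w_3-x_3)\vert \le \tfrac12 \mathcal{M}_h(w) + C h^4 \le \mathcal{M}_h(w) + Ch^5$ for $h$ small (absorbing into the $h^5$ term is actually the other way — more carefully, one keeps $\tfrac12\mathcal M_h(w) + Ch^4$ and notes $Ch^4 \le \mathcal M_h(w)$ is not automatic, so instead one argues: the bound $\vert\int g_h^{3D}(w_3-x_3)\vert \le \tfrac12\mathcal M_h(w)+Ch^4$ directly gives $\vert\int g_h^{3D}(w_3-x_3)\vert \le \mathcal M_h(w)+Ch^4$, hence also $\le \mathcal M_h(w)+Ch^5$ after enlarging $C$ only if $h\le 1$; to get the $Ch^5$ in \eqref{forceestimate} as stated one uses the sharper estimate $Ch^3\Vert w_3-x_3\Vert_{H^1}$ split as $\delta\mathcal M_h(w) + C\delta^{-1}h^{6}\cdot h^{-2}$ — wait, that is $h^4$ again). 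I would therefore present the Young step as $C h^2 \mathcal M_h(w)^{1/2} \le \tfrac12 \mathcal M_h(w) + \tfrac12 C^2 h^4$ and then simply note $\tfrac12 C^2 h^4 \le C' h^5 / h \ge$ — the cleanest route is: since we only claim $\le \mathcal M_h(w) + Ch^5$, and $h^4 = h^{-1}\cdot h^5$, we instead bound $Ch^3\Vert w_3-x_3\Vert_{H^1(\Omega_h)}$ by $\tfrac12\mathcal M_h(w) + \tfrac12 C^2 h^4$ and absorb the extra factor by restricting to $h \le 1$ and using $h^4 \le h^5$... which is false. The correct statement to prove, consistent with the paper's use, is $\le \min\{\mathcal F_h^t(w),\mathcal M_h(w)\} + Ch^4$ with possibly a different exponent; assuming the exponent $5$ in \eqref{forceestimate} is as intended, it must come from a sharper force scaling $\Vert g_h^{3D}\Vert_{L^2(\Omega_h)}\le Ch^{7/2}$, i.e.\ the $h^{1/2}$ from the thin-domain measure is retained: then $Ch^{7/2}\cdot h^{-1}\mathcal M_h(w)^{1/2} = Ch^{5/2}\mathcal M_h(w)^{1/2}\le \tfrac12\mathcal M_h(w)+\tfrac12 C^2 h^5$, which gives exactly \eqref{forceestimate}.

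The bound involving $\mathcal F_h^t(w) = \mathcal M_h(w) - \int_{\Omega_h} g_h^{3D}(t)(w_3-x_3)\dx$ follows because $\vert\int g_h^{3D}(w_3-x_3)\vert \le \tfrac12\mathcal M_h(w) + Ch^5$ implies $\mathcal F_h^t(w) \ge \mathcal M_h(w) - \tfrac12\mathcal M_h(w) - Ch^5 = \tfrac12\mathcal M_h(w) - Ch^5$, hence $\mathcal M_h(w) \le 2\mathcal F_h^t(w) + Ch^5$, and feeding this back into the already-established estimate $\vert\int g_h^{3D}(w_3-x_3)\vert \le \tfrac12\mathcal M_h(w) + Ch^5 \le \mathcal F_h^t(w) + Ch^5$ gives the $\mathcal F_h^t(w)$ alternative; taking the minimum with the $\mathcal M_h(w)$ alternative completes \eqref{forceestimate}. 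The main obstacle is the first step: making precise the pointwise inequality $W^{\rm el}(F) \ge c\,\vert\sym F - \Id\vert^2$ uniformly over all $F \in GL^+(3)$ (not just near $SO(3)$), which requires combining the two different lower bounds \ref{W_lower_bound} and \ref{W_lower_bound_spec} and a compactness/continuity argument on the region where $F$ is neither close to $SO(3)$ nor large — this is where care is needed, though it is essentially classical (cf.\ \cite{FrieseckeJamesMueller:02}). Everything else is Korn, Poincaré, and Young with bookkeeping of $h$-powers.
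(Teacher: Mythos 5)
Your proof of \eqref{forceestimate} is, after the self-correction at the end, essentially the paper's argument: the correct input is $\Vert g^{3D}_h(t)\Vert_{L^2(\Omega_h)} \le C h^{7/2}$ (the factor $h^{1/2}$ from the thin-domain measure combined with the $W^{1,1}(I;L^2(\Omega'))$ bound in \eqref{forcebound} and the fundamental theorem of calculus in time), after which Young's inequality gives $\tfrac12\mathcal{M}_h(w)+Ch^5$, and the $\mathcal F^t_h$-alternative follows from the feedback estimate $\mathcal{M}_h(w)\le 2\mathcal F^t_h(w)+Ch^5$ exactly as you describe. So the second half is fine, modulo presentation.

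The genuine gap is in your proof of \eqref{ineq:force}. You want to combine the linear Korn inequality \eqref{standardkornscaling} for $u=w-\id$ with a pointwise bound $W^{\rm el}(F)\ge c\,\vert\sym F-\Id\vert^2$, but this bound is false: by frame indifference and \ref{W_lower_bound_spec}, $W^{\rm el}(Q)=0$ for every $Q\in SO(3)$, while $\vert\sym Q-\Id\vert>0$ whenever $Q\neq\Id$; and $\dist(F,SO(3))=\vert\sqrt{F^TF}-\Id\vert$ does not control $\vert\sym F-\Id\vert$ when $F$ is near a rotation far from the identity. The problematic region is thus not, as you suggest, where $F$ is "neither close to $SO(3)$ nor large", but $SO(3)$ itself away from $\Id$, and no compactness argument patches this. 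Nor does the estimate survive in integrated form: the bound $\Vert\sym(\nabla w)-\Id\Vert_{L^2(\Omega_h)}^2\le C\,\mathcal{M}_h(w)$ that your chain requires fails, e.g., for a plate clamped on $\Gamma_D^h$ and bent by a rotation angle of order $\sqrt h$ (there $\Vert\sym(\nabla w)-\Id\Vert_{L^2(\Omega_h)}^2\sim h^3$ while $\mathcal{M}_h(w)\sim h^4+h^{1+p/2}=o(h^3)$ since $p>4$). This is precisely the point where linearized Korn cannot replace geometric rigidity: the energy is blind to which rotation $\nabla w$ is close to. The paper's proof instead uses the quantitative rigidity estimate on thin domains, $\Vert\nabla w-Q\Vert_{L^2(\Omega_h)}^2\le Ch^{-2}\mathcal{M}_h(w)$ for a single $Q\in SO(3)$ (\cite[Equation~(35)]{lecumberry}, see also \cite[Theorem~6]{FJM_hierarchy}), and then exploits the Dirichlet condition $w=\id$ on $\Gamma_D^h$ to conclude $\Vert Q-\Id\Vert_{L^2(\Omega_h)}^2\le Ch^{-2}\mathcal{M}_h(w)$ (\cite[Equation~(53)]{lecumberry}); only then do Poincaré's inequality and restriction to the third component yield \eqref{ineq:force}. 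Without this rigidity step your argument does not close.
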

\begin{proof}
\OOO Fix $w \in \mathcal{W}_{\id}^h$. In  \cite[Equation~(35)]{lecumberry}  (\EEE see also \cite[Theorem~6]{FJM_hierarchy}) \EEE it is shown that there exists a rotation $Q \in SO(3)$ \OOO satisfying \EEE  
\begin{align*}
\Vert \nabla w - Q \Vert_{L^2(\Omega_h)}^2 \leq C h^{-2} \mathcal{M}_h(w ).
\end{align*}
Moreover, by the definition of $\mathcal W_\id^{\OOO h\EEE}$, see \eqref{admissibledeformations} \OOO a\EEE nd \cite[Equation~ (53)]{lecumberry} we derive that
\begin{align*}
\Vert Q - \Id \Vert_{L^2(\Omega_h)}^2 \leq Ch^{-2} \mathcal{M}_h(w).
\end{align*}
Thus, by Poincaré's inequality it  follows that \EEE 
\begin{align*}
\Vert w_3 - x_3 \Vert_{H^1(\Omega_h)}^2 \leq C \Vert \nabla w - \Id \Vert_{L^2(\Omega_h)}^2 \leq C h^{-2} \mathcal{M}_h(w),
\end{align*}
which  is \EEE \eqref{ineq:force}.    We now show \eqref{forceestimate}. \EEE  By the fundamental theorem of calculus in Bochner spaces,  \eqref{eq: data}, and \EEE  the  first bound \EEE in \eqref{forcebound} we have for all $t \in I$  that \EEE 
\begin{align}\label{forceonthindomain}
\Vert  g_h^{3D} \EEE(t) \Vert_{L^2(\Omega_h)}
&= \Big\|  g_h^{3D}\EEE(0) + \int_0^t  \partial_t g_h^{3D} \EEE(s) \di s \Big\|_{L^2(\Omega_h)} \nonumber \\
& \leq \Vert f_h^{3D}(0) \Vert_{L^2(\Omega)} h^{1/2} + \int_I \Vert  \partial_t \EEE f_h^{3D}(s) \Vert_{L^2(\Omega)} h^{1/2} \di s
\leq C h^3 h^{1/2} = C h^{7/2}.
\end{align}
By Hölder's inequality, \eqref{ineq:force}, \eqref{forceonthindomain}, and Young's inequality  we derive that \EEE
\begin{align}\label{forceonthindomain_part}
\left \vert \int_{\Omega_h}  g_h^{3D} \EEE(t) (w_3 - x_3) \dx \right \vert
&\leq \Vert  g_h^{3D} \EEE(t) \Vert_{L^2(\Omega_h)} \Vert w_3 - x_3 \Vert_{L^2(\Omega_h)} \notag \\
&\leq C  h^{7/2} h^{-1} \mathcal{M}_h(w)^{1/2}
\leq \OOO \frac{1}{2} \EEE \mathcal{M}_h(w) + \OOO C \EEE h^5.
\end{align}
 Therefore, \OOO using the definition of $\mathcal F_h^t$, we have \EEE
\begin{equation*}
\mathcal{M}_h(w)
= \mathcal F^t_h(w) +  \int_{\Omega_h}  g^{3D}_h\EEE(t) (w_3 - x_3) \dx  \leq  \mathcal F^t_h(w) + \frac{1}{2} \mathcal{M}_h(w) + C h^5.
\end{equation*}
 This shows $\mathcal{M}_h(w) \le 2\mathcal F^t_h(w) + Ch^5$, and employing \eqref{forceonthindomain_part} once again, we have \EEE
\begin{equation*}
\left\vert \int_{\Omega_h}  g^{3D}_h\EEE(t) (w_3 - x_3) \dx \right\vert
\leq \OOO \frac{1}{2}  \mathcal{M}_h(w)\EEE + C h^5 \leq \OOO\min\EEE\{ \mathcal F^t_h(w), \mathcal{M}_h(w) \} +  C h^{5}.
\end{equation*}
\ZZZ Thus, \eqref{forceestimate} holds. \EEE
\end{proof}

\subsection{A priori estimates for the regularized solution}\label{sec: 4.2}


  For the following, it is convenient to introduce the  \ZZZ $\alpha$-dependent \EEE \emph{total energy}:  for $\alpha \in [2,4]$ fixed, we  define \EEE $\toten^{(\alpha)}_h \colon \Wid \times L^{4/\alpha}_+(\Omega_h) \to \R_+$ by
\begin{equation}\label{toten}
  \toten^{(\alpha)}_h(w, \vartheta) \defas \mechen(w) +  \mathcal{W}^{{\rm in}, \alpha}_h(w,\vartheta) \quad \text{ with } \quad \mathcal{W}^{{\rm in}, \alpha}_h(w,\vartheta) \defas \frac{\alpha}{4} \int_{\Omega_h} \inten(\nabla w, \vartheta)^{4/\alpha} \di x.
\end{equation}

We emphasize that the exponent $4/\alpha$  in the internal energy \REV is of a purely technical nature. It is \EEE introduced to ensure that $\mechen$ and $\mathcal{W}^{{\rm in}, \alpha}_h$ are of the same order in $h$. In fact, the mechanical energy is of order $h^4$ per unit volume, \REV see \eqref{boundres:mech}, \EEE whereas $ \inten(\nabla w, \vartheta) \sim h^\alpha$, \REV see \eqref{inten_lipschitz_bounds} and \eqref{boundres:temperature}. \EEE Due to the exponent $4/\alpha$  and the fact that $\OOO|\EEE\Omega_h\OOO|\EEE \sim h$, we can therefore expect that both terms of  $\toten^{(\alpha)}_h(w, \vartheta)$ scale like $h^5$. 
\REV Additionally, the integrability of the temperature is improved, see Remark~\ref{rem:improved_temp_bounds} below, needed for the limiting passage. \EEE For $\alpha = 4$, we shortly \OOO write \EEE $\toten_h \OOO\defas\EEE \toten^{(4)}_{ h \EEE}$. We also refer to \cite[Section 3.3]{RBMFMK} for a further discussion in this direction. 

\OOO After a change of coordinates, \ZZZ the bound $ \sup\nolimits_{h>0} \GGG ( \EEE h^{-4}\mathcal{M}(y_0^h) +   h^{-2\alpha}  \Vert \theta_0^h \Vert_{L^{2}(\Omega)}^{2} \GGG ) \EEE < + \infty $,  \eqref{forcebound}--\eqref{assumption:temp}, \GGG and \eqref{inten_lipschitz_bounds} \EEE directly lead  to the following bounds on the rescaled quantities: \EEE
\begin{align}
      \mathcal{E}^{(\alpha)}_h(w_0^h,\vartheta_0^h) &\leq C_0 h^5 \label{improved_bounds_energy_initial}, \\
      \Vert  g^{3D}_h \EEE \Vert_{W^{1,1}(I;L^2( \Omega_h))} &\leq C_0 \ZZZ h^{1/2 + 3}, \EEE \qquad
      \ZZZ \Vert  \vartheta_\flat^h  \Vert_{L^2(\GGG I \times \Gamma_h \EEE)} \leq C_0 h^{1/2 + \alpha}, \EEE \label{improved_bounds_external}
    \end{align}
    for a constant $C_0 > 0$ independent of $h$.  All constants we encounter in the rest of the section are always \EEE independent of $h$, $\eps$,   and the time $t$, \EEE but might depend on $T$. 
\begin{proposition}[Bounds on the total energy]\label{lem: first a priori estimates}
Given $h \in (0, 1]$, let $w_0^h \in \mathcal W_\id$, $\vartheta_0^h \in L_+^2(\Omega_h)$,  $g^{3D}_h \in W^{1, 1}(I; L^2(\Omega_h))$, \EEE and  $\vartheta_\flat^h\EEE \in \ZZZ L^{2}(I;L^2_+(\Gamma_h)) \EEE$ such that  \eqref{improved_bounds_energy_initial}--\eqref{improved_bounds_external} hold. \EEE
Then, there exist constants $C>0, \, \rho>0$\OOO, \EEE and $h_0 \in (0, 1)$ such that for every $\eps \in (0, 1)$, $h \in (0,  h_0)\EEE$, and \ZZZ any \EEE weak solution $(w_\eps,\vartheta_\eps)$ of the $\eps$-regularized problem in the sense of Definition~\ref{def:weak_formulation_reg} it holds that
\begin{subequations}
\begin{align}
  \esssup_{t \in I} \mathcal{E}^{(\alpha)}_h(w_\eps(t),\vartheta_\eps(t)) &\leq C h^5, \label{bound:total} \\
    \Vert\nabla w_\eps\Vert_{L^\infty(I \times \Omega_h)}
  + \Vert(\nabla w_\eps)^{-1}\Vert_{L^\infty(I \times \Omega_h)} &\leq C, \label{Linfty_strain_inverse_bound}\\
    \essinf\nolimits_{ t \in I}  \inf\nolimits_{x \in \Omega_h } \EEE \det (\nabla w_\eps  (t,x)) \EEE  &\geq \rho,  \label{bound:determinant} \\
  \Vert    \vartheta_\eps \EEE \Vert_{L^\infty(I;L^1(\Omega_h))} & \leq C h^{1+\alpha}. \label{bound:temperaturel1}
\end{align}
\end{subequations}
\end{proposition}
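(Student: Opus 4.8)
The plan is to argue at the level of the $\eps$-regularized system of Definition~\ref{def:weak_formulation_reg}, to combine the mechanical energy balance \eqref{energybalanceregularized} with a suitably weighted test of the heat equation \eqref{weak_form_heat_unrescaled_reg}, and to run a continuation argument in time so that all constants close out independently of $h$ and $\eps$. The recurring device for $h$-independence is to transfer every Sobolev-type estimate to the \emph{fixed} domain $\Omega$ via $y_\eps(\cdot)=w_\eps(\cdot,h\,\cdot)$, noting $\nabla_h^2y_\eps=\nabla_h(\nabla_hy_\eps)$ and $|\nabla v|\le|\nabla_hv|$ for $h\le1$, so that the embedding $W^{2,p}(\Omega)\hookrightarrow C^{1,1-3/p}(\overline\Omega)$ (here $p>4$), the interpolation inequalities on $\Omega$, and the rigidity estimates quoted in the proof of Lemma~\ref{lem:forceestimate} (cf.\ \eqref{ineq:force}) all carry constants not degenerating as $h\to0$. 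For a constant $K$ to be fixed later, let $\Theta$ be the supremum of the times $s$ such that $\mathcal E^{(\alpha)}_h(w_\eps(\sigma),\vartheta_\eps(\sigma))\le Kh^5$ for a.e.\ $\sigma\le s$; by \eqref{improved_bounds_energy_initial} one has $\Theta>0$. On $[0,\Theta]$ the mechanical part gives $\mathcal M(y_\eps(\sigma))\le Kh^4$, so \ref{W_lower_bound}, \ref{W_lower_bound_spec}, \ref{H_bounds} together with geometric rigidity yield $R_\eps(\sigma)\in SO(3)$ with $\lVert\nabla_hy_\eps(\sigma)-R_\eps(\sigma)\rVert_{L^2(\Omega)}+|R_\eps(\sigma)-\Id|\le CK^{1/2}h$ and $\lVert\nabla_h^2y_\eps(\sigma)\rVert_{L^p(\Omega)}\le CK^{1/p}h^{4/p}$; interpolating between $L^2(\Omega)$ and $W^{1,p}(\Omega)$ forces $\lVert\nabla_hy_\eps(\sigma)-\Id\rVert_{L^\infty(\Omega)}\to0$ as $h\to0$, whence for $h<h_0(K)$ the bounds \eqref{Linfty_strain_inverse_bound}, \eqref{bound:determinant} hold on $[0,\Theta]\times\Omega_h$ \emph{with constants independent of $K$} (only $h_0$ depends on $K$), and the internal-energy part gives the auxiliary estimate $\lVert\vartheta_\eps(\sigma)\rVert_{L^{4/\alpha}(\Omega_h)}\le CK^{\alpha/4}h^{5\alpha/4}$.

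Next, testing \eqref{weak_form_heat_unrescaled_reg} with an approximation of $\indic_{[0,t]}$ and discarding the nonnegative terms $\int_{\Omega_h}W^{\rm in}(\nabla w_\eps(t),\vartheta_\eps(t))\ge\ac\lVert\vartheta_\eps(t)\rVert_{L^1(\Omega_h)}$ (by \eqref{inten_lipschitz_bounds}) and $\kappa\int_0^t\!\int_{\Gamma_h}\vartheta_\eps$ leads to
\[
  \ac\,\lVert\vartheta_\eps(t)\rVert_{L^1(\Omega_h)}\le\lVert W^{\rm in}(\nabla w_\eps(0),\vartheta_{0,\eps})\rVert_{L^1(\Omega_h)}+\int_0^t\!\!\int_{\Omega_h}\xi^{\rm reg}_{\eps,\alpha}\,\di x\,\di s+\Big|\int_0^t\!\!\int_{\Omega_h}\partial_FW^{\rm cpl}(\nabla w_\eps,\vartheta_\eps):\partial_t\nabla w_\eps\,\di x\,\di s\Big|+\kappa\!\int_0^t\!\!\int_{\Gamma_h}\vartheta_{\flat,\eps}\,\dH\,\di s .
\]
The first and last terms are $\le Ch^{1+\alpha}$ by \eqref{improved_bounds_external} and \eqref{inten_lipschitz_bounds}, $\int\!\int\xi^{\rm reg}_{\eps,\alpha}\le\int\!\int\xi$ is controlled by the energy balance below, and the adiabatic term is handled by \eqref{avoidKorn}, \eqref{Linfty_strain_inverse_bound}, \eqref{bound:determinant} and Young's inequality; a Gronwall argument (using $1+\alpha\le5$) then gives \eqref{bound:temperaturel1}.

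For the total energy, add \eqref{energybalanceregularized} to $\tfrac{\alpha}{4}$ times \eqref{weak_form_heat_unrescaled_reg} tested with the weight $(W^{\rm in}(\nabla w_\eps,\vartheta_\eps))^{4/\alpha-1}$ (equal to $1$ if $\alpha=4$). On the left this produces $\mathcal E^{(\alpha)}_h(w_\eps(t),\vartheta_\eps(t))$ plus the Fourier term $\int\! c_V\,\mathcal K(\nabla w_\eps,\vartheta_\eps)\nabla\vartheta_\eps\cdot\nabla\vartheta_\eps\ge0$ (using $c_V\in[\ac,\aC]$ from \eqref{sec_deriv} and \eqref{spectrum_bound_K}), a nonnegative residual dissipation, and a nonnegative boundary term. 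On the right: the initial energy is $\le C_0h^5$ by \eqref{improved_bounds_energy_initial} (this is where the exponent $4/\alpha$ is essential, as $\int W^{\rm in}(0)$ alone is only $O(h^{1+\alpha})$); the force term is treated by integrating by parts in time and invoking Lemma~\ref{lem:forceestimate} with \eqref{improved_bounds_external}, and is absorbed into the left up to a $K$-independent $Ch^5$; the boundary datum is absorbed by Young into $\delta\,\mathcal E^{(\alpha)}_h$ plus a $K$-independent $Ch^5$; and the two dissipation/adiabatic contributions $\int\!\int\xi^{\rm reg}_{\eps,\alpha}(W^{\rm in})^{4/\alpha-1}$ and $\int\!\int\partial_FW^{\rm cpl}:\partial_t\nabla w_\eps\big((W^{\rm in})^{4/\alpha-1}-1\big)$ — the delicate ones — are estimated using the improved integrability $\lVert\xi^{(\alpha)}\rVert_{L^{4/\alpha}(I\times\Omega_h)}^{4/\alpha}\le\int\!\int\xi$ (which is exactly what the truncation \eqref{diss_rate_truncated} buys), together with \eqref{avoidKorn}, \eqref{Linfty_strain_inverse_bound}, \eqref{bound:determinant}, Hölder and Young, so that they are absorbed into $\delta\big(\int\!\int\xi+\mathcal E^{(\alpha)}_h\big)$ plus a term $C\int_0^t\mathcal E^{(\alpha)}_h(w_\eps(s),\vartheta_\eps(s))\,\di s$ with $C$ \emph{independent of $K$}. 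Gronwall yields $\mathcal E^{(\alpha)}_h(w_\eps(t),\vartheta_\eps(t))\le C^\ast h^5$ on $[0,\Theta]$ with $C^\ast$ independent of $K$; choosing $K:=2C^\ast$ and then $h_0=h_0(K)$, this inequality reads $\le\tfrac12Kh^5$ on $[0,\Theta]$, and since $t\mapsto\mathcal E^{(\alpha)}_h(w_\eps(t),\vartheta_\eps(t))$ is by the combined balance the difference of an absolutely continuous and a nondecreasing function, the strict bound persists past $\Theta$, forcing $\Theta=T$. Undoing the rescaling gives \eqref{bound:total}, \eqref{Linfty_strain_inverse_bound}, \eqref{bound:determinant}, and, via the previous step, \eqref{bound:temperaturel1}.

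The main obstacle is precisely this last step: extracting the \emph{sharp} $h^5$ scaling of the full energy — in particular the $L^{4/\alpha}$-in-space control of $\vartheta_\eps$ hidden in $\mathcal W^{{\rm in},\alpha}_h$, which for $\alpha<4$ is invisible to the naive total-energy balance obtained by testing the heat equation with $1$ — while keeping every constant independent of the continuation threshold $K$. This forces the $4/\alpha$-weighted test function to be used in tandem with the improved integrability of the truncated dissipation rate $\xi^{(\alpha)}$ and a careful treatment of the adiabatic coupling term, including the role of the partial cancellation between the mechanical and thermal balances; all of this is a thin-domain refinement of the bookkeeping in \cite{MielkeRoubicek20Thermoviscoelasticity, RBMFMK}.
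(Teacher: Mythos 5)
Your overall architecture is essentially the paper's: you combine the mechanical balance \eqref{energybalanceregularized} with the heat equation tested against the $(4/\alpha-1)$-power of the internal energy, exploit the truncation through $(\xi^{(\alpha)})^{4/\alpha}\le\xi$, get the uniform strain bounds from the energy via the second-gradient term, and close with Gronwall; the only organizational differences are that you run a continuation argument in time (the paper instead first proves the unconditional suboptimal bound \eqref{preliminary_mech_bound}, which already gives \eqref{Linfty_strain_inverse_bound}--\eqref{bound:determinant} for all times, and only then upgrades), and that you derive \eqref{bound:temperaturel1} by a separate test rather than by H\"older from \eqref{bound:total} as in Step~4 of the paper. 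However, there is a genuine gap in your treatment of the Fourier term: you assert that after the weighted test it appears as $\int c_V\,\mathcal{K}(\nabla w_\eps,\vartheta_\eps)\nabla\vartheta_\eps\cdot\nabla\vartheta_\eps\ge 0$ and can simply be kept on the left. This is not true, because the weight is a function of $m_\eps=W^{\rm in}(\nabla w_\eps,\vartheta_\eps)$, whose spatial gradient contains, besides $c_V\nabla\vartheta_\eps$, the cross contribution $\partial_F W^{\rm in}(\nabla w_\eps,\vartheta_\eps):\nabla^2 w_\eps$ (cf.\ \eqref{chain_rule_nabla_inten}). The Fourier term therefore splits into a nonnegative principal part and a mixed term of the type $\chi''(m_\eps)\,\mathcal{K}(\nabla w_\eps,\vartheta_\eps)\nabla\vartheta_\eps\cdot(\partial_F W^{\rm in}:\nabla^2 w_\eps)$, which has no sign and must be estimated; this is precisely one of the delicate points of the proof, handled in \eqref{hcm_lower_bound}--\eqref{reg:est:5} via $\vartheta\wedge 1\le\vartheta^{1-4/(\alpha p)}$, Young with exponents $p/(p-2)$ and $p/2$, and the $O(h^5)$ control of $\int_{\Omega_h}|\nabla^2 w_\eps|^p$, and it is responsible for part of the Gronwall term $C\int_0^t\mathcal{E}^{(\alpha)}_h\,\di s$. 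Your sketch omits it entirely. Relatedly, using the unshifted weight $(W^{\rm in})^{4/\alpha-1}$ creates an admissibility problem for $\alpha\in(2,4)$, since then $\chi''(m_\eps)\sim m_\eps^{4/\alpha-2}$ is unbounded near $m_\eps=0$; the shifted weight $(h^\alpha+m_\eps)^{4/\alpha-1}$ used in the paper is what guarantees that the test function lies in $L^2(I;H^1(\Omega_h))$ and that the chain rule of \cite{MielkeRoubicek20Thermoviscoelasticity} applies.

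A second concrete error is the boundary term: absorbing $\kappa\int_0^t\int_{\Gamma_h}\vartheta_{\flat,\eps}\,\chi'(m_\eps)\,\di\haus^2\di s$ ``by Young into $\delta\,\mathcal{E}^{(\alpha)}_h$ plus $Ch^5$'' cannot work, because $\mathcal{E}^{(\alpha)}_h$ is a volume quantity and there is no trace control of $m_\eps^{4/\alpha-1}$ on $\Gamma_h$ in terms of it (any trace inequality would moreover carry $h$-dependent constants on the thin domain). The correct pairing is against the nonnegative boundary term that you yourself keep on the left: by \eqref{inten_lipschitz_bounds} one has $\kappa\,\vartheta_\eps\,\chi'(m_\eps)\ge \ZZZ C_0^{-1}\kappa\, m_\eps^{4/\alpha}\EEE$ up to harmless pieces, and Young with exponents $4/\alpha$ and $4/(4-\alpha)$ absorbs the datum into this term, the remainder being $C\int_0^t\int_{\Gamma_h}(\vartheta_\flat^h)^{4/\alpha}\le Ch^5$ by \eqref{improved_bounds_external}; this is exactly \eqref{dont remove}. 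With these two repairs (plus the case distinction $\alpha=2$ versus $\alpha\in(2,4)$ in the adiabatic estimate, as in \eqref{apriori_adiabatic} and the computation following it), your continuation argument closes in the same way as the paper's Gronwall step, though the continuation device itself is not needed once the unconditional preliminary bound of Step~1 is available.
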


\begin{proof} 
In \ZZZ Step 1  we derive a suboptimal   a priori bound on the total energy. In Step 2, we deduce uniform bounds on the strain (see \eqref{Linfty_strain_inverse_bound}--\eqref{bound:determinant}) which then in Step 3 allows us to obtain the optimal control \eqref{bound:total}. Eventually, in Step 4 we address the bound \eqref{bound:temperaturel1} on the temperature. \EEE 

\emph{Step 1 (Preliminary bound on the total energy):}
 Let us \EEE proceed similarly to \cite[Lemma 6.2]{MielkeRoubicek20Thermoviscoelasticity}.
 \GGG For a.e.~$t \in I$, \EEE we can test \eqref{weak_form_heat_unrescaled_reg} with $\vphi(s, x) \defas \indic_{[0, t]}(s)$ resulting in
\begin{align}\label{balancereg:testwith1}
  &\mathcal{W}^{\rm{in}}( w_\eps(t), \vartheta_\eps(t)) - \int_0^t \int_{\Omega_h}
      \xi_{\eps, \alpha}^{\rm reg}(\nabla w_\eps,  \partial_t \EEE \nabla w_\eps, \vartheta_\eps)
      + \partial_F W^{\rm{cpl}}(\nabla w_\eps, \vartheta_\eps) :  \partial_t \EEE \nabla w_\eps \di x \di s \nonumber\\
    &\quad=
    \mathcal{W}^{\rm{in}}(w_\eps(0), \vartheta_\eps(0)) + \kappa \int_0^t \int_{ \Gamma_h} (\vartheta_{\flat, \eps} - \vartheta_\eps)  \di \haus^2  \di s.
\end{align}
\ZZZ \GGG We now define for a.e.~$t \in I$  \EEE
\begin{equation}\label{def_Ealpha}
   E(t) \defas \mathcal{E}_h\EEE (w_\eps(t),\vartheta_\eps(t)) -   \int_{\Omega_h}  g\EEE^{3D}_h(t) (  ({w_\eps})_3 (t) \EEE - x_3) \dx.
 \end{equation}
Adding \eqref{balancereg:testwith1} to \eqref{energybalanceregularized}, integrating by parts, and using $\vartheta_\eps \geq 0$, $\xi_{\eps, \alpha}^{\rm reg} \leq \xi$, and $\vartheta_{\flat, \eps} \leq \vartheta_\flat^h\EEE$ we find that
\begin{equation}\label{Ebound}
 E\EEE(t)
  \leq  E\EEE(0)
    + \int_0^t \left( \int_{ \Gamma_h} \kappa \vartheta_\flat^h\EEE \di \mathcal{H}^2 - \int_{\Omega_h}  \partial_t g\EEE^{3D}_h(s) (({w_\eps})_3 - x_3) \dx  \right) {\rm d} s.
\end{equation}
 Consider now \EEE $m \geq 0$ and $h \geq 0$.
If $m \geq h^3$ it  follows \EEE that $\sqrt{h^{-3}m} \leq h^{-3} m$  and hence \EEE $\sqrt{m} \leq h^{-3/2} m$.
If $m \leq h^3$ we \OOO instead \EEE have $\sqrt{m} \leq h^{3/2}$.
Combining both statements leads to $\sqrt{m} \leq h^{-3/2} m + h^{3/2}$, and therefore,  with  $m = \frac{1}{h^2} \mechen(w_\eps(s)) $, \ZZZ we get  \EEE
\begin{equation*}
   \frac{\sqrt{\mechen(w_\eps(s))}}{h} \EEE \leq \frac{\mechen(w_\eps(s))}{h^{7/2}} + h^{3/2}
\end{equation*}
for  \GGG a.e.~$s \in I$. \EEE
In view of \eqref{ineq:force}, we thus get
\begin{align} \label{forceboundcalc}
  \int_0^t \int_{\Omega_h}  \partial_t g\EEE^{3D}_h(s) (({w_\eps})_3 - x_3) \dx  \di s
  &\leq  \int_0^t \Vert   \partial_t g\EEE^{3D}_h(s) \Vert_{L^2(\Omega_h)} \Vert w_\eps(s) - x_3 \Vert_{\OOO L^2\EEE(\Omega_h)} \di s \nonumber \\
  &\leq  \int_0^t \Vert  \partial_t g\EEE^{3D}_h(s) \Vert_{L^2(\Omega_h)} h^{-1} \sqrt{\mechen(w_\eps(s))} \di s \nonumber \\
  &\leq C \int_0^t h^{-7/2} \Vert  \partial_t g\EEE^{3D}_h(s) \Vert_{L^2(\Omega_h)} (\mathcal{M}_h(w_\eps(s)) + h^5) \di s.
\end{align}
 Moreover, by  assumption   \eqref{improved_bounds_external}  and Hölder's inequality \ZZZ  we have 
\begin{equation}\label{hatten wir schon}
  \Vert \vartheta_\flat^h \Vert_{L^1(I; L^1(\Gamma_h))}
   \leq   \big(T \,  \mathcal{H}^2(\Gamma_h)\big)^{1/2} \Vert \vartheta_\flat^h \Vert_{L^{2}(I; L^{2}(\Gamma_h))} \EEE
  \leq C h^{1 + \alpha}. \EEE
\end{equation}
\EEE
Shortly writing $\tilde g_h(s) \defas h^{-7/2} \Vert  \partial_t \EEE g^{3D}_h(s) \Vert_{L^2(\Omega_h)}$ we derive \EEE using \eqref{forceestimate}, \eqref{Ebound}, and \eqref{forceboundcalc} that
\begin{equation}\label{for once later}
   E\EEE(t) \leq  E\EEE(0) + C h^{1+\alpha} + C \int_0^t  \tilde g_h\EEE(s) (E(s) + h^5) \di s.\EEE
\end{equation}
Note that \ZZZ by \EEE  \eqref{improved_bounds_external} \EEE we have $\int_0^t  \tilde g_h\EEE(s) \di s \leq C$. \OOO Moreover,  \eqref{forceestimate} \EEE and  \eqref{improved_bounds_energy_initial} \ZZZ (for $\alpha = 4$) \EEE also \OOO show \EEE  $E(0) \leq \ZZZ C \EEE h^5$. \EEE
Then, by Gronwall's inequality (in integral form)    we derive that  
\begin{equation*}
 E\EEE(t)
  \leq \left( E\EEE(0) + C h^{1+\alpha} + C  h^5 \EEE \int_0^t \ZZZ \tilde g_h \EEE(s) \di s\right)
   \exp\left(C \int_0^t \ZZZ \tilde g_h \EEE(s) \di s \right)  
  \leq C (h^5 + h^{1+\alpha}). \EEE
\end{equation*}
 T\EEE he previous estimate  together with \EEE \eqref{forceestimate}  implies 
\begin{align}\label{preliminary_mech_bound}
\mathcal{E}_h(w_\eps(t),\vartheta_\eps(t)) &\leq C (h^5 + h^{1+\alpha}) \qquad \GGG \text{for a.e.~} t\in I\EEE,
\end{align}
which gives \EEE  \eqref{bound:total} in the case $\alpha = 4$.   \ZZZ We still need \EEE to prove \eqref{bound:total} for $\alpha \in [2, 4)$.  To this end, we need to repeat the estimates with \OOO $\mathcal{W}^{{\rm in}, \alpha}_h$ \EEE   instead of \OOO $\mathcal{W}^{{\rm in}, 4}_h\EEE$ in order to obtain the right scaling $h^5$ for the energy $\mathcal{E}^{(\alpha)}_h$. \EEE  Before we deal with this  task, \EEE let us first derive \eqref{Linfty_strain_inverse_bound} \ZZZ and \eqref{bound:determinant} \EEE which can be  in fact shown  already  with  \eqref{preliminary_mech_bound}\EEE.

\emph{Step 2 ($L^\infty$-bound on the strain and its inverse):}  
 We  \EEE  use  \eqref{preliminary_mech_bound} \EEE   to conclude \eqref{Linfty_strain_inverse_bound} for sufficiently small $h$.
To this end,   let us \EEE fix $t \in I$  for which \eqref{preliminary_mech_bound} holds true\EEE.
Let us shortly write $F_\eps(t) \defas \fint_{\Omega_h} \nabla w_\eps(t, x\EEE) \di x$, $G_\eps(t, x) \defas \nabla w_\eps(t, x) - F_\eps(t)$  for $x \in \Omega_h$ \EEE as well as $\tilde G_\eps(t,  \tilde x \EEE) \defas G_\eps(t, \tilde x_1 \EEE, \tilde x_2\EEE, h \tilde x_3\EEE)$ for $\tilde x \EEE \in \Omega$. \EEE
Using $p > 3$, Morrey's and Poincaré's inequality, a change of variables, assumption \ref{H_bounds}, and  \eqref{preliminary_mech_bound} \EEE we derive  that \EEE
\begin{align*}
  \left\Vert \nabla w_\eps(t)\EEE - F_\eps(t)\EEE \right\Vert _{L^\infty(\Omega_h)}
  &=  \Vert \tilde G_\eps  (t) \EEE  \Vert _{L^\infty(\Omega)}
  \leq C \Vert \tilde G_\eps (t)\EEE \Vert_{W^{1,p}(\Omega)}
  \leq C \Vert \nabla \tilde G_\eps (t)\EEE \Vert_{L^p(\Omega)} \nonumber \\
  &\leq C h^{-1/p} \Vert \nabla G_\eps (t)\EEE \Vert_{L^p(\Omega_h)} = C h^{-1/p} \Vert \nabla^2 w_\eps (t)\EEE \Vert_{L^p(\Omega_h)} \\
  &\leq C h^{-1/p}   \mechen(w_\eps(t))^{1/p} \EEE \OOO \leq C h^{-1/p} (h^{5/p} + h^{(1 + \alpha)/p}) \EEE \leq C h^{2/p},
\end{align*}
 where in the last step we also used that $\alpha\ge 2$. \EEE Let $\bar Q_\eps \in SO(3)$ be such that $|F_\eps\OOO(t)\EEE - \bar Q_\eps| = \dist(F_\eps\OOO(t)\EEE, SO(3))$.
Then, with the aforementioned bound, \ref{W_lower_bound_spec},   \eqref{preliminary_mech_bound},  and $\alpha \ge 2$ \EEE we see that
\begin{align*}
  \lVert \nabla w_\eps (t)\EEE - \bar Q_\eps \rVert _{L^\infty(\Omega_h)}
  &\leq
    \lVert \nabla w_\eps (t)\EEE - F_\eps (t)\EEE \rVert _{L^\infty(\Omega_h)}
    + |F_\eps (t)\EEE - \bar Q_\eps| \\
  &= C h^{2/p} + \Big(\fint_{\Omega_h} \dist^2(F_\eps (t)\EEE, SO(3)) \di x\Big)^{1/2} \\
  &\leq C h^{2/p} +
   \Big(2 \lVert \nabla w_\eps (t)\EEE - F_\eps (t)\EEE  \rVert _{L^\infty(\Omega_h)}^2
    +  2\fint_{\Omega_h}\EEE \dist^2(\nabla w_\eps (t)\EEE, SO(3)) \di x\Big)^{1/2} \\
  &\leq C \big(h^{2/p} + h^{-1/2} \sqrt{\mechen(w_\eps (t)\EEE)} \big)
  \leq C (h^{2/p} + h).
\end{align*}
 As $\det(\bar Q_\eps) = 1$, by the  local  Lipschitz-continuity of the determinant  we derive that \EEE there exists some $h_0>0$ independent of $\eps$ such that
$\Vert \det(\nabla w_\eps) - 1 \Vert_{L^\infty(\Omega_h) } \leq \frac{1}{2}$ for $h \in (0, h_0)$.  This implies \EEE  \eqref{bound:determinant} and
\begin{align*}
\Vert (\nabla w_\eps)^{-1} \Vert_{L^\infty(\Omega_h) } &\leq \Vert \det (\nabla w_\eps)^{-1} \Vert_{L^\infty(\Omega_h)} \Vert {\rm adj}(\nabla w_\eps) \Vert_{L^\infty(\Omega_h)} \leq C ,
\end{align*}
where ${\rm adj}(F)\in  \R^{3\times 3} \EEE $ denotes the adjugate matrix of $F\in  \R^{3\times 3} \EEE  $. This shows \eqref{Linfty_strain_inverse_bound}.
\EEE

\emph{Step 3 \GGG (Optimal b\EEE ound on the total energy):}
We now show the total energy bound \eqref{bound:total} with optimal scaling in $h$  in the case \EEE $\alpha \in [2,4)$.
Without further  notice, \EEE we assume that $h \in (0, h_0)$ with $h_0$ as in Step~2.
In the derivation of \eqref{bound:total}, we will follow  the lines of \EEE the proof of \cite[Lemma 3.15]{RBMFMK}\EEE.
 Nevertheless, t\EEE here are two main differences: On the one hand, we need to make sure that all constants are independent of the \ZZZ thickness $h$. \EEE
On the other hand, our setting is time-continuous while the one in \cite{RBMFMK} is time-discrete.

Consider the scalar function $\chi(s) \defas \alpha / 4 (h^\alpha + s)^{4/\alpha}$ for $s \geq 0$ and define $\vphi(s, x) \defas \indic_{[0, t]}(s)\EEE\chi'(m_\eps(s, x))$ for $m_\eps \defas \inten(\nabla w_\eps, \vartheta_\eps)$.
 Our goal is to show \EEE that $\vphi$ is an admissible test function for \eqref{weak_form_heat_unrescaled_reg},  i.e., $\varphi \in L^2(I; H^1(\Omega_h))$.  Indeed,  as $\chi'(m_\eps) = (h^\alpha + m_\eps)^{4/\alpha - 1}$ and $4/\alpha - 1 \leq 1$, we directly get \EEE $\chi'(m_\eps) \in L^2(I \times \Omega_h)$  by \eqref{inten_lipschitz_bounds}  and  $\vartheta_\eps \in  L^{2}(I; H^1(\Omega_h)) $ (see Definition~\ref{def:weak_formulation_reg}). \EEE 
Moreover, using \EEE $\chi''(m_\eps) = (4/\alpha - 1) (h^\alpha + m_\eps)^{4/\alpha - 2}$ and $4/\alpha - 2 \leq 0$,  it holds that  \EEE $\chi''(m_\eps) \in L^\infty(I \times \Omega_h)$.
\OOO With the chain rule we compute
\begin{equation}\label{chain_rule_nabla_inten}
  \nabla m_\eps = \partial_F W^{\rm in}(\nabla w_\eps, \vartheta_\eps) \GGG : \EEE\nabla^2 w_\eps + \partial_\vartheta W^{\rm in} (\nabla w_\eps, \vartheta_\eps) \nabla \vartheta_\eps.  
\end{equation}
\EEE
Consequently, \EEE we obtain $\nabla \vphi = \indic_{[0, t]} \chi''(m_\eps) \nabla m_\eps \in L^2(I\times\Omega_h)$ \OOO using  \eqref{sec_deriv}, \eqref{est:coupl}, \eqref{Linfty_strain_inverse_bound}, \ref{H_bounds}, and $\vartheta_\eps \in L^{2}(I; H^1(\Omega_h)) $. \EEE
This shows the  admissibility of \EEE $\vphi$ as a test function in \eqref{weak_form_heat_unrescaled_reg}.

 We continue by \EEE applying the chain rule  from \cite[Proposition 3.5]{MielkeRoubicek20Thermoviscoelasticity} \EEE to the convex functional $\mathcal{J}(\cdot) = \int_{\Omega_h} \chi(\cdot) \di x$ for $m_\eps$, \ZZZ where we recall that  \EEE$m_\eps \in L^2(I;H^1(\Omega_h)) \cap H^1(I; \REV (H^1(\Omega_h))^* \EEE)$ \ZZZ by Definition \ref{def:weak_formulation_reg}. \EEE This along with  \eqref{weak_form_heat_unrescaled_reg} (tested  with $\varphi$)  leads to 
\begin{align}\label{balancetestwithchi}
  \int_{\Omega_h} \chi(m_\eps(t)) \dx \EEE
  &= \int_{\Omega_h} \chi(m_\eps(0)) \dx
    + \kappa \int_0^t \int_{ \Gamma_h}
      (\vartheta_{\flat, \eps} \EEE - \vartheta_\eps) \chi'(m_\eps) \di \haus^2 \di s \notag \\
  &\phantom{\leq}\quad + \int_0^t \int_{\Omega_h}
    \Big(
      \xi_{\eps, \alpha}^{\rm reg}(\nabla w_\eps,  \partial_t \EEE \nabla w_\eps, \vartheta_\eps)
      + \partial_F W^{\rm{cpl}}(\nabla w_\eps, \vartheta_\eps) :  \partial_t \EEE \nabla w_\eps
    \Big) \chi'(m_\eps) \di x \di s \notag \\
  &\phantom{\leq}\quad - \int_0^t \int_{\Omega_h}  \hcmnoh \EEE(\nabla w_\eps, \vartheta_\eps) \nabla \vartheta_\eps \cdot \nabla \chi'(m_\eps) \di x \di s
\end{align}
\GGG for a.e.~$t \in I$. \EEE
\ZZZ By \EEE the definition of $\chi$ and the definition of  $\OOO\mathcal{W}^{\rm in, \alpha}_h\EEE$ in \eqref{toten} \ZZZ it holds that \EEE 
\begin{align}\label{eq: to conlc}
\mathcal{W}^{{\rm in}, \alpha}_h\big(w_\eps(t),\vartheta_\eps(t)\big) \le   \int_{\Omega_h} \chi(m_\eps(t)) \dx \qquad \GGG \text{for a.e.~} t \in I. \EEE
\end{align}  \EEE
\ZZZ  We now \EEE   derive a bound for every term appearing on the right-hand side of \eqref{balancetestwithchi}. \ZZZ With this, \eqref{eq: to conlc} allows us to control $\mathcal{W}^{{\rm in}, \alpha}_h(w_\eps(t),\vartheta_\eps(t))$. \EEE By the definition of $\chi$  and the fact that $\OOO|\Omega_h\OOO| \le Ch$, \EEE  we first obtain
\begin{equation}\label{reg:est:1}
  \int_{\Omega_h} \chi(m_\eps(0)) \dx
  = \int_{\Omega_h} \frac{\alpha}{4} (h^\alpha + m_\eps(0))^{4/\alpha} \di x
  \leq C \left(  h^5  + \mathcal{W}^{{\rm in}, \alpha}_h(w_\eps(0), \vartheta_\eps(0)\GGG ) \EEE \right).
\end{equation}
 Using \EEE $\vartheta_{\flat,\eps}\leq \vartheta_\flat^h\EEE$\OOO, $\chi' \geq 0$,  \eqref{inten_lipschitz_bounds},   \eqref{improved_bounds_external} \ZZZ (with H\"older's inequality), \EEE   Young's inequality with powers $4/\alpha$ and $4/(4-\alpha)$ and constant $\lambda = C_0^{-1} 2^{-4/\alpha}$, and   the inequality $\vert a+b \vert^{ q} \leq 2^{ q}( \vert a\vert^{ q} + \vert b \vert^{ q} )$ for $q \geq  1$,  it holds that
\begin{align}\label{dont remove}
  &\kappa \int_0^t \int_{ \Gamma_h}
    (\vartheta_{\flat,\eps} - \vartheta_\eps) \chi'(m_\eps)
  \di \haus^2 \di s
  \leq \int_0^t \int_{ \Gamma_h}
    (\vartheta_\flat^h\EEE - C_0^{-1} m_\eps) (h^\alpha + m_\eps)^{4/\alpha - 1}
  \di \haus^2 \di s \notag \\
  &\quad\leq  \int_0^t \int_{ \Gamma_h} \Big(\vartheta_\flat^h\EEE (h^\alpha + m_\eps)^{4/\alpha - 1} - C_0^{-1}  m_\eps^{4/\alpha}  \Big) \di \haus^2 \di s  \notag \\
  &\quad\leq
     \int_0^t \int_{ \Gamma_h}  \Big( C_\lambda(\vartheta_\flat^h\EEE)^{4/\alpha}
    +   2^{4/\alpha} \EEE    \lambda ( h^4 + m_\eps^{4/\alpha} ) -C_0^{-1}   m_\eps^{4/\alpha} \Big) \di \haus^2 \di s   \leq C  h^5.
\end{align}
\EEE Recall  the definitions \EEE of $\xi^{\rm reg}_{\eps, \alpha}$ in \eqref{dissipation:trucated2} and $\xi^{(\alpha) }$ in \eqref{diss_rate_truncated}, respectively\EEE.  W\EEE e have $(\xi^{\rm reg}_{\eps, \alpha})^{4 / \alpha} \leq (\xi^{(\alpha)})^{4/\alpha} \leq \xi$.
Hence, \EEE by Young's inequality with powers $4/\alpha$ and $4/(4-\alpha)$  and constant $\lambda \in (0, 1)$, as well as the definition of $\OOO\mathcal W_h^{\rm in, \alpha}\EEE$ in \eqref{toten} \EEE we can estimate
\begin{align}\label{reg:est:3}
  &\int_0^t \int_{\Omega_h}
    \xi^{\rm reg}_{\eps, \alpha}(\nabla w_\eps,  \partial_t \EEE \nabla w_\eps, \vartheta_\eps)
    \chi'(m_\eps)
  \di x \di s \nonumber \\
  &\quad\leq  \lambda \EEE \int_0^t \int_{\Omega_h}
      \xi(\nabla w_\eps,  \partial_t \EEE \nabla w_\eps, \vartheta_\eps)
    \di x \di s
    +  C_\lambda \EEE \int_0^t \int_{\Omega_h}
      (h^4+ m_\eps^{4/\alpha})
    \di x \di s  \nonumber  \\
  &\quad\leq  \lambda \EEE \int_0^t \int_{\Omega_h}
      \xi(\nabla w_\eps,  \partial_t \EEE \nabla w_\eps, \vartheta_\eps)
    \di x \di s
    + C h^5 + C \int_0^t
      \mathcal{W}^{{\rm in}, \alpha}_h(w_\eps(s), \vartheta_\eps(s))
    \di s.
\end{align}
In view of \eqref{hcm}--\eqref{spectrum_bound_K}, \eqref{Linfty_strain_inverse_bound}, and  \eqref{bound:determinant}, $ \hcmnoh \EEE(\nabla w_\eps,\vartheta_\eps)$ is uniformly bounded  from below \EEE (in the eigenvalue sense) \OOO for a.e.~$t \in I$ and every $x \in \Omega$\EEE.
Moreover, with \OOO \eqref{chain_rule_nabla_inten}  and  \eqref{sec_deriv} we \EEE derive that
\begin{align*}
  \nabla  \chi'(m_\eps) \EEE
  &= \chi''(m_\eps) \nabla m_\eps  = \left(\tfrac{4}{\alpha} - 1\right)
    (h^{\alpha} + m_\eps)^{4/\alpha-2}
    \Big(
      \partial_F W^{\rm in}(\nabla w_\eps, \vartheta_\eps) ) \GGG : \EEE \nabla^2 w_\eps
      - \vartheta_\eps \partial_\vartheta^2 W^{\rm cpl}(\nabla w_\eps, \vartheta_\eps) \nabla \vartheta_\eps
    \Big).
\end{align*}
Thus, employing \eqref{est:coupl}, \eqref{Linfty_strain_inverse_bound}, \OOO and \EEE \eqref{sec_deriv} we find that
\begin{equation}\label{hcm_lower_bound}
   \hcmnoh \EEE(\nabla w_\eps, \vartheta_\eps) \nabla \vartheta_\eps \cdot \nabla \chi'(m_\eps)
    \geq (\tfrac{4}{\alpha} - 1) (h^{\alpha} + m_\eps)^{4/\alpha-2}
    \big(
      C^{-1} \vert \nabla \vartheta_\eps \vert^2
      -C (\vartheta_\eps \wedge 1)
        \vert \nabla^2 w_\eps \vert
        \vert \nabla \vartheta_\eps \vert
    \big).
\end{equation}
By $\vartheta \wedge 1 \leq \vartheta^{1-4/(\alpha p)}$ for all $\vartheta \geq 0$  (recall $\alpha \ge 2$ and $p > 4$), \EEE \eqref{inten_lipschitz_bounds}, Young's inequality twice (firstly with power $2$ and constant $\lambda$, secondly with powers $p/(p-2)$ and $p/2$) we derive that
\begin{align}\label{needed_for_imporoved_weighted_l2}
  (\vartheta_\eps \wedge 1)
  \vert \nabla^2 w_\eps \vert
  \vert \nabla \vartheta_\eps \vert
 & \leq \lambda \vert \nabla \vartheta_\eps \vert^2 + C_\lambda m_\eps^{2-8/(\alpha p)}  \vert \nabla^2 w_\eps \vert^2   = \EEE \lambda \vert \nabla \vartheta_\eps \vert^2
    + C_\lambda m_\eps^{2(p-2)/p} m_\eps^{4(\alpha-2)/(\alpha p)}
    \vert \nabla^2 w_\eps \vert^2 \notag \\
  &\leq \lambda \vert \nabla \vartheta_\eps \vert^2
    + C_\lambda
    \left(
      m_\eps^2
      + m_\eps^{2 - 4/\alpha} \vert \nabla^2 w_\eps \vert^p
    \right).
\end{align}
Combining the previous two estimates, using \OOO $4/\alpha - 2 \leq 0\EEE$  and \EEE choosing $\lambda < C^{-2} \EEE$   with $C$ as in \eqref{hcm_lower_bound}, we then get  by \eqref{mechanical},  \ref{H_bounds} \ZZZ and \eqref{toten} \EEE
\begin{align}\label{reg:est:5}
  - \int_0^t \int_{\Omega_h}
     \hcmnoh \EEE(\nabla w_\eps, \vartheta_\eps) \nabla \vartheta_\eps \cdot \nabla \chi'(m_\eps) \di  x \di s
  &\leq C \int_0^t \int_{\Omega_h}
    (h^{\alpha} + m_\eps)^{4/\alpha-2}
    (
      m_\eps^2
      + m_\eps^{2 - 4/\alpha} \vert \nabla^2 w_\eps \vert^p
    )
  \di x \di s \nonumber \\
  &\leq C \int_0^t \int_{\Omega_h}
 \big( m_\eps^{4/\alpha}
      + \vert \nabla^2 w_\eps \vert^p \big) 
    \di x \di s \nonumber \\
  &\leq   C \int_0^t \left(
      \mathcal{W}^{{\rm in}, \alpha}_h(w_\eps(s), \vartheta_\eps(s))
      + \mathcal M_{\OOO h \EEE}(w_\eps(s)) \right)
    \di s \nonumber \\
  & = \EEE C \int_0^t \mathcal{E}^{(\alpha)}_h (w_\eps(s), \vartheta_\eps(s)) \di s.
\end{align}\GGG Our \EEE next goal is to show that for any $\lambda \in (0, 1)$ there is a constant $C_\lambda$ independent of $h$ and $\eps$ such that
\begin{align}\label{apriori_adiabatic}
  &\int_0^t \int_{\Omega_h}
      |\partial_F W^{\rm cpl}(\nabla w_\eps, \vartheta_\eps) :  \partial_t \EEE \nabla w_\eps
      \chi'(m_\eps)|
    \di x \di s \notag \\
  &\quad\leq
  C_\lambda h^5 + C_\lambda \int_0^t \mathcal{W}^{{\rm in}, \alpha}_h (w_\eps(s),\vartheta_\eps(s)) \di s
    + \lambda \int_0^t \int_{\Omega_h}
      \xi(\nabla w_\eps, \partial_t \nabla w_\eps, \vartheta_\eps)
    \di x \di s. 
\end{align} \EEE
We  first deal with \EEE the case $\alpha \in (2,4)$.
By \eqref{avoidKorn}, \eqref{Linfty_strain_inverse_bound}, Young's inequality with powers \OOO $\alpha/(\alpha-2)$ \EEE and \OOO $\alpha/2$ \EEE and constant $\lambda \in (0, 1)$,   and the inequality $(a + b)^q \leq \GGG 2^q \EEE (a^q + b^q)$ for any $a, \, b \geq 0$ and $q \ZZZ =  4/\alpha - 1  \OOO\in (0, 1)$, \EEE we derive  
\begin{align*}
  &\int_0^t \int_{\Omega_h}
      |\EEE\partial_F W^{\rm cpl}(\nabla w_\eps, \vartheta_\eps) : \partial_t \nabla w_\eps\EEE
      \chi'(m_\eps) |\EEE
    \di x \di s \\
  &\quad\leq \int_0^t \int_{\Omega_h} \left(
      C_\lambda (\vartheta_\eps \wedge 1)^{\alpha/(\alpha-2)}
      + \lambda \xi( \nabla \EEE w_\eps,  \partial_t \nabla \EEE w_\eps, \vartheta_\eps)^{\alpha/4}
    \right)
    (h^\alpha + m_\eps)^{4/\alpha-1}
  \di x \di s \\
    &\quad\leq \GGG C \EEE \int_0^t \int_{\Omega_h} \left(
      C_\lambda (\vartheta_\eps \wedge 1)^{\alpha/(\alpha-2)}
      + \lambda \xi( \nabla \EEE w_\eps,  \partial_t \nabla \EEE w_\eps, \vartheta_\eps)^{\alpha/4}
    \right) \OOO (h^{4-\alpha} + m_\eps^{4/\alpha-1}) \EEE
  \di x \di s.
\end{align*}
\OOO Consequently, \GGG using \EEE $\vartheta \wedge 1 \leq \vartheta^{(\alpha-2)/\alpha}$ for $\vartheta \geq 0$, \EEE \eqref{inten_lipschitz_bounds} and Young's inequality with powers $4/\alpha$ and $4/(4-\alpha)$
\begin{align*}
  & \int_0^t \int_{\Omega_h}
      |\EEE\partial_F W^{\rm cpl}(\nabla w_\eps, \vartheta_\eps) : \partial_t \nabla w_\eps
      \chi'(m_\eps)|\EEE
    \di x \di s \EEE\\
   &\quad \leq C \int_0^t \int_{\Omega_h} \left(
      C_\lambda (m_\eps h^{4-\alpha} + m_\eps^{4/\alpha})
      +  \lambda \xi(\nabla w_\eps, \partial_t \nabla w_\eps, \vartheta_\eps)^{\alpha/4}(h^{4 - \alpha} + m_\eps^{(4 - \alpha)/\alpha}) \EEE
    \right) \di x \di s \\
  &\quad\leq C \int_0^t \int_{\Omega_h}
   \Big(    C_\lambda (h^4 +  m_\eps^{4/\alpha})\EEE
      +  \lambda \xi( \nabla \EEE w_\eps,  \partial_t \nabla \EEE w_\eps, \vartheta_\eps) \Big)
    \di x \di s \\
  &\quad\leq  C_\lambda \EEE h^5
    +  C_\lambda \EEE \int_0^t \mathcal{W}^{{\rm in}, \alpha}_h (w_\eps(s),\vartheta_\eps(s)) \di s
    +  C \EEE \lambda \int_0^t \int_{\Omega_h}
      \xi( \nabla \EEE w_\eps,  \partial_t \nabla \EEE w_\eps, \vartheta_\eps)
    \di x \di s. 
\end{align*}
 This applied for $\lambda/C$ in place of $\lambda$ gives \EEE \eqref{apriori_adiabatic} for $\alpha \in (2, 4)$. \EEE
In the case $\alpha = 2$, we  similarly \EEE derive by \eqref{avoidKorn}, \EEE \eqref{Linfty_strain_inverse_bound}, and Young's inequality with power $2$ and constant $\lambda \in (0,1)$ \EEE that
\begin{align*}
      &\int_0^t \int_{\Omega_h}
          |\EEE\partial_F W^{\rm cpl}(\nabla w_\eps, \vartheta_\eps) :  \partial_t \nabla \EEE w_\eps
          \chi'(m_\eps) |\EEE
        \di x \di s \\
      &\quad\leq C\int_0^t \int_{\Omega_h}
          (\vartheta_\eps \wedge 1) \xi( \nabla \EEE w_\eps,  \partial_t \nabla \EEE w_\eps, \vartheta_\eps)^{ 1/2}
        (h^{ 2} + m_\eps)
      \di x \di s \\
      &\quad\leq  C \EEE \int_0^t \int_{\Omega_h} \Big(
           C_\lambda (m_\eps^2 + h^4) \EEE
          +  \lambda \xi( \nabla \EEE w_\eps,  \partial_t \nabla \EEE w_\eps, \vartheta_\eps) \Big)
        \di x \di s  \\
      &\quad\leq  C_\lambda h^5 \EEE +  C_\lambda \int_0^t  \mathcal{W}^{\rm in,2}_{h} \EEE (w_\eps(s),\vartheta_\eps(s)) \di s
        +  C \EEE \lambda \int_0^t \int_{\Omega_h}
          \xi( \nabla \EEE w_\eps,  \partial_t \nabla \EEE w_\eps, \vartheta_\eps)
        \di x \di s .
    \end{align*} \EEE
 In a similar fashion,  \ZZZ again using  \eqref{Linfty_strain_inverse_bound}, \EEE Young's inequality with power $2$ and constant $\lambda \in (0,1)$, and $\vartheta \wedge 1 \leq \vartheta^{2/\alpha}$ for $\vartheta \geq 0$ we also get
\begin{align}\label{eq: auch noch}
&\int_0^t \hspace{-0.05cm} \int_{\Omega_h} \hspace{-0.05cm}
          |\partial_F W^{\rm cpl}(\nabla w_\eps, \vartheta_\eps) :  \partial_t \nabla  w_\eps
        |         \di x \di s \notag \\ &\quad\le  C_\lambda \int_0^t \mathcal{W}^{{\rm in}, \alpha}_h (w_\eps(s),\vartheta_\eps(s)) \di s   +  \lambda \int_0^t \hspace{-0.05cm} \int_{\Omega_h} \hspace{-0.05cm}
      \xi( \nabla \EEE w_\eps,  \partial_t \nabla \EEE w_\eps, \vartheta_\eps)
    \di x \di s. 
        \end{align}
        \EEE    We are ready to conclude.
    Combining \eqref{balancetestwithchi}--\eqref{reg:est:3}, \EEE \eqref{reg:est:5}--\eqref{apriori_adiabatic}, and choosing   $\lambda = 1/3$  \EEE yields
    \begin{align*}
      \mathcal{W}^{{\rm in}, \alpha}_h(w_\eps(t),\vartheta_\eps(t))
      &\leq
        C h^5
        + C \mathcal{W}^{{\rm in}, \alpha}_h (w_\eps(0), \vartheta_\eps(0))
        + C \int_0^t \mathcal{E}^{(\alpha)}_h(w_\eps(s), \vartheta_\eps(s)) \di s \\
      &\phantom{\leq}\quad
          +   \frac{2}{3} \EEE  \int_0^t \int_{\Omega_h}
            \xi(\nabla w_\eps,  \partial_t \EEE \nabla w_\eps, \vartheta_\eps)
          \di x \di s . \EEE
    \end{align*}
  Summing the above inequality  and \EEE \eqref{energybalanceregularized},  using \eqref{eq: auch noch} for  $\lambda = 1/3$, \ZZZ and \eqref{improved_bounds_energy_initial} \EEE  we can derive similarly to \OOO the proof of \eqref{for once later} \EEE in Step~1
    \begin{equation*}
       E^{(\alpha)}(t) \EEE
        \leq E^{(\alpha)}(0) + C h^5 + C \int_0^t   \tilde{g}_h(s)  \big( E^{(\alpha)}(s) + h^5 \big) \di s,
    \end{equation*}
 \EEE   with   $\tilde{g}_h$ as in   \eqref{for once later}, where  $E^{(\alpha)}$ is defined as in \eqref{def_Ealpha} with $\mathcal{E}^{(\alpha)}_h$ in place of $\mathcal{E}_h = \mathcal{E}^{(4)}_h$. \EEE
    Then, Gronwall's inequality,  the assumption \eqref{improved_bounds_energy_initial}, \EEE and \eqref{forceestimate} lead to \eqref{bound:total}.

    \emph{Step 4 (Bound on temperature):} \EEE
It remains to show \eqref{bound:temperaturel1}.  In this regard\EEE, by Hölder's inequality \OOO with powers $(4-\alpha)/\alpha$ and $4/\alpha$\EEE, \eqref{inten_lipschitz_bounds}, \ZZZ \eqref{toten}, \EEE and \eqref{bound:total},  we have that \EEE
    \begin{align*}
    \int_{\Omega_h} \vert \vartheta_\eps(t) \vert \di x &\leq \OOO|\EEE\Omega_h\OOO|\EEE^{(4-\alpha)/4} \left(\int_{\Omega_h} \vert \vartheta_\eps(t) \vert^{4/\alpha} \di x \right)^{\alpha/4} \\
    &\leq C h^{(4-\alpha)/4} (\mathcal{E}^{(\alpha)}_h(w_\eps(t),\vartheta_\eps(t)))^{\alpha/4} \leq C h^{(4-\alpha)/4} h^{5  \alpha/4 \EEE} = C h^{1+\alpha}
    \end{align*}
\ZZZ for a.e.\ $t \in I$, \EEE    as desired.  
    \end{proof}

     W\EEE e next address  a priori \EEE bounds on the dissipation.
    \begin{lemma}[Bounds on the dissipation and  the \EEE strain rate]\label{lem: first a priori estimates2}
    Given $h > 0$, let $w_0^h \in \mathcal W_\id$, $\vartheta_0^h \in L^2_+(\Omega_h)$, $ g\EEE^{3D}_h \in W^{1, 1}(I; L^2_+(\Omega_{ h}))$, and  $\vartheta_\flat^h\EEE \in \ZZZ L^{2}(I;L^2_+(\Gamma_h)) \EEE$ such that   \eqref{improved_bounds_energy_initial}--\eqref{improved_bounds_external} hold. \EEE
    Then, there exist constant\OOO s \EEE $C>0$ and $h_0 \in (0,1]$ such that for all $\eps \in (0, 1)$, $h \in (0,   h_0 \EEE]$, and any weak solutions $(w_\eps, \vartheta_\eps)$ of the regularized problem in the sense of Definition~\ref{def:weak_formulation_reg} it holds that
    \begin{subequations}
    \begin{align}
      \int_I \int_{\Omega_h}
          R(\nabla w_\eps,  \partial_t \EEE \nabla w_\eps, \vartheta_\eps)
        \di x \di t
      &\leq C h^5, \label{bound:dissipation} \\
      \Vert  \partial_t \EEE \nabla w_\eps \Vert_{L^2(I\times \Omega_h)}
      &\leq C h^{3/2}. \label{bound:strainrate}
    \end{align}
    \end{subequations}
    \end{lemma}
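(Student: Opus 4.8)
The plan is to derive both estimates from the energy balance \eqref{energybalanceregularized} together with the a priori bounds of Proposition~\ref{lem: first a priori estimates}, which apply since a regularized solution $(w_\eps,\vartheta_\eps)$ exists by Proposition~\ref{prop:existenceregularized}; throughout, $h$ is taken below a threshold $h_0$ small enough for \eqref{bound:total}, \eqref{Linfty_strain_inverse_bound}, \eqref{bound:determinant}, for Theorem~\ref{pompescaling} to apply, and for $Ch^{5/p}\le 1/\rho$. First I would prove \eqref{bound:dissipation} by bounding each term on the right-hand side of \eqref{energybalanceregularized} (for a.e.\ $t\in I$) by $Ch^5$, up to a small multiple of $\int_0^t\int_{\Omega_h}\xi$ which is absorbed into the left-hand side; since $\xi=2R$ by \eqref{diss_rate} and $t\mapsto\int_0^t\int_{\Omega_h}\xi$ is nondecreasing, letting $t\nearrow T$ then yields the claim. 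The strain-rate bound \eqref{bound:strainrate} will follow by applying the thin-domain Korn inequality \eqref{kornsecondthin_dirichlet} slicewise in time to $u=\partial_t w_\eps(t,\cdot)$ with reference field $z=w_\eps(t,\cdot)$, and identifying $\sym((\nabla w_\eps)^T\nabla\partial_t w_\eps)$ with a multiple of the square root of the dissipation rate.

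For the terms in \eqref{energybalanceregularized}: the initial mechanical energy satisfies $\mathcal{M}_h(w_\eps(0))=\mathcal{M}_h(w_0^h)\le \mathcal{E}^{(\alpha)}_h(w_0^h,\vartheta_0^h)\le C h^5$ by \eqref{improved_bounds_energy_initial}. For the force term I would integrate by parts in time, using $(\partial_t w_\eps)_3=\partial_t((w_\eps)_3-x_3)$: the boundary-in-time contributions $\int_{\Omega_h}g_h^{3D}(t)((w_\eps)_3(t)-x_3)\,\di x$ at $t\in\{0,T\}$ are estimated by \eqref{forceestimate} together with $\mathcal{M}_h(w_\eps(t))\le Ch^5$ from \eqref{bound:total}, while the remaining term $\int_0^t\int_{\Omega_h}\partial_t g_h^{3D}\,((w_\eps)_3-x_3)\,\di x\di s$ is controlled via Hölder's inequality, the bound $\|(w_\eps)_3(s)-x_3\|_{L^2(\Omega_h)}\le Ch^{-1}\sqrt{\mathcal{M}_h(w_\eps(s))}\le Ch^{3/2}$ from \eqref{ineq:force} and \eqref{bound:total}, and $\int_I\|\partial_t g_h^{3D}(s)\|_{L^2(\Omega_h)}\,\di s\le Ch^{7/2}$ from \eqref{improved_bounds_external}, giving a total of $\le Ch^5$. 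Finally, the adiabatic term is handled by \eqref{eq: auch noch}: for $\lambda=\tfrac12$ one gets $\int_0^t\int_{\Omega_h}|\partial_F W^{\rm cpl}(\nabla w_\eps,\vartheta_\eps):\partial_t\nabla w_\eps|\,\di x\di s\le C\int_0^t\mathcal{W}^{{\rm in},\alpha}_h(w_\eps(s),\vartheta_\eps(s))\,\di s+\tfrac12\int_0^t\int_{\Omega_h}\xi\,\di x\di s\le Ch^5+\tfrac12\int_0^t\int_{\Omega_h}\xi\,\di x\di s$, where $\mathcal{W}^{{\rm in},\alpha}_h\le\mathcal{E}^{(\alpha)}_h\le Ch^5$ once more by \eqref{bound:total}. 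Dropping the nonnegative terms $\mathcal{M}_h(w_\eps(t))$ and $\eps|\partial_t\nabla w_\eps|^2$ on the left of \eqref{energybalanceregularized} and absorbing, we obtain $\int_0^t\int_{\Omega_h}\xi\,\di x\di s\le Ch^5$, hence $\int_I\int_{\Omega_h}R=\tfrac12\int_I\int_{\Omega_h}\xi\le Ch^5$, which is \eqref{bound:dissipation}.

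For \eqref{bound:strainrate}, fix $t\in I$ at which the above bounds hold. Since $w_\eps=\id$ on $I\times\Gamma_D^h$, the field $u\defas\partial_t w_\eps(t,\cdot)$ vanishes on $\Gamma_D^h$, and $z\defas w_\eps(t,\cdot)$ satisfies, with constants independent of $t$, $\eps$, and (small) $h$, the hypotheses of Theorem~\ref{pompescaling}: $\det(\nabla z)\ge\rho$ by \eqref{bound:determinant}, $\|\nabla z\|_{L^\infty(\Omega_h)}\le C\le 1/\rho$ by \eqref{Linfty_strain_inverse_bound} (after possibly shrinking $\rho$), and $\|\nabla^2 z\|_{L^p(\Omega_h)}\le C\,\mathcal{M}_h(w_\eps(t))^{1/p}\le Ch^{5/p}\le 1/\rho$ by \ref{H_bounds} and \eqref{bound:total}. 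Then \eqref{kornsecondthin_dirichlet} gives $\|\nabla\partial_t w_\eps(t)\|_{L^2(\Omega_h)}\le\tfrac{C}{h}\|\sym((\nabla w_\eps(t))^T\nabla\partial_t w_\eps(t))\|_{L^2(\Omega_h)}$. By \ref{D_quadratic} and \ref{D_bounds}, writing $C_\eps=(\nabla w_\eps)^T\nabla w_\eps$ and $\dot C_\eps=2\sym((\nabla w_\eps)^T\nabla\partial_t w_\eps)$, one has $|\sym((\nabla w_\eps)^T\nabla\partial_t w_\eps)|^2=\tfrac14|\dot C_\eps|^2\le\tfrac{1}{2c_0}R(\nabla w_\eps,\partial_t\nabla w_\eps,\vartheta_\eps)$. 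Squaring, integrating over $I$, and invoking \eqref{bound:dissipation} yields $\|\partial_t\nabla w_\eps\|_{L^2(I\times\Omega_h)}^2\le\tfrac{C}{h^2}\int_I\int_{\Omega_h}R\,\di x\di t\le\tfrac{C}{h^2}h^5=Ch^3$, i.e.\ \eqref{bound:strainrate}.

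The main obstacle is the slicewise application of the thin-domain Korn inequality: one must check that $z=w_\eps(t)$ meets the hypotheses of Theorem~\ref{pompescaling}\ref{item:kornsecondthin_dirichlet} with constants uniform in $t$, $\eps$, $h$, which is precisely what the uniform bounds \eqref{Linfty_strain_inverse_bound}--\eqref{bound:determinant} and the $W^{2,p}$-control from $\mathcal{M}_h(w_\eps(t))\le Ch^5$ provide, and then to note that the factor $h^{-1}$ in the Korn constant, combined with the $h^5$-scaling of the dissipation, produces exactly the $h^{3/2}$-scaling of the strain rate. The other steps are essentially a careful bookkeeping of the previously established estimates.
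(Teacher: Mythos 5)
Your proof is correct, and its skeleton coincides with the paper's: start from the energy balance \eqref{energybalanceregularized}, drop the nonnegative terms, control the adiabatic term through \eqref{eq: auch noch} together with \eqref{bound:total} and absorb the dissipation contribution, and finally deduce \eqref{bound:strainrate} from \eqref{bound:dissipation} by a slicewise-in-time application of Theorem~\ref{pompescaling}\ref{item:kornsecondthin_dirichlet} combined with \ref{D_quadratic}--\ref{D_bounds}, checking the hypotheses uniformly in $t$, $\eps$, $h$ via \eqref{Linfty_strain_inverse_bound}--\eqref{bound:determinant} and $\Vert \nabla^2 w_\eps(t)\Vert_{L^p(\Omega_h)}\le Ch^{5/p}$, exactly as the paper does. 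The one genuine difference is the force term: the paper estimates $\int_I\int_{\Omega_h} g_h^{3D}\,\partial_t(w_\eps)_3$ directly (see \eqref{forceboundcalc2}) using Poincar\'e's inequality, the generalized Korn inequality \eqref{kornsecondthin_dirichlet}, and Young's inequality with a small parameter, so that a multiple of $\int_I\int_{\Omega_h}R$ must be absorbed and the thin-domain Korn inequality is already needed in the proof of \eqref{bound:dissipation}; you instead integrate by parts in time and use the already established bounds \eqref{bound:total}, \eqref{ineq:force}, \eqref{forceestimate}, \eqref{forceonthindomain}, and \eqref{improved_bounds_external} to bound the temporal boundary terms and the $\partial_t g_h^{3D}$ term by $Ch^5$ outright, so no absorption (and no Korn) is required at that stage. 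Both routes yield the same scaling; yours makes the proof of \eqref{bound:dissipation} slightly more elementary, at the modest price of justifying the product rule in time for the $W^{1,1}(I;L^2)$ force against the $H^1(I;H^1)$ deformation and of passing from a.e.\ $t$ to $t=T$ by monotonicity of the dissipation, which you correctly note.
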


    \begin{proof}
    The argument  follows along the lines of \EEE \cite[Lemma 6.2]{MielkeRoubicek20Thermoviscoelasticity}, \OOO but we additionally need  to ensure the independence of the constants on the \ZZZ  thickness $h$. \EEE 
    Let us first show \eqref{bound:dissipation}.
    By  \eqref{diss_rate}, \EEE  \eqref{energybalanceregularized}, and the nonnegativity of the mechanical energy it follows that
    \begin{align}\label{mech_bound_diss}
      &2\int_I \int_{\Omega_h}
        R(\nabla w_\eps,  \partial_t \EEE \nabla w_\eps, \vartheta_\eps) \di x \di t
      \notag \\
      &\quad \leq \mathcal{M}_h(w_\eps(0))
        + \int_I \int_{\Omega_h} \OOO g^{3D}_h \EEE \partial_t  ({w_\eps})_3 \EEE \dx  \di t
        -  \int_I \int_{\Omega_h}
          \partial_F W^{\rm cpl} (\nabla w_\eps, \vartheta_\eps)
            :  \partial_t \EEE \nabla w_\eps
        \di x \di t.
    \end{align}
     By the estimate in \eqref{eq: auch noch} for  $\lambda =1/4$ \EEE and \eqref{bound:total} we \OOO then derive \EEE
    \begin{align}\label{dont remove 2}
      \int_I \int_{\Omega_h}
        |\partial_F W^{\rm cpl} (\nabla w_\eps, \vartheta_\eps) :  \partial_t \EEE \nabla w_\eps|
      \di x \di t
      &\leq  C \int_{I} \mathcal{W}^{{\rm in}, \alpha}_h (w_\eps(\OOO t ),\vartheta_\eps(\OOO t ))  \di \OOO t \EEE
        + \frac{1}{4} \int_I \int_{\Omega_h}
            \xi(\nabla w_\eps,  \partial_t \EEE \nabla w_\eps, \vartheta_\eps)
          \di x \di t \notag \\
      &\leq C h^5 +  \frac{1}{2} \int_I \int_{\Omega_h}
        R(\nabla w_\eps,  \partial_t \EEE \nabla w_\eps, \vartheta_\eps)
      \di x \di t.
    \end{align}
 We now \OOO employ the generalized Korn's inequality from \EEE Theorem~\ref{pompescaling}\ref{item:kornsecondthin_dirichlet} for $u =    \partial_t  w_\eps(t)$ and $z = w_\eps(t)$. \GGG To this end, notice \EEE that by \eqref{bound:total} \GGG and  \EEE\ref{H_bounds} we have  $\Vert \nabla^2 w_\eps(t) \Vert^p_{L^p(\Omega_h)}\leq C h^5$ \EEE  for \GGG a.e.~$t \in I$. \GGG Moreover, \eqref{Linfty_strain_inverse_bound} and Poincaré's inequality imply that $w_\eps(t) \in W^{2,p}(\Omega_h;  \R^3)  $ for a.e.~$t \in I$. 
    Hence, \EEE \eqref{Linfty_strain_inverse_bound}--\eqref{bound:determinant} and $\partial_t w_\eps = 0$ on $\Gamma_D^h$ ensure that all assumptions of the generalized Korn's inequality are satisfied. 
  Thus, by using Young's inequality with power $2$ and constant $\OOO\lambda$ for some $  \lambda \in (0,1)$, Poincaré's inequality, \ref{D_quadratic}, \ref{D_bounds}, and \eqref{forceonthindomain} we derive that
\begin{align}\label{forceboundcalc2}
\int_I \int_{\Omega_h}  g^{3D}_h(s)   \partial_t   ({w_\eps})_3   \dx  \di t &\leq C_{\lambda} h^{-2} \int_I \Vert g^{3D}_h(t) \Vert_{L^2(\Omega_h)}^2 \di t + C   \lambda h^2 \int_I \Vert \partial_t \nabla w_\eps (t) \Vert_{L^2(\Omega_h)}^2 \di t \notag \\
&\leq C_{\lambda} h^5 + C   \lambda  \int_I \int_{\Omega_h}  R(\nabla w_\eps,   \partial_t   \nabla w_\eps, \vartheta_\eps) \di x \di t.
\end{align}    
 Thus, choosing $\lambda = (2C)^{-1}$ \OOO for $C$ as above  and \OOO using  \eqref{improved_bounds_energy_initial} \ZZZ along with \EEE \eqref{mech_bound_diss}--\eqref{forceboundcalc2}  \OOO yields  \eqref{bound:dissipation}.
Eventually, we can employ the generalized Korn's inequality in the version of Theorem~\ref{pompescaling}\ref{item:kornsecondthin_dirichlet}  once again \OOO to  obtain \eqref{bound:strainrate}. \EEE
    \end{proof}

    \subsection{Improved a priori estimates for the temperature  in the regularized setting\EEE}\label{sec: 4.3}
     In this section, \ZZZ we \EEE derive \EEE a priori estimates  for \EEE the temperature $\vartheta_\eps$ and the internal energy $m_\eps = \mathcal{W}^{\rm in} (w_\eps,\vartheta_\eps)$.       As a preliminary step, we  state and prove a \EEE version of the anisotropic Gagliardo-Nirenberg interpolation \OOO on thin domains\EEE, see \cite[Lemma 4.2]{MielkeNaumann}  for the corresponding statement on  a fixed domain. \EEE

    \begin{lemma}[Anisotropic Gagliardo-Nirenberg inequality on thin domains]\label{lem:gagliardonirenberg}
      For every $r \in (1, 3)$ there exists a constant $C_r>0$ such that for all $\vphi \in L^\infty(I; L^1(\Omega_h)) \cap L^r(I; W^{1, r}(\Omega_h))$ it holds that
      \begin{equation}\label{GL_thin_domain}
        \Vert \vphi \Vert_{L^{ 4r/3}(I \times \Omega_h)}
        \leq C_r h^{3/(4r) -1/4}
          \Vert \vphi \Vert_{L^{\infty}(I;L^{1}(\Omega_h))}^{1/4}
            \Big(
              h^{-1} \Vert \vphi \Vert_{L^{\infty}(I;L^{1}(\Omega_h))}
              + h^{-1/r} \Vert \nabla \vphi \Vert_{L^r(I \times \Omega_h)}
            \Big)^{3/4}.
      \end{equation}
    \end{lemma}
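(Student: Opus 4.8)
The plan is to deduce \eqref{GL_thin_domain} from the anisotropic Gagliardo--Nirenberg inequality on the \emph{fixed} reference domain $\Omega = \Omega_1 = \Omega'\times(-1/2,1/2)$ of \cite[Lemma~4.2]{MielkeNaumann} by a rescaling argument, keeping careful track of the powers of $h$. The fixed-domain statement I would use reads: for every $r\in(1,3)$ there is a constant $C_r>0$ such that
\begin{equation*}
\|\psi\|_{L^{4r/3}(I\times\Omega)} \le C_r\,\|\psi\|_{L^\infty(I;L^1(\Omega))}^{1/4}\Big(\|\psi\|_{L^\infty(I;L^1(\Omega))}+\|\nabla\psi\|_{L^r(I\times\Omega)}\Big)^{3/4}
\end{equation*}
for all $\psi\in L^\infty(I;L^1(\Omega))\cap L^r(I;W^{1,r}(\Omega))$; this is in turn obtained from the spatial Gagliardo--Nirenberg inequality $\|\psi(t)\|_{L^{4r/3}(\Omega)}\le C\|\psi(t)\|_{L^1(\Omega)}^{1/4}(\|\psi(t)\|_{L^1(\Omega)}+\|\nabla\psi(t)\|_{L^r(\Omega)})^{3/4}$ — valid since $\tfrac34(\tfrac1r-\tfrac13)+\tfrac14=\tfrac3{4r}$ — by raising to the power $4r/3$, integrating over $I$, pulling out $\|\psi\|_{L^\infty(I;L^1)}^{r/3}$, and using the elementary bound $(a+b)^r\le C_r(a^r+b^r)$.

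Next I would introduce the rescaled function $\tilde\varphi(t,x',x_3)\defas\varphi(t,x',hx_3)$ on $I\times\Omega$ and record how the three relevant quantities transform under the change of variables $x_3\mapsto hx_3$: one has $\|\varphi\|_{L^{4r/3}(I\times\Omega_h)}=h^{3/(4r)}\|\tilde\varphi\|_{L^{4r/3}(I\times\Omega)}$ and $\|\varphi\|_{L^\infty(I;L^1(\Omega_h))}=h\,\|\tilde\varphi\|_{L^\infty(I;L^1(\Omega))}$, while for the gradient, since $\partial_3\tilde\varphi(t,x',x_3)=h\,(\partial_3\varphi)(t,x',hx_3)$ and the in-plane derivatives are unchanged, for $h\le1$ there holds the pointwise bound $|\nabla\tilde\varphi(t,x',x_3)|\le|(\nabla\varphi)(t,x',hx_3)|$, and hence $\|\nabla\tilde\varphi\|_{L^r(I\times\Omega)}\le h^{-1/r}\|\nabla\varphi\|_{L^r(I\times\Omega_h)}$. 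In particular $\tilde\varphi$ lies in the space required by the fixed-domain inequality. Substituting these relations into the fixed-domain estimate applied to $\psi=\tilde\varphi$ and multiplying through by $h^{3/(4r)}$ produces exactly \eqref{GL_thin_domain}, with the same constant $C_r$; its independence of $h$ is automatic because after rescaling we always work on the single fixed domain $\Omega$.

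I do not expect a serious obstacle here: the argument is a dilation together with bookkeeping of exponents. The one point deserving a moment of care is the anisotropy of the gradient under the rescaling — the vertical derivative of $\tilde\varphi$ acquires a factor $h$, and it is precisely because $h\le1$ that this is harmless, so that one may simply bound $|\nabla\tilde\varphi|$ by $|\nabla\varphi|$ evaluated at the rescaled point instead of invoking the scaled gradient $\nabla_h$; this is also what yields the exact prefactor $h^{-1/r}$ in front of $\|\nabla\varphi\|_{L^r(I\times\Omega_h)}$ on the right-hand side of \eqref{GL_thin_domain}.
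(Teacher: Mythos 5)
Your proposal is correct and follows essentially the same route as the paper: rescale $\varphi$ to the fixed domain $\Omega$ via $\tilde\varphi(t,x',x_3)=\varphi(t,x',hx_3)$, apply the fixed-domain anisotropic Gagliardo--Nirenberg inequality of \cite[Lemma 4.2]{MielkeNaumann}, and convert the three norms back with the scalings $h^{3/(4r)}$, $h$, and $h^{-1/r}$. The only (harmless) difference is that you additionally sketch a derivation of the fixed-domain inequality, which the paper simply cites.
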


    \begin{proof}
       Let $r$ and $\vphi$ be as in the statement. \EEE
      We reason by rescaling.
       Consider \EEE $\tilde \vphi \in L^\infty(I; L^1(\Omega)) \cap L^r(I; W^{1, r}(\Omega))$ given by $\tilde \vphi(t, x) \defas \vphi(t, x', hx_3)$ for every $(t, x) \in I \times \Omega$.
      Applying \cite[Lemma 4.2]{MielkeNaumann} for $N=3$,   $\theta = 3/4$, \EEE   $s = p = 4r/3$, $s_1 = \infty$, $p_1 = 1$, and $s_2 = p_2 = r$ leads to the existence of a constant $C_r$ independent of  $\vphi$ \EEE and $h$ such that
      \begin{equation}\label{GL_thick_domain}
        \Vert \tilde \vphi \Vert_{L^{ 4r/3}(I \times \Omega)}
        \leq C_r
          \Vert \tilde \vphi \Vert_{L^{\infty}(I;L^{1}(\Omega))}^{ 1/4}
            \Big(
              \Vert \tilde \vphi \Vert_{L^{\infty}(I;L^{1}(\Omega))}
              + \Vert \nabla \tilde \vphi \Vert_{L^r(I \times \Omega)}
            \Big)^{ 3/4}.\EEE
      \end{equation}
       A \EEE change of coordinates  yields \EEE
      $        \lVert \vphi \rVert _{L^{ 4r/3\EEE}(I \times \Omega_h)}
          = h^{ 3 / (4r)} \lVert \tilde \vphi \rVert _{L^{4r/3}(I \times \Omega)}$,         $\lVert \vphi \rVert _{L^\infty(I; L^1(\Omega_h))}
          = h \lVert \tilde \vphi \rVert _{L^\infty(I; L^1(\Omega))}$, and $\lVert \nabla \tilde \vphi \rVert _{L^r(I \times \Omega)}
          \leq  h^{-1/r}  \lVert \nabla \vphi \rVert _{L^r(I \times \Omega_h)}$.       This along with  \eqref{GL_thick_domain} gives \EEE \eqref{GL_thin_domain}.
    \end{proof}

    \begin{lemma}[Improved bounds on the temperature]\label{lem:improvedregularityregularized}
    Given $h \in (0, 1]$, let $w_0^h \in \mathcal W_\id$, $\vartheta_0^h \in L^2_+(\Omega_h)$, $ g\EEE^{3D}_h \in W^{1, 1}(I; L^2_+(\Omega_h\EEE))$, and  $\vartheta_\flat^h\EEE \in \ZZZ L^{2}(I;L^2_+(\Gamma_h)) \EEE$   such that \eqref{improved_bounds_energy_initial}--\eqref{improved_bounds_external} hold. \EEE     Then, for every $q \in [1, 5/3)$ and $r \in [1, 5/4)$ there exist constants $C_q>0$, $C_r>0$,  $C>0$, and $h_0 \in (0,1)$ \EEE such that for all $h \in (0,  h_0 \EEE]$, $\eps \in (0,1)$, and all weak solutions $(w_\eps,\vartheta_\eps)$ of the regularized problem in the  sense \EEE of Definition~\ref{def:weak_formulation_reg} it holds that
    \begin{subequations}
    \begin{align}
      \Vert \vartheta_\eps \Vert_{L^q(I\times \Omega_h)}
        + \Vert m_\eps \Vert_{L^q(I\times \Omega_h)}
        &\leq C_q h^{\alpha + 1/q}, \label{reg:temperature} \\
      \Vert \nabla \vartheta_\eps \Vert_{L^r(I\times \Omega_h)}
        + \Vert \nabla m_\eps \Vert_{L^r(I\times \Omega_h)}
        &\leq C_r h^{\alpha + 1/r}, \label{reg:temperaturegradient}\\
      \Vert \partial_t \EEE m_\eps \Vert_{L^1(I; \OOO(\EEE H^3 \EEE (\Omega_h)\OOO)\EEE^*)}
        &\leq C  h^{\alpha + 1}, \label{forAubin-Lion}
    \end{align}
    \end{subequations}
  where again \OOO we have \EEE set $m_\eps := \mathcal{W}^{\rm in} (w_\eps,\vartheta_\eps)$. \EEE    \end{lemma}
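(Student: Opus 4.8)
The strategy is the classical Boccardo–Gallouët technique for heat equations with $L^1$-data, carried out carefully so that every constant carries the correct power of $h$. The starting point is the energy–type identity obtained by testing the regularized heat equation \eqref{weak_form_heat_unrescaled_reg} with truncations of $m_\eps$. Concretely, for $k>0$ set $T_k(s) \defas (s\wedge k)\vee(-k)$ and test with $\vphi = \indic_{[0,t]} T_k(m_\eps)$ (which is admissible since $m_\eps \in L^2(I;H^1(\Omega_h))\cap H^1(I;H^{-1}(\Omega_h))$), applying the chain rule of \cite[Proposition 3.5]{MielkeRoubicek20Thermoviscoelasticity} to the convex primitive of $T_k$. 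Using the uniform ellipticity of $\hcmnoh(\nabla w_\eps,\vartheta_\eps)$ (from \eqref{Linfty_strain_inverse_bound}--\eqref{bound:determinant}) on the left, and on the right bounding $\int \xi^{\rm reg}_{\eps,\alpha}\le \int R \le Ch^5$ via \eqref{bound:dissipation}, bounding the adiabatic term by \eqref{eq: auch noch} and \eqref{bound:dissipation} again, and bounding the boundary and initial contributions by \eqref{improved_bounds_external} and \eqref{improved_bounds_energy_initial}, one obtains
\begin{equation*}
\int_0^T\int_{\{|m_\eps|\le k\}} |\nabla m_\eps|^2 \di x\di t \le C\,k\,h^{\alpha+1} + C h^5
\end{equation*}
for all $k\ge h^\alpha$, where the $k h^{\alpha+1}$ comes from $\|m_\eps\|_{L^\infty(I;L^1(\Omega_h))}\le Ch^{1+\alpha}$ (that is \eqref{bound:temperaturel1}). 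Combined with the $L^\infty L^1$ bound, this is exactly the input the Boccardo–Gallouët argument requires on a thin domain; the key is that the right-hand side scales like $h^{\alpha+1}$ (since $h^5 \le h^{\alpha+1}$ for $\alpha\le 4$) times $k$.

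\textbf{From the level-set estimate to the $L^q$ and $W^{1,r}$ bounds.} With the above, one runs the standard interpolation: decompose $\Omega_h\times I$ over dyadic level sets $\{2^{j}h^\alpha \le |m_\eps| < 2^{j+1}h^\alpha\}$, estimate their measure by Chebyshev against $\|m_\eps\|_{L^\infty L^1}$, and estimate $\int|\nabla m_\eps|^r$ on each by Hölder together with the displayed gradient bound. Here the anisotropic Gagliardo–Nirenberg inequality on thin domains, Lemma~\ref{lem:gagliardonirenberg}, replaces the usual Sobolev embedding and keeps track of the $h$-powers; one applies it to $\vphi = |m_\eps|$ (or to truncations thereof). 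Feeding the $L^\infty L^1$ bound $\|m_\eps\|_{L^\infty(I;L^1(\Omega_h))}\le Ch^{1+\alpha}$ into \eqref{GL_thin_domain} and optimizing over the truncation level, one arrives at $\|m_\eps\|_{L^q(I\times\Omega_h)}\le C_q h^{\alpha+1/q}$ for every $q<5/3$ and $\|\nabla m_\eps\|_{L^r(I\times\Omega_h)}\le C_r h^{\alpha+1/r}$ for every $r<5/4$; the exponents $5/3$ and $5/4$ arise precisely because the space-time dimension in the Boccardo–Gallouët bookkeeping is effectively $3+1$ with the scaling of the thin slab absorbed into the prefactors. The corresponding bounds for $\vartheta_\eps$ follow from those for $m_\eps$ using \eqref{inten_lipschitz_bounds} (i.e.\ $\ac\vartheta_\eps\le m_\eps\le \aC\vartheta_\eps$) for the $L^q$ bound, and from the chain rule identity \eqref{chain_rule_nabla_inten} together with \eqref{sec_deriv}, \eqref{est:coupl}, \eqref{Linfty_strain_inverse_bound}, Hölder, and \eqref{bound:total} (to control $\|\nabla^2 w_\eps\|_{L^p}$) for $\nabla\vartheta_\eps$ — exactly as in \cite[Proposition 6.3]{MielkeRoubicek20Thermoviscoelasticity}, but now with the $h$-scalings inserted.

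\textbf{The time-derivative bound.} For \eqref{forAubin-Lion} one reads off $\partial_t m_\eps$ directly from \eqref{weak_form_heat_unrescaled_reg}: for $\psi\in H^3(\Omega_h)$ (whose embedding into $W^{1,\infty}(\Omega_h)$ is used, with an $h$-independent constant after rescaling, since $3-3/2>1$), pair the equation against $\psi$ and estimate term by term — the conductivity term by \eqref{reg:temperaturegradient} and boundedness of $\hcmnoh$, the dissipation term by \eqref{bound:dissipation}, the adiabatic term by \eqref{eq: auch noch}/\eqref{bound:dissipation}, and the boundary term by \eqref{improved_bounds_external} — all of which are $O(h^{\alpha+1})$, and then take the supremum over $\|\psi\|_{H^3}\le 1$ and integrate in time.

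\textbf{Main obstacle.} The routine part is the Boccardo–Gallouët interpolation itself; the genuinely delicate point is ensuring that \emph{every} constant — in the truncation testing, in the dyadic summation, in Gagliardo–Nirenberg, and in the $H^3\hookrightarrow W^{1,\infty}$ embedding — is independent of $h$, with the $h$-powers appearing only through the explicit prefactors. This is where Lemma~\ref{lem:gagliardonirenberg} (with its explicit $h^{3/(4r)-1/4}$ factor) and the rescaling $x\mapsto(x',hx_3)$ do the essential work, and where one must be careful that the lower-order term $h^5$ on the right of the level-set estimate is genuinely dominated by $k h^{\alpha+1}$ in the regime $k\ge h^\alpha$, $\alpha\in[2,4]$, so that it does not spoil the optimal scaling. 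A secondary subtlety is that the estimates must be uniform in the regularization parameter $\eps$ as well (so that they survive the limit $\eps\to0$ in Section~\ref{sec: 4.4}), which is guaranteed because all the inputs \eqref{bound:dissipation}, \eqref{bound:total}, \eqref{bound:temperaturel1}, \eqref{improved_bounds_external} are themselves $\eps$-uniform.
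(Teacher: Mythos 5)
Your overall architecture matches the paper's: a Boccardo--Gallou\"et-type test in the regularized heat equation, the thin-domain Gagliardo--Nirenberg inequality of Lemma~\ref{lem:gagliardonirenberg} to close the interpolation, the chain rule \eqref{relationwtheta} to transfer gradient bounds from $m_\eps$ to $\vartheta_\eps$, and a duality pairing against $H^3(\Omega_h)$ for \eqref{forAubin-Lion}. The difference is that you test with plain truncations $T_k(m_\eps)$, whereas the paper tests with $\chi_\eta'(m_\eps)$, $\chi_\eta'(s)=1-(1+h^{-\alpha}s)^{-\eta}$, and this is exactly where your argument has a genuine quantitative gap. First, your claimed level-set estimate is not strong enough, and your own justification of it fails for $\alpha>2$: at the lowest relevant level $k\sim h^{\alpha}$ one has $k h^{\alpha+1}\sim h^{2\alpha+1}$, and $h^{5}\le h^{2\alpha+1}$ only if $\alpha\le 2$; for $\alpha\in(2,4]$ the additive $Ch^{5}$ is \emph{not} dominated by $Ckh^{\alpha+1}$ (at $\alpha=4$ it exceeds it by a factor $h^{-4}$). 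Second, and more seriously, your derivation of that estimate invokes only the uniform ellipticity of $\mathcal{K}(\nabla w_\eps,\vartheta_\eps)$, but the flux involves $\nabla\vartheta_\eps$ while $\nabla T_k(m_\eps)$ involves $\nabla m_\eps$; by \eqref{relationwtheta} the discrepancy produces the cross term $\int_{\{m_\eps\le k\}}(\vartheta_\eps\wedge 1)\,|\nabla^2 w_\eps|\,|\nabla m_\eps|$, which after Young's inequality and the natural bound $(m_\eps\wedge1)|\nabla^2 w_\eps|^2\le m_\eps+|\nabla^2 w_\eps|^p$ leaves a $k$-independent remainder of order $h^{1+\alpha}+h^{5}$. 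For the Boccardo--Gallou\"et bookkeeping (dyadic levels, or H\"older plus Lemma~\ref{lem:gagliardonirenberg} plus absorption) to yield $\Vert\nabla m_\eps\Vert_{L^r}\le C_r h^{\alpha+1/r}$, the $k$-free remainder must be of order $h^{1+2\alpha}$; equivalently, in the scaled variable $p_\eps=h^{-\alpha}m_\eps$ one needs $\int_{\{p_\eps\le\kappa\}}|\nabla p_\eps|^2\le C\kappa h$. Both $h^{5}$ (for $\alpha>2$) and $h^{1+\alpha}$ are too large, by the factors $h^{4-2\alpha}$ and $h^{-\alpha}$ respectively, so the scaling $h^{\alpha+1/r}$ does not come out of your intermediate claim.

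The paper's weight is designed precisely to supply the missing power of $h$: $\chi_\eta'\le 1$ and $\chi_\eta''\le\min\{\eta h^{-\alpha},\,m_\eps^{-1}\}$ damp the cross term, and multiplying through by $h^{\alpha}/\eta$ produces the weighted bound \eqref{weigthedpoincarebound} with right-hand side $Ch^{1+2\alpha}/\eta$, which is then combined with H\"older (with $\eta=(5-4r)/3$), Lemma~\ref{lem:gagliardonirenberg}, and an absorption step. Your truncation route can be repaired, but only by an extra argument making the cross term itself proportional to $k$ --- for instance bounding $(\vartheta_\eps\wedge1)^2\le C(k\wedge1)(m_\eps\wedge1)$ on $\{m_\eps\le k\}$ and then using H\"older in space together with $\sup_{t}\Vert\nabla^2 w_\eps(t)\Vert_{L^p(\Omega_h)}^2\le Ch^{10/p}$ and $\alpha\le 4$ --- which your proposal neither states nor replaces; as written, the argument would only deliver weaker $h$-powers. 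A secondary point: your parenthetical claim that the embedding $H^3(\Omega_h)\hookrightarrow W^{1,\infty}(\Omega_h)$ has an $h$-independent constant is not correct (for test functions depending on $x'$ alone the constant degenerates like $h^{-1/2}$); the duality estimate for \eqref{forAubin-Lion} is otherwise the same term-by-term argument as in the paper.
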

    \begin{proof}
    The proof is divided \GGG into \EEE three steps. In Step 1, we show a weighted  $L^2$-bound \EEE on $\nabla m_\eps$. As in \cite{MielkeRoubicek20Thermoviscoelasticity} or \cite{RBMFMK}, we employ special test functions \ZZZ used \EEE by {\sc Boccardo and Gallou\"et} for the regularity theory of parabolic equations with a measure-valued right-hand side, see e.g.\ \cite{BoccardoGallouet89Nonlinear}\EEE.
    \OOO With \EEE Lemma~\ref{lem:gagliardonirenberg} we \OOO then \EEE derive  the bound on $\nabla m_\eps$ in  \eqref{reg:temperaturegradient}, see Step 2.  T\EEE he last step  addresses \EEE the remaining bounds.

    \emph{Step 1 (Weighted $L^2$-bound on the gradient):}
    For $\eta \in (0, 1)$, let $\chi_\eta\colon \R_{\OOO +\EEE} \to \R_{\OOO +\EEE}$ be given by $\chi_\eta(0) = 0$ and $\chi_\eta'(s) = 1- (1+ h^{-\alpha} s)^{-\eta}$. \ZZZ For  \EEE all $s \geq 0$,  $\chi_\eta$ satisfies  
    \begin{equation*}
      \chi_\eta(s) \leq s, \qquad \chi_\eta''(s) = \eta h^{-\alpha} (1+ h^{-\alpha} s)^{-1-\eta} \in (0, h^{-\alpha} \ZZZ ). \EEE
    \end{equation*}
     Along the lines of Step~3 in the proof of Proposition~\ref{lem: first a priori estimates}, \OOO we can show that the chain rule \cite[Proposition 3.5]{MielkeRoubicek20Thermoviscoelasticity} applies to $\mathcal{J}(m_\eps) \defas \int_{\Omega_h} \chi_\eta(m_\eps) \di x$  and that $ \chi_\eta'(m_\eps) \ZZZ \in L^2(I;H^1(\Omega_h)) \EEE$  is a valid test function for  \eqref{weak_form_heat_unrescaled_reg}. This leads to \EEE the identity
    \begin{align}\label{chi_eta_test}
      & \int_{\Omega_h} \EEE \chi_\eta(m_\eps(T))  \di x \EEE -  \int_{\Omega_h} \EEE\chi_\eta(m_\eps(0))  \di x \EEE
       \notag \\
      &\quad= - \int_I \int_{\Omega_h} \chi_\eta''(m_\eps) \nabla m_\eps
        \cdot  \hcmnoh \EEE(\nabla w_\eps,\vartheta_\eps) \nabla \vartheta_\eps \di x \di t
        + \kappa \int_I \int_{\Gamma_h\EEE} (\vartheta_{\flat,\eps}- \vartheta_\eps) \chi_\eta'(m_\eps) \di \haus^2 \di t \notag \\
      &\phantom{\quad=}\quad + \int_I \int_{\Omega_h}
        \chi_\eta'(m_\eps) \Big(
          \xi_{\eps, \alpha}^{\rm reg} (\nabla w_\eps,  \partial_t \EEE \nabla w_\eps, \vartheta_\eps)
          + \partial_F W^{\rm cpl} (\nabla w_\eps, \vartheta_\eps) :  \partial_t \EEE \nabla w_\eps
        \Big) \di x \di t.
    \end{align}
    Using $\chi_\eta' \leq 1$, $\xi_{\eps, \alpha}^{\rm reg} \leq \xi$, \eqref{avoidKorn}, \eqref{Linfty_strain_inverse_bound}, \eqref{bound:temperaturel1}, $\vartheta_\eps \wedge 1 \leq \vartheta_\eps^{1/2}$, Young's inequality, \eqref{bound:dissipation}, and $\alpha \leq 4$ we derive that
    \begin{align*}
      &\int_I \int_{\Omega_h}
        \chi_\eta'(m_\eps) \Big(
          \xi_{\eps, \alpha}^{\rm reg}(\nabla w_\eps,  \partial_t \EEE \nabla w_\eps, \vartheta_\eps)
          + \partial_F W^{\rm cpl} (\nabla w_\eps, \vartheta_\eps) :  \partial_t \EEE \nabla w_\eps
        \Big) \di x \di t \\
      &\quad\leq
        \int_I \int_{\Omega_h}
          \xi(\nabla w_\eps,  \partial_t \EEE \nabla w_\eps, \vartheta_\eps)
        \di x \di t
        + C \int_I \int_{\Omega_h}
          (\vartheta_\eps \wedge 1)
          \ZZZ \big(\xi(\nabla w_\eps,   \partial_t   \nabla w_\eps, \vartheta_\eps)\big)^{1/2} \EEE
        \di x \di t \\
      &\quad\leq  C 
        \int_I \int_{\Omega_h}
          \xi(\nabla w_\eps,  \partial_t \EEE \nabla w_\eps, \vartheta_\eps)
        \di x \di t
        + C \int_I \int_{\Omega_h}
          \vartheta_\eps
        \di x \di t
       \leq C h^{ 1 + \alpha\EEE}.
    \end{align*}
    By Hölder's inequality,   $\vartheta_{\flat,\eps}\le \vartheta_\flat^h\EEE$, \ZZZ  and    \eqref{improved_bounds_external}  we find  
$ \Vert \vartheta_{\flat,\eps}  \Vert_{L^1(I; L^1(\Gamma_h))} \le C h^{1+\alpha}  $, see also \eqref{hatten wir schon}.  \EEE  By    \eqref{improved_bounds_energy_initial} and H\"older's inequality with exponent\OOO s $4/(4 - \alpha)$ and  $4/\alpha$ we \OOO also \EEE get $\Vert m_\eps(0) \Vert_{L^1(\Omega_h)} \le Ch^{1+\alpha}$.    \OOO Using the above estimates\EEE, by \eqref{chi_eta_test}, \EEE  $0 \leq \chi_\eta(s) \leq s$, $ 0 \le \chi_\eta' \leq 1$,   and $\vartheta_\eps \geq 0$ we see that
    \begin{align}\label{mixed_term_hcm_bound}
      &\int_I \int_{\Omega_h}
        \chi_\eta''(m_\eps) \nabla  m_\eps \EEE
        \cdot  \hcmnoh \EEE(\nabla w_\eps,\vartheta_\eps) \nabla \vartheta_\eps
      \di x \di t \notag \\
      &\quad\leq \int_{\Omega_h} \chi_\eta(m_\eps(0)) \di x
        + \int_I \int_{ \Gamma_h\EEE} \kappa  \vartheta_{\flat,\eps} \EEE \dH \dt \notag \\
      &\phantom{\quad\leq}\quad
        + \int_I \int_{\Omega_h}
          \chi_\eta'(m_\eps) \Big(
            \xi_{\eps, \alpha}^{\rm reg}(\nabla w_\eps,  \partial_t \EEE \nabla w_\eps, \vartheta_\eps)
            + \partial_F W^{\rm cpl}(\nabla w_\eps, \vartheta_\eps) :  \partial_t \EEE \nabla w_\eps
          \Big) \dx \dt
      \leq Ch^{ 1 + \alpha}.
    \end{align}
    Using the definition of $\mathcal K_h$ in \eqref{hcm}--\eqref{spectrum_bound_K}, and \eqref{Linfty_strain_inverse_bound}--\EEE\eqref{bound:determinant}, we see that
    \begin{equation}\label{spec_bound_hcm}
      \frac{1}{C} \leq  \hcmnoh \EEE(\nabla w_\eps, \vartheta_\eps) \leq C,
    \end{equation}
    where the inequalities are meant in the eigenvalue sense.
 Employing the chain rule we have
    \begin{equation}\label{relationwtheta}
      \nabla m_\eps
      = \partial_\vartheta W^{\rm in}(\nabla w_\eps, \vartheta_\eps) \nabla \vartheta_\eps + \partial_F W^{\rm in}(\nabla w_\eps, \vartheta_\eps) : \nabla^2 w_\eps.
    \end{equation}
    Thus, \eqref{sec_deriv}, \eqref{spec_bound_hcm}, Young's inequality with constant $\lambda$, \eqref{est:coupl},  \eqref{Linfty_strain_inverse_bound}, \EEE and \eqref{inten_lipschitz_bounds} yield
    \begin{align*}
        \tfrac{1}{C_0C} \vert \nabla m_\eps \vert^2  & \leq \frac{1}{\partial_\vartheta W^{\rm in}(\nabla w_\eps, \vartheta_\eps)}
        \nabla m_\eps
        \cdot  \hcmnoh \EEE(\nabla w_\eps, \vartheta_\eps) \nabla m_\eps \\
      &\quad= \nabla m_\eps
          \cdot  \hcmnoh \EEE(\nabla w_\eps, \vartheta_\eps) \nabla \vartheta_\eps
        + \frac{1}{\partial_\vartheta W^{\rm in}(\nabla w_\eps, \vartheta_\eps)}
          \nabla m_\eps \cdot  \hcmnoh \EEE(\nabla w_\eps, \vartheta_\eps)
          (
            \partial_F W^{\rm in}(\nabla w_\eps, \vartheta_\eps)
              : \nabla^2 w_\eps
          ) \\
      &\quad\leq \nabla m_\eps \cdot  \hcmnoh \EEE(\nabla w_\eps, \vartheta_\eps) \nabla \vartheta_\eps
        + c_0^{-1} \lambda \vert \nabla m_\eps \vert^2
        + C_\lambda (m_\eps \wedge 1)^2 \vert \nabla^2 w_\eps \vert^2.
    \end{align*}
     We choose \EEE $\lambda = C^{-1}C_0^{-1} c_0/2$ in the estimate above. Then, by \EEE the elementary  estimate \EEE
    \begin{equation*}
      \chi_\eta''(m_\eps)
      = \frac{\eta}{h^\alpha (1+ h^{-\alpha} m_\eps )^{1+\eta}}
      \leq \frac{\eta}{h^\alpha + m_\eps}
      \leq m_\eps^{-1},
    \end{equation*}
    Young's inequality with powers $p/(p-2)$ and $p/2$, and $s \wedge 1 \leq s^{(p-1)/p}$ for $s \geq 0$,  it follows that \EEE
    \begin{align*}
      \chi_\eta''(m_\eps) \vert \nabla m_\eps \vert^2
      &\leq C \Big(
        \chi_\eta''(m_\eps) \nabla m_\eps \cdot  \hcmnoh \EEE (\nabla w_\eps, \vartheta_\eps) \nabla \vartheta_\eps
        + m_\eps^{-1} m_\eps^{2(p-1)/p} \vert \nabla^2 w_\eps \vert^2
       \Big) \\
    &\leq C \Big(
      \chi_\eta''(m_\eps) \nabla m_\eps
        \cdot  \hcmnoh \EEE(\nabla w_\eps, \vartheta_\eps) \nabla \vartheta_\eps
        + m_\eps
        + \vert \nabla^2 w_\eps \vert^p
      \Big).
    \end{align*}
   Integrating the above inequality over $I \times \Omega_h$, we derive by \eqref{mixed_term_hcm_bound}, \eqref{inten_lipschitz_bounds}, \eqref{bound:temperaturel1}, \ref{H_bounds}, \eqref{bound:total}, and $\alpha \leq 4$  the following \EEE weighted \GGG $L^2$-bound \EEE on the gradient:
    \begin{equation}\label{weigthedpoincarebound}
      \int_I \int_{\Omega_h}
        \frac{\vert \nabla m_\eps \vert^2}{(1 + h^{-\alpha} m_\eps)^{1+\eta}} \dx \dt
        = \frac{h^\alpha}{\OOO \eta\EEE} \int_I \int_{\Omega_h} \chi_\eta''(m_\eps) |\nabla m_\eps|^2 \di x \di t
        \leq \frac{C}{\eta} h^\alpha(h^{ 1 + \alpha} + h^5) \leq \frac{C}{\eta} h^{ 1 + 2 \alpha}.
    \end{equation}

    \emph{Step 2 \GGG ($L^r$-bound \EEE on $\nabla m_\eps$):}
    By interpolation we \ZZZ  now \EEE derive \EEE an  \GGG $L^r$-bound \EEE on $\nabla m_\eps$ for $r \in (1, 5/4)$.
     Let us shortly write \EEE $p_\eps \defas h^{-\alpha} m_\eps$.
     Furthermore, we choose $\eta \defas (5-4r)/3$ in \eqref{weigthedpoincarebound}.
    Then, b\EEE y Hölder's inequality with powers \OOO $2/r$ and $2/(2-r)$ \EEE we  get \EEE  
        \begin{align}\label{regularityestimate1}
      \Vert \nabla p_\eps \Vert_{L^r(I\times \Omega_h)}^r
      &= \int_I \int_{\Omega_h} \vert\nabla p_\eps  \vert^r (1+p_\eps)^{-(1+\eta)r/2}  (1+p_\eps)^{2 (2-r) r/3} \di x \EEE \notag \\
      &\leq \left(
        \int_I \int_{\Omega_h} (1 + p_\eps)^{ 4r/3} \dx \dt
      \right)^{(2-r)/2}
      \left(   \int_I \int_{\Omega_h}
        \frac{ h^{-2\alpha} \vert    \nabla m_\eps \EEE  \vert^2}{(1+  h^{-\alpha} m_\eps \EEE)^{1+\eta}} \dx \dt
      \right)^{r/2} \notag \\
      & \REV \leq C_{r} h^{-\alpha r} h^{(1+2\alpha)r/2} \Vert 1 + p_\eps \Vert_{L^{ 4r/3}(I \times \Omega_h)}^{ 2(2-r)r/3} \notag  \EEE \\
      &\leq C_{r} h^{r/2} \Vert 1 + p_\eps \Vert_{L^{ 4r/3}(I \times \Omega_h)}^{ 2(2-r)r/3}.
    \end{align}
    \OOO Notice that  \eqref{bound:temperaturel1} and \eqref{inten_lipschitz_bounds} yield \EEE
    \begin{equation}\label{Linfty_L1_bound_peps}
      \Vert 1 + p_\eps \Vert_{L^\infty(I;L^1(\Omega_h))} \leq Ch.
    \end{equation}
    Consequently, applying Lemma \ref{lem:gagliardonirenberg} for $\vphi = 1 + p_\eps$ we discover that 
    \begin{align}\label{regularityestimate2}
      &\Vert 1 + p_\eps \Vert_{L^{ 4r/3}(I\times \Omega_h)} \notag \\
      &\quad\leq C_r h^{ 3/(4r) - 1/4}
        \Vert 1 + p_\eps \Vert_{L^\infty(I;L^1(\Omega_h))}^{ 1/4}
        \left(
          h^{-1} \Vert 1 + p_\eps \Vert_{L^\infty(I;L^1(\Omega_h))}
          + h^{-1/r} \Vert \nabla p_\eps \Vert_{L^r(I\times \Omega_h)}
        \right)^{ 3/4} \notag \\
      &\quad\leq C_r h^{ 3/(4r)} \left(
        1 +  h^{-3/(4r)} \Vert \nabla p_\eps \Vert_{L^{r}(I;L^{r}(\Omega_h))}^{ 3/4\EEE}
      \right).
    \end{align}
    \OOO Employing \EEE \eqref{regularityestimate1}, \eqref{regularityestimate2},
    \OOO and \EEE Young's inequality with powers \OOO $2/r$ and $2/(2-r)$ \EEE  and constant $\nu > 0$, \ZZZ we deduce \EEE
    \begin{align*}
      \Vert \nabla p_\eps \Vert_{L^r(I\times \Omega_h)}^r
      &\leq C_r h^{r/2} h^{(2-r)/2} \left( 1 + h^{-3/(4r)} \Vert \nabla p_\eps \Vert_{L^r(I\times \Omega_h)}^{ 3/4} \right)^{ 2(2-r)r/3\EEE}\\
      &\leq C_rh \left( 1 + h^{r/2 - 1} \Vert \nabla p_\eps \Vert_{L^r(I\times \Omega_h)}^{ (2-r)r/2\EEE} \right) \\
      &\leq C_r h + \ZZZ C_r \EEE h^{r/2} \Vert \nabla p_\eps \Vert_{L^r(I\times \Omega_h)}^{(2-r)r/2}
      \leq C_r h + C_{r, \nu} h + C_r \nu \Vert \nabla p_\eps \Vert_{L^r(I\times \Omega_h)}^r .
    \end{align*}
    \REV Thus, we choose $\nu>0$   such that $C_r \nu \leq 1/2$, rearrange the above inequalty, and  \EEE see that
    \begin{equation}\label{grad_inten_bound}
      \Vert \nabla p_\eps \Vert_{L^r(I\times \Omega_h)}
      \leq C_r h^{1/r}.
    \end{equation}
     Recalling $p_\eps = h^{-\alpha} m_\eps$, this shows the estimate for $\nabla m_\eps$ in \eqref{reg:temperaturegradient} \OOO for $r \in (1, 5/4)$\EEE,  whereas \EEE
 the case $r=1$ follows by Hölder's inequality.

    \emph{Step 3 (Proof of the remaining bounds):}
    Given $q \in ( 1\EEE, 5/3)$, we  now derive  $L^q$-bounds \EEE on the temperature  and \EEE the internal energy.
     Suppose first that $q \in (4/3 ,5/3)$ and  let $r = 3q/4 \in (1, 5/4)$. \EEE
    Then, by Lemma \ref{lem:gagliardonirenberg} applied \OOO for \EEE $\vphi = p_\eps$, \eqref{Linfty_L1_bound_peps}, and \eqref{grad_inten_bound} it follows that
    \begin{align*}
      \Vert p_\eps \Vert_{L^q(I \times \Omega_h)}   &\leq C_q h^{3/(4r) - 1/4} \Vert p_\eps \Vert_{L^{\infty}(I;L^{1}(\Omega_h))}^{ 1/4\EEE} \left(h^{-1}\Vert p_\eps \Vert_{L^{\infty}(I;L^{1}(\Omega_h))} + h^{-1/r} \Vert \nabla p_\eps \Vert_{L^{r}(I;L^{r}(\Omega_h))} \right)^{ 3/4} \\ &
      \leq C_q h^{1/q}.
    \end{align*}
    With \eqref{inten_lipschitz_bounds}  and $p_\eps = h^{-\alpha} m_\eps$, \EEE this establishes \eqref{reg:temperature},  also using H\"older's inequality if $q \le 4/3$. \EEE
    Due to \eqref{grad_inten_bound}, in order to show \eqref{reg:temperaturegradient}, it remains to control $\Vert \nabla \vartheta_\eps \Vert_{L^r(I \times \Omega_h)}$ for $r \in [\EEE 1, 5/4)$.
    By \eqref{relationwtheta},  \eqref{grad_inten_bound}, \EEE Hölder's inequality with powers $p/(p-r)$ and $p/r$, \eqref{sec_deriv}, \eqref{bound:total},   \eqref{Linfty_strain_inverse_bound}, \EEE \eqref{est:coupl}, and $s \wedge 1 \leq s^{(p-1)/p}$ for $s \geq 0$  it holds that
    \begin{align*}
      &\int_I \int_{\Omega_h} h^{-\alpha r}  \vert \nabla \vartheta_\eps \vert^r \dx \dt \nonumber \\
      &\quad\leq C \int_I \int_{\Omega_h} h^{-\alpha r} \vert \nabla  m_\eps \vert^r \dx \dt
        + \int_I \int_{\Omega_h} \vert
          h^{-\alpha} \partial_F W^{\rm in}(\nabla w_\eps, \vartheta_\eps)
        \vert^r \vert \nabla^2 w_\eps \vert^r \dx \dt \nonumber\\
      &\quad\leq C_r h
        +  C \EEE\left(
            \int_I \int_{\Omega_h} \vert
              h^{-\alpha}  (\vartheta_\eps \wedge 1) \EEE
            \vert^{pr/(p-r)} \dx \dt
          \right)^{(p-r)/p} \left(
            \int_I \int_{\Omega_h} \vert \nabla^2 w_\eps \vert^p \dx \dt
          \right)^{r/p} \nonumber \\
      &\quad\leq
        C_r h
        + C h^{5r/p} \left(
          \int_I \int_{\Omega_h} (h^{-\alpha} \vartheta_\eps^{(p-1)/p})^{pr/(p-r)} \dx \dt
        \right)^{(p-r)/p} \nonumber\\
      &\quad\leq C_r h + C h^{5r/p} h^{-\alpha r} \Big(
          \int_I \int_{\Omega_h} \vartheta\EEE_\eps^{r(p-1)/(p-r)} \dx \dt
        \Big)^{(p-r)/p}.
    \end{align*}
     As $p > 3$ and $r <5/4$, we have  $r(p-1)/(p-r) < 5(p-1)/(4(p-5/4)) \leq 5/3$. Hence, \OOO we can apply \EEE \eqref{reg:temperature}  for the power $q \defas r(p-1)(p-r)$ which leads to \EEE
    \begin{equation*}
      h^{5r/p} h^{-\alpha r} \left(
          \int_I \int_{\Omega_h} \vartheta\EEE_\eps^{r(p-1)/(p-r)} \dx \dt
        \right)^{(p-r)/p}
      \leq C_r h^{5r/p -\alpha r + \alpha r(1 - 1/p) + 1 - r/p\EEE}= C_r h^{1+ (4-\alpha)r/p} \leq C_r h,
    \end{equation*}
    where we have used $\alpha \leq 4$.  This concludes the proof of \eqref{reg:temperaturegradient}. \EEE

\ZZZ It remains to show \eqref{forAubin-Lion}. To this end, we \EEE test \eqref{weak_form_heat_unrescaled_reg} with  an element \OOO $\vphi \in L^\infty(I;H^3(\Omega_h)) \subset L^\infty(I;W^{1,\infty}(\Omega_h))$  of the dual  satisfying $\Vert \vphi \Vert_{L^\infty(I;H^3(\Omega_h))} \leq 1$, \REV yielding
\begin{align*}
  \int_I \REV \langle  \partial_t m_\eps , \vphi \rangle \EEE \dt
    &= - \int_I \int_{\Omega_h}  \hcmnoh \EEE(\nabla w_\eps,\vartheta_\eps) \nabla  \vartheta_\eps \cdot \nabla  \vphi\EEE \dx \dt
      + \kappa\EEE \int_I \int_{ \Gamma_h\EEE} ( \vartheta_{\flat,\eps}- \vartheta_\eps ) \vphi\EEE \dH \dt \notag \\
    &\phantom{=}\quad + \int_I \int_{\Omega_h} \Big(\xi^{\rm reg}_{\eps, \alpha}(\nabla w_\eps,  \partial_t \EEE \nabla w_\eps, \vartheta_\eps ) + \partial_F W^{\rm cpl} (\nabla w_\eps, \vartheta_\eps) :  \partial_t \EEE \nabla w_\eps\Big) \vphi\EEE \di x \di t. \notag \\
          \end{align*}
 $s \wedge 1 \le \sqrt{s}$ for $s \ge 0$, \REV \eqref{avoidKorn}, \eqref{Linfty_strain_inverse_bound}, \eqref{diss_rate}, and Young's inequality with power $2$ imply that
\begin{align*}
  \partial_F W^{\rm cpl} (\nabla w_\eps, \vartheta_\eps) :  \partial_t  \nabla w_\eps \leq C (\vartheta_\eps)^{1/2} R (\nabla w_\eps, \partial_t \nabla w_\eps,\vartheta_\eps )^{1/2} \leq C \vartheta_\eps + C R (\nabla w_\eps, \partial_t \nabla w_\eps,\vartheta_\eps )
\end{align*}
a.e.\ in $I \times \Omega$. \EEE
Thus, we get by \EEE \eqref{spec_bound_hcm}, $\xi^{\rm reg}_{\eps,\alpha} \leq \xi$,  \eqref{diss_rate}, $\vartheta_{\flat,\eps} \leq \vartheta_\flat^h\EEE$,   and a trace estimate 
    \begin{align*}
      \int_I \REV \langle  \partial_t m_\eps , \vphi \rangle \EEE \dt
            &\leq
          C \lVert \nabla \vartheta\EEE_\eps \rVert _{L^1(I \times \Omega_h)} \lVert \nabla \vphi \rVert _{L^\infty(I \times \Omega_h)}
          +  C \EEE \lVert \vartheta_\flat^h\EEE \rVert _{L^{ 1\EEE}(I; L^1(\Gamma_h\EEE))} \lVert  \vphi \rVert _{L^\infty(I \times \Gamma_h)} \notag \\
          &\phantom{\leq}\quad + C \lVert \vartheta_\eps \rVert _{L^1(I; W^{1,1}(\Omega_h))} \lVert  \vphi \rVert _{L^\infty(I \times \Gamma_h)} 
         + C \int_I \int_{\Omega_h} R(\nabla w_\eps, \partial_t\EEE \nabla w_\eps, \vartheta_\eps) \di x \di t \lVert  \vphi \rVert _{L^\infty(I \times \Omega_h)}.  \notag\\
            \end{align*}
Then, \REV  employing  \eqref{bound:dissipation},  \eqref{improved_bounds_external},    \eqref{reg:temperature}--\eqref{reg:temperaturegradient}, $\Vert \vphi \Vert_{L^\infty(I;H^3(\Omega_h))} \leq 1$, and a Sobolev embedding        \ZZZ we conclude 
    \begin{align}\label{eq: similar arg}
      \int_I \REV \langle  \partial_t m_\eps , \vphi \rangle \EEE \dt         &\leq C(
          h^{ 1 + \alpha}
          + h^5 ) \leq C h^{ 1 + \alpha}
      \end{align}
\EEE This shows \eqref{forAubin-Lion}  by the arbitrariness of $\vphi$\EEE.
    \end{proof}

    \begin{remark}[Improved temperature bounds for $\alpha < 4$]\label{rem:improved_temp_bounds}
    \OOO We remark that, in the case $\alpha < 4$, the estimates \eqref{reg:temperature} and \eqref{reg:temperaturegradient} hold for a larger  class of   values \EEE of $q$ and $r$  than stated in \EEE Lemma \ref{lem:improvedregularityregularized}.
    In fact, testing \eqref{weak_form_heat_unrescaled_reg} with $\vphi \defas \chi'(m_\eps)$ for  $\chi(s) =   \alpha / 4 (h^\alpha + s)^{4/\alpha}$ \EEE as in the proof of  Proposition~\ref{lem: first a priori estimates}, and \ZZZ using \eqref{balancetestwithchi}, \eqref{reg:est:1}--\eqref{reg:est:3}, \EEE \eqref{bound:total}, \eqref{bound:dissipation}, and $\mathcal{E}^{(\alpha)}_h(w_0^h,\vartheta_0^h) \leq C_0 h^5$  it follows that \EEE
    \begin{equation}\label{further_improvement_heat_cond_upper_bound}
      \int_I \int_{\Omega_h}  \hcmnoh \EEE(\nabla w_\eps,\vartheta_\eps) \nabla \vartheta_\eps \cdot \nabla \chi'(m_\eps) \dx \dt \leq C h^5.
    \end{equation}
    Moreover, by \eqref{hcm_lower_bound}, \eqref{needed_for_imporoved_weighted_l2}, and \eqref{bound:total} we derive that
    \begin{align*}
      &\int_I \int_{\Omega_h}  \hcmnoh \EEE(\nabla w_\eps,\vartheta_\eps) \nabla \vartheta_\eps \cdot \nabla \chi'(m_\eps) \dx \dt \\
      &\quad\geq c_\alpha \int_I \int_{\Omega_h}
        (h^\alpha + m_\eps)^{4/\alpha - 2} |\nabla \vartheta_\eps|^2 \di x \di t
        - C \int_I \int_{\Omega_h} m_\eps^{4/\alpha} \di x \di t
        - C \int_I \int_{\Omega_h} |\nabla^2\EEE w_\eps|^p \di x \di t \\
      &\quad\geq c_\alpha \int_I \int_{\Omega_h}
        (h^\alpha + m_\eps)^{4/\alpha - 2} |\nabla \vartheta_\eps|^2 \di x \di t
        -C h^5
    \end{align*}
    for a constant $c_\alpha$  only \EEE depending on $\alpha$.
    With  \eqref{inten_lipschitz_bounds} and  \eqref{further_improvement_heat_cond_upper_bound} \EEE this leads to an improved weighted  $L^2$-bound \EEE on the temperature gradient,  namely \EEE
    \begin{equation*}
      \int_I \int_{\Omega_h} \frac{\vert h^{-\alpha} \nabla  \vartheta_\eps \vert^2}{(1 + h^{-\alpha} \vartheta_\eps)^{2-4/\alpha}} \leq C h.
    \end{equation*}
     This is in fact an improvement for $\alpha < 4$ as $2 - 4/\alpha < 1 \leq 1 + \eta$, see also \eqref{weigthedpoincarebound}. \EEE
    By a similar argument as in the proof of Lemma \ref{lem:improvedregularityregularized} we can \OOO then \EEE show that \eqref{reg:temperature} and \eqref{reg:temperaturegradient} hold true for $q = \frac{20}{3\alpha}$ and $r = \frac{20}{3\alpha + 4}$.
    We omit the details as this \OOO has \EEE already been discussed in \cite[Remark 3.21]{RBMFMK}  (To compare to \cite{RBMFMK}, replace $\alpha$ by $\alpha/2$, $d$ by $3$, and $\eps$ by $h$.) \EEE
    \end{remark}

    \subsection{Proof of Proposition~\ref{prop:Existenceof3dsolutions}}\label{sec: 4.4}

    We are ready to  prove  \OOO the \EEE a priori estimates for the \OOO nonregularized system. \EEE
    \begin{proof}[Proof of Proposition~\ref{prop:Existenceof3dsolutions}]
      Employing $\xi^{\rm reg}_{\alpha, \eps} \leq \xi^{(\alpha)}$, see   \eqref{diss_rate_truncated} \ZZZ and \eqref{dissipation:trucated2}, \EEE one can show in the same manner as   in  \EEE \cite[Proprosition 6.4]{MielkeRoubicek20Thermoviscoelasticity} that solutions $(w_\eps,\vartheta_\eps)$ from Proposition~\ref{prop:existenceregularized} converge, up to selecting a subsequence, to a weak solutions $(w^h, \vartheta^h)$ of the nonregularized problem in the sense of Definition~\ref{def:weak_formulation},  where we have
\begin{align}\label{the convergences}
   w_\eps &\rightharpoonup w^h \qquad \quad \text{ weakly* in } L^\infty(I; W^{2,p}(\Omega_h;\R^3))  \ZZZ  \ \  \  \text{ and weakly in } \EEE  H^1(I;H^1(\Omega_h;\R^3)) \EEE , \notag\\   
     \nabla w_\eps  &\to \nabla w^h \qquad \,\text{  strongly in \EEE}  L^\infty(I \times \Omega_h;\R^{3 \times 3})  , \notag \\
      \vartheta_\eps &\to \vartheta^h \qquad \quad \,\text{  strongly   in \EEE}  L^q(I\times \Omega_h)  \text{ for all }  1 \le \ZZZ q \EEE < \tfrac{5}{3}. 
     \end{align}
      It remains to ensure that the a priori bounds  stated in  Proposition~\ref{lem: first a priori estimates}, \EEE Lemma~\ref{lem: first a priori estimates2}, and Lemma~\ref{lem:improvedregularityregularized} are preserved  after \EEE taking the limit $\eps \to 0$.  For convenience, we address here the bounds for $(w^h,\vartheta^h)$, but they clearly transfer to $(y^h,\theta^h)$ as stated in Proposition~\ref{prop:Existenceof3dsolutions} by a \OOO change  of variables. \EEE

       By  \ref{H_bounds}, \ref{H_regularity}, \EEE and \eqref{inten_lipschitz_bounds}    we discover by  a  standard lower semicontinuity argument that \EEE the bounds \eqref{bound:total}, \eqref{bound:dissipation}, \eqref{bound:strainrate}, and \eqref{reg:temperature}--\eqref{reg:temperaturegradient} pass over to the limiting solution $(w^h, \vartheta^h)$  as $\eps \to 0$ resulting in  \eqref{boundres:mech}--\eqref{boundres:strainrate} \EEE and  \eqref{boundres:temperature}--\eqref{boundres:temperaturegrad}, respectively\EEE.  We omit details and just mention that for \eqref{boundres:mech}  \ZZZ it \EEE is important that, due to \eqref{the convergences}, the convergence $w_\eps(t)  \to   w^h(t)$ in $W^{1,\infty}(\Omega_h;\R^3)$ and $\vartheta_\eps(t) \to \vartheta^h(t)$ in $L^1(\Omega_h)$ hold for a.e.\ $t \in I$. \EEE

   Let us finally show \eqref{forAubin-Lion3}.      Given \EEE $h \in (0, 1)$,  we can define for \EEE a.e.~$t \in I$ the distribution $\sigma^h(t)$ by
      \begin{align*}
         \langle \sigma^h(t),\varphi \rangle &\defas - \int_{\Omega_h}
         \hcmnoh \EEE(\nabla w^h, \vartheta^h) \nabla \vartheta^h \cdot \nabla \vphi
        + \big(
          \xi^{(\alpha)}(\nabla w^h,  \partial_t \EEE \nabla w^h, \vartheta^h)
          + \partial_F W^{\rm{cpl}}(\nabla w^h, \vartheta^h) : \partial_t\EEE\nabla w^h
        \big) \vphi \di x \notag \\
        &\phantom{\defas}\quad
          + \kappa  \int_{ \Gamma_h \EEE} (\vartheta_\flat^h\EEE - \vartheta^h) \vphi \di \haus^2, \qquad
        \text{for every } \varphi \in  H^3 \EEE(\Omega_h),
           \end{align*}
              where all functions appearing on the right-hand side are evaluated at $t$.
      Then, as $(w^h, \vartheta^h)$ is a weak solution in the sense of Definition \ref{def:weak_formulation}, we see that for every $\psi\EEE \in C^\infty_{\OOO c\EEE} (I)$ and $\vphi \in C^\infty( \overline{\Omega_h})\EEE$ it holds that
      \begin{equation*}
        \int_I \langle \sigma^h(t), \varphi \rangle \psi\EEE(t)\dt
        = - \int_I \int_{\Omega_h} m^h \partial_t \psi\EEE(t) \varphi \dx  \dt,
      \end{equation*}
\ZZZ where $m^h \defas W^{\rm{in}}(\nabla w^h, \vartheta^h)$. \EEE \OOO The \EEE arbitrariness of $\vphi$ \OOO implies that \EEE the weak time derivative of $ m^h$ coincides in the distributional sense with $\sigma^h$ for a.e.~$t \in I$. Thus, it is left to show that $\sigma^h \in L^1(I;  H^3\EEE (\Omega_h)^*)$.
     In this regard, \ZZZ as shown above, \EEE $(w^h, \vartheta^h)$ satisfies \EEE the bounds \eqref{boundres:mech}--\eqref{boundres:dissipation} and \eqref{boundres:temperature}--\eqref{boundres:temperaturegrad},  up to scaling.
     With the definition of $\sigma$, \EEE  \eqref{avoidKorn}, Young's inequality, a trace estimate,  \eqref{improved_bounds_external}, and  a bound on $ \hcmnoh \EEE$ derived as in  \eqref{spec_bound_hcm} \EEE we find $\Vert \sigma^h \Vert_{L^1(I; \OOO(\EEE H^3 \EEE (\Omega_h)\OOO)\EEE^*)} \leq C h^{\alpha+1}$. \ZZZ This \EEE  concludes the proof of \eqref{forAubin-Lion3}, \ZZZ again up to scaling. \EEE  We refer to \eqref{eq: similar arg} for a similar argument. \EEE
    \end{proof}

    \section{Passage to the  two-dimensional \EEE limit}\label{sec:proofofcompactness}

 In this section we \ZZZ prove \EEE our main results, namely \EEE   Proposition~\ref{prop:compactness} and Theorem~\ref{maintheorem}.
    As before,  we write \EEE $\Omega  = \EEE \Omega' \times (-1/2, 1/2)$, $\Gamma = \GGG \Gamma' \EEE \times (-1/2, 1/2)$, and $\Gamma_D = \Gamma_D' \times (-1/2, 1/2)$. 

    \subsection{Rigidity}\label{sec:rigidity } 
    We start  this \EEE section by  proving \EEE the following rigidity result.  
    \begin{lemma}[Rigidity]\label{lemma:rigidity}
 Let   $\OOO(\EEE(y^h,\theta^h)\OOO)_h\EEE$   be a sequence of weak solutions to \eqref{weak_form_mech_res} and \eqref{weak_form_heat_res} as given in Proposition \ref{prop:Existenceof3dsolutions}.      Then, for sufficiently small $h$ there exists a  map \EEE $R^h \in L^\infty(I; H^1(\Omega'; SO(3)))$  such that \EEE
    \begin{subequations}
    \begin{align}
      \Vert \nabla_h y^h - R^h \Vert_{L^\infty(I;L^2(\Omega))} &\le C h^2 ,  \label{rig:closetorot}\\
      \Vert \nabla_h y^h - \Id \Vert_{L^\infty(I;L^2(\Omega))}
      + \Vert \nabla' R^h \Vert_{L^\infty(I;L^2(\Omega'))} &\le C h, \qquad
      \Vert R^h -\Id  \Vert_{{L^\infty(I;L^q(\Omega'))}} \leq C_q h, \label{rig:closetoid}\\
        \Vert \nabla_h y^h -\Id  \Vert_{L^\infty(I\times\Omega)}
      + \Vert R^h -\Id  \Vert_{L^\infty(I \times \Omega')} & \le Ch^{4/p}, \label{rig:closetoidinfty}
    \end{align}
    \end{subequations}
    where  $q \in [1, \infty)$, \EEE $C_q$ is a constant only depending on $q$ and $\Omega$, and  where \EEE  we have   extended $R^h$ to $I \times \Omega$ via $R^h(t, x) \defas R^h(t, x')$.
     Finally, \EEE setting $s= 1+ (3-8/p)^{-1} \in [1,2)$,  it holds that \EEE 
    \begin{subequations}
    \begin{align}
      \Vert  \partial_t \EEE \nabla_h y^h \Vert_{L^2(I\times \Omega)} & \leq C h, \label{bound:strainratenew}\\
         \Vert \sym \big( (\nabla_h y^h)^T  \partial_t \EEE \nabla_h y^h \big) \Vert_{L^2(I\times \Omega)} & \leq C h^2, \label{bound:strainraterotated-another one}\\
      \Vert \sym \big( (R^h)^T  \partial_t \EEE \nabla_h y^h \big) \Vert_{L^s(I\times \Omega)} & \leq C h^2, \label{bound:strainraterotated}\\
      \Vert \sym \big(\partial_t\EEE \nabla_h y^h \big) \Vert_{L^s(I\times \Omega)} &\leq C h^2. \label{boundres:strainratesym}
    \end{align}
    \end{subequations}
    \end{lemma}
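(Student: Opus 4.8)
The plan is to combine the a priori estimates from Proposition~\ref{prop:Existenceof3dsolutions} with the quantitative geometric rigidity theorem of Friesecke--James--Müller, applied on the rescaled domain $\Omega$. First I would recall that \eqref{boundres:mech} gives $\esssup_{t\in I}\int_\Omega W^{\rm el}(\nabla_h y^h)\,dx \le Ch^4$, so by \ref{W_lower_bound_spec} we have $\esssup_{t\in I}\int_\Omega \dist^2(\nabla_h y^h(t),SO(3))\,dx \le Ch^4$. Applying the geometric rigidity estimate on $\Omega$ (in the form used in \cite{FrieseckeJamesMueller:02, FJM_hierarchy, lecumberry}) to the deformation $x\mapsto y^h(t,x', x_3)$ with the scaled gradient, one obtains for a.e.~$t$ a rotation-valued map $R^h(t,\cdot)\in H^1(\Omega';SO(3))$, independent of $x_3$, with $\Vert \nabla_h y^h(t) - R^h(t)\Vert_{L^2(\Omega)}^2 \le C\int_\Omega\dist^2(\nabla_h y^h(t),SO(3))\,dx \le Ch^4$ and, crucially, $\Vert \nabla' R^h(t)\Vert_{L^2(\Omega')}^2 \le Ch^{-2}\int_\Omega\dist^2(\nabla_h y^h(t),SO(3))\,dx \le Ch^2$. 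Taking the essential supremum in $t$ yields \eqref{rig:closetorot} and the $\nabla' R^h$ bound in \eqref{rig:closetoid}. The fact that $R^h$ can be chosen in $L^\infty(I;H^1)$ (measurability in $t$) follows by a standard measurable selection argument as in \cite{FK_dimred}.

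Next I would derive the remaining bounds in \eqref{rig:closetoid} and \eqref{rig:closetoidinfty}. Since $y^h(t)=\id$ on $\Gamma_D^h$, the boundary datum forces $R^h$ to be close to $\Id$: using a Poincaré/trace argument on $\Omega'$ together with $\Vert \nabla_h y^h - R^h\Vert_{L^2}\le Ch^2$ and $\Vert\nabla' R^h\Vert_{L^2}\le Ch$ (as in \cite[Equation~(53)]{lecumberry} or \cite[Section~4]{FK_dimred}) one gets $\Vert R^h-\Id\Vert_{L^2(\Omega')}\le Ch$, hence $\Vert \nabla_h y^h-\Id\Vert_{L^2(\Omega)}\le Ch$; the $L^q$ bound $\Vert R^h-\Id\Vert_{L^q(\Omega')}\le C_q h$ follows from the $H^1$ control via the Sobolev embedding $H^1(\Omega')\hookrightarrow L^q(\Omega')$ for all $q<\infty$ combined with interpolation against the $L^2$ bound and the $L^\infty$ bound $\Vert R^h-\Id\Vert_{L^\infty}\le 2$. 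For the $L^\infty$ estimates in \eqref{rig:closetoidinfty}, I would use the hyperstress bound: \ref{H_bounds} and \eqref{boundres:mech} give $\Vert \nabla_h^2 y^h(t)\Vert_{L^p(\Omega)}^p\le Ch^4$, and then Morrey's embedding $W^{1,p}\hookrightarrow C^{0,1-3/p}$ applied to $\nabla_h y^h - R^h$ (controlling its $W^{1,p}$ norm by $\Vert\nabla_h y^h-R^h\Vert_{L^p}$, interpolated from $L^2$ and $L^\infty$, plus $\Vert\nabla_h^2 y^h\Vert_{L^p}+\Vert\nabla' R^h\Vert_{L^p}$) yields $\Vert\nabla_h y^h-\Id\Vert_{L^\infty}\le Ch^{4/p}$; the same applies to $R^h-\Id$ since $R^h$ inherits the relevant regularity. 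This mirrors the estimates already carried out in Step~2 of the proof of Proposition~\ref{lem: first a priori estimates}.

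For the strain-rate bounds, \eqref{bound:strainratenew} is exactly \eqref{boundres:strainrate}. The key new ingredient is \eqref{bound:strainraterotated-another one}: here I would use the identity $2\sym((\nabla_h y^h)^T\partial_t\nabla_h y^h)=\partial_t((\nabla_h y^h)^T\nabla_h y^h)=\partial_t C_h^h$, the time derivative of the (scaled) right Cauchy--Green tensor, and exploit the dissipation bound. By \ref{D_quadratic}, \ref{D_bounds} and \eqref{boundres:dissipation} we have $\int_I\int_\Omega |\partial_t C_h^h|^2\,dx\,dt = \int_I\int_\Omega|\dot C|^2 \le C\int_I\int_\Omega R(\nabla_h y^h,\partial_t\nabla_h y^h,\theta^h)\,dx\,dt\le Ch^4$, which is \eqref{bound:strainraterotated-another one}. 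Then \eqref{bound:strainraterotated} follows by writing $\sym((R^h)^T\partial_t\nabla_h y^h)=\sym((\nabla_h y^h)^T\partial_t\nabla_h y^h)+\sym((R^h-\nabla_h y^h)^T\partial_t\nabla_h y^h)$ and estimating the second term by Hölder: $\Vert(R^h-\nabla_h y^h)^T\partial_t\nabla_h y^h\Vert_{L^s}\le \Vert R^h-\nabla_h y^h\Vert_{L^{2p/(p-2)}}\Vert\partial_t\nabla_h y^h\Vert_{L^2}$ (or a suitable Hölder pair giving exponent $s$), using $\Vert R^h-\nabla_h y^h\Vert_{L^{\infty}}\le Ch^{4/p}$ interpolated against $\Vert R^h-\nabla_h y^h\Vert_{L^2}\le Ch^2$ to get the exponent matching $s=1+(3-8/p)^{-1}$, together with $\Vert\partial_t\nabla_h y^h\Vert_{L^2}\le Ch$; one checks the powers of $h$ multiply to $h^2$. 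Finally \eqref{boundres:strainratesym} follows from \eqref{bound:strainraterotated} by $\sym(\partial_t\nabla_h y^h)=\sym((R^h)^T\partial_t\nabla_h y^h)+\sym((\Id-R^h)^T\partial_t\nabla_h y^h)$ and an analogous Hölder estimate using $\Vert\Id-R^h\Vert_{L^\infty}\le Ch^{4/p}$ and $\Vert R^h-\Id\Vert_{L^2}\le Ch$.

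The main obstacle I anticipate is bookkeeping the exponents in the interpolation/Hölder steps for \eqref{bound:strainraterotated} and \eqref{boundres:strainratesym} so that the integrability exponent comes out exactly $s=1+(3-8/p)^{-1}$ while the power of $h$ is exactly $h^2$; this forces a specific choice of the Hölder conjugate exponents tied to the $L^\infty$-vs-$L^2$ interpolation of $R^h-\nabla_h y^h$ (scaling like $h^{4/p}$ and $h^2$ respectively) and of $R^h-\Id$ (scaling like $h^{4/p}$ and $h$). A secondary technical point is ensuring the $t$-measurability of $R^h$ so that $R^h\in L^\infty(I;H^1(\Omega';SO(3)))$ rather than merely being defined for a.e.~$t$; this is handled by the usual measurable-selection construction underlying geometric rigidity, as in \cite{FK_dimred, FJM_hierarchy}.
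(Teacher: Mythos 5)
Your route is essentially the paper's: you quote the quantitative rigidity estimates of \cite{FJM_hierarchy, lecumberry, FK_dimred} (with constants uniform in $t$ and a measurable selection of $R^h$) for \eqref{rig:closetorot}--\eqref{rig:closetoidinfty}, note that \eqref{bound:strainratenew} is \eqref{boundres:strainrate}, obtain \eqref{bound:strainraterotated-another one} from \ref{D_quadratic}--\ref{D_bounds} and \eqref{boundres:dissipation}, and prove the $L^s$ bounds by a triangle inequality plus H\"older, interpolating $\nabla_h y^h - R^h$ between $L^2 \sim h^2$ and $L^\infty \sim h^{4/p}$. For \eqref{bound:strainraterotated} this works exactly as you anticipate, except that the correct companion exponent is $\tfrac{2s}{2-s} = \tfrac{4(p-2)}{p-4}$, not $\tfrac{2p}{p-2}$ (your hedge covers this): by the choice of $s$ one has $1 + 2(4/p-1)\tfrac{s-1}{2-s} = 0$, hence $\Vert \nabla_h y^h - R^h \Vert_{L^{2s/(2-s)}(I\times\Omega)} \le Ch$, and the product with $\Vert \partial_t \nabla_h y^h \Vert_{L^2} \le Ch$ gives $Ch^2$.

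The one step that fails as written is the last one, \eqref{boundres:strainratesym}. You propose to treat $\Id - R^h$ ``analogously'', i.e.\ to interpolate $\Vert R^h - \Id \Vert_{L^2} \le Ch$ against $\Vert R^h - \Id \Vert_{L^\infty} \le Ch^{4/p}$. This only yields
\begin{equation*}
  \Vert R^h - \Id \Vert_{L^{2s/(2-s)}} \le C\, h^{\frac{p-4}{2(p-2)}}\, h^{\frac{4}{p}\cdot \frac{p}{2(p-2)}} = C\, h^{\frac{p}{2(p-2)}},
\end{equation*}
and since $\tfrac{p}{2(p-2)} < 1$ for $p > 4$, the resulting bound $Ch^{1 + p/(2(p-2))}$ (e.g.\ $h^{11/6}$ for $p = 5$) is strictly weaker than the claimed $Ch^2$. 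The remedy — and what the paper does — is not to interpolate at all but to invoke \eqref{rig:closetoid} with $q = \tfrac{2s}{2-s}$: since $R^h$ is independent of $x_3$ and $\Vert R^h - \Id \Vert_{L^\infty(I;H^1(\Omega'))} \le Ch$, the two-dimensional Sobolev embedding gives $\Vert R^h - \Id \Vert_{L^\infty(I;L^q(\Omega'))} \le C_q h$ with the full power $h$ for every finite $q$; this is precisely the bound you yourself derive when proving \eqref{rig:closetoid}, so the fix is one line. A smaller caveat: your sketch of the $L^\infty$ estimate for $R^h - \Id$ in \eqref{rig:closetoidinfty} relies on an $L^p$ control of $\nabla' R^h$ with good scaling, which is not directly available; this estimate is, however, contained in the cited rigidity results, which the paper simply quotes, so it does not affect the overall argument.
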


    \begin{proof}
    \textit{Step 1 (Proof of \eqref{rig:closetorot}--\eqref{rig:closetoidinfty}):} \ZZZ     By \eqref{boundres:mech}, for $h$ sufficiently small we have  \EEE
    \begin{equation*}
     \esssup_{t \in I}   \mathcal{M}(y^h(t))  \ZZZ \le Ch^{-4}. \EEE
    \end{equation*}
 With this bound,  \EEE
    the proof of \eqref{rig:closetorot}--\eqref{rig:closetoidinfty} for fixed $t \in I$ can be found in \cite[Lemma 4.2]{FK_dimred}  (\EEE see also  \cite{FJM_hierarchy,lecumberry} \EEE for further details) with constants $C$ and $C_q$ that can be chosen uniformly in  $t$.  (Note that the scaling  $h^\alpha$ in \cite{FK_dimred} is replaced by  $h^{4/p}$ in \eqref{rig:closetoidinfty}. This is due to the fact that in our model the prefactor of the second gradient term is $h^{-4}$, whereas in \cite{FK_dimred} it is $h^{-\alpha p}$, see \cite[Equation~(2.14)]{FK_dimred}.) \EEE
     Moreover, the map $t \mapsto R^h(t)$ is measurable \EEE as a careful inspection of the proof of \cite[Theorem 6]{FJM_hierarchy} shows that $R^h(t,x')$ may defined as the nearest-point projection onto $SO(3)$ of
    \begin{align*}
    \int_{-1/2}^{1/2} \int_{x'+ (-h,h)\EEE^2} \frac{1}{h^2} \psi\left(\frac{x'-z'}{h}\right)\nabla_h y^h(t,z',z_3) \di z' \di z_3,
    \end{align*}
    $\psi$  being \EEE a standard mollifier.

    \textit{Step 2 (Proof of \eqref{bound:strainratenew}--\eqref{boundres:strainratesym}):}  First, \eqref{bound:strainratenew} has already been shown in \eqref{boundres:strainrate}. \GGG Moreover,  \EEE \eqref{bound:strainraterotated-another one} follows  by \OOO combining  \ref{D_quadratic}--\ref{D_bounds} and \eqref{boundres:dissipation}. \EEE        Let us \OOO now \EEE show \eqref{bound:strainraterotated}--\EEE\eqref{boundres:strainratesym}.
    We first note that $s = 1+(3-8/p)^{-1} \in [1,2)$ as $p>4$.
    With $2s/(2-s) = 2 + 4 (s-1)/(2-s)$,  \eqref{rig:closetorot}\EEE, and  \eqref{rig:closetoidinfty} \EEE we  then \EEE derive that
    \begin{align}\label{strainrateforHölder}
      \int_I \int_\Omega \vert h^{-1} (\nabla_h y^h - R^h) \vert^{\frac{2s}{2-s}} \dx \dt
      &\leq
        \lVert h^{-1} (\nabla_h y^h - R^h) \rVert _{L^2(I \times \Omega)}^2
        \lVert h^{-1}(\nabla_h y^h - R^h) \rVert _{L^\infty(I \times \Omega)}^{4\frac{s-1}{2-s}} \notag \\
      &\leq C  h^{2 + 4(4/p-1)\frac{s-1}{2-s}}
      = C,
    \end{align}
    where we have used $1 + 2(4/p-1)\frac{s-1}{2-s} = 0$ \OOO by our definition of $s$\EEE.  Consequently, by the triangular inequality, Hölder's inequality with powers $2/(2 - s)$ and $2/s$\EEE,  the definition of $s$\EEE, \eqref{bound:strainratenew}\OOO--\eqref{bound:strainraterotated-another one}, \EEE and  \eqref{strainrateforHölder} we derive that \EEE
    \begin{align*}
      &\Vert \sym((R^h)^T  \partial_t \EEE  \nabla_h \EEE y^h) \Vert_{L^s(I \times \Omega)} \\
      &\quad\leq
        \Vert \sym((R^h - \nabla_h y^h)^T  \partial_t \EEE  \nabla_h \EEE y^h) \Vert_{L^s(I \times \Omega)}
         + \Vert \sym((\nabla_h y^h)^T  \partial_t \EEE  \nabla_h \EEE y^h) \Vert_{L^s(I \times \Omega)} \\
      &\quad\leq
        C \Vert R^h - \nabla_h y^h \rVert_{L^{\frac{2s}{2 - s}}(I \times \Omega)}
        \Vert  \partial_t \EEE  \nabla_h \EEE y^h \rVert_{L^2(I \times \Omega)}
        + C \Vert \sym((\nabla_h y^h)^T  \partial_t \EEE  \nabla_h \EEE y^h) \Vert_{L^2(I \times \Omega)} \leq C h^2,
    \end{align*}
    which is \eqref{bound:strainraterotated}.
    Again, by the triangular inequality, Hölder's inequality with powers $2/(2-s)$ and $2/s$,   \eqref{rig:closetoid},    \eqref{bound:strainratenew}, \ZZZ and \EEE \eqref{bound:strainraterotated} we have that
    \begin{align*}
      \Vert \sym (  \partial_t \EEE  \nabla_h \EEE  y^h) \Vert_{L^s(I \times \Omega)}
      &\leq \Vert \sym((\Id - (R^h)^T)  \partial_t \EEE  \nabla_h \EEE  y^h) \Vert_{L^s(I \times \Omega)}
        + \Vert \sym((R^h)^T  \partial_t \EEE  \nabla_h \EEE  y^h) \Vert_{L^s(I \times \Omega)} \\
      &\leq C \Vert \Id - R^h \Vert_{L^{\frac{2s}{2 - s}}(I \times \Omega)}
        \Vert  \partial_t \EEE \nabla_h \EEE  y^h \Vert_{L^2(I \times \Omega)} + Ch^2 \leq C h^2.
    \end{align*}
     This shows  \eqref{boundres:strainratesym} and concludes the proof. \EEE
    \end{proof}

    \subsection{Compactness}\label{sec:compactness}
    Recall the definitions of $u^h$, $v^h$, \EEE and $\mu^h$ in \eqref{def:displacements} and \eqref{def:scaledtemp}.
    We are ready to prove Proposition~\ref{prop:compactness}.

    \begin{proof}[Proof of Proposition~\ref{prop:compactness}]
    We prove the statement for the sequence of solutions $\OOO(\EEE(y^h, \theta^h)\OOO)\EEE_h$ from Proposition~\ref{prop:Existenceof3dsolutions}  satisfying \EEE \eqref{boundres:mech}--\eqref{forAubin-Lion3}   and   \eqref{rig:closetorot}--\eqref{boundres:strainratesym} for a map $R^h \in L^\infty(I;H^1(\Omega';SO(3)))$.   For convenience, in this proof we only show the regularity  
\begin{align}\label{eq: reguregu}
u \in L^\infty(I; H^1(\Omega';\R^2)) \quad \text{ and } \quad v \in L^\infty(I; H^2(\Omega'))
\end{align}
for the displacements, deferring the regularity of the time derivatives to Lemma \ref{lem:conv_strain_stress} below. \EEE   
    
      The proof  consists of \EEE five steps.
     In the first  step\EEE, we investigate the convergence of $(\EEE h^{-1} (R^h - \Id))_h$,  leading to   the  compactness statement \EEE for $(u^h)_h$ and $(v^h)_h$ in Step 2. In Step 3, we relate the limit of $(h^{-1} (R^h - \Id))_h$ with the limit of $( v^h)_h$.         This allows us \EEE to verify the boundary conditions of the limit of $(v^h)_h$ in Step 4.
     Finally, we address the convergence of the temperatures $(\mu^h)_h$ in Step 5. \EEE

    \emph{Step 1 (Limit of $(\frac{R^h - \Id}{h})$):}
    As a preliminary step, we investigate the convergence of $A^h \defas h^{-1}(R^h - \Id)$. 
\ZZZ The following argument is similar to the one in \cite[Proof of Theorem 2.1, Step 2]{AMM_vKaccel}. Yet, we have a slightly different control on the time derivative. \EEE     By \eqref{rig:closetoid} there exists $A \in L^\infty(I; H^1(\Omega';  \R^{3 \times 3} \EEE ))$ such that, up to selecting a subsequence,     \begin{equation}\label{convAh}
      A^h \weaklystar A \qquad \text{weakly* to } L^\infty(I; H^1(\Omega';  \R^{3 \times 3} \EEE )).
    \end{equation}
    In the following, we will improve the above weak* convergence to strong convergence,  i.e., \EEE
    \begin{equation}\label{convAhimproved}
      A^h \to A \qquad \text{strongly in } L^{ q \EEE}(I \times \Omega';  \R^{3 \times 3} \EEE  ) \text{ for any }  q \EEE \in [1, \infty).
    \end{equation}
 This is based on showing that   for any $t_1, \, t_2 \in I$ with $0 < t_1 < t_2 < T$ it holds  that    
    \begin{equation}\label{lennart show}
    \limsup\limits_{ s\to 0} \sup\limits_{j} \int_{t_1}^{t_2} \Vert A^{h_j}(t+s) - A^{h_j}(t) \Vert_{\REV ( H^{1}(\Omega'))^* \EEE} \di t = 0,
    \end{equation}
     where  $(h_j)_j$ is \OOO an arbitrary  sequence converging to $0$\EEE.
 Then, \EEE as $H^1(\Omega'; \R^{3 \times 3})$ embeds  compactly into $L^{ q}(\Omega'; \R^{3 \times 3})$ for any $ q \EEE \in [1, \infty)$ and  $(A^h)_h$ is bounded in  $L^\infty(I; H^1(\Omega';  \R^{3 \times 3}  ))$  the desired convergence \eqref{convAhimproved} follows by using \cite[Theorem 6]{Simon}, see also \cite[Theorem 2.5]{AMM_vKaccel}. \EEE

 Let us show \eqref{lennart show}. \EEE      Using \EEE again \eqref{rig:closetoid},  we see that \EEE the sequence $(h^{-1}(\nabla_h y^h - \Id))_h$ is bounded in $L^\infty(I;L^2(\Omega; \R^{3 \times 3}))$.
    Moreover, $(h^{-1} \partial_t \EEE\nabla_h y^h)_h$ is bounded in $L^2(I; \REV (H^1(\Omega;\R^{3 \times 3}))^* \EEE)$ due to \eqref{bound:strainratenew}.
    \GGG Due to \EEE the compact embedding of \EEE $L^2(\Omega;  \R^{3 \times 3}\EEE)$ into $\REV (H^1(\Omega;\R^{3 \times 3}))^* \EEE$, \QQQ the \EEE Aubin-Lions lemma \GGG implies that \EEE $(h^{-1}(\nabla_h y^h - \Id))_h$ is  pre\EEE compact in $L^\infty(I;\REV ( H^{1}(\Omega))^* \EEE)$.  
    Hence, with  \cite[Theorem 2]{Simon} it follows for any $t_1, \, t_2 \in I$ \OOO satisfying \EEE $0 < t_1 < t_2 < T$ that \EEE
    \begin{equation}\label{strain_displacement_conv}
      \int_{t_1}^{t_2}
      \Vert h^{-1} (\nabla_h y^h(t+s) - \nabla_h y^h(t)) \Vert_{\REV ( H^{1}(\Omega))^* \EEE} \di t \to 0 \qquad \text{as } s \to 0 \text{, uniformly in } h.
    \end{equation}
     Fix \EEE $\eps>0$ and  consider a sequence $(h_j)_j$ converging to $0$\EEE.   Then\OOO,  we get \EEE
    \begin{equation*}
    \limsup\limits_{ s\to 0} \max_{h_j \geq \eps} \EEE \int_{t_1}^{t_2} \Vert A^{h_j}(t+s) - A^{h_j}(t) \Vert_{\REV ( H^{1}(\Omega'))^* \EEE} \di t = 0
    \end{equation*}
     since this convergence holds for any $A^{h_j}  \in \EEE L^\infty(I;H^1(\Omega'))$ and we are taking the maximum over a finite \OOO set\EEE. For every $h_j <\eps$ instead,  \EEE    \eqref{rig:closetorot} and the triangular inequality imply  
    \begin{align*}
      &\int_{t_1}^{t_2} \Vert A^{h_j} (t+s) - A^{h_j}(t) \Vert_{\REV ( H^{1}(\Omega'))^* \EEE} \di t \\
      &\quad=  \int_{t_1}^{t_2} \Vert {h}_j^{-1} (\nabla_{h_j} y^{h_j}(t+s) - \nabla_{h_j} y^{h_j}(t)) \Vert_{\REV ( H^{1}(\Omega))^* \EEE} \di t  \\
       &\phantom{\quad=}\quad + {h}_j^{-1} \int_{t_1}^{t_2}
       \Vert R^{h_j}(t+s) - \nabla_{h_j} y^{h_j}(t+s) \Vert_{L^2(\Omega)}
        + \Vert R^{h_j}(t)  - \nabla_{h_j} y^{h_j}(t) \Vert_{L^2(\Omega)} \di t \\
     &\quad\leq    \int_{t_1}^{t_2} \Vert {h}_j^{-1} (\nabla_{h_j} y^{h_j}(t+s) - \nabla_{h_j} y^{h_j}(t)) \Vert_{\REV ( H^{1}(\Omega))^* \EEE} \di t + C \eps.
    \end{align*}
    Thus, sending $\eps \to 0$ and using \eqref{strain_displacement_conv} results in  \eqref{lennart show}. \ZZZ Therefore, we have shown \GGG \eqref{convAhimproved}. \EEE

       Next, \EEE notice that
    \begin{equation}\label{toderivecompu}
      -\frac{(A^h)^T A^h}{2}
      = -\frac{((R^h)^T - \Id)(R^h - \Id)}{2h^2}
      = -\frac{\Id - (R^h)^T - R^h + \Id}{2h^2}
      = \sym \left( \frac{R^h - \Id}{h^2} \right).
    \end{equation}
    As $(A^h)_h$ is bounded in $ L^\infty(I; \EEE L^{ 2 q}(\Omega'; \R^{3 \times 3}))\EEE$  by \eqref{rig:closetoid} \EEE we see that
    \begin{equation*}
      \Vert \sym(A^h) \Vert_{ L^\infty(I; \EEE L^{ q}( \Omega'))\EEE}
       = h \left\Vert \frac{(A^h)^T A^h}{2} \right\Vert_{L^\infty(I; L^q(\Omega'))} \EEE
      \leq C h \Vert A^h \Vert_{L^\infty(I; L^{2q}(\Omega'))}^2
      \EEE
      \leq C h \to 0. 
    \end{equation*}
    This shows that $A$ is skew-symmetric  for a.e.~$(t, x') \in I \times \Omega'$, \EEE and therefore, \ZZZ due to \GGG \eqref{convAhimproved}, \EEE for any $ q\EEE \in [1, \infty)$ and $i, \, j \in \{1,2,3\}$ we have that
    \begin{equation*}
      \left( \sym \left( \frac{R^h - \Id}{h^2} \right) \right)_{ij}
      = -\frac{A^h e_i \cdot A^h e_j}{2}
      \to - \frac{A e_i \cdot A e_j}{2}
      = \frac{(A^2)_{ij}}{2} \qquad \GGG \text{strongly in } \EEE L^{ q}(I \times \Omega').
    \end{equation*}

    \emph{Step 2 (Compactness for $(u^h)_h$ and $(v^h)_h$):}
    Let $s$ be as in \eqref{convuhdot}.
    Since $u^h(t,x'\EEE) = 0$ for a.e.\ $(t,x'\EEE) \in I \times \Gamma_D'$ and thus $ \partial_t \EEE u^h(t,x'\EEE) = 0$ for a.e.~$(t,x'\EEE) \in I \times \Gamma_D'$, we obtain by  \eqref{def:displacements}, \EEE Korn's inequality, Jensen's inequality, and \eqref{boundres:strainratesym}\OOO: \EEE
    \begin{align*}
      \Vert  \partial_t \EEE \nabla'\EEE u^h \Vert_{L^s(I\times \Omega')}
      &\leq C \Vert \sym ( \partial_t \EEE \nabla'\EEE u^h) \Vert_{L^s(I\times \Omega')}
      = C h^{-2} \Bigg\Vert \int_{-1/2}^{1/2} \sym(\partial_t  \nabla'  y^h) \EEE \di x_3 \Bigg\Vert_{L^s(I\times \Omega')} \\
      &\leq C h^{-2} \Vert \sym( \partial_t  \nabla_h \EEE y^h) \Vert_{L^s(I\times\Omega)} \leq C.
    \end{align*}
 Thus,  Poincaré's inequality  \GGG yields \EEE $\partial_t\EEE u^h \rightharpoonup \tilde u$ weakly in $L^s(I; W^{1,s}(\Omega'; \R^2))$ for some $\tilde u \in L^s(I; W^{1,s}(\Omega'; \R^2))$, \EEE up to selecting a subsequence.
    We proceed similarly with $(u^h)_h$.
    By the boundedness of $(A^h)_h$ in $L^\infty(I; H^1(\Omega'; \R^{3 \times 3}\EEE))$, \eqref{rig:closetorot}, and \eqref{toderivecompu} we derive that
    \begin{equation*}
      \left\Vert \sym \left( \frac{\nabla_h y^h - \Id}{h^2} \right) \right\Vert_{L^\infty(I; L^2(\Omega))}
      \leq \left\Vert \sym \left( \frac{\nabla_h y^h - R^h}{h^2} \right) \right\Vert_{L^\infty(I; L^2(\Omega))}
      + C \Vert A^h \Vert_{L^\infty(I; L^4(\Omega'))}^2 \leq C.
    \end{equation*}
    Hence, Korn-Poincaré's inequality  and  $u^h(t,x'\EEE) = 0$ for a.e.\ $(t,x'\EEE) \in I \times \Gamma_D'$  \EEE yields that $(u^h)_h$ is bounded in $L^\infty(I; H^1(\Omega';\R^2))$.
    Possibl\OOO y p\EEE assing to a subsequence,  we can find \EEE $u \in L^\infty(I; H^1(\Omega';\R^2))$ such that $u^h \weaklystar u$ weakly* in $L^\infty(I; H^1(\Omega';\R^2))$.
    It is then standard to prove \ZZZ that \EEE $\tilde u =  \partial_t \EEE u$.
    In particular, we have shown \eqref{convuh}--\EEE\eqref{convuhdot}.

We now address compactness for  $(v^h)_h$.
    By \OOO the definition of $v^h$ in \EEE  \eqref{def:displacements}, \EEE \eqref{rig:closetoid}, \OOO Jensen's inequality, \EEE  $v^h(t,x'\EEE) = 0$ for  a.e.~$(t,x'\EEE) \in I \times \Gamma_D'$, \EEE and Poincaré's \EEE inequality it holds that $(v^h)_h$ is bounded in $L^\infty(I; H^1(\Omega'))$.
    Hence, there exists $v \in L^\infty(I; H^1(\Omega'))$ such that, up to selecting a subsequence, $v^h \weaklystar v$ weakly* in $L^\infty(I; H^1(\Omega'))$.
    Similarly,  by  \eqref{bound:strainratenew} we have  that $(\partial_t  \nabla'\EEE v^h)_h$ is bounded in $L^2(I \times \Omega';  \R^2)\EEE$.
    Again, Poincaré's inequality yields $ \partial_t \EEE v^h \rightharpoonup  \partial_t \EEE v$ weakly in $L^2(I; H^1(\Omega'))$, up to   a subsequence.     This concludes the proof of \eqref{convvh}--\EEE\eqref{convvhdot}.

 The convergences \eqref{convuh}--\eqref{convvhdot} and \eqref{eq: nonlinear boundary conditions} also imply that the first two boundary conditions in \eqref{eq: bccond} are satisfied. Our next goal is to complete the proof of \eqref{eq: bccond} and to show  $v \in L^\infty(I; H^2(\Omega'))$. For this, we first need some additional properties of $A$. \EEE

    \textit{Step 3 (Characterization of $A$):}  
     Notice that by the definition of $u^h$ and $A^h$ we have \EEE for a.e.~$(t,x') \in I \times \Omega'$
    \begin{align*}
      h\partial_2 u_1^h
      = \frac{1}{h} \int_{-1/2}^{1/2} \partial_2 y^h_1 \di x_3
      = \frac{1}{h} \int_{-1/2}^{1/2}  (\partial_2 y^h_1 - R^h_{12} ) \di x_3
         + A^h_{12}.
    \end{align*}
    Therefore, by \eqref{rig:closetorot} and \eqref{convuh} we derive that $(\ZZZ h^{-1} \EEE A_{12}^h)_h$ is bounded in $L^\infty(I; L^2(\Omega'))$.
    This shows
    \begin{equation}\label{A12}
      A_{12} = 0 \qquad \text{for a.e.~} (t,x') \in I \times \Omega'.
    \end{equation}
     Similarly, \EEE for $i \in \{1, 2\}$ and a.e.~$(t,x') \in I \times \Omega'$  we have \EEE
    \begin{equation*}
      \partial_i v^h
      = \int_{-1/2}^{1/2} \frac{\partial_i y_3^h - R^h_{3i}}{h} \di x_3 + A^h_{3i}.
    \end{equation*}
     Using \eqref{rig:closetorot} and t\EEE aking the limit $h \to 0$ on both sides leads  to \EEE
    \begin{equation}\label{A3i}
      \partial_i v = A_{3i} \qquad \text{for a.e.~} (t,x') \in I \times \Omega' \text{ and } i \in \{1,2\}.
    \end{equation}
      The skew-symmetry of $A$, \eqref{A12}, and \eqref{A3i} then  allow \OOO us  to represent $A$ in terms of $v$, namely \EEE
    \begin{equation}\label{A_repr}
      A
      = e_3 \otimes \begin{pmatrix} \nabla' v \\ 0 \end{pmatrix}
        - \begin{pmatrix} \nabla' v \\ 0 \end{pmatrix} \otimes e_3
      = \begin{pmatrix}
        0 & 0 & -\partial_1 v \\
        0 & 0 & -\partial_2 v \\
        \partial_1 v & \partial_2 v & 0
      \end{pmatrix}.
    \end{equation}
   As \EEE $A \in L^\infty(I; H^1(\Omega';  \R^{3 \times 3}\EEE))$  (see \EEE \eqref{convAh}) this \EEE implies $v \in L^\infty(I; H^2(\Omega'))$.

    \emph{Step 4 (Trace of  $\nabla' v$ on $\Gamma_D'$):}     Our next goal is to derive  the trace condition \EEE
    \begin{equation}\label{boundarynablav}
      \nabla' v = 0 \qquad \text{a.e.~on }   I \EEE \times \Gamma_D'.
    \end{equation}
   To this end,  let us define \EEE
    \begin{equation*}
    \mathcal Z^h(t,x') \defas
      \int_{-1/2}^{1/2} x_3 \left(
          y^h(t,x',x_3) -
          \begin{pmatrix} x' \\ hx_3 \\ \end{pmatrix}
        \right) \di x_3
    \end{equation*}
 Repeating the proof in  \cite[Corollary 1]{FJM_hierarchy}, in particular by following \cite[ Equation~(100)\EEE]{FJM_hierarchy}, we get that 
    \begin{align*}
      \frac{1}{h^2} \mathcal Z^h(t) \ZZZ  \rightharpoonup \EEE \frac{1}{12} A(t) e_3
        &= -\frac{1}{12} \begin{pmatrix} \nabla' v(t) \\ 0 \end{pmatrix} \quad
          \text{weakly in }  H^1(\Omega';\R^3)
          \end{align*}
          for a.e.\ $t \in I$, where  \EEE the  equality   is a direct consequence of \eqref{A_repr}.     Notice that by construction $\mathcal Z^h (t,x') = 0$  for a.e.~$(t,x') \in I \times \Gamma_D'$. Consequently, by the above convergence and the compactness of the trace operator from $H^1(\Omega';\R^3)$ to $ L^2(\OOO \Gamma'; \R^3)$,  \eqref{boundarynablav} follows. \EEE

    \emph{Step 5 (Compactness for the temperature and its gradient):}
    Using the definition of $\mu^h$ in \eqref{def:scaledtemp}  and \EEE \eqref{boundres:temperature}--\EEE\eqref{boundres:temperaturegrad}  we see that \EEE
    \begin{equation*}
      \sup_{h \in (0, 1]} \Big(
        \Vert \mu^h \Vert_{L^q(I \times  \Omega' \EEE)}
        + \Vert \OOO\nabla'\EEE \mu^h \Vert_{L^r(I\times  \Omega' \EEE)}
      \Big) < \infty
    \end{equation*}
    for any $q \in [1,5/3)$ and $r \in [1,5/4)$.
    This directly leads to the \GGG convergence \EEE in \eqref{convmuhnabla}, up to selecting a subsequence\EEE.
    The strong convergence in \eqref{convmuh} is more delicate.  Here, we follow the lines of \cite[Lemma~4.2]{RBMFMK}. \EEE First, we \EEE show that, up to selecting a subsequence, \EEE
    \begin{equation}\label{theta_h_conv}
       h^{-\alpha}  \theta^h \EEE \to \tilde \theta \qquad \text{strongly in } L^q(I \times \Omega),
    \end{equation}
     for any $q \in [1, 5/3)$  for some $\tilde \theta \in L^q(I \times \Omega)$.      Set $\eta^h \defas h^{-\alpha} \zeta^h = h^{-\alpha} W^{\rm in}(\nabla_h y^h , \EEE \theta^h)$.
    \EEE
    By \eqref{boundres:temperature}--\eqref{forAubin-Lion3} the sequence $(\eta\EEE^h)_h$ is uniformly bounded in $L^r(I; W^{1,r}(\Omega))$ for any $r \in [1, 5/4)$ and that $(\partial_t\eta\EEE^h)_h$ is uniformly bounded in $L^1(I;\OOO( H^3 \EEE (\Omega)\OOO)\EEE^*)$.
 \ZZZ    Fix $\tilde r \in  (1, \frac{15}{7})$. \EEE     By  the \EEE Rellich-Kondrachov theorem, there exists $ s\EEE \in [1,5/4)$ such that the embedding $W^{1,s}(\Omega) \subset\subset L^{\tilde r}(\Omega)$ is compact.
     As \EEE $L^{\tilde  r}(\Omega) \subset \OOO( H^3 \EEE (\Omega)\OOO)\EEE^*$, we derive by  the \GGG Aubin-Lions \EEE lemma that $\eta\EEE^h \to \eta$  strongly in $L^{s}(I; L^{\tilde r}(\Omega))$ 
    for some $\eta\EEE \in L^{s}(I; W^{1,s}(\Omega)\GGG)$.   \EEE \ZZZ In  particular, this implies  $\eta^h \to \eta$ in measure on $I \times \Omega$. 
     \EEE Given any $q \in [1, 5/3)$, the sequence $(\eta^h)_h$ is equiintegrable in \GGG $L^q(I \times \Omega)$ \EEE by \eqref{boundres:temperature} (applied for a larger exponent less than $5/3$), and then Vitali's convergence theorem implies   
  \begin{equation}\label{phconv1}
      \eta\EEE^h \to \eta\EEE\qquad {\rm strongly \ in \ }L^{q}( \GGG I \times \Omega \EEE).
    \end{equation}     \EEE
 To show \eqref{theta_h_conv}, we now transfer the \OOO convergence  of \EEE  the sequence $(\eta^h)_h$ to the sequence $(h^{-\alpha}\theta^h)_h$.
    We \EEE first note that for any $F \in GL^+(3)$, the map $W^{\rm in }(F, \cdot)$ is invertible with
    \begin{equation*}
      (\EEE W^{\rm in}(F, \cdot)^{-1})'\EEE(m) = c_V\big(F, W^{\rm in}(F, \cdot)^{-1}(m)\big)^{-1} \leq c_0^{-1}
    \end{equation*}
    for every $m > 0$, where we recall the definition of $c_V$ in \ref{C_heatcap_cont}  and  the  \EEE bound \OOO in \EEE \eqref{sec_deriv}.
     Using \EEE the definition of $\zeta^h$ we can write $\theta\EEE^h  = W^{\rm in}(\nabla_h y^h, \cdot)^{-1}(\zeta^h)$.
    Then,  by \EEE the fundamental theorem of calculus, a change of variables, \ZZZ and the fact that $W^{\rm in}(\nabla_h y^h, \cdot)^{-1}(0)= 0$ \GGG (see the discussion below \eqref{inten_lipschitz_bounds}) \EEE  it follows that \EEE
    \begin{align*}
       h^{-\alpha} \theta\EEE^h
      = h^{-\alpha} \int_0^{\zeta^h} (\EEE W^{\rm in}(\nabla_h y^h, \cdot)^{-1})'\EEE(m) \di m
      &= h^{-\alpha} \int_0^{\zeta^h} c_V\big(\nabla_h y^h, W^{\rm in}(\nabla_h y^h, \cdot)^{-1}(m)\big)^{-1} \di m \\
      &= \int_0^{ \eta\EEE^h} c_V(\nabla_h y^h, W^{\rm in}(\nabla_h y^h, \cdot)^{-1}(h^\alpha m))^{-1} \di m.
    \end{align*}
    Let us now set $\tilde\theta\EEE \defas \overline c_V^{-1} \eta\EEE$, where $\overline c_V = c_V(\Id, 0)$  is as in \EEE \eqref{barcvandk}.
    By \eqref{sec_deriv} we  then \EEE derive that
    \begin{align*}
      \vert  h^{-\alpha} \theta\EEE^h - \tilde \theta\EEE \vert
      &\ZZZ = \EEE \left\vert
          \int_0^{\eta\EEE^h} c_V\big(\nabla_h y^h, W^{\rm in}(\nabla_h y^h, \cdot)^{-1}(h^\alpha m)\big)^{-1} \di m
          - \int_0^{  \eta\EEE} \overline c_V^{-1} \di m
        \right\vert \\
    &\leq \frac{1}{c_0} |\eta\EEE^h - \eta\EEE|
      +
          \int_0^{  \eta\EEE^h} \OOO\left|\EEE c_V\big(\nabla_h y^h, W^{\rm in}(\nabla_h y^h, \cdot)^{-1}(h^\alpha m)\big)^{-1} - \overline c_V^{-1} \OOO\right|\EEE\di m.
    \end{align*}
  The integrand of the second term is bounded by $2/c_0$, see \eqref{sec_deriv}, and thus the integral is bounded pointwise by $2\eta^h/c_0$. \ZZZ Then,  \EEE  $\eta^h \to \eta$ in $L^q(I \times \Omega)$, the continuity of $c_V$ at $(\Id, 0)$, \eqref{rig:closetoidinfty}, \OOO and   dominated convergence imply \eqref{theta_h_conv}.
     Now, notice that by \eqref{convmuhnabla} we must have for a.e.~$(t, x') \in I \times \Omega'$\OOO  \EEE
    \begin{equation}\label{mu_def}
      \mu(t, x') = \int_{-1/2}^{1/2} \tilde \theta(t, x', x_3) \di x_3.
    \end{equation}
        Hence, with \eqref{theta_h_conv} and Jensen's inequality \eqref{convmuh} follows.
 Eventually we note that the weak convergence of the scaled gradient $h^{-\alpha} \nabla_h \theta^h$, see \eqref{boundres:temperaturegrad}, implies that $ \tilde \theta  \ZZZ =   \overline c_V^{-1}\eta \EEE$ does not depend on $x_3$, i.e., $\mu = \tilde \theta$. \EEE
    \end{proof}
    
  The  following \EEE corollary collects some properties that have been established in the previous proof. \EEE
     
\begin{corollary}\label{a new corollary}
In the setting of Proposition~\ref{prop:compactness}, given the maps $(R^h)_h \subset L^\infty(I;H^1(\Omega';SO(3)))$ from Lemma \ref{lemma:rigidity}  the functions $A^h \defas \frac{R^h - \Id}{h}$ satisfy,  up to  a subsequence, for any $q \in [1,\infty)$
\begin{subequations}
\begin{align}
  A^h &\to A  \qquad \quad \text{weakly* in }    L^\infty(I; H^1(\Omega'; \R^{3 \times 3})) \text{ and strongly in } L^{ q \EEE}(I \times \Omega'; \R^{3 \times 3}), \label{convAhlem}\\
     h^{-1} \sym \left( A^h \right) &\to
   \frac{1}{2} \ZZZ A^2 \EEE  \quad   \ \text{ strongly in } L^{q}(I \times \Omega'; \R^{3 \times 3}), \label{convAhsquaredstatement} 
\end{align}
\end{subequations}
where the limit $A \in L^\infty(I; H^1(\Omega';   \R^{3 \times 3}   )) $ is characterized by 
\begin{equation}\label{sym_G''A_repr-new}
 A
  = e_3 \otimes \begin{pmatrix} \nabla' v \\ 0 \end{pmatrix}
    - \begin{pmatrix} \nabla' v \\ 0 \end{pmatrix} \otimes e_3 \quad \text{a.e.~in $I \times \Omega$}.
\end{equation}
 Moreover, \EEE
    \begin{equation}\label{theta_h_convXXX}
       h^{-\alpha}  \theta^h \EEE \to   \mu \qquad \text{strongly in } L^q(I \times \Omega) \quad \text{for any $q \in [1, 5/3)$,}
    \end{equation}
 for $\mu$ as given in Proposition~\ref{prop:compactness}. \EEE
\end{corollary}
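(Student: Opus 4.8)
The plan is to observe that Corollary~\ref{a new corollary} is essentially a \emph{bookkeeping} statement: each of the four displayed claims has already been obtained inside the proof of Proposition~\ref{prop:compactness}, and the task is to isolate them, to note which subsequence is being used, and to supply the (short) verifications that were only implicit. First I would fix once and for all the subsequence: the proof of Proposition~\ref{prop:compactness} extracts finitely many nested subsequences (for $A^h$, for $u^h$, for $v^h$, for $\mu^h$, for $\eta^h$ and $h^{-\alpha}\theta^h$), so by a diagonal argument there is one common subsequence along which \emph{all} the convergences in Proposition~\ref{prop:compactness} as well as those claimed here hold; the corollary is stated precisely for this subsequence. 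This is the harmless part.

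Next I would go through the four statements in turn. For \eqref{convAhlem}: weak* convergence in $L^\infty(I;H^1(\Omega';\R^{3\times 3}))$ is exactly \eqref{convAh}, and the upgrade to strong convergence in $L^q(I\times\Omega';\R^{3\times 3})$ for every $q\in[1,\infty)$ is \eqref{convAhimproved}, which was proved in Step~1 of the proof of Proposition~\ref{prop:compactness} via the time-continuity estimate \eqref{lennart show} combined with Simon's compactness theorem \cite[Theorem~6]{Simon} (or \cite[Theorem~2.5]{AMM_vKaccel}), using the compact embedding $H^1(\Omega';\R^{3\times 3})\hookrightarrow\hookrightarrow L^q(\Omega';\R^{3\times 3})$. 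For \eqref{convAhsquaredstatement}: this is the content of the displayed identity near the end of Step~1, where \eqref{toderivecompu} gives $h^{-1}\sym(A^h)=\sym\big((R^h-\Id)/h^2\big)=-\tfrac12(A^h)^TA^h$, and the strong $L^{q}$-convergence $A^h\to A$ from \eqref{convAhlem} together with the $L^\infty(I;L^{2q})$-bound on $A^h$ from \eqref{rig:closetoid} lets one pass to the limit in the quadratic expression, yielding $-\tfrac12 A^TA=\tfrac12 A^2$ (using that $A$ is skew-symmetric, which was also shown there); one simply records this, noting that the strong $L^q$-convergence of a product follows from strong $L^{2q}$-convergence of each factor, which is implied by strong $L^{q'}$-convergence for all $q'<\infty$. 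For \eqref{sym_G''A_repr-new}: this is \eqref{A_repr} from Step~3, obtained from the skew-symmetry of $A$, the relation $A_{12}=0$ in \eqref{A12}, and $\partial_i v=A_{3i}$ in \eqref{A3i}; since $A\in L^\infty(I;H^1)$ this also re-confirms $v\in L^\infty(I;H^2(\Omega'))$, consistent with Proposition~\ref{prop:compactness}. Finally \eqref{theta_h_convXXX} is exactly \eqref{theta_h_conv} together with the identification $\tilde\theta=\mu$ noted at the very end of the proof of Proposition~\ref{prop:compactness}: the weak convergence of the scaled gradient $h^{-\alpha}\nabla_h\theta^h$ forces $\tilde\theta$ to be independent of $x_3$, hence $\tilde\theta=\mu$ by \eqref{mu_def} and Jensen, and $h^{-\alpha}\theta^h\to\tilde\theta$ strongly in $L^q(I\times\Omega)$ for $q\in[1,5/3)$ was established there via the monotone reparametrization through $c_V$, the strong convergence $\eta^h\to\eta$ in $L^q(I\times\Omega)$, the continuity of $c_V$ at $(\Id,0)$, \eqref{rig:closetoidinfty}, and dominated convergence.

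Since every assertion has already been proved, the write-up is a matter of citing the appropriate steps rather than of new mathematics. The only point requiring genuine (if minor) care is making the common subsequence explicit, so that \eqref{convAhlem}--\eqref{theta_h_convXXX} all refer to the \emph{same} extraction; I would state this at the outset and then dispatch the four items one by one. I do not anticipate any real obstacle here, beyond being careful that the quadratic passage to the limit in \eqref{convAhsquaredstatement} uses the uniform $L^\infty(I;L^{2q})$ bound (not merely $L^\infty(I;H^1)$, which would give only $L^4$ control in the space variable and an $L^\infty$-in-time issue) — but this is precisely the bound recorded in \eqref{rig:closetoid}, so it is available.

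\begin{proof}[Proof of Corollary~\ref{a new corollary}]
All the statements have been established in the course of the proof of Proposition~\ref{prop:compactness}. We only need to observe that the various subsequences extracted there (for $(A^h)_h$, for $(u^h)_h$, $(v^h)_h$, $(\mu^h)_h$, and for $(\eta^h)_h$ and $(h^{-\alpha}\theta^h)_h$) can be chosen to be nested, so that by a diagonal argument there is a single subsequence, not relabeled, along which all the convergences in Proposition~\ref{prop:compactness} and the ones asserted here hold simultaneously.

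The weak* convergence of $(A^h)_h$ in $L^\infty(I;H^1(\Omega';\R^{3\times 3}))$ is \eqref{convAh}, and its strong convergence in $L^q(I\times\Omega';\R^{3\times 3})$ for every $q\in[1,\infty)$ is \eqref{convAhimproved}; this proves \eqref{convAhlem}. The characterization \eqref{sym_G''A_repr-new} of $A$ is \eqref{A_repr}, obtained from the skew-symmetry of $A$, from \eqref{A12}, and from \eqref{A3i}. For \eqref{convAhsquaredstatement}, recall from \eqref{toderivecompu} that
\begin{equation*}
  h^{-1}\sym\left(A^h\right)=\sym\left(\frac{R^h-\Id}{h^2}\right)=-\frac{(A^h)^TA^h}{2}\qquad\text{a.e.~in }I\times\Omega'.
\end{equation*}
By \eqref{rig:closetoid} the sequence $(A^h)_h$ is bounded in $L^\infty(I;L^{2q}(\Omega';\R^{3\times 3}))$, and by \eqref{convAhlem} it converges to $A$ strongly in $L^{2q}(I\times\Omega';\R^{3\times 3})$; hence $(A^h)^TA^h\to A^TA$ strongly in $L^q(I\times\Omega';\R^{3\times 3})$. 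Since $A$ is skew-symmetric, $-A^TA=A^2$, and \eqref{convAhsquaredstatement} follows. Finally, \eqref{theta_h_convXXX} is exactly \eqref{theta_h_conv} together with the identification $\tilde\theta=\mu$: indeed the weak convergence of the scaled gradient $h^{-\alpha}\nabla_h\theta^h$ in \eqref{boundres:temperaturegrad} forces the limit $\tilde\theta$ to be independent of $x_3$, so that $\tilde\theta=\mu$ by \eqref{mu_def}, and the strong convergence in $L^q(I\times\Omega)$ for $q\in[1,5/3)$ was established via the monotone reparametrization through $c_V$, the strong convergence $\eta^h\to\eta$ in $L^q(I\times\Omega)$, the continuity of $c_V$ at $(\Id,0)$, \eqref{rig:closetoidinfty}, and dominated convergence.
\end{proof}
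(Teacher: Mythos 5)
Your proposal is correct and matches the paper's treatment: the paper gives no separate proof of this corollary but explicitly presents it as a collection of facts established in the proof of Proposition~\ref{prop:compactness} (namely \eqref{convAh}, \eqref{convAhimproved}, \eqref{toderivecompu}, \eqref{A12}--\eqref{A_repr}, and \eqref{theta_h_conv} with the identification $\tilde\theta=\mu$), which is exactly what you cite. Your added remarks on the common subsequence and on passing to the limit in the quadratic term via the $L^{2q}$ bound from \eqref{rig:closetoid} are consistent with the argument given there at the end of Step~1.
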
    
    
    \EEE
    
    The next lemma derives compactness properties of the internal energy. \EEE

    \begin{lemma}[Compactness  of internal energy\EEE]\label{corol:compactnesscorollary}
     Let $(\EEE(y^h,\theta^h))\EEE_h$ be a sequence of weak \GGG solutions \EEE to \eqref{weak_form_mech_res} and \eqref{weak_form_heat_res} in the sense of Definition \ref{def:weak_formulation}, such that \eqref{boundres:mech}--\eqref{forAubin-Lion3} and all   assumptions of Proposition~\ref{prop:Existenceof3dsolutions}  are satisfied. \ZZZ Let  $( \mu^h_0 )_h$ be the  rescaled versions of the initial temperatures $(\theta_0^h)_h$     given in  \eqref{def:scaledtemp}. We  \EEE suppose that $\mu_0^h \to \mu_0$  strongly in $L^2( \Omega')$ and that \eqref{well-preparednessinitial} holds. \EEE
     Then, the following holds true: \EEE
    \begin{subequations}
    \begin{align}
    h^{-\alpha}  \int_{-1/2}^{1/2} \EEE W^{\rm in}(\nabla_h y^h, \theta^h)  \di x_3 \EEE &\to \overline c_V \mu &&\text{strongly in } L^q(I\times \Omega) \ \text{for }q \in [1, 5/3), \label{convscaledinternal} \\
    h^{-\alpha}  \int_{-1/2}^{1/2} \EEE W^{\rm in}(\nabla_h y^h_0, \theta^h_0)  \di x_3 \EEE &\to \overline c_V \mu_0 &&\text{strongly in } L^1( \Omega \EEE), \label{convscaledinternalinitial}
    \end{align}
    \end{subequations}
    as $h \to 0$, where  as before  $\overline c_V = c_V(\Id, 0)$.
    \end{lemma}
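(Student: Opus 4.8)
The statement \eqref{convscaledinternal}--\eqref{convscaledinternalinitial} is essentially a repackaging of \eqref{theta_h_convXXX} (for the bulk term) together with a comparison-of-internal-energies argument at the initial time (for the initial term). The plan is therefore to reduce \eqref{convscaledinternal} directly to Corollary~\ref{a new corollary}, and to prove \eqref{convscaledinternalinitial} by relating $h^{-\alpha}W^{\rm in}(\nabla_h y_0^h,\theta_0^h)$ to $h^{-\alpha}\theta_0^h$ and then to $\overline c_V^{-1}$ times the limit, exploiting the well-preparedness assumption \eqref{well-preparednessinitial} together with $\mu_0^h\to\mu_0$ in $L^2(\Omega')$.

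\emph{Step 1 (Bulk term).} Set $\eta^h\defas h^{-\alpha}W^{\rm in}(\nabla_h y^h,\theta^h)=h^{-\alpha}\zeta^h$. In the proof of Proposition~\ref{prop:compactness} it was shown, see \eqref{phconv1}, that $\eta^h\to\eta$ strongly in $L^q(I\times\Omega)$ for every $q\in[1,5/3)$, and in \eqref{theta_h_convXXX} that $\eta=\overline c_V\tilde\theta=\overline c_V\mu$, where $\mu$ does not depend on $x_3$. Hence $h^{-\alpha}\int_{-1/2}^{1/2}W^{\rm in}(\nabla_h y^h,\theta^h)\di x_3=\int_{-1/2}^{1/2}\eta^h\di x_3$ converges strongly in $L^q(I\times\Omega)$ to $\int_{-1/2}^{1/2}\overline c_V\mu\di x_3=\overline c_V\mu$, since the integral over $x_3$ is a bounded linear operator on $L^q$. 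This gives \eqref{convscaledinternal}; one only has to note that the convergences in \eqref{phconv1} and \eqref{theta_h_convXXX} were obtained for the full sequence of solutions provided by Proposition~\ref{prop:Existenceof3dsolutions}, so no further subsequence extraction is needed here.

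\emph{Step 2 (Initial term).} For the initial data, the main point is that the well-preparedness assumption \eqref{well-preparednessinitial} forces $\nabla_h y_0^h$ to be close to a rotation, hence uniformly close to $\Id$ in the relevant sense. More precisely, $h^{-4}\mathcal M(y_0^h)\to\phi_0^{\rm el}(u_0,v_0)<\infty$ combined with the rigidity estimates of Lemma~\ref{lemma:rigidity} (applied to the static configuration $y_0^h$) yields $\Vert\nabla_h y_0^h-\Id\Vert_{L^\infty(\Omega)}\le Ch^{4/p}\to 0$. Then, exactly as in the computation preceding \eqref{theta_h_convXXX}, one writes $h^{-\alpha}\theta_0^h=\int_0^{\eta_0^h}c_V(\nabla_h y_0^h,W^{\rm in}(\nabla_h y_0^h,\cdot)^{-1}(h^\alpha m))^{-1}\di m$ with $\eta_0^h\defas h^{-\alpha}W^{\rm in}(\nabla_h y_0^h,\theta_0^h)$, and estimates
\begin{align*}
\big|\,h^{-\alpha}\theta_0^h-\overline c_V^{-1}\eta_0^h\,\big|
 \le \int_0^{\eta_0^h}\big|\,c_V(\nabla_h y_0^h,W^{\rm in}(\nabla_h y_0^h,\cdot)^{-1}(h^\alpha m))^{-1}-\overline c_V^{-1}\,\big|\,\di m,
\end{align*}
the integrand being bounded by $2/c_0$ and tending to $0$ pointwise by continuity of $c_V$ at $(\Id,0)$ and $\nabla_h y_0^h\to\Id$ in $L^\infty$, $h^\alpha m\to 0$. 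Dominated convergence — using the uniform bound $h^{-2\alpha}\Vert\theta_0^h\Vert_{L^2(\Omega)}^2\le C$, hence $h^{-\alpha}\theta_0^h$ bounded in $L^2$, to dominate — then gives $\eta_0^h=\overline c_V\,h^{-\alpha}\theta_0^h+o(1)$ in $L^1(\Omega)$. Finally, since $\mu_0^h\to\mu_0$ in $L^2(\Omega')$ and $h^{-\alpha}\theta_0^h$ is $x_3$-independent in the limit by the same argument as for $\tilde\theta$ (one can also invoke the second part of \eqref{well-preparednessinitial}, $\Vert h^{-\alpha}\theta_0^h\Vert_{L^2(\Omega)}\to\Vert\mu_0\Vert_{L^2(\Omega')}$, to upgrade weak to strong convergence), we get $h^{-\alpha}\theta_0^h\to\mu_0$ in $L^1(\Omega)$, whence $\int_{-1/2}^{1/2}\eta_0^h\di x_3\to\overline c_V\mu_0$ in $L^1(\Omega)$, which is \eqref{convscaledinternalinitial}.

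\emph{Main obstacle.} The bulk statement \eqref{convscaledinternal} is immediate from work already done. The only genuine issue is the initial-time claim \eqref{convscaledinternalinitial}: one must be careful that the sequence $(y_0^h)_h$ indeed satisfies the hypotheses of the rigidity lemma (which requires an energy bound of the form $\mathcal M(y_0^h)\le Ch^4$, guaranteed by \eqref{well-preparednessinitial}) so that $\nabla_h y_0^h\to\Id$ in $L^\infty$, and that the $L^2$-bound on $h^{-\alpha}\theta_0^h$ from the hypotheses of Proposition~\ref{prop:Existenceof3dsolutions} supplies the domination needed to pass the ``change of internal-energy variable'' estimate to the limit. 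Once these two ingredients are in place, the convergence $h^{-\alpha}\theta_0^h\to\mu_0$ in $L^1(\Omega)$ — and hence \eqref{convscaledinternalinitial} — follows by the identical dominated-convergence scheme used for the time-dependent temperature in the proof of Proposition~\ref{prop:compactness}.
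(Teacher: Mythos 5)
Your proof follows essentially the same route as the paper: \eqref{convscaledinternal} is read off from \eqref{phconv1}, \eqref{mu_def}, and \eqref{theta_h_convXXX}, and \eqref{convscaledinternalinitial} is obtained by the same change-of-variable comparison of $W^{\rm in}(\nabla_h y_0^h,\cdot)$ with $\overline c_V\vartheta$, using rigidity at the initial configuration and the well-preparedness of the data; the paper merely organizes the initial-time estimate slightly differently (it subtracts $\overline c_V\mu_0^h$, splits the $\vartheta$-integral at $\mu_0^h$ via Fubini, and treats the remainder through $\int_{-1/2}^{1/2}|h^{-\alpha}\theta_0^h-\mu_0^h|\di x_3\to 0$), which is equivalent to your two-step comparison.

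One caveat: your primary justification that ``$h^{-\alpha}\theta_0^h$ is $x_3$-independent in the limit by the same argument as for $\tilde\theta$'' does not work, since that argument relied on the scaled gradient bound \eqref{boundres:temperaturegrad}, and no such bound is available for the initial data $\theta_0^h\in L^2_+(\Omega)$. The parenthetical alternative you mention is therefore not optional but the actual argument (and is what the paper uses): since $\mu_0^h$ is the $x_3$-average of $h^{-\alpha}\theta_0^h$, orthogonality gives $\|h^{-\alpha}\theta_0^h\|_{L^2(\Omega)}^2=\|\mu_0^h\|_{L^2(\Omega')}^2+\|h^{-\alpha}\theta_0^h-\mu_0^h\|_{L^2(\Omega)}^2$, and the norm convergence in \eqref{well-preparednessinitial} together with $\mu_0^h\to\mu_0$ in $L^2(\Omega')$ forces the fluctuation to vanish, whence $h^{-\alpha}\theta_0^h\to\mu_0$ in $L^2(\Omega)$. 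Also, the phrase ``dominated convergence using the $L^2$ bound to dominate'' should be sharpened: a uniform $L^2$ bound is not a dominating function; rather, the integrand factor is bounded by $2/c_0$ and tends to zero a.e.\ (using $\nabla_h y_0^h\to\Id$ uniformly and $\theta_0^h\to 0$ a.e.\ along subsequences), so the error is bounded by $\varepsilon_h\,\eta_0^h$ with $\varepsilon_h\to 0$ a.e.\ and bounded, and Cauchy--Schwarz (or Vitali, via the equiintegrability supplied by the $L^2$ bound) gives the $L^1$ convergence. With these two repairs your argument is complete.
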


    \begin{proof}
     Notice that \EEE \eqref{convscaledinternal}  has \EEE been addressed in Step~5 of the proof of Proposition \ref{prop:compactness}, see   \ZZZ \eqref{phconv1} \EEE and \eqref{mu_def}, and use the identities $\eta^h = h^{-\alpha} W^{\rm in}(\nabla_h y^h  , \EEE \theta^h)$ and  $\overline c_V \tilde\theta =  \eta$. \EEE
    
  To see  \eqref{convscaledinternalinitial}, we recall the relation $  \mu^h_0(x')
  = \frac{1}{h^\alpha}   \int_{-1/2}^{1/2} \theta^h_0( x', x_3) \di x_3   $ by  \eqref{def:scaledtemp}. Then, \OOO using \EEE the fundamental theorem of calculus and a change of variables,  we get  for  a.e.~$(x',x_3) \in \Omega$
     \begin{align*}
   h^{-\alpha} W^{\rm in}(\nabla_h y^h_0, \theta^h_0) - \overline c_V   \mu^h_0  =  \int_0^{ h^{-\alpha} \theta^h_0\EEE}   \partial_\vartheta W^{\rm in}(\nabla_h y^h_0,h^\alpha s) \di s - \overline c_V \mu^h_0.
\end{align*}
Taking the integral over $x_3$ and using Fubini's theorem,    we get \GGG for \GGG a.e.~$x' \in \Omega'$ \EEE
\begin{align*}
     h^{-\alpha}  \int_{-1/2}^{1/2} W^{\rm in}(\nabla_h y^h_0, \theta^h_0)   \di x_3 -  \overline c_V \mu^h_0 & = \int_0^{\mu_0^h} \Big( \int_{-1/2}^{1/2}  \partial_\vartheta W^{\rm in}(\nabla_h y^h_0,h^\alpha s) \di x_3   - \overline c_V  \Big) \di s \\ &\quad  +   \int_{-1/2}^{1/2} \int_{\mu_0^h}^{h^{-\alpha} \theta^h_0}  \Big( \partial_\vartheta W^{\rm in}(\nabla_h y^h_0,h^\alpha s) - \overline c_V  \Big)   \di s  \di x_3, 
    \end{align*}
    where we also used $ \int_{-1/2}^{1/2} \int_{\mu_0^h}^{h^{-\alpha} \theta^h_0} 1 \di s  \di x_3 =0$ and the fact that $\mu_0^h$ is independent of $x_3$. We observe that the \OOO absolute values of the  integrands are bounded by \eqref{sec_deriv} and that $\partial_\vartheta W^{\rm in}(\nabla_h y^h_0,h^\alpha s) \to \overline c_V $ pointwise in $\Omega$ by  the continuity of $c_V$ at $(\Id, 0)$  and \eqref{rig:closetoidinfty}. Recall $\mu_0^h \to \mu_0$   in $L^2(\Omega')$ \OOO by assumption  and  \eqref{well-preparednessinitial} which gives that $\int_{-1/2}^{1/2}|h^{-\alpha} \theta^h_0 - \mu_0^h  | \EEE\di x_3 \to 0 $ in $L^2(\Omega')$. Then,  \eqref{convscaledinternalinitial} follows by dominated convergence. 
    \end{proof}

\subsection{Convergence of strain and stress}  

Given any matrix $M \in \R^{3 \times 3}$, we will write $M''$ for the upper-left $(2 \times 2)$-submatrix of $M$. In the next lemma, we address the convergence of the rescaled strain and stress tensors.

\begin{lemma}[Convergence of \OOO rescaled \EEE strain and stress]\label{lem:conv_strain_stress}\EEE
\ZZZ Suppose that all assumptions of Proposition \ref{prop:Existenceof3dsolutions} hold and that $(\EEE(y^h,\theta^h))_h$ is a sequence of solutions satisfying  \eqref{boundres:mech}--\eqref{forAubin-Lion3}. Moreover, we assume that  \eqref{rig:closetorot}--\eqref{boundres:strainratesym} \EEE hold for a sequence of rotations $(R^h)_h \subset \EEE L^\infty(I;H^1(\Omega';SO(3)))\EEE$. \EEE
We define
\begin{equation}\label{def:AhGh}
  G^h \defas \frac{(R^h)^T \nabla_h y^h - \Id}{h^2}.
\end{equation}
Then, \ZZZ there exists $G \in L^\infty(I; L^2(\Omega; \R^{3\times 3}))$ such that, \EEE up to selecting a \ZZZ subsequence (not relabeled), \EEE the following convergences hold true: \EEE
\begin{subequations}
\begin{align}
  G^h &\weaklystar G && \text{weakly* in } L^\infty(I; L^2(\Omega; \R^{3\times 3})), \label{convGh} \\
   \frac{1}{2h^2} \left((\nabla_h y^h)^T \nabla_h y^h - \Id\right) &\rightharpoonup \sym(G) && \text{weakly in }  H^1(I; L^2(\Omega; \R^{3\times 3})), \EEE \label{convstrainrateweak}
\end{align}  
\end{subequations}
where  
\begin{equation}\label{sym_G''A_repr}
\sym(G'')
  = \sym(\nabla' u)
    + \frac{1}{2} \nabla'v \otimes \nabla' v
    - x_3 (\nabla')^2 v
\end{equation}
 a.e.~in \ZZZ $I \times \Omega$. \EEE
\GGG Furthermore, \EEE  we have \EEE $ \partial_t \EEE u \in L^2(I; H^1(\Omega'; \R^2))$,  $\partial_t\EEE v \in L^2(I; H^2(\Omega'))$,  and \EEE
\begin{equation}\label{sym_dotG''}
  \partial_t \sym(G'')
  =  \sym( \partial_t \EEE \nabla' u) + \partial_t \EEE \nabla' v \odot \nabla' v
    - x_3 (\nabla')^2  \partial_t \EEE v
\end{equation} for a.e.~$(t, x) \in I \times \Omega$. \EEE
Finally,    for any  $ r \in [1, 5/3)$ \EEE  the following convergences hold true:
\begin{subequations}
\begin{align}
  h^{-2} \partial_F W^{\rm el}(\nabla_h y^h)
    &\weaklystar \C_{W^{\OOO \rm el\EEE}}^3 \sym(G) \qquad
    &&\text{weakly* in } L^\infty(I; L^2(\Omega; \R^{3\times 3})), \label{conv:stress:el} \\
  h^{-2} \partial_F W^{\rm cpl}(\nabla_h y^h,\theta^h)
    &\weakly \ZZZ \mu \EEE   \mathbb{B}^{(\alpha)}
    &&\text{weakly  in } L^{ r}(I \times \Omega; \R^{3\times 3}), \label{conv:stress:cpl}\\
  h^{-3} \partial_G H(\nabla_h^2 y^h)
    &\to 0
    &&\text{strongly in \EEE} L^\infty(I; L^1(\Omega; \R^{3 \times 3 \times 3})), \label{secondgradientvanishes} \\
  h^{-2}\partial_{\dot F} R (\nabla_h y^h,  \partial_t \EEE \nabla_h y^h, \theta^h)
  &\weakly \C_R^3 \partial_t  \sym(G)
  &&\text{weakly in } L^2(I\times \Omega; \R^{3 \times 3}), \label{conv:stress:visc}
\end{align}
\end{subequations}
where  $\C_{W^{\rm el}}^3$, $\C_R^3$, and $\mathbb B^{(\alpha)}\EEE$ are as in \eqref{eq:quadraticformsnotred}--\eqref{alpha_dep},  and  \GGG $\mu$ as given in Proposition~\ref{prop:compactness}. \EEE 
\end{lemma}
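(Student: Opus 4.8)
The plan is to first establish the weak$^*$ compactness of $(G^h)_h$ and then identify each limit in turn, using the rigidity estimates from Lemma~\ref{lemma:rigidity} together with the bounds from Proposition~\ref{prop:Existenceof3dsolutions} and the compactness already obtained in Proposition~\ref{prop:compactness} and Corollary~\ref{a new corollary}. For \eqref{convGh}, I would combine \eqref{rig:closetorot} with $\Vert (R^h)^T \Vert_{L^\infty} = 1$ to see that $(G^h)_h$ is bounded in $L^\infty(I; L^2(\Omega;\R^{3\times 3}))$, which yields a weak$^*$ limit $G$ up to a subsequence. To pass to \eqref{convstrainrateweak}, I would expand $\tfrac{1}{2h^2}((\nabla_h y^h)^T\nabla_h y^h - \Id) = \sym(G^h) + \tfrac{h^2}{2}(G^h)^T G^h$; the first term converges weakly$^*$ to $\sym(G)$ and the second vanishes in $L^1$ since $(G^h)_h$ is bounded in $L^2$. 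The $H^1(I;\cdot)$-regularity in time follows from \eqref{bound:strainraterotated-another one} (which controls $\sym((\nabla_h y^h)^T\partial_t\nabla_h y^h)$ in $L^2(I\times\Omega)$), after noting $\partial_t\big(\tfrac{1}{2h^2}((\nabla_h y^h)^T\nabla_h y^h - \Id)\big) = h^{-2}\sym((\nabla_h y^h)^T\partial_t\nabla_h y^h)$.

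The identification \eqref{sym_G''A_repr} is the heart of the lemma and I expect it to be the main obstacle. The standard route, as in \cite{FJM_hierarchy} and \cite{AMM_vKaccel}, is to write $G^h = h^{-2}((R^h)^T\nabla_h y^h - \Id)$ and decompose $\nabla_h y^h - \Id = (\nabla_h y^h - R^h) + (R^h - \Id)$. The in-plane components of the symmetric part of $h^{-2}(\nabla_h y^h - \Id)$ are controlled via the displacement fields $u^h$ and $v^h$: integrating in $x_3$ recovers $\sym(\nabla' u) + \tfrac12\nabla' v\otimes\nabla' v$ from the quadratic term $\tfrac{1}{2h^2}(A^h)^TA^h \to \tfrac12 A^2$ (Corollary~\ref{a new corollary}, \eqref{convAhsquaredstatement}, together with \eqref{A_repr}), while the $x_3$-dependence, i.e.\ the $-x_3(\nabla')^2 v$ term, comes from the first moment $\mathcal Z^h$ in $x_3$ of $y^h$ that was already analyzed in Step~4 of the proof of Proposition~\ref{prop:compactness}. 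I would make this precise by testing $G^h$ against functions of the form $\psi(x_3)\varphi(x')$ and separating the $x_3$-average from the linear-in-$x_3$ part, carefully keeping track of which contributions survive the limit; the compatibility assumption \eqref{quadraticformsassumption} (zero Poisson ratio) ensures the upper-left block decouples. The time-derivative statement \eqref{sym_dotG''} and the regularity $\partial_t u \in L^2(I;H^1(\Omega';\R^2))$, $\partial_t v \in L^2(I;H^2(\Omega'))$ follow by differentiating the same identities in time and invoking \eqref{boundres:strainratesym}, \eqref{bound:strainratenew} and the weak convergences $\partial_t u^h \rightharpoonup \partial_t u$, $\partial_t v^h \rightharpoonup \partial_t v$; one gets $L^2$ in time (rather than merely $L^s$) precisely because $\sym((\nabla_h y^h)^T\partial_t\nabla_h y^h)$ is $O(h^2)$ in $L^2(I\times\Omega)$ by \eqref{bound:strainraterotated-another one}.

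For the stress convergences I would Taylor-expand around $\Id$ (respectively $(\Id,0)$). For \eqref{conv:stress:el}, using \ref{W_regularity}, frame indifference \ref{W_frame_invariace} and $\partial_F W^{\rm el}(\Id)=0$, write $h^{-2}\partial_F W^{\rm el}(\nabla_h y^h) = h^{-2}(R^h)\,\partial_F W^{\rm el}((R^h)^T\nabla_h y^h)$ and expand $\partial_F W^{\rm el}(\Id + h^2 G^h) = h^2 \partial_F^2 W^{\rm el}(\Id)[G^h] + o(h^2)$; the remainder is controlled by the $L^\infty$-bound \eqref{rig:closetoidinfty} on the strain and $\det\nabla_h y^h$ staying bounded below, and $R^h\to\Id$, $G^h\rightharpoonup^* G$ give the claimed limit $\C^3_{W^{\rm el}}\sym(G)$ (only $\sym(G)$ enters, since $\C^3_{W^{\rm el}}$ annihilates skew matrices by \ref{W_regularity}--\ref{W_frame_invariace}). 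For \eqref{conv:stress:cpl}, I would use \eqref{est:coupl} and \ref{C_bounds} to Taylor-expand $\partial_F W^{\rm cpl}$ in the temperature variable: $\partial_F W^{\rm cpl}(F,\vartheta) = \vartheta\,\partial_{F\theta}W^{\rm cpl}(F,0) + o(\vartheta)$ near $\vartheta=0$, so $h^{-2}\partial_F W^{\rm cpl}(\nabla_h y^h,\theta^h) = h^{\alpha-2}\big(h^{-\alpha}\theta^h\big)\partial_{F\theta}W^{\rm cpl}(\nabla_h y^h,0) + \ldots$; for $\alpha=2$ this converges weakly in $L^r$ to $\mu\,\mathbb B^{(2)}$ using \eqref{theta_h_convXXX} and $\nabla_h y^h\to\Id$, while for $\alpha>2$ the prefactor $h^{\alpha-2}\to 0$ forces the limit to be $0 = \mu\mathbb B^{(\alpha)}$, consistent with \eqref{alpha_dep}. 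The bound \eqref{secondgradientvanishes} is immediate: by \ref{H_bounds}, $h^{-3}|\partial_G H(\nabla_h^2 y^h)| \le C h^{-3}|\nabla_h^2 y^h|^{p-1}$, and $\Vert\nabla_h^2 y^h\Vert_{L^p(\Omega)}^p \le C\,\mathcal{M}(y^h) \le C h^4$ by \eqref{boundres:mech} and \ref{H_bounds}, so a Hölder estimate gives $h^{-3}\Vert\partial_G H(\nabla_h^2 y^h)\Vert_{L^1(\Omega)} \le C h^{-3}\cdot h^{4(p-1)/p} \to 0$ since $p>4$ implies $4(p-1)/p > 3$. Finally \eqref{conv:stress:visc} is the viscous analogue: by \ref{D_quadratic} and \eqref{chain_rule_Fderiv}, $\partial_{\dot F}R(\nabla_h y^h, \partial_t\nabla_h y^h,\theta^h) = 2\nabla_h y^h\, D((\nabla_h y^h)^T\nabla_h y^h, \theta^h)\,\partial_t\big((\nabla_h y^h)^T\nabla_h y^h\big)$; dividing by $h^2$, using the continuity of $D$ at $(\Id,0)$, $\nabla_h y^h\to\Id$, $\theta^h\to 0$, \eqref{DDDDD}, and \eqref{convstrainrateweak} (which gives $h^{-2}\partial_t\,\sym((\nabla_h y^h)^T\nabla_h y^h) \rightharpoonup 2\,\partial_t\sym(G)$ weakly in $L^2$), a weak-strong convergence argument yields the limit $\C^3_R\,\partial_t\sym(G)$. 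The main care throughout is to ensure the Taylor remainders are genuinely negligible, for which the uniform $L^\infty$-smallness of the strain from \eqref{rig:closetoidinfty} and the lower bound on the determinant are essential.
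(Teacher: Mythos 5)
Most of your plan coincides with the paper's proof: \eqref{convGh} from \eqref{rig:closetorot}, the expansion $\tfrac{1}{2h^2}((\nabla_h y^h)^T\nabla_h y^h-\Id)=\sym(G^h)+\tfrac{h^2}{2}(G^h)^TG^h$ together with \eqref{bound:strainraterotated-another one} for \eqref{convstrainrateweak}, the Taylor expansion of $\partial_F W^{\rm el}$ at $\Id$ combined with frame indifference and \eqref{rig:closetoidinfty} for \eqref{conv:stress:el}, the temperature expansion and the $\alpha$-dependent scaling for \eqref{conv:stress:cpl}, the H\"older estimate giving $h^{1-4/p}$ for \eqref{secondgradientvanishes}, and the factorization \eqref{chain_rule_Fderiv} plus weak--strong convergence (with an $L^2$-boundedness upgrade) for \eqref{conv:stress:visc}.

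The genuine gap is in the identification \eqref{sym_G''A_repr}, which you yourself call the heart of the lemma. The identity is claimed pointwise a.e.\ in $x_3$, i.e.\ it asserts that $\sym(G'')$ is an \emph{affine} function of $x_3$ with linear part $-x_3(\nabla')^2 v$. Knowing the zeroth moment (through $u^h$ and $h^{-1}\sym(A^h)\to\tfrac12 A^2$) and the first moment (through $\mathcal Z^h$, which in the paper is only used for the trace of $\nabla'v$) does not determine $\sym(G'')$ pointwise unless you already know it is affine in $x_3$; testing against $\psi(x_3)\varphi(x')$ only "keeps track" of contributions you can compute, and general $\psi$ are not covered by these two moments. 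The missing idea is the difference-quotient/commutation argument of \cite{AMM_vKaccel}: for $i\in\{1,2\}$ one writes $R^h\,\ell^{-1}(G^h(\cdot,x_3+\ell)-G^h(\cdot,x_3))e_i$ as an in-plane derivative of averages of $h^{-2}\partial_3 y^h$, whose limit is $\partial_i A\,e_3$, independent of $x_3$; this yields $G e_i=\bar G e_i+x_3\,\partial_i A e_3$ and hence the affine structure, which is then also what makes your Step on \eqref{sym_dotG''} work (the $L^2(I;H^2)$-regularity of $\partial_t v$ is obtained by multiplying the affine identity by $-x_3$, integrating in $x_3$ and using Jensen, and $\partial_t u\in L^2(I;H^1)$ then follows from the zeroth moment, the Sobolev-embedding bound on $\partial_t\nabla'v\odot\nabla'v$ and Korn). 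Finally, the compatibility condition \eqref{quadraticformsassumption} plays no role in this lemma: the characterization of $\sym(G'')$ is purely kinematic, and \eqref{quadraticformsassumption} enters only later, in the identification of the limiting stress in the proof of Theorem~\ref{maintheorem}.
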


\EEE We remark  at this point \EEE that due to  the higher order regularization\OOO, \EEE no linear growth condition on $\partial_F W^{\rm el}(F)$  is required,  in contrast to, e.g.,  \cite[Equation~(1.19)]{MP_thin_plates} or \cite[Equation~(2.1)]{AMM_vKaccel}.

\begin{proof} 
We divide the  proof in four steps. In the first step,  we prove \eqref{convGh}--\eqref{convstrainrateweak} \ZZZ and \eqref{sym_G''A_repr}. \ZZZ Here, for \EEE the convergence of $(G^h)_h$ we argue along the lines of \cite[Proof of Theorem~1, Step~4]{AMM_vKaccel}. \EEE
Then, convergence  of \EEE the rescaled elastic and coupling stress is shown in Step 2.
Step 3  is concerned with \EEE the rescaled viscous stress. \ZZZ This \EEE eventually allows us to  show \ZZZ the characterization in  \eqref{sym_dotG''} and \EEE the regularity of $\partial_t \EEE u$ and $\partial_t\EEE v$ in Step 4. \ZZZ The latter \EEE in turn also concludes the proof of the compactness statement in Proposition~\ref{prop:compactness}, \ZZZ see \eqref{eq: reguregu}. \EEE

\textit{Step 1 (Compactness for $(G^h)_h$ and characterization of the limit):}  First, note that \eqref{rig:closetorot} directly  gives \EEE \eqref{convGh} by weak compactness.  Next, we \EEE  characterize $G''$ in terms of $u$ and $v$.
In this regard, we  show that  $G''(t, x', \cdot)$ is \OOO affine for \EEE a.e.~$(t,  x')  \in I \times \Omega'$. \EEE By the fundamental theorem of calculus and the definition of $G^h$ in \eqref{def:AhGh}, we discover for a.e.~$(t, x) \in I \times \Omega$, $\ell > 0$ sufficiently small, and $i \in \{1,2\}$  that \EEE
\begin{align*}
  R^h(t,x') \frac{G^h(t,x',x_3 + \ell) - G^h(t,x',x_3)}{\ell} e_i
  &= \frac{\partial_i y^h(t, x', x_3 + \ell) - \partial_i y^h(t, x', x_3)}{h^2 \ell} \\
  &= \partial_i \Bigg( \frac{1}{\ell} \int_0^\ell \frac{h^{-1} \partial_3 y^h (t,x',x_3 + \tilde \ell)}{h} \, {\rm d} \tilde \ell \Bigg).
\end{align*}
On the one hand, by \eqref{rig:closetoidinfty} and \eqref{convGh}, the left-hand side converges to $\ell^{-1}(G(t,x',x_3 + \ell) - G(t,x',x_3)) e_i$ weakly* in $L^\infty(I; L^2(\Omega; \R^3))$.
On the other hand, by adding $\partial_i \big( (R^h-\Id)e_3 - R^he_3 \big) = 0$ to the equation above, we see by \eqref{rig:closetorot} and \eqref{convAhlem} that  the term after the second equal sign \EEE  converges to $\partial_i A(t,x') e_3$ weakly* in $L^\infty(I; \REV (H^{1}(\Omega; \R^3))^* \EEE)$.
Thus, we have for a.e.~$(t,x) \in I \times \Omega$, $\ell > 0$ sufficiently small, and $i \in \{1, 2\}$ that
\begin{equation*}
  \frac{G(t,x',x_3 + \ell) - G(t,x',x_3)}{\ell} e_i
  = \partial_i A(t,x') e_3 . \EEE
\end{equation*}
This proves  that \EEE the difference quotient on the left-hand side  is independent \EEE of $x_3$. \EEE
 In particular, \EEE there exists $\bar G \in L^\infty(I; L^2({\Omega'}; \R^{3\times3}))$ such that for a.e.~$(t, x) \in I \times \Omega$ and $i \in \{1, 2\}$
\begin{equation*}
  G(t,x',x_3) e_i = \bar G(t,x') e_i + x_3 \partial_i A(t,x') e_3.
\end{equation*}
By  \eqref{sym_G''A_repr-new}   \EEE this leads to
\begin{equation}\label{repG}
  G(t,x',x_3)_{ji} = \bar G(t,x')_{ji} - x_3 \partial_{ji} v(t,x')
\end{equation}
  a.e.\  on \EEE  $I \times \Omega$ and  for \EEE $ i, \, j \in \{1,2\}$.
In order to identify the symmetric part of $\bar G''$, \ZZZ we \EEE employ \EEE the identity
\begin{align}\label{to pass limit}
  \int_{-1/2}^{1/2} \sym((R^h G^h)'') \di x_3
  &= \int_{-1/2}^{1/2} \sym(h^{-2}(\nabla_h y^h - \Id))'' \di x_3
    - \int_{-1/2}^{1/2} \sym(h^{-2}(R^h - \Id))'' \di x_3 \notag \\
  &= \sym(\nabla' u^h)
    - \int_{-1/2}^{1/2} \sym(h^{-2}(R^h - \Id))'' \di x_3
\end{align}
a.e.\  on \EEE $I \times \Omega'$. \ZZZ Using \EEE 
\begin{align*}
  \vert R^h G^h - G^h \vert = \vert h^{-2}  (\Id - (R^h)^T)(\nabla_h y^h - R^h)  \vert \leq   h \vert h^{-1} (\Id - (R^h)^T)\vert \vert h^{-2} (\nabla_h y^h - R^h) \vert, \EEE
\end{align*}
\ZZZ the \EEE \OOO Cauchy-Schwarz \EEE inequality,  and \EEE \eqref{rig:closetorot}--\EEE\eqref{rig:closetoid}, we see that the limits of $(\sym(R^hG^h))_h$ and $(\sym(G^h))_h\EEE$  must \EEE coincide.
Then, by  \ZZZ  \eqref{convuh},  \eqref{convAhsquaredstatement}, \eqref{sym_G''A_repr-new}, and \eqref{convGh} \EEE    we can pass to the limit \OOO $h \to 0$ \EEE in \eqref{to pass limit} to get 
\begin{equation*}
  \sym (\bar G'')
  = \sym(\nabla' u) + \frac{1}{2} \nabla'v \otimes \nabla'v
\end{equation*}
a.e.~in $I \times \Omega'$.
We conclude \eqref{sym_G''A_repr} by using \eqref{repG}.

\ZZZ We conclude this step with the proof of \eqref{convstrainrateweak}. \EEE \OOO By \eqref{rig:closetorot} and \eqref{rig:closetoidinfty} we can estimate
\begin{equation*}
  \|\nabla_h y^h - R^h\|_{L^\infty(I; L^4(\Omega))}^2
  \leq \|\nabla_h y^h - R^h\|_{L^\infty(I \times \Omega)} \|\nabla_h y^h - R^h\|_{L^\infty(I; L^2(\Omega))}
  \leq C h^{2 + 4/p}.
\end{equation*}
Hence, \GGG using \EEE
\begin{align*}
\frac{1}{2h^2} \left((\nabla_h y^h)^T \nabla_h y^h - \Id\right) = \frac{1}{2h^2}(\nabla_h y^h - R^h)^T (\nabla_h y^h -R^h) +\sym(G^h) 
\end{align*}
a.e.~in $I \times \Omega$ \OOO and  \eqref{convGh} we derive that 
\begin{align}\label{stvernantbad}
   \frac{1}{2h^2} \left((\nabla_h y^h)^T \nabla_h y^h - \Id\right) \GGG \weaklystar \EEE \sym(G) \qquad\text{weakly* in } L^\infty(I; L^2(\Omega; \R^{3\times 3})).
\end{align}
Employing \eqref{bound:strainraterotated-another one}, we find some $P\in L^{2}(I \times \Omega; \R^{3 \times 3})$ such that, up to a subsequence,
\begin{align}\label{eq: P def}
h^{-2} \sym ((\nabla_h  y^h)^T  \partial_t  \ZZZ \nabla_h \EEE  y^h) \rightharpoonup P \qquad \text{ weakly in   $ L^{2}(I \times \Omega; \R^{3 \times 3})$}. 
\end{align}
Taking the time derivative on the left-hand side in \eqref{stvernantbad}, we obtain the left-hand side of \eqref{eq: P def}, which gives $P = \partial_t\sym(G)$ and \OOO concludes the proof of  \eqref{convstrainrateweak}.

\textit{Step 2 (Proof of \eqref{conv:stress:el}--\eqref{secondgradientvanishes}):}
We   now derive \EEE compactness results for the sequence of elastic stresses. 
 Using \eqref{rig:closetoidinfty} and the definition of $G^h$  in \eqref{def:AhGh}, we   get $\|h^2 G^h\|_{L^\infty(I \times \Omega)} \le  \| \nabla_h y^h - R^h \|_{L^\infty(I \times \Omega)} \leq C h^{4/p}$.
 \EEE Hence, \EEE by \ref{W_regularity},   $\partial_F W^{\rm el} (\Id) = 0$ (see \ref{W_lower_bound_spec}), and  the symmetry of \ZZZ $\C_{W^{\rm el}}^3$, see \eqref{eq:quadraticformsnotred}, \EEE a Taylor expansion  yields for every $ \varphi \EEE \in L^1(I; L^2(\Omega; \R^{3 \times 3}))$: \EEE
\begin{align*}
&\left\vert \int_I \int_\Omega  \big( h^{-2} \partial_F W^{\rm el}(\Id+ h^2 G^h) - \C_{W^{\rm el}}^3 \sym(G) \big)  \varphi \EEE  \di x \dt \right\vert \\
&\quad  \leq \EEE
\left\vert \int_I \int_\Omega \big( \partial^2_{F^2} W^{\rm el}(\Id) G^h - \C_{W^{\rm el}}^3 G \big)  \varphi \EEE  \di x \dt \right\vert
+ C \int_I \int_\Omega \vert (R^h)^T \nabla_h y^h  - \Id \EEE \vert \vert G^h \vert \vert  \varphi \EEE \vert \di x \dt \\
&\quad \leq
\left\vert \int_I \int_\Omega \big( \partial^2_{F^2} W^{\rm el}(\Id) G^h - \C_{W^{\rm el}}^3 G \big) \varphi \EEE  \di x \dt \right\vert
 + C \Vert \nabla_h y^h - R^h \Vert\EEE_{L^\infty(I \times \Omega)} \Vert G^h \Vert_{L^\infty(I;L^2(\Omega))} \Vert  \varphi \EEE  \Vert_{L^1(I;L^2(\Omega))}.
\end{align*}
In view of \eqref{convGh}, \eqref{rig:closetoidinfty}, \ZZZ and the definition of $\C_{W^{\rm el}}^3$, \EEE we deduce  by the arbitrariness of $\varphi$ \EEE
\begin{equation}\label{weak_conv_elastic_stress}
  h^{-2} \partial_F W^{\rm el}(\Id+ h^2 G^h)
  \weaklystar \C_{W^{\rm el}}^3 \sym(G) \qquad \text{weakly* in } L^\infty(I; L^2(\Omega;\R^{3\times 3}))  \text{ as } h \to 0 \EEE.
\end{equation}
Furthermore, notice that by \ref{W_frame_invariace} we can write
\begin{equation*}
  \partial_F \elpot(\nabla_h y^h)
  = R^h \partial_F \elpot((R^h)^T \nabla_h y^h)
  = R^h \partial_F W^{\rm el}(\Id + h^2 G^h).
\end{equation*}
With  \eqref{weak_conv_elastic_stress} \EEE and \eqref{rig:closetoidinfty} this \ZZZ gives \EEE to \eqref{conv:stress:el}.

We proceed with the coupling stress.
By a similar reasoning as in the proof of Lemma \ref{est:coupl} we \GGG derive by using \EEE the continuous extension of $\partial_{F\vartheta} W^{\rm cpl}$ to $GL^+(3)\times \R_+$ in \ref{C_heatcap_cont}, $\partial_F W^{\rm cpl} (\Id, 0 ) =  0$ (see \ref{C_zero_temperature}), the first two bounds of \ref{C_bounds}, \ZZZ  and \EEE the fundamental theorem of calculus \OOO that \EEE $\vert \partial_F W^{\rm cpl}(\Id + F,  \vartheta \EEE) \vert \ZZZ \leq C \vert F \vert +  C (1+|F|) | \vartheta|\EEE$.  
 By  \eqref{theta_h_convXXX}  we find   $h^{ - 2} \theta^h \weakly  \delta_{2\alpha} \ZZZ \mu \EEE $ \EEE weakly in $L^q(I \times \Omega)$ for any $q \in [1,5/3)$, where $\delta_{2\alpha}$ denotes the Kronecker delta. Thus,  \eqref{convGh},  \EEE  \ref{C_regularity}, \OOO the definition of $\mathbb{B}^{(\alpha)}$ in  \eqref{alpha_dep}, \ZZZ \eqref{rig:closetoidinfty}, \EEE and \cite[Proposition 2.3]{MP_thin_plates} imply that
\begin{equation*}
  h^{-2} \partial_F W^{\rm cpl}(\Id+ h^2 G^h,   \theta\EEE^h)
  \weakly \ZZZ  \mu \EEE \, \mathbb{B}^{(\alpha)} \qquad \text{weakly in }  L^q  (I \times \Omega;\R^{3\times 3}).
\end{equation*}
As before, we use \ref{C_frame_indifference} and \eqref{rig:closetoidinfty} to deduce \eqref{conv:stress:cpl}.

We  now address \EEE  the hyperelastic stress. By \ref{H_bounds} and \eqref{boundres:mech} it holds that $ \int_\Omega \vert \nabla_h^2 y^h \vert^p \dx \leq C h^4$ for a.e.\ $t \in I$. Consequently, by \ref{H_bounds} and Hölder's inequality with powers $p/(p-1)$ and $p$ we derive that
\begin{equation*}
  \int_\Omega \vert \partial_G H(\nabla_h^2 y^h) \vert \di x
  \leq C \int_\Omega |\nabla_h^2 y^h|^{p-1} \di x
  \leq C \lVert \nabla_h^2 y^h \rVert_{L^{p}(\Omega)}^{p-1}
  \leq C h^{4- 4/p}
\end{equation*}
for almost every $t\in I$ and thus \eqref{secondgradientvanishes} follows, as $p>4$.

\emph{Step 3 (Proof of \eqref{conv:stress:visc}):}
 We now consider the sequence of \EEE viscous stresses and characterize their limit in terms of $G$. 
  Recall by \eqref{chain_rule_Fderiv} that one can write
\begin{equation}\label{viscousstressexpansion}
  \partial_{\dot F} R(\nabla_h y^h, \partial_t  \ZZZ \nabla_h \EEE  y^h, \theta^h)
  = 2 \nabla_h y^h
      D\big((\nabla_h y^h)^T \nabla_h y^h, \theta^h\big)
      \big((\partial_t \ZZZ \nabla_h \EEE  y^h)^T \nabla_h y^h + (\nabla_h y^h)^T  \partial_t \ZZZ \nabla_h \EEE  y^h\big).
\end{equation}
Our goal \OOO now  is to show that
\begin{equation}\label{weaklimitdissipation}
  h^{-2} \partial_{\dot F} R(\nabla_h y^h, \partial_t \ZZZ \nabla_h \EEE  y^h, \theta^h)
  \weakly 4 D(\Id, 0) \partial_t \sym(G)
  = \C_R^3 \partial_t \sym(G) \qquad \text{weakly in } L^{2}(I \times \Omega; \R^{3 \times 3}),
\end{equation}
where  \ZZZ the second identity follows from \eqref{DDDDD}. \EEE We first note that by \eqref{rig:closetoidinfty} we have $\nabla_h y^h \to \Id$ uniformly on $I \times \Omega$.
Moreover, by \eqref{theta_h_convXXX}  it holds that $\theta^h \to 0$ in $L^{1}(I \times \Omega)$, and hence, up to selecting a subsequence, pointwise a.e.~in $I \times \Omega$.
Thus,  by \ref{D_bounds} and dominated convergence  it follows that 
\begin{equation*}
  \nabla_h y^h D((\nabla_h y^h)^T \nabla_h y^h, \theta^h) \to D(\Id,0) \qquad
  \text{strongly in } L^{ q}(I \times \Omega; \R^{3 \times 3 \times 3  \times 3 })
\end{equation*}
for any $ q \in [1, \infty)$.   Hence, \OOO \eqref{eq: P def},  \eqref{viscousstressexpansion},  and  the \ZZZ fact that $P = \partial_t\sym(G)$ \OOO lead to the convergence in \eqref{weaklimitdissipation}, but \ZZZ only \OOO in the space  $L^{\OOO\tilde p}(I \times \Omega; \R^{3 \times 3})$ for any $\OOO\tilde p\in[1,2)$. Eventually, using that the sequence in \eqref{weaklimitdissipation}
is bounded in   $L^2(I \times \Omega; \R^{3 \times 3})$ by  \eqref{viscousstressexpansion}, \ref{D_bounds}, \eqref{rig:closetoidinfty}, and \eqref{bound:strainraterotated-another one}, we get by weak compactness that the convergence also holds \REV weakly \EEE in $L^{2}(I \times \Omega; \R^{3 \times 3})$.   \ZZZ This \EEE concludes   the proof of   \eqref{conv:stress:visc}.\EEE

\textit{Step 4 \GGG (Improved \EEE regularity of \EEE $ \partial_t \EEE u$ and $ \partial_t \EEE v$):}
It remains to show \ZZZ the characterization  \eqref{sym_dotG''} and \EEE that $ \partial_t \EEE u \in L^2(I; H^1(\Omega'; \R^2))$ and $ \partial_t \EEE v \in L^2(I; H^2(\Omega'))$.
We have already shown  that \EEE $\partial_t \sym(G)  \in \EEE L^2(I \times \Omega; \R^{3 \times 3})$.
 Moreover, i\EEE n Proposition~\ref{prop:compactness} (\EEE see \eqref{convuhdot} and \eqref{convvhdot}) \EEE we have  also proved \EEE that $ \partial_t \EEE \nabla'\EEE u \in L^s(I \times \Omega'\EEE)$ for some $s > 1$ and $ \partial_t \EEE \nabla'\EEE v \in L^2(I \times \Omega'\EEE)$.
With   \eqref{sym_G''A_repr}, this shows $x_3 (\nabla')^2 v \in W^{1, 1}(I ; L^1(\Omega; \R^{2 \times 2}))\EEE$.
Consequently,  using \eqref{sym_G''A_repr},  \eqref{sym_dotG''}  holds true \EEE for a.e.~$(t, x) \in I \times \Omega$.
On the one hand, multiplying both sides of \eqref{sym_dotG''}  with \EEE $-x_3$ and integrating over $x_3 \in (-1/2, 1/2)$  leads to \EEE
\begin{equation*}
  \frac{1}{12} | \partial_t \EEE (\nabla')^2 v|^2 = \left|
    \int_{-1/2}^{1/2} x_3 \partial_t \sym(G'') \di x_3
  \right|^2
  \leq \int_{-1/2}^{1/2} |\partial_t \sym(G'')|^2 \di x_3 ,
\end{equation*}
where we used Jensen's inequality in the second step.  As \EEE $\partial_t \sym(G) \in L^2(I \times \Omega; \R^{3 \times 3})$,  we get $  \frac{1}{12} | \partial_t \EEE (\nabla')^2 v|^2 \in L^1(I \times \Omega') $. \EEE With \eqref{convvhdot} this shows $ \partial_t \EEE v \in L^2(I; H^2(\Omega'; \R^2))$.

On the other hand, integrating \eqref{sym_dotG''} over $x_3 \in (-1/2, 1/2)$ we find  that \EEE
\begin{equation*}
  \sym( \partial_t \EEE \nabla'\EEE u) =
    \int_{-1/2}^{1/2} \partial_t \sym(G'') \di x_3
    -  \partial_t \EEE \nabla' v \odot \nabla'\EEE v.
\end{equation*}
\ZZZ Note that  $ \partial_t \EEE v \in L^2(I; H^2(\Omega'; \R^2))$ implies \OOO  $\nabla' v \in L^\infty(I; H^1(\Omega; \ZZZ \R^{2})) \OOO $ and $\partial_t \nabla' v \in L^2(I; H^1(\Omega; \ZZZ \R^{2})) \OOO$. \ZZZ Thus, \OOO by Sobolev embedding in space we get that $\nabla' v \in L^\infty(I; L^4(\Omega; \R^{2})$ and $\partial_t \nabla' v \in L^2(I; L^4(\Omega; \R^{2 })$, respectively. This directly \GGG yields \EEE $ \partial_t \EEE \nabla' v \odot \nabla'\EEE v \in L^2(I \times \Omega'; \R^{2 \times 2})$.  T\EEE he previous equality,   $\partial_t \sym(G) \in L^2(I \times \Omega; \R^{3 \times 3})$, \EEE and Korn's inequality  eventually lead to \EEE $ \partial_t \EEE u \in L^2(I; H^1(\Omega; \R^2))$.
Note that we have used $u(t, \cdot) = 0$  a.e.~on \EEE $\Gamma_D'$ for a.e.~$t \in I$, see also Proposition~\ref{prop:compactness}.
\end{proof}

\subsection{Convergence of solutions}\label{sec:convergencesol}
In this subsection, we   prove our main theorem.  As a preparation, we recall an energy balance in the \GGG three-dimensional setting. \EEE
 
    \begin{remark}[Energy balance of rescaled solutions]
     Let $(y^h,\theta^h)$ be as in Remark \EEE \ref{rem:weakformresc}.  
     Then, for a.e.~$t \in I$ it holds that \EEE
\begin{align}\label{balance3D}
       \mathcal{M}(y^h(t)) \EEE -   \mathcal{M}(y^h_0) \EEE   &=  \int_0^t \int_{\Omega} f^{3D}_h(t)  \partial_t \EEE y^h_3 \dx \di s  -   \int_0^t  \int_{\Omega}  2 R(\nabla_h y^h,  \partial_t \nabla_h   y^h, \theta^h) \dx  \di s  \EEE\notag\\
      &\phantom{=}\quad \ZZZ - \EEE \int_0^t \int_\Omega \partial_F W^{\rm cpl}(\nabla_h y^h, \theta^h):  \partial_t \EEE \nabla_h y^h \dx \di s.
    \end{align}
    Indeed, this can be seen formally by testing \eqref{weak_form_mech_res} with $ \partial_t \EEE \ZZZ \nabla_h \EEE  y^h$ and  using \EEE a chain rule,  as well as \eqref{eq: free energy} and \eqref{diss_rate}. \EEE
    For a rigorous derivation, we refer to \cite[Proof of Proposition 5.1, Step 3]{MielkeRoubicek20Thermoviscoelasticity}, relying on the \ZZZ  chain \EEE rule \cite[Proposition 3.6]{MielkeRoubicek20Thermoviscoelasticity}.
    \end{remark}

\begin{proof}[Proof of Theorem~\ref{maintheorem}]

As in the proof of Proposition~\ref{prop:compactness}, we can assume that  there exists \EEE a sequence of solutions $(\EEE(y^h,\theta^h ))\EEE_h$ in the sense of  Definition \ref{def:weak_formulation} satisfying \EEE \eqref{boundres:mech}--\eqref{forAubin-Lion3}, and \eqref{rig:closetorot}--\eqref{boundres:strainratesym} for a sequence of rotations $(R^h)_h \subset \EEE L^\infty(I;H^1(\Omega'; SO(3)))$. \GGG In particular, we can make use of the properties given in Proposition~\ref{prop:compactness}, Corollary~\ref{a new corollary}, Lemma~\ref{corol:compactnesscorollary}, and Lemma~\ref{lem:conv_strain_stress}.  \EEE

 T\EEE he proof  is divided \GGG into \EEE  seven \EEE steps.
In the first  step\EEE, we use \eqref{weak_form_mech_res}   to further characterize the limiting stresses. After  proving that the \EEE skew-symmetric part of th\OOO e r\EEE escaled  limiting \EEE stress is of lower order (Step 2), we derive the  limiting \EEE mechanical equations in Step 3 and Step 4.  Step 5 is devoted to \OOO deriving an appropriate \EEE energy balance in the \GGG two-dimensional \EEE setting. \EEE In order to prove the convergence of the heat equation, we  first need to show \EEE that \eqref{convstrainrateweak} holds with \emph{strong} convergence (Step 6). Only then, we can pass to the limit in the heat equation (Step 7).

\textit{Step 1 (Further characterization of the limiting  stresses):}
 Our first goal \OOO is  to characterize the different parts of the limiting stress  \EEE
\begin{align}\label{defEmat}
 \Sigma \EEE \coloneqq \CW^3 \sym(G) + \mu \mathbb B^{(\alpha)}  + \CD^3 \partial_t\sym(G),
\end{align}
 where the \OOO constant  tensors \OOO appearing above  are defined below \eqref{eq:quadraticformsnotred} and in  \eqref{alpha_dep}, and $G$ \GGG is  given as \EEE in Lemma \ref{lem:conv_strain_stress}. \OOO More precisely, w\EEE e  aim to \EEE show   \EEE
\begin{align}\label{CWCD}\hspace{-0.3cm}
  \CW^3 \sym(G) = \begin{bmatrix}
    \CW^2 \sym(G'') & 0 \\
    0 & 0
  \end{bmatrix},  \ \ 
  \CD^3 \partial_t \sym(G) = \begin{bmatrix}
   \CD^2 \partial_t\sym(G'') & 0 \\
   0 & 0
  \end{bmatrix}, \ \ 
  \mathbb{B}^{(\alpha)} = \begin{bmatrix}
     (\mathbb{B}^{(\alpha)})''  & 0 \\
    0 & 0
  \end{bmatrix},
\end{align}
where the tensors  $\CW^2$, $\C_R^2 \in \R^{2 \times 2 \times 2 \times 2}$   are \ZZZ introduced \EEE below \eqref{def:quadraticforms}, and the $2 \times 2$-submatrix $\sym(\ZZZ G'') \EEE $  has \EEE been characterized in \eqref{sym_G''A_repr}.

The third \OOO equality  of \eqref{CWCD} follows from \OOO the  assumption \eqref{heattensorassumption}\OOO. Hence, it remains  to characterize $  \CW^3 \sym(G)$ and $  \CD^3 \partial_t \sym(G)$.  As a preliminary step, we show that
\begin{equation}\label{Ezerocolrow}
 \Sigma  e_3 = 0 \quad\text{a.e.~in $I \times \Omega$.}
\end{equation}
In \OOO this regard\EEE, \EEE notice that $ \vphi_y \EEE (t, x',x_3) \coloneqq \int_0^{x_3} \Phi(t, x', \tilde x_3) \di \tilde x_3$ is an admissible test function  in \EEE \eqref{weak_form_mech_res} for any $\Phi \in C^\infty(I; C^\infty_c(\Omega; \R^3))$.
Consequently, using $ \vphi_y \EEE $ in \eqref{weak_form_mech_res}, dividing the equation by $h$ and employing \eqref{forcebound}  as well as \EEE \eqref{conv:stress:el}--\eqref{conv:stress:visc} we discover that
\begin{equation*}
  \int_I \int_\Omega \Sigma e_3 \cdot \Phi \di x \di t = 0.
\end{equation*}
By the arbitrariness of $\Phi$ this  shows  \eqref{Ezerocolrow}.  \EEE

Condition \eqref{quadraticformsassumption}, \ref{W_frame_invariace}, and \ref{D_quadratic} imply for $i \in \{ 1, 2, 3 \}$ and $k, \, l \in \{ 1, 2 \}$ that  
\begin{equation*}
  (\C_S^3)_{3ikl} = (\C_S^3)_{i3kl} = (\C_S^3)_{kli3} = (\C_S^3)_{kl3i} = 0 \qquad \text{for } S \in \{ W^{\rm el}, R\}.
\end{equation*}
In particular, by mapping a matrix $F \in \R^{3 \times 3}_\sym$ to the vector $\tilde F^T = ( F_{11} , 2F_{12} , F_{22} , 2F_{13} , 2F_{23} , F_{33})^T$,
we can identify the 4th-order tensors $\CW^3$ and $\CD^3$ with matrices $ \tilde \C_{W^{\rm el}}^3$, $\tilde \C_R^3 \in \R^{6 \times 6}$ \OOO given by \EEE
\begin{align*}
  \tilde \C_{W^{\rm el}}^3 &= \begin{bmatrix}
    A_1 & 0 \\
    0 & A_2
  \end{bmatrix}, &
  \tilde \C_R^3 &= \begin{bmatrix}
    B_1 & 0 \\
    0 & B_2
  \end{bmatrix},
\end{align*}
 respectively, \EEE
where $A_1, \, A_2, \, B_1, \, B_2 \in \R^{3 \times 3}$ such that for any $F \in \R^{3\times 3}_\sym$ it holds that
\begin{align*}
  F: \C_S^3 F = \tilde F \cdot \tilde \C_S^3 \tilde F \qquad {\rm for } \ S \in \{ W^{\rm el} , R\}.
\end{align*}
Note that the matrices $A_1$, $A_2$, $B_1$, and $B_2$ are invertible due to the positive definiteness of $\CW^3$ and $\C_R^3$\OOO, respectively\EEE.  Due to \eqref{quadraticformsassumption}, in the above sense we can identify the 2nd-order tensors  $A_1$ and $B_1$ with the reduced 4th-order tensors  $\C_{W^{\rm el}}^2$  and  $ \C_R^2$, respectively. \EEE
Combining  these facts \EEE with \eqref{defEmat},  \eqref{Ezerocolrow},   and \eqref{heattensorassumption} leads to the following system of ODEs:
\begin{equation}\label{corkequation}
   \partial_t \begin{pmatrix}
    2\sym(G)_{13} \\
    2\sym(G)_{23} \\
    \sym(G)_{33} \\
  \end{pmatrix} = - B_2^{-1} A_2 \begin{pmatrix}
    2\sym(G)_{13} \\
    2\sym(G)_{23} \\
    \sym(G)_{33} \\
  \end{pmatrix}  \qquad {\rm for \ a.e.\ \ } (t,x) \in  I \times \EEE \Omega.
\end{equation}
\ZZZ Next, \GGG we \EEE check that the initial values satisfy 
\begin{equation}\label{eq: initiii val}
  \sym(G(0))_{13} = \sym(G(0))_{23} = \sym(G(0))_{33} = 0 \qquad \text{for a.e.~} x \in \Omega.
\end{equation}\EEE 
\ZZZ To this end, we  investigate  convergence at initial time.  By \cite[Lemma 5.3, Theorem 5.6]{FK_dimred} (see also \cite{FJM_hierarchy}) we find a sequence $(R^h_0)_h \subset H^1(\Omega';SO(3))$ satisfying
\begin{align}\label{convi-propi}
 \Vert \nabla_h y^h_0 - R^h_0 \Vert_{L^2(\Omega)} &\le C h^2
\end{align}
such that $G^h_0 = \OOO h\ZZZ^{-2} ( (R^h_0)^T \nabla_h y^h_0 - \Id)$ satisfies $G^h_0 \rightharpoonup G_0$ weakly in $L^2(\Omega;\R^{3\times 3})$ for some $G_0 \in L^2(\Omega;\R^{3\times 3})$, and it holds that 
$$ \liminf\limits_{h \to 0}  h^{-4} \int_\Omega W^{\rm el} (\nabla_h y^h_0 ) \dx + h^{-4 } \int_\Omega H(\nabla_h^2 y^h_0) \dx  \geq \int_\Omega    Q^3_{W^{\rm el}}  \big( \sym ( G_0 ) \big) \di x\geq\int_\Omega    Q^2_{W^{\rm el}}   \big( \sym ( G_0'' ) \big) \di x. $$
Moreover, we have  $G_0'' = \sym(\nabla' u_0)
    + \frac{1}{2} \nabla'v_0 \otimes \nabla' v_0
    - x_3 (\nabla')^2 v_0$ for the initial displacements $u_0$ and $v_0$. In particular, using $\int_{-1/2}^{1/2} x_3 \di x_3 = 0$, $\int_{-1/2}^{1/2} x_3^2 \di x_3 = 1/12$, and \eqref{eq: phi0}, this implies $\int_\Omega    Q^2_{W^{\rm el}}   \big( \sym ( G_0'' ) \big) \di x = \phi^{\rm el}_0(u_0,v_0)$. Therefore, using \eqref{well-preparednessinitial} we obtain
$$ \phi^{\rm el}_0(u_0,v_0)    \geq \int_\Omega    Q^3_{W^{\rm el}}  \big( \sym ( G_0 ) \big) \di x \ge  \int_\Omega    Q^2_{W^{\rm el}}   \big( \sym ( G_0'' ) \big) \di x  =  \phi^{\rm el}_0(u_0,v_0). $$
\EEE
Thus, all \GGG inequalities \ZZZ  turn out to be   equalities.  As $Q^3_{W^{\rm el}}$ is positive definite on $\R^{3 \times 3}_{\rm sym}$, this shows
 \begin{equation}\label{initialrelation1}
  \sym(G_0)_{13} = \sym(G_0)_{23} = \sym(G_0)_{33} = 0 \qquad \text{for a.e.~} x \in  \Omega.
\end{equation} 
\ZZZ We now transfer this property to $G(0)$.    Using the identity 
\begin{align}\label{iiii}
(2h)^{-2}  \big((\nabla_h y^h)^T \nabla_h y^h - \Id\big) = (2h)^{-2} (\nabla_h y^h - R^h)^T (\nabla_h y^h -R^h) +\sym(G^h) 
\end{align}
along with \eqref{rig:closetorot}, \GGG \eqref{rig:closetoidinfty}, \ZZZ \eqref{convGh}, and \eqref{convstrainrateweak} shows that the left-hand side of \eqref{iiii} \GGG is bounded in \ZZZ $L^\infty(I; L^2(\Omega; \R^{3\times 3})) \cap H^1(I; L^2(\Omega; \R^{3\times 3}))$. Then, using the \EEE Aubin-Lions lemma we \GGG get, up to a subsequence, \EEE that $ 
(2h)^{-2} \big((\nabla_h y^h)^T \nabla_h y^h - \Id\big) \to \sym(G) $
strongly in $C(I; \REV ( H^{1}(\Omega))^* \EEE)$,  and thus the initial values satisfy $
(2h)^{-2} \left((\nabla_h y^h_0)^T \nabla_h y^h_0 - \Id\right) \to \sym(G(0))$ \ZZZ in \EEE $\REV ( H^{1}(\Omega))^* \EEE$, up to a subsequence. \ZZZ Using \eqref{iiii} for $y_0^h$ and $R^h_0$ in place of $y^h$ and $R^h$, and \eqref{convi-propi}  \EEE yields $\sym(G(0)) = \sym(G_0)$ in $\REV ( H^{1}(\Omega))^* \EEE$. \ZZZ This \EEE together with \eqref{initialrelation1} shows \eqref{eq: initiii val}. \EEE

In Lemma \ref{lem:conv_strain_stress} we have shown that $\sym (G) \in L^\infty(I; L^2(\Omega;\R^{3 \times 3}))$ and $\partial_t \sym (G)\in L^2(I\times \Omega;\R^{3 \times 3})$.
Hence, we can \OOO multiply \EEE \eqref{corkequation} with $ \xi_G \defas (2 \sym (G)_{13},2 \sym (G)_{23}, \sym (G)_{33})^T$ \OOO and integrate over $\Omega$\EEE.
By the positive definiteness of $B_2^{-2} A_2$ this yields
\begin{align*}
\frac{{\rm d}}{{\rm d}t} \Vert \xi_G (t) \Vert_{L^2(\Omega)}^2 = 2 \int_\Omega  \partial_t \EEE \xi_G (t) \cdot \xi_G (t) \dx = \int_\Omega -2 B_2^{-1} A_2 \xi_G (t) \cdot \xi_G (t) \dx \leq - C \Vert \xi_G (t) \Vert_{L^2(\Omega)}^2 \leq 0
\end{align*}
for a.e.~ $t \in\EEE I$.
\OOO With Gronwall's inequality and \eqref{eq: initiii val} t\EEE his  shows \EEE $\xi_G  \equiv 0$.
\OOO Consequently, \eqref{defEmat} and \eqref{Ezerocolrow} \ZZZ along with   \eqref{heattensorassumption} \EEE imply that \eqref{CWCD} holds.

\textit{Step 2 (Convergence of  the \EEE rescaled stress):}
In order to prove the convergence of the mechanical equations, let us define
\begin{equation*}
  \Sigma^h \coloneqq h^{-2} (R^h)^T \big(
   \partial_F W^{\rm el}(\nabla_h y^h)
    + \partial_F W^{\rm cpl}(\nabla_h y^h,\theta^h)
    + \partial_{\dot F} R (\nabla_h y^h,  \partial_t \EEE \ZZZ \nabla_h \EEE  y^h, \theta^h)
  \big)
\end{equation*}
as well as \GGG its zeroth and first moments \EEE
\begin{align}\label{def:moments}
  \bar \Sigma^h(t, x') &\defas\EEE
    \int_{-\frac{1}{2}}^{\frac{1}{2}} \Sigma^h(t, x) \di x_3, &
  \hat \Sigma^h(t, x') &\defas\EEE
    \int_{-\frac{1}{2}}^{\frac{1}{2}} x_3 \Sigma^h(t, x) \di x_3,
\end{align}
respectively.  In a similar fashion, \ZZZ recalling $\Sigma$ defined in \eqref{defEmat}, \EEE  we define $ \bar \Sigma$ and $\hat \Sigma$ \OOO as the zeroth and first moment \EEE of the limiting strain\OOO, respectively\EEE. The goal of this step is to show that, for any  $q \in [1, 5/3)$, we have \EEE
    \begin{subequations}
    \begin{gather}
 R^h\Sigma^h  \rightharpoonup  \Sigma \  \text{ weakly in } L^q(I \times \Omega; \R^{3\times 3}),\EEE  \quad  R^h \bar \Sigma^h \rightharpoonup \bar \Sigma,  \ R^h \hat \Sigma^h \rightharpoonup \hat \Sigma \  \text{ weakly  in } L^q(I \times \Omega'; \R^{3\times 3}), \label{EhconvergenceRh}\\
 \bar \Sigma^h   \rightharpoonup \bar \Sigma, \quad\hat \Sigma^h \rightharpoonup \bar \Sigma  \quad   \text{weakly  in } L^q(I \times \Omega'; \R^{3\times 3}), \label{Ehconvergence}\\
h^{-1} \skw (\bar \Sigma^h) \to 0  \quad \ \ \ \text{ strongly in } L^1(I \times \Omega'; \R^{3\times 3}). \label{skewsymmetricvanish}
    \end{gather}
    \end{subequations}
By \eqref{conv:stress:el}, \eqref{conv:stress:cpl},   and \EEE \eqref{conv:stress:visc} we get \eqref{EhconvergenceRh}. Using also \eqref{rig:closetoidinfty}, we additionally derive \eqref{Ehconvergence}.  

We now show \eqref{skewsymmetricvanish}.  In this regard,   let us first write 
\begin{align}\label{skew} 
  (R^h)^T \partial_F W^{\rm el}(\nabla_h y^h)  & =   (R^h)^T \partial_F W^{\rm el}(\nabla_h y^h) (\nabla_h y^h)^T R^h  +  (R^h)^T \partial_F W^{\rm el}(\nabla_h y^h) \big(\Id -    (\nabla_h y^h)^T R^h\big).
\end{align} \EEE
B\EEE y \ref{W_frame_invariace}, it holds that $0= \partial_t \OOO(\EEE W^{\rm el}(e^{tS} F)\OOO)\EEE \vert_{t=0} = \partial_F W^{\rm el}(F)F^T:S$ for every $S \in \R^{3\times 3}_{\skw}$. Hence, $\partial_F W^{\rm el}(F) F^T$ is symmetric for every $F \in  GL^+(3) \EEE$,  and in particular \OOO also  $(R^h)^T \partial_F W^{\rm el}(\nabla_h y^h) (\nabla_h y^h)^T R^h  $ is symmetric. Thus, taking the   skew-symmetric \EEE \EEE parts of the matrices in \eqref{skew} we find 
\begin{align*}
{\rm skew} \big(  (R^h)^T \partial_F W^{\rm el}(\nabla_h y^h) \big)  & =    {\rm skew} \Big( (R^h)^T \partial_F W^{\rm el}(\nabla_h y^h) \big(\Id -    (\nabla_h y^h)^T R^h \big)\Big).   
\end{align*}
\OOO Then, \EEE by  \eqref{rig:closetorot} and  \EEE  \eqref{conv:stress:el}  it follows that \EEE
\begin{align}\label{skew:el}
\Vert {\rm skew} \big( (R^h)^T \partial_F W^{\rm el}(\nabla_h y^h)\big)\Vert_{L^\infty(I;L^1(\Omega))} &\leq  C \Vert \partial_F W^{\rm el}(\nabla_h y^h) \Vert_{L^\infty(I;L^2(\Omega))}  \Vert \big(  (R^h)^T \nabla_h y^h  - \Id \big) \Vert_{L^\infty(I;L^2(\Omega))}  \notag \\ &\leq Ch^4 .
\end{align}
We proceed similarly with the coupling term.  U\EEE sing \ref{C_frame_indifference} instead of \ref{W_frame_invariace}, we  can \EEE derive a similar identity for the  skew-symmetric \EEE part of $(R^h)^T \partial_F W^{\rm cpl}(\nabla_h y^h,\theta^h)$.  As the derivation is very similar to the one above, we omit further details. \QQQ Using \GGG Hölder's inequality with powers $4/3$ and $4$, we get \EEE
\begin{align}\label{skew-neu}
&\Vert {\rm skew} \big( (R^h)^T \partial_F W^{\rm cpl}(\nabla_h y^h,\theta^h)\big)\Vert_{L^1(\GGG I\times \Omega) \EEE}   \leq C \Vert   \partial_F W^{\rm cpl}(\nabla_h y^h,\theta^h) \Vert_{L^{\GGG 4/3 \EEE}(I \times \Omega)} \Vert (R^h)^T  \nabla_h y^h - \Id      \Vert_{L^{\GGG 4 \EEE}(I \times \Omega)}.
\end{align}
Moreover, \eqref{rig:closetoidinfty} and \eqref{rig:closetorot}  yield \EEE  
\begin{align*}
\Vert (R^h)^T  \nabla_h y^h - \Id  \big)    \Vert_{L^{\ZZZ 4}(I \times \Omega)} &\leq  \left( \int_I \int_\Omega \vert  (R^h)^T  \nabla_h y^h - \Id  \vert^{\ZZZ 2} \vert  (R^h)^T  \nabla_h y^h - \Id  \vert^2 \di x \di t \right)^{1/4}\\
&\leq  \Vert (R^h)^T  \nabla_h y^h - \Id  \Vert_{L^\infty(I \times \Omega)}^{\ZZZ 1/2} \left( \int_I \int_\Omega \vert (R^h)^T  \nabla_h y^h - \Id  \vert^2 \di x \di t \right)^{1/4}\\
&\leq C \ZZZ h^{2/p}h. \EEE
\end{align*}
\ZZZ Thus, \EEE from \eqref{skew-neu} and \eqref{conv:stress:cpl} \ZZZ  we get  \EEE 
\begin{align}\label{skew:cpl}
\Vert {\rm skew} \big( (R^h)^T \partial_F W^{\rm cpl}(\nabla_h y^h,\theta^h)\big)\Vert_{L^1 \GGG (I \times\Omega) \EEE}  = \GGG { o} \EEE(h^3). \EEE
\end{align}
Shortly writing $C^h \defas (\nabla_h y^h)^T \nabla_h y^h$ and $\dot C^h \defas ( \partial_t \EEE \ZZZ \nabla_h \EEE  y^h)^T \nabla_h y^h + (\nabla_h y^h)^T  \partial_t \EEE \ZZZ \nabla_h \EEE  y^h$, we have by \ref{D_quadratic} that $\skw (D(C^h,\theta^h) \dot C^h) = 0$. Thus, by \eqref{chain_rule_Fderiv} \ZZZ and \ref{D_quadratic} \EEE it holds that
\begin{align*}
 \skw \big( (R^h)^T \partial_{\dot F} R(\nabla_h y^h,  \partial_t \EEE \ZZZ \nabla_h \EEE  y^h, \theta^h)\big) & = \ZZZ 2 \,   \skw  ( (R^h)^T \nabla_h y^h -\Id) (D(C^h, \theta^h) \dot C^h)  +   2 \skw  (D(C^h, \theta^h) \dot C^h) \EEE   \\
  & =  2 \EEE \, \skw \big( h^2 G^h (D(C^h, \theta^h)  \dot C^h) \big) ,
\end{align*}
where $G^h$ is  as \EEE in \eqref{def:AhGh}.
 Consequently, \EEE by the Cauchy-Schwarz inequality,  \ref{D_bounds},  \eqref{bound:strainraterotated-another one}, \EEE and \eqref{convGh} it follows that
\begin{align}\label{skew:visc}
\Vert {\rm skew} \big( (R^h)^T \partial_{\dot F} R(\nabla_h y^h,  \partial_t \EEE \ZZZ \nabla_h \EEE  y^h, \theta^h)\big) \Vert_{L^1(I;L^1(\Omega))} \leq Ch^4.
\end{align}
With \eqref{skew:el}, \eqref{skew:cpl}, \eqref{skew:visc}, \EEE and \eqref{def:moments} we conclude \eqref{skewsymmetricvanish}.

\textit{Step 3 (Convergence  to \EEE  the first mechanical equation):} In this step, we prove that the triplet $(u,v,\mu)$ solves \eqref{eq: weak equation1}. \EEE
 Let us test \EEE \eqref{weak_form_mech_res} with
$ \vphi_y \EEE = (\varphi_u, 0)$ for $\varphi_u \in C^\infty(I \times \overline{\Omega'}; \R^2)$ with $\varphi_u = 0 $ on $I \times \Gamma_D'$ and divide  both sides \EEE by $h^2$, resulting in
\begin{align*}
 0  & =  \int_I \int_\Omega \Big( 
    R^h \Sigma^h : \nabla_h \vphi_y 
    + \frac{1}{h^2} \partial_G H(\nabla^2_h y^h) \cdddot \nabla^2_h \vphi_y  \Big) \di x \di t \EEE \\
  & =  \int_I \int_{\Omega'} (R^h \bar \Sigma^h)'' : \nabla' \varphi_u \di x' \di t
    + \frac{1}{h^2} \int_I \int_{\Omega}
      \partial_G H(\nabla^2_h y^h) \cdddot \nabla^2_h \vphi_y   \di x \di t, \EEE \\
\end{align*}
where   in the first step we used $(\vphi_y )_3 \equiv 0$ and  in the second step \EEE we used   the fact that $\varphi_u$ and $R^h$ do not depend on $x_3$.
After rearranging terms, we discover that
\begin{equation*}
  \left\vert \int_I \int_{\Omega'} (R^h \bar \Sigma^h)'' : \nabla' \varphi_u \di x' \di t \right\vert
  \leq  h^{-2}\int_I \int_\Omega \vert \partial_G H(\nabla_h^2 y^h) \vert \vert (\nabla')^2 \varphi_u \vert \di x \di t.
\end{equation*}
 Due to \EEE \eqref{EhconvergenceRh} and \eqref{secondgradientvanishes}, passing to  the limit $h \to 0$ we find
\begin{equation}\label{zeromomeq}
  \int_I \int_{\Omega'}   \bar \Sigma_{ij} \ZZZ \colon  \nabla' \varphi_u \EEE  \di x' \di t
  = 0.
\end{equation}
 By \eqref{defEmat}--\eqref{CWCD} and  \EEE  the characterization in  \eqref{sym_G''A_repr}--\eqref{sym_dotG''} we have that
\begin{equation}\label{E_repr}
\begin{aligned}
  \LLL \Sigma'' &= \CW^2 \big(
    \sym(\nabla' u)
    + \frac{1}{2} \nabla'v \otimes \nabla' v
    - x_3 (\nabla')^2 v
  \big)
  + \mu (\mathbb{B}^{(\alpha)})'' \\
  &\phantom{=}\quad + \CD^2 \big(
    \sym( \partial_t \EEE \nabla' u)
    +   \partial_t \EEE \nabla' v \odot \nabla' v 
    - x_3  \partial_t \EEE (\nabla')^2 v
  \big) \\
\end{aligned}
\end{equation}
 Thus,  by \eqref{zeromomeq}, \EEE \eqref{E_repr}, and the fact that $x_3$ only appears as a linear factor in \ZZZ \eqref{E_repr}, \EEE we find the equation
\begin{align}\label{zeromomeq2}
  \int_I \int_{\Omega'} \Big( \C_{W^{\rm el}}^2 \big(
    \sym(\nabla' u)
    + \frac{1}{2} \nabla'v \otimes \nabla' v
  \big)
  + \mu (\mathbb{B}^{(\alpha)})'' + \C_R^2 \big(
    \sym( \partial_t \EEE \nabla' u)
    +    \partial_t \EEE \nabla' v \odot \nabla' v 
  \big) \Big) : \nabla' {\varphi_u} \di x' \di t
  = 0.
\end{align}
This  concludes the proof of \EEE \eqref{eq: weak equation1}.

\textit{Step 4 (Convergence  to \EEE the second mechanical equation):}
We now derive the second  limiting \EEE mechanical equation \eqref{eq: weak equation2}.
In this regard, we test \eqref{weak_form_mech_res} with $ \vphi_y \EEE = (0,0,{\varphi_v})$ for ${\varphi_v} \in C^\infty(I\times  \overline{\Omega'})$  such that \EEE ${\varphi_v} = 0$ on $I \times \Gamma_D'$ and multiply both sides by $h^{-3}$, which leads to
\begin{align*}
 \int_I \int_\Omega
       h^{-1} \big( R^h \Sigma^h \big)  : \nabla_h \vphi_y  \dx \dt
      + h^{-3} \partial_G H(\nabla^2_h y^h) \cdddot \nabla^2_h \vphi_y 
    \di x \di t  =  h^{-3} \int_I \int_\Omega  f^{3D}_h (\vphi_y )_3 \di x \di t.
\end{align*}
Hence, by the definition of $\bar \Sigma^h$ in \eqref{def:moments}, the definition of  $\vphi_y$, \ZZZ and the fact that $f_h^{3D} $ is independent of the $x_3$-variable, \EEE it follows that
\begin{align*}
  \left\vert \int_I \int_{\ZZZ \Omega'} h^{-3} f_h^{3D}   \, \EEE {\varphi_v} \ZZZ \di x'  \EEE \di t - \int_I \int_{\Omega'} \sum_{i = 1}^2
    \frac{1}{h} ( R^h \bar \Sigma^h)_{3 i} \partial_i {\varphi_v} \di x' \di t \right\vert
  \leq h^{-3} \int_I \int_\Omega \vert \partial_G H(\nabla^2_h y^h) \vert \vert (\nabla')\EEE^2 {\varphi_v} \vert \di x \di t
     .
\end{align*}
By \eqref{secondgradientvanishes} the right-hand side tends to $0$ as $h \to 0$.
Hence, by \eqref{forcebound} we derive that
\begin{equation*}
  \lim_{h \to 0} \int_I \int_{\Omega'} \sum_{i = 1}^2 \frac{1}{h} (R^h \bar \Sigma^h)_{3i} \partial_i {\varphi_v} \di x' \di t
  = \int_I \int_{\Omega'} f^{2D} {\varphi_v} \di x' \di t.
\end{equation*}
Moreover,  considering again $A^h = h^{-1}(R^h - \Id)$, \EEE   by the identity $h^{-1} R^h \bar \Sigma^h = A^h \bar \Sigma^h + h^{-1} \bar \Sigma^h$, \eqref{convAhlem},   \eqref{Ehconvergence}, \OOO and the above limit, \EEE it follows that \EEE
\begin{equation}\label{lin_mech_sec_id1}
  \lim_{h \to 0} \int_I \int_{\Omega'} \sum_{i = 1}^2
    \frac{1}{h} \bar \Sigma^h_{3 i} \partial_i {\varphi_v} \di x' \di t
  = - \int_I \int_{\Omega'} \ZZZ \sum_{i,k = 1}^2 \EEE  A_{3k} \bar \Sigma_{k i}  \partial_i {\varphi_v} \di x' \di t
    + \int_I \int_{\Omega'} f^{2D} {\varphi_v} \di x' \di t.
\end{equation}
We next test \eqref{weak_form_mech_res} with $ \vphi_y \EEE (t, x) \coloneqq (x_3 \eta(t, x'), 0)$ for $\eta \in C^\infty(I\times  \overline{\Omega'};\R^2)$ with $\eta = 0 $ on $I \times \Gamma_D'$ and multiply both sides by $h^{-2}$  leading to \EEE
\begin{align*}
 \int_I \int_\Omega
      R^h \Sigma^h : \nabla_h  \vphi_y
    \di x \di t           + h^{-2} \int_I \int_\Omega
     \partial_G H(\nabla^2_h y^h) \cdddot \nabla^2_h  \vphi_y
    \di x \di t   = 0.
\end{align*}
 Notice that b\EEE y our choice of $ \vphi_y$ it holds that $\partial_{33}  \vphi_y = 0$ and thus $\nabla^2_h  \vphi_y = \bigo(\frac{1}{h})$  as $h \to 0$\EEE. 
Rewriting $\Sigma^h$  with \EEE $\hat \Sigma^h$ and $\bar \Sigma^h$  as in \EEE \eqref{def:moments}, we derive by \eqref{secondgradientvanishes} that
\begin{align*}
  &\Bigg\vert \int_I \int_{\Omega'} \sum_{i, j = 1}^2 ( R^h \hat \Sigma^h)_{ij} \partial_j \eta_i \di x' \di t
  + \int_I \int_{\Omega'}  \sum_{i = 1}^2 \frac{1}{h} ( R^h \bar \Sigma^h)_{i3} \eta_i \di x' \di t  \Bigg\vert   \leq   C \frac{1}{h^{2}} \int_I \int_\Omega \vert \partial_G H(\nabla^2_h y^h) \vert h^{-1} \di x \di t \to 0.
\end{align*}
Therefore, by \eqref{EhconvergenceRh}, the identity $h^{-1} R^h \bar \Sigma^h = A^h \bar \Sigma^h + h^{-1} \bar \Sigma^h$, \eqref{convAhlem}, \EEE and \eqref{skewsymmetricvanish}  we have \EEE
\begin{align*}
  \int_I \int_{\Omega'} \sum_{i, j = 1}^2  \hat \Sigma_{ij} \partial_j \eta_i \di x' \di t
  &= \lim_{h \to 0} \int_I \int_{\Omega'} \sum_{i, j = 1}^2 ( R^h \hat \Sigma^h)_{ij} \partial_j \eta_i \di x' \di t  = - \lim_{h \to 0} \int_I \int_{\Omega'} \sum_{i = 1}^2 \frac{1}{h} (R^h\bar \Sigma^h)_{i 3} \eta_i \di x' \di t \notag \\
  &= -\lim_{h \to 0} \left(\int_I \int_{\Omega'}  \sum_{i = 1}^2 \frac{1}{h}  \bar \Sigma^h_{i3} \eta_i \di x' \di t + \int_I \int_{\Omega'}  \sum_{i = 1}^2 \sum_{k=1}^3 A_{ik}^h \bar \Sigma_{k3}^h \eta_i \di x' \di t\right) \notag \\
     &= -\lim_{h \to 0} \int_I \int_{\Omega'}  \sum_{i = 1}^2 \frac{1}{h}  \bar \Sigma^h_{i3} \eta_i \di x' \di t
\end{align*}
where we  used that $\bar \Sigma_{k3} = 0$, see \EEE \eqref{Ezerocolrow}. \ZZZ  Then, \GGG \eqref{skewsymmetricvanish} \ZZZ shows
\begin{align*}
  \int_I \int_{\Omega'} \sum_{i, j = 1}^2  \hat \Sigma_{ij} \partial_j \eta_i \di x' \di t & =  -\lim_{h \to 0} \int_I \int_{\Omega'}  \sum_{i = 1}^2 \frac{1}{h}  \bar \Sigma^h_{3i} \eta_i \di x' \di t   -\lim_{h \to 0} \int_I \int_{\Omega'}  \sum_{i = 1}^2 \frac{1}{h} \big( \bar \Sigma^h_{i3} -  \bar \Sigma^h_{3i}  \big)  \eta_i \di x' \di t \notag \\
  & =  -\lim_{h \to 0} \int_I \int_{\Omega'}  \sum_{i = 1}^2 \frac{1}{h}  \bar \Sigma^h_{3i} \eta_i \di x' \di t.
\end{align*} \EEE
 U\EEE sing $\eta = \nabla' {\varphi_v}$  \ZZZ in the above relation, we then  derive by \eqref{lin_mech_sec_id1} \EEE 
 \begin{align}\label{firstmomec1}
 \int_I \int_{\Omega'} \ZZZ \sum_{i,k = 1}^2 \EEE  A_{3k} \bar \Sigma_{k i}  \partial_i {\varphi_v} \di x' \di t - \int_I \int_{\Omega'} \sum_{i, j = 1}^2 \hat \Sigma_{ij} \partial_{ji} {\varphi_v} \di x' \di t
    = \int_I \int_{\Omega'} f^{2D} {\varphi_v} \di x' \di t.
 \end{align} 
Then, using \eqref{sym_G''A_repr-new}, \eqref{E_repr}, \ZZZ $\int_{-1/2}^{1/2} x_3 \di x_3 = 0$, \EEE   $\int_{-1/2}^{1/2} x_3^2 \di x_3 = 1/12$,    the fact that $x_3$ only appears as a linear factor in $\Sigma$, \ZZZ and the identity $a^T M b  = \sum_{k,i=1}^2 a_k M_{ki} b_i = M \colon (a  \odot b) $ for $a,b \in \R^2$ and $M \in \R^{2 \times 2}_{\rm sym}$,  \EEE we can rewrite \OOO \eqref{firstmomec1} \EEE as   \EEE
\begin{equation}\label{firstmomec2}
\begin{aligned}
&\int_I \int_{\Omega'} \left( \C_{W^{\rm el}}^2 \big(
    \sym(\nabla' u)
    + \frac{1}{2} \nabla'v \otimes \nabla' v
  \big)
  + \mu (\mathbb{B}^{(\alpha)})''
   \right) : \nabla' v \odot \nabla' {\varphi_v} \di x' \di t\\
  &\quad+\int_I \int_{\Omega'} \C_R^2 \big(
    \sym( \partial_t \EEE \nabla' u)
    +  \partial_t \EEE \nabla' v \odot \nabla' v 
  \big) : \nabla' v \odot \nabla' {\varphi_v} \di x' \di t\\
 &\qquad= \int_I \int_{\Omega'} f^{2D} {\varphi_v} \di x' \di t
    - \int_I \int_{\Omega'} \frac{1}{12} \Big(
    \CW^2 (\nabla')^2 v + \CD^2  \partial_t \EEE (\nabla')^2 v
  \Big) : (\nabla')^2 {\varphi_v} \di x' \di t,
\end{aligned}
\end{equation}
 where we also used the symmetry of $(\mathbb{B}^{(\alpha)})''$. \EEE   This shows \eqref{eq: weak equation2}.

%

\textit{Step 5 (Energy balance in the \GGG two-dimensional \EEE setting):}  Before proceeding with the strong convergence of strains and the heat-transfer equation,  we   establish the energy balance \OOO \eqref{balance2Dmaintheorem}  in the  \GGG two-dimensional \EEE setting. \EEE 
We enlarge the class of test functions in \eqref{zeromomeq2} and   \eqref{firstmomec2}  in the following way: \EEE
Note  by Lemma \ref{lem:conv_strain_stress} that $v \in H^1(I;H^2({\Omega'}))$ and $u \in H^1(I;H^1({\Omega'};\R^2))$. Moreover, it can be shown that $\mu \in L^{10/3}(I \times \Omega)$ for $\alpha = 2$ (\EEE see Remark~\ref{rem:improved_temp_bounds}). \EEE  Since $\mathbb{B}^{(\alpha)} \neq 0$  if and only if \EEE $\alpha = 2$, by approximation \eqref{zeromomeq2} remains true for $\varphi_u \in L^2 (I;H^1({\Omega'}))$ satisfying $\varphi_u = 0$ a.e.~on $I \times \Gamma_D'$.  Similarly, \EEE  \eqref{firstmomec2} \EEE holds true for all $\varphi_v \in L^2(I;H^2({\Omega'}))$ with $\varphi_v = 0$  a.e.~in $I \times \Gamma_D'$\EEE.
  On the one hand, testing \eqref{zeromomeq2} with $\varphi_u = \indic_{[0, t]} ( \partial_t \EEE u_1, \partial_t \EEE u_2)$  for $t \in I$ \EEE results in
\begin{align*}
  0 &= \int_0^t  \left(\int_{\Omega'}  \C_{W^{\rm el}}^2 \big(
    \sym(\nabla' u)
    + \frac{1}{2} \nabla'v \otimes \nabla' v
  \big) :
    \sym( \partial_t \EEE \nabla' u)
  \d \ZZZ x' \EEE  \right) \di s
 + \int_0^t \int_{\Omega'} \mu (\mathbb{B}^{(\alpha)})'' : \sym( \partial_t \EEE \nabla' u)
     \, {\rm d} x' \di s  \\
    &\phantom{=}\quad + \int_0^t \int_{\Omega'} \C_R^2 \big(
    \sym( \partial_t \EEE \nabla' u)
    +   \partial_t \EEE \nabla' v \odot \nabla' v 
  \big) : \sym( \partial_t \EEE \nabla' u)
   \di x' \di s,
\end{align*}
where we have used that $(\mathbb{B}^{(\alpha)})''$ is symmetric.
On the other hand, testing   \eqref{firstmomec2} \EEE with $\varphi_v = \indic_{[0, t]}  \partial_t \EEE v$ yields
\begin{align*}
&\int_0^t \int_{\Omega'} \left( \C_{W^{\rm el}}^2 \big(
    \sym(\nabla' u)
    + \frac{1}{2} \nabla'v \otimes \nabla' v
  \big)
  + \mu (\mathbb{B}^{(\alpha)})'' \right) : \nabla' v \odot  \partial_t \EEE \nabla' v \di x' \di s\\
&\quad + \int_0^t \int_{\Omega'} \C_R^2 \big(
    \sym( \partial_t \EEE \nabla' u)
    +  \partial_t \EEE \nabla' v \odot \nabla' v 
  \big) : \nabla' v \odot  \partial_t \EEE \nabla' v \di x' \di s\\
 & \qquad=  \int_0^t \int_{\Omega'} f^{2D}  \partial_t \EEE v \di x' \di s
    - \frac{1}{12} \int_0^t \int_{\Omega'} \Big(
    \CW^2 (\nabla')^2 v + \CD^2  \partial_t \EEE (\nabla')^2 v
  \Big) :  \partial_t \EEE (\nabla')^2 v \di x' \di s.
\end{align*}
Summing the last two equations and \ZZZ again \EEE using $\int_{-1/2}^{1/2} x_3^2 \di x_3 = 1/12$, $\int_{-1/2}^{1/2} x_3 \di x_3 = 0$, and the \EEE chain rule \cite[Proposition 3.5]{MielkeRoubicek20Thermoviscoelasticity}   we find for   a.e.~$t \in I$ \EEE that
\begin{align*}
  &\int_0^t  \frac{{\rm d}}{{\rm d} s}  \frac{1}{2} \EEE \left(\int_\Omega  \C_{W^{\rm el}}^2 \big(
    \sym(\nabla' u)
    + \frac{1}{2} \nabla'v \otimes \nabla' v  - \EEE x_3 (\nabla')^2 v
  \big) : \big(
    \sym(\nabla' u)
    + \frac{1}{2} \nabla'v \otimes \nabla' v  - \EEE x_3 (\nabla')^2 v
  \big) \dx  \right) \di s \\
 & + \int_0^t \int_\Omega \C_R^2  (
    \sym( \partial_t \EEE \nabla' u)
    +  \partial_t \EEE \nabla' v \odot \nabla' v  - \EEE x_3  \partial_t \EEE (\nabla')^2 v  ) :  (
    \sym( \partial_t \EEE \nabla' u)
    +  \partial_t \EEE \nabla' v \odot \nabla' v  - \EEE x_3  \partial_t \EEE (\nabla')^2 v )
   \di x' \di s \\
  &+ \int_0^t \int_{\Omega'} \mu (\mathbb{B}^{(\alpha)})'' :
    \big( \sym( \partial_t \EEE \nabla' u) +\nabla' v \odot  \partial_t \EEE \nabla' v \big) \di x' \di s
  = \int_0^t \int_{\Omega'} f^{2D}  \partial_t \EEE v \di x' \di s.
\end{align*}  
  Recalling \eqref{eq: phi0}, in the limit $h \to 0$, this leads to the   energy balance  \EEE  
\begin{equation}\label{balance2D}
\begin{aligned}
  &\phi_0^{\rm el}(u(t),v(t)) - \phi_0^{\rm el}(u(0),v(0)) + \int_0^t \int_{\Omega}  \C_R^2  G''_{t}:G''_{t}  + \mu (\mathbb{B}^{(\alpha)})'' \OOO :  G''_{t}  \EEE \, {\rm d}x \di s =\int_0^t \int_{\Omega'} f^{2D}  \partial_t \EEE v \, {\rm d}x' \di s
\end{aligned}
\end{equation}
for  a.e.~$t\in I$,  where we use the shorthand $ G''_t:= \partial_t \sym(G'')$  for the time derivative identified \EEE in \eqref{sym_dotG''}. The balance also holds for every $t \in I$ since the regularity of $u$ and $v$ imply that $u \in C(I;H^1(\Omega'))$ and $v \in C(I;H^2(\Omega'))$. 
Moreover, it \EEE can be expressed in a compact way by using an integration over the third coordinate and \eqref{sym_dotG''}, which gives \eqref{balance2Dmaintheorem}.

\textit{Step 6 (Strong convergence of the   symmetrized \GGG  strain rate\EEE):}  Before we derive the limit ing \EEE  heat-transfer equation,  \EEE we \OOO improve the convergence in \eqref{convstrainrateweak}. More precisely, we \GGG show \EEE that for the same subsequence as in \eqref{convstrainrateweak} \EEE \GGG we have \EEE
\begin{equation}\label{Chdot_strong_conv}
 h^{-2} \dot C^h \to 2 \, \partial_t \sym(G) \EEE \qquad \text{strongly in } L^2(I \times \Omega; \R^{3 \times 3})
\end{equation}
 \OOO where \EEE
\begin{equation*}
  \dot C^h \defas\EEE ( \partial_t \EEE \ZZZ \nabla_h \EEE  y^h)^T \nabla_h y^h + (\nabla_h y^h)^T  \partial_t \EEE \ZZZ \nabla_h \EEE  y^h.
\end{equation*} 
\OOO In this regard, \ZZZ again using the notation  $ G''_t= \partial_t \sym(G'')$, \EEE  we prove \EEE for a.e.~$t \in I$ \OOO the following four limits \EEE
\begin{subequations} 
\begin{align}
\ZZZ J_1 \EEE &\defas \liminf\limits_{h \to 0} h^{-4} \int_\Omega W^{\rm el} (\nabla_h y^h(t,x) ) \dx + h^{-4 } \int_\Omega H(\nabla_h^2 y^h(t,x)) \dx \geq {\phi}^{\rm el}_0(u(t),v(t)), \label{conv:strongphi_h} \\
\ZZZ J_2 \EEE &\defas \liminf\limits_{h \to 0}  h^{-4} \int_0^t  \int_\Omega 2 R (\nabla_h y^h, \partial_t  \nabla_h y^h, \theta^h) \dx \dt \EEE \geq \ZZZ \int_0^t \EEE \int_{\Omega'} \C_R^2\,    G''_{t}: G''_{t} \EEE \di x' \di s , \label{conv:strongdissi} \\
\ZZZ J_3 \EEE &\defas \lim\limits_{h \to 0} h^{-4} \int_0^t \int_\Omega \partial_F W^{\rm cpl}(\nabla_h y^h, \theta^h):  \partial_t \EEE \ZZZ \nabla_h \EEE  y^h \dx \dt = \ZZZ \int_0^t \EEE \int_{\Omega'} \mu (\mathbb{B}^{(\alpha)})'' : G''_{t} \di x' \di s, \label{conv:strong:coupl} \\
\ZZZ J_4 \EEE &\defas \lim\limits_{h \to 0}  h^{-4}\int_0^t \int_{\Omega} f^{3D}_h  \partial_t \EEE y^h_3 \dx \di s =  \int_0^t f^{2D}  \partial_t \EEE v \di x' \di s. \label{conv:strong:forceprod}
\end{align}
\end{subequations}
 In this regard, notice that \EEE \eqref{conv:strongphi_h} is addressed in \cite[Theorem 5.6]{FK_dimred}.
The representation in \eqref{diss_rate}, \eqref{convstrainrateweak}, $\C_R^3= 4D(\Id,0)$ \ZZZ (see \eqref{DDDDD}), \EEE \eqref{eq:quadraticformsnotred}, \eqref{CWCD},  and \EEE a standard lower semicontinuity argument  lead to \EEE \eqref{conv:strongdissi}.

 Let us now show \EEE \eqref{conv:strong:coupl}.
 W\EEE e first investigate the case $\alpha > 2$.
 Given \EEE $s \in (2/\alpha, 10/(3\alpha) \wedge 1)$, i\EEE n view of Remark~\ref{rem:improved_temp_bounds} and $2s > 1$, we  derive \EEE $\Vert \theta_h \Vert_{L^{2s}(I \times \Omega)} \leq C h ^{\alpha}$.
Thus, \eqref{avoidKorn}, \eqref{diss_rate}, \ref{D_bounds}, $\theta_h \wedge 1 \leq \theta_h^s$, \ZZZ the \EEE Cauchy-Schwarz inequality\EEE,   \eqref{rig:closetoidinfty}, and \eqref{bound:strainraterotated-another one} \EEE  imply that
\begin{align*}
  h^{-4} \int_0^t \int_\Omega
    \vert \partial_F W^{\rm cpl}(\nabla_h y^h, \theta^h) :  \partial_t \EEE \ZZZ \nabla_h \EEE  y^h \vert
    \dx \di s
  &\leq C h^{-4} \int_0^t \int_\Omega
  \vert \theta_h \wedge 1 \vert
  \vert \dot C^h \vert \dx \di s \notag \\
  &\leq C h^{-2} \Vert \theta_h \Vert_{L^{2s}(I \times \Omega)}^s
  \leq C h^{(s - 2/\alpha)\alpha}.  
\end{align*}
\ZZZ Consequently, as $s > \frac{2}{\alpha}$, this term vanishes  as $ h \to 0$. \EEE  We \EEE now deal with the case $\alpha = 2$.
 With \EEE the identity \eqref{writecplviadotc}, it holds that 
\begin{equation}\label{ident_coupl}
  \partial_F W^{\rm cpl}(\nabla_h y^h, \theta^h):  \partial_t \EEE \ZZZ \nabla_h \EEE  y^h =  \frac{1}{2} (\nabla_h y^h)^{-1}  \partial_F W^{\rm cpl} (\ZZZ \nabla_h y^h, \EEE \theta^h)  : \dot C^h, \EEE
\end{equation}
 where $C^h \ZZZ = \EEE (\nabla_h y^h)^T \nabla_h y^h$. \EEE  
 W\EEE e now show

\begin{equation}\label{strongcplstressrewritten}
h^{-2}   (\nabla_h y^h)^{-1}  \partial_F W^{\rm cpl} (\nabla_h y^h,\theta^h)  \to \mu \mathbb B^{(2)}\qquad \text{strongly in } L^2(I \times \Omega).
\end{equation}
\EEE
Indeed, by the fundamental theorem of calculus \ZZZ and \EEE a change of variables  we find that
\begin{equation*}
 h^{-2} (\nabla_h y^h)^{-1} \partial_F W^{\rm cpl} ( \ZZZ \nabla_h y^h \EEE, \theta^h)
  =   (\nabla_h y^h)^{-1} \int_0^{ h^{\OOO -2\EEE}\theta^h \EEE} \partial_{F\theta} W^{\rm cpl} (\ZZZ \nabla_h y^h \EEE, h^2 s) \di s.
\end{equation*}
In view of the definition of  $\mathbb B^{(2)}$ in \eqref{alpha_dep}, \ZZZ and using \EEE \ref{C_bounds}, \EEE     \eqref{rig:closetoidinfty},  and \eqref{theta_h_convXXX} \EEE this quantity converges, up to selecting a subsequence, pointwise a.e.~in $I \times \Omega$ to $\mu \,   \mathbb  B^{(2)}\EEE$.  Similarly, \ZZZ using also that $\partial_F\cplpot(F, 0) = 0$ for all $F \in GL^+(3)$ by \ref{C_zero_temperature},  we get \EEE $    \vert
    h^{-2} (\nabla_h y^h)^{-1} \partial_F W^{\rm cpl} ( C^h \EEE, \theta^h)
  \vert  \leq C  h^{- \OOO 2} \theta^h \EEE$. \EEE  In the case $\alpha =2$, \ZZZ the sequence $h^{-2} \theta^h$ is bounded in $L^q(I \times \Omega)$ for some $q >2$, \EEE  see Remark~\ref{rem:improved_temp_bounds}. \EEE  Thus, the sequence \eqref{strongcplstressrewritten}  is $L^2(I\times \Omega)\EEE$-equiintegrable, and \ZZZ therefore \EEE  \eqref{strongcplstressrewritten}  follows by  Vitali's convergence theorem.  \EEE  Consequently, i\EEE n view of  \eqref{convstrainrateweak},  \eqref{ident_coupl}, \EEE \eqref{strongcplstressrewritten}, and weak-strong convergence\OOO, \EEE  we discover that \EEE
\begin{equation*}
 h^{-4} \partial_F W^{\rm cpl}(\nabla_h y^h, \theta^h) :  \partial_t \EEE \ZZZ \nabla_h \EEE  y^h
 \weakly \mu  \mathbb{B}^{(2)} \EEE : \partial_t {\rm sym}(G) \qquad \text{weakly in } L^1(I \times \Omega).
\end{equation*}
By our assumption  \eqref{heattensorassumption} \EEE on $\mathbb{B}^{(2)}$ this concludes the proof of \eqref{conv:strong:coupl}.
 Eventually, \EEE recalling \eqref{def:displacements}, i\EEE n view of \eqref{forcebound} and \eqref{convvhdot},  the limit \EEE \eqref{conv:strong:forceprod} follows by weak-strong convergence.

 Based on the energy  balance \EEE and \eqref{conv:strongphi_h}--\eqref{conv:strong:forceprod}, we will now show \eqref{Chdot_strong_conv}. \EEE
By \eqref{balance2D}, \eqref{conv:strong:forceprod}, \eqref{well-preparednessinitial}, \eqref{balance3D} \OOO with both sides divided by $h^{-4}$\EEE,  and \EEE \eqref{conv:strongphi_h}--\eqref{conv:strong:coupl} we find for a.e.~$t \in I$ that
\begin{align*}
  &{\phi}^{\rm el}_0(u(t),v(t)) + \int_0^t \int_{\Omega'} \C_R^2 G''_{t} : G''_{t} \di x' \di s
    + \int_0^t \int_{\Omega'} \mu (\mathbb{B}^{(\alpha)})'' : G''_{t}\di x' \di s \\
  &\quad= {\phi}^{\rm el}_0(u(0),v(0)) + \int_0^t \ZZZ \int_{\Omega'} \EEE f^{2D}  \partial_t \EEE v \di x' \di s \\
  &\quad= \lim\limits_{h \to 0} \left(
    h^{-4} \int_\Omega W^{\rm el} (\nabla_h y^h_0(x) ) \dx + h^{-4 } \int_\Omega H(\nabla_h^2 y^h_0(x)) \dx
    + h^{-4}\int_0^t \int_{\Omega}  f^{3D}_h \EEE  \partial_t \EEE y^h_3 \dx \di s
  \right) \\
  &\quad\geq \ZZZ J_1 + J_2 + J_3  \EEE \geq {\phi}^{\rm el}_0(u(t),v(t))
    + \int_0^t \int_{\Omega'} \C_R^2 G''_{t}:G''_{t} \di x' \di s
    + \int_0^t \int_{\Omega'} \mu (\mathbb{B}^{(\alpha)})'' : G''_{t} \di x' \di s.
\end{align*}
 As a consequence\EEE, the  inequality \EEE in \eqref{conv:strongdissi} must be  an \EEE equality.
 By \eqref{chain_rule_Fderiv} we \EEE can write
\begin{equation*}
 \frac{1}{2} (\nabla_h y^h)^{-1} \partial_{\dot F} R(\nabla_h y^h,  \partial_t \EEE \ZZZ \nabla_h \EEE  y^h, \theta^h)
  =  D( C^h \EEE, \theta^h)
     \dot C^h.
\end{equation*}
Thus,  with \eqref{rig:closetoidinfty} and \EEE  \eqref{conv:stress:visc} we find
\begin{equation*}
D( C^h\EEE, \theta^h)
    h^{-2} \dot C^h
  \weakly \frac{1}{2} \C_R^3 \partial_t  \sym(G)
  \qquad\text{weakly in } L^2(I\times \Omega; \R^{3 \times 3}).
\end{equation*}
 This along \EEE with \ref{D_quadratic}--\ref{D_bounds}, \eqref{diss_rate}, \eqref{convstrainrateweak}, \eqref{rig:closetoidinfty},   \eqref{theta_h_convXXX},    equality in  \eqref{conv:strongdissi}, \ZZZ and  dominated convergence yields \EEE
\begin{align*}
  &c_0 \int_I \int_\Omega \vert h^{-2} \dot C_h - 2 \partial_t \sym (G) \vert^2 \di x \di t \\
  &\quad\leq \int_I \int_\Omega D(C_h, \theta^h) \big(h^{-2} \dot C_h - 2 \partial_t \sym (G)\big)
    : \big(h^{-2}\dot C_h - 2 \partial_t \sym (G)\big) \di x \di t \\
  &\quad= h^{-4} \int_I \int_\Omega
   2R(\nabla_h y^h ,  \partial_t \EEE \ZZZ \nabla_h \EEE  y^h, \theta^h) \di x \di t
    - 4 \int_I \int_\Omega D(C_h, \theta^h) \partial_t \sym(G) : h^{-2} \dot C_h \di x \di t \\
  &\phantom{\quad=}\quad + 4 \int_I \int_\Omega
    D(C_h,\theta^h) \partial_t \sym(G) : \partial_t \sym(G) \di x \di t \to 0 \quad \text{as } h \to 0,
\end{align*}
where we again used \eqref{CWCD} and $\C_R^3 = 4D(\Id,0)$,  \ZZZ see   \eqref{DDDDD}.  \EEE This concludes the proof of \eqref{Chdot_strong_conv}.

\textit{Step 7 \GGG (Derivation \EEE of the limiting heat-transfer equation\EEE):}
It remains to derive  \eqref{eq:temperature2dweak}. \EEE
Recall \eqref{def:scaledtemp}  and l\EEE et $r \in [1, 5/4)$.
By \eqref{boundres:temperaturegrad} we have that
\begin{align}\label{rescaledmuhconv}
 h^{-\alpha} \EEE \nabla_h \theta\EEE^h \weakly g \qquad \text{weakly in }L^r(I \times \Omega; \R^3)
\end{align}
 for some $g = (g_1,g_2,g_3) \in L^r(I \times \Omega; \R^3)$. 
In view of \eqref{theta_h_convXXX} \GGG and Proposition~\ref{prop:compactness}, \EEE we find $\nabla \ZZZ \mu \EEE = (g_1, g_2, 0)$, \ZZZ and \EEE $g_1$ and $g_2$ \OOO do not  depend on $x_3$, \ZZZ i.e., \EEE
\begin{equation*}
  \nabla' \mu(t, x') =
  \begin{pmatrix} g_1(t, x') \\ g_2(t, x') \end{pmatrix} \qquad \text{for a.e.~} (t, x') \in I \times \Omega'.
\end{equation*}
Let us now define
\begin{equation*}
  \nu(t, x') \defas \int_{-1/2}^{1/2} g_3(t, x) \di x_3.
\end{equation*}
We continue by deriving a relation between $\nu$ and $\nabla' \mu$.
In this regard, let us test \eqref{weak_form_heat_res} with $\vphi(t, x) := x_3 \psi(t, x')$ for $\psi \in C^\infty(I \times \overline{\Omega'}\EEE)$ satisfying $\psi(T)=0$ and multiply the resulting equation by $h^{1 - \alpha}$.
By $\xi^{(\alpha)} \leq \xi$, \eqref{diss_rate}, \eqref{boundres:dissipation},  \ZZZ \eqref{conv:strong:coupl}, \EEE  a trace estimate, \eqref{boundres:temperature}, \eqref{boundres:temperaturegrad}, \eqref{assumption:temp}, and \eqref{well-preparednessinitial} this  leads to \EEE 
\begin{equation*}
 \left\vert
  \int_I \int_\Omega (
  \ZZZ   \mathcal{K}^h_{31} \EEE  h^{-\alpha}  \partial_1 \theta^h \EEE
    + \ZZZ \mathcal{K}^h_{32} \EEE  h^{-\alpha} \partial_2 \theta^h \EEE
    + \ZZZ \mathcal{K}^h_{33} \EEE h^{-1-\alpha} \partial_3 \theta^h \EEE
  ) \psi \di x \di t \right\vert \leq C h,
\end{equation*}
where we shortly wrote $\ZZZ \mathcal{K}^h \EEE \coloneqq \mathcal{K}(\nabla_h y^h, \theta^h)$.
Recall the definition of ${\mathbb K}$ above \eqref{barcvandk}. \EEE Moreover, recall that $\mu$  does not depend on  $x_3$. \EEE
Due to \eqref{spectrum_bound_K} and the strong \GGG convergence \EEE of $(\theta^h)_h$ and $(\nabla_h y^h)_h$, see \eqref{theta_h_convXXX}, and \eqref{rig:closetoidinfty}, respectively, we can use   dominated convergence   to get
\begin{align*}
\text{ $\mathcal{K}(\nabla_h y^h, \theta^h) \to \mathbb{K}$ \qquad strongly in $L^{\bar q}(I \times \Omega;\R^{3\times 3})$ for any $\bar q \in [1,\infty)$}.
\end{align*}
We pass to the limit  $h \to 0$ \EEE in the integral on the left-hand side above.  
 \EEE With Fubini's theorem this shows
\begin{equation*}
  \int_I \int_{\Omega'} (
     { {\mathbb K} \EEE}_{31} \partial_1 \mu
    +  { {\mathbb K} \EEE}_{32} \partial_2 \mu
    +  { {\mathbb K} \EEE}_{33} \nu
  ) \psi \di x' \di t
  = \int_I \int_{\Omega} (
    \mathbb{K}_{31} \partial_1 \mu
    + \mathbb{K}_{32} \partial_2 \mu
    + \mathbb{K}_{33} g_3
  ) \psi \di x \di t = 0.
\end{equation*}
 \EEE Note that $\mathbb{K}_{33} \geq c_0 > 0$ by \eqref{spectrum_bound_K}.
 Hence, b\EEE y the arbitrariness of $\psi$ we derive that
\begin{equation}\label{nu_reprnew}
  \nu = - \frac{
     { {\mathbb K} \EEE}_{31} \partial_1 \mu
    +  { {\mathbb K} \EEE}_{32} \partial_2 \mu
  }{ { {\mathbb K} \EEE}_{33}} \qquad \text{for a.e.~} (t,x') \in I \times \Omega'.
\end{equation}
We continue by testing \eqref{weak_form_heat_res} with $\varphi\EEE \in C^\infty(I \times \bar \Omega)$ independent of $x_3$ satisfying $\varphi\EEE(T) = 0$,  and \EEE divide the resulting equation by $h^\alpha$, to get  
\begin{equation}\label{last_test}
\begin{aligned}
  &h^{-\alpha}\int_I \int_\Omega
    \mathcal{K}(\nabla_h y^h, \theta^h)  \nabla_h \theta^h \cdot  \begin{pmatrix} \nabla' { \varphi \EEE} \\ 0 \end{pmatrix}
    - \Big(
        \xi^{(\alpha)}(\nabla_h y^h ,  \partial_t \EEE \ZZZ \nabla_h \EEE  y^h,\theta^h \big)
      + \partial_F W^{\rm{cpl}}(\nabla_h y^h, \theta^h) :  \partial_t \EEE \ZZZ \nabla_h \EEE  y^h
    \Big) { \varphi \EEE} \di x \di t \\
  &\  - h^{-\alpha}\int_I \int_\Omega
    W^{\rm{in}}(\nabla_h y^h,\theta^h)  \partial_t \EEE { \varphi \EEE} \di x \di t
    + \kappa \int_I \int_{\Gamma\EEE} h^{-\alpha}(\theta^h-\theta_\flat^h\EEE) { \varphi \EEE} \di \haus^2 \di t 
     = \EEE \int_\Omega h^{-\alpha}W^{\rm in}(\nabla_h y_0^h, \theta_0^h) { \varphi \EEE}(0) \di x.
\end{aligned}
\end{equation}
\ZZZ Note that  by  \eqref{rescaledmuhconv}--\eqref{nu_reprnew} \EEE we find
\begin{equation*}
h^{-\alpha}\int_I \int_\Omega
    \mathcal{K}(\nabla_h y^h, \theta^h)  \nabla_h \theta^h \cdot  \begin{pmatrix} \nabla' { \varphi \EEE} \\ 0 \end{pmatrix} \di x \di t\to \int_I \int_{\Omega'} \mathbb{K} \begin{pmatrix} \nabla' \mu \\ \nu \end{pmatrix}
    \cdot \begin{pmatrix} \nabla' { \varphi \EEE} \\ 0 \end{pmatrix} \di x' \di t \quad \text{as } h \to 0.
\end{equation*}
 Moreover, \EEE \eqref{diss_rate}, \eqref{diss_rate_truncated}, \ZZZ \eqref{DDDDD}, \EEE  \eqref{alpha_dep}, \EEE \eqref{Chdot_strong_conv}, \ref{D_bounds},   \EEE and \eqref{CWCD}  lead to \EEE
\begin{align*}
h^{-\alpha}\int_I \int_\Omega
\xi^{(\alpha)}(\nabla_h y^h ,  \partial_t \EEE \ZZZ \nabla_h \EEE  y^h,\theta^h \big)
      { \varphi \EEE} \di x \di t\to \int_I \int_{\Omega'} \GGG \big( \EEE   \CD^{2, \alpha} \EEE G''_{t}: G''_{t} \GGG \big) \EEE { \varphi \EEE} \di x \di t, \quad \text{as } h \to 0,
\end{align*}
 where we recall the shorthand $G''_{t} = \partial_t {\rm sym}(G'')$. \EEE Thus, \ZZZ \eqref{conv:strong:coupl}, \EEE
 \eqref{convscaledinternal}, \eqref{convscaledinternalinitial}, a trace estimate, \eqref{convmuhnabla},  and \EEE \eqref{assumption:temp}  allow \EEE us to pass to the limit $h \to 0$ in \eqref{last_test}, resulting in
\begin{align*}
  &\int_I \int_{\Omega'} \Big( \mathbb{K} \begin{pmatrix} \nabla' \mu \\ \nu \end{pmatrix}
    \cdot \begin{pmatrix} \nabla' { \varphi \EEE} \\ 0 \end{pmatrix} - \overline c_V \mu  \partial_t \EEE { \varphi \EEE} \Big) \di x' \di t
  -  \int_I \int_{\Omega'} \GGG \big( \EEE \CD^{2,\alpha} G''_{t}: G''_{t}  \GGG \big) \EEE{ \varphi \EEE} \di\ZZZ  x' \EEE \di t \\
  &\quad + \kappa \int_I \int_{ \Gamma'} \mu { \varphi \EEE} \di \haus^1(x') \di t
  = \kappa \int_I \int_{ \Gamma'}  \mu_\flat { \varphi \EEE} \di \haus^1(x') \di t + \overline c_V \int_\Omega \mu_0 { \varphi \EEE}(0) \di x'.
\end{align*}
Recalling the definition of $\tilde{\mathbb{K}}$ in \eqref{heattensorredd}, by \ZZZ the symmetry of $\mathbb{K}$, \EEE \eqref{nu_reprnew}, and \eqref{sym_dotG''},  the above equation \EEE further simplifies to
\begin{align*}
  &\int_I \int_{\Omega'} \Big( \tilde{\mathbb{K}} \nabla' \mu \cdot \nabla' { \varphi \EEE} - \overline c_V \mu  \partial_t \EEE { \varphi \EEE}  \Big)\di x' \di t     -  \frac{1}{12}\int_I \int_{\Omega'} \GGG \big( \EEE \CD^{2,\alpha}
		 \partial_t \EEE (\nabla')^2 v :
		 \partial_t \EEE (\nabla')^2 v  \GGG \big) \EEE{ \varphi \EEE} \di \ZZZ x' \EEE \di t \\
  &\quad- \int_I \int_{\Omega'} \CD^{2,\alpha} \Big(
      \sym( \partial_t \EEE \nabla' u)
      + \frac{1}{2}  \partial_t \EEE \nabla' v \odot  \partial_t \EEE \nabla' v
    \Big) : \Big(
      \sym( \partial_t \EEE \nabla' u)
      +  \partial_t \EEE \nabla' v \odot \nabla' v
    \Big) { \varphi \EEE} \di x' \di t  \\
  &\quad + \kappa \int_I \int_{\GGG \Gamma' \EEE} \mu { \varphi \EEE} \di \haus^1(x') \di t
  = \kappa \int_I \int_{\GGG \Gamma' \EEE} \mu_\flat { \varphi \EEE} \di \haus^1(x') \di t + \overline c_V \int_\Omega \mu_0 { \varphi \EEE}(0) \di x'. \EEE
\end{align*}  \EEE
This gives \eqref{eq:temperature2dweak}.

To conclude the proof, we note that  $u$ and $v$ satisfy   the initial conditions. In fact,   \eqref{convuh}--\eqref{convvhdot} and  \ZZZ the \EEE Aubin-Lions lemma imply strong convergence of $(u_h)_h$ and $(v_h)_h$ in  $C(I; L^{\bar r}(\Omega'))$, up to a subsequence, for some suitable $\bar r \in [1,+\infty)$.  
\end{proof}

\ZZZ

\section*{Acknowledgements} 
This work was funded by  the DFG project FR 4083/5-1 and  by the Deutsche Forschungsgemeinschaft (DFG, German Research Foundation) under Germany's Excellence Strategy EXC 2044 -390685587, Mathematics M\"unster: Dynamics--Geometry--Structure.

  \EEE

\typeout{References}


\begin{thebibliography}{999}

%
%


\bibitem{Abels2}
\textsc{H.~Abels, T.~Ameismeier:}
Convergence of thin vibrating rods to a linear beam equation.
\textit{Z.\ Angew.\ Math.\ Phys.} \textbf{73}, Article number: 166  (2022)


	\bibitem{Abels3}
	{\sc H.~Abels, M.~G.~Mora, S.~M\"uller:}
	\newblock{Large time existence for thin vibrating plates}.
	\newblock \textit{Comm.\ Partial Differential Equations}
	\newblock {\bf 36}(12), 2062--2102 (2011)





 
\bibitem{AMM_vKaccel}
\textsc{H.~Abels, M.~G.~Mora, S.~Müller:}
The time-dependent von Kármán plate equation as a limit of 3d nonlinear elasticity.
\textit{Calc.~Var.~Partial Differential Equations} \textbf{41}, 241--259 (2011)




\bibitem{Anicic}
\textsc{S.~Anicic, H.~Le~Dret, A.~Raoult:}
The infinitesimal rigid displacement lemma in
Lipschitz coordinates and application to shells
with minimal regularity.
\textit{Math.\ Meth.\ Appl.\ Sci.} \textbf{27}(11),  1283--1299 (2004)

\bibitem{Antmann98Physically}
\textsc{S.~S.~Antman:}
Physically unacceptable viscous stresses.
\textit{Z.\ Angew.\ Math.\ Phys.} \textbf{49}, 980--988 (1998)


\bibitem{Antmann04Nonlinear}
\textsc{S.~S.~Antman:}
Nonlinear problems of elasticity.
\textit{Springer}, New York (2004)

 
\bibitem{Korn:Scalingreference}
\textsc{G.~Anzellotti, S.~Baldo, D.~Percivale:}
Dimension reduction in variational problems, asymptotic development in {$\Gamma$}-convergence and thin structures in elasticity. 
\textit{Asymptotic Anal.} \textbf{9}(1), 61--100 (1994)

 

\bibitem{RBMFMK}
\textsc{R.~Badal, M.~Friedrich, M.~Kru{\v{z}}{\'i}k:}
Nonlinear and linearized models in thermoviscoelasticity.
\textit{Arch.\ Ration.\ Mech.\ Anal.} \textbf{247}(1), Article number: 5 (2023)
 
\bibitem{RBMFMKLM}
\textsc{R.~Badal, M.~Friedrich, M.~Kru{\v{z}}{\'i}k, L.~Machill:}
\GGG Positivity of temperatures in large strain nonlinear thermoviscoelasticity and linearized models at positive critical temperatures. \EEE
\textit{In preparation}


\bibitem{BallCurrieOlver81Null}
\textsc{J.~M.~Ball, J.~C.~Currie, P.~L.~Olver:}
Null Lagrangians, weak continuity, and variational problems of arbitrary order.
\textit{J.\ Funct.\ Anal.} \textbf{41}(2), 135--174 (1981)


\bibitem{Batra76Thermodynamics}
\textsc{R.~C.~Batra:}
Thermodynamics of non-simple elastic materials.
\textit{J.\ Elasticity} \textbf{6}, 451--456 (1976)


\bibitem{Schwarzacher}
\textsc{B.~Bene{\v{s}}ov{\'a}, M.~Kampschulte, S.~Schwarzacher:}
A variational approach to hyperbolic evolutions and fluid-structure interactions.
\textit{J.\ Eur.\ Math.\ Soc.} (2023)


\bibitem{BoccardoGallouet89Nonlinear}
\textsc{L.~Boccardo, T.~Gallou\"et:}
Non-linear elliptic and parabolic equations involving measure data.
\textit{J.\ Funct.\ Anal.} \textbf{87}(1), 149--169 (1989)




\bibitem{gravina}
\textsc{A.~\v{C}e\v{s}\'{\i}k, G.~Gravina, M.~Kampschulte:}
Inertial evolution of non-linear viscoelastic solids in the face of (self-) collision.
\textit{Preprint:}  https://arxiv.org/abs/2212.00705 (2022)

\REV 

\bibitem{Poynting}
\textsc{A.~Chiesa, M.~Kru\v{z}\'ik, U.~Stefanelli:}
Finite-strain Poynting–Thomson model: Existence and linearization.
\textit{Math.\ Mech.\ Solids} \textit{DOI:}  https://doi.org/10.1177/10812865241263788   (2024)

\EEE 

\bibitem{DalMaso93AnIntroduction}
\textsc{G.~Dal~Maso:}
An introduction to {$\Gamma$}-convergence.
\textit{Birkh{\"a}user}, Boston, Basel, Berlin (1993)


\bibitem{DalMasoNegriPercivale02Linearized}
\textsc{G.~Dal~Maso, M.~Negri, D.~Percivale:}
Linearized elasticity as {$\Gamma$}-limit of finite elasticity.
\textit{Set-valued\ Anal.} \textbf{10}, 165--183 (2002)




\bibitem{demoulini}
\textsc{S.~Demoulini:}
Weak solutions for a class of nonlinear systems of viscoelasticity.
\textit{Arch.\ Ration.\ Mech.\ Anal.} \textbf{155}, 299--334 (2000)





	\bibitem{Freddi2012}
	{\sc L.~Freddi, M.~G.~Mora, R.~Paroni:}
	\newblock {Nonlinear thin-walled beams with a rectangular cross-section -- Part I}.
	\newblock \textit{Math.\ Models Methods Appl.\ Sci.}
	\newblock {\bf 22}(3), \GGG 1150016 \EEE (2012)
	
	\bibitem{Freddi2013}
	{\sc  L.~Freddi, M.~G.~Mora, R.~Paroni:}
	\newblock {Nonlinear thin-walled beams with a rectangular cross-section -- Part II}.
	\newblock \textit{Math.\ Models Methods Appl.\ Sci.}
	\newblock {\bf 23}(4), 743--775 (2013)


\bibitem{FriedrichKruzik18Onthepassage}
\textsc{M.~Friedrich, M.~Kru\v{z}\'ik:}
On the passage from nonlinear to linearized viscoelasticity.
\textit{SIAM J.\ Math.\ Anal.} \textbf{50}(4), 4426--4456 (2018)




\bibitem{FK_dimred}
\textsc{M.~Friedrich, M.~Kru\v{z}\'ik:}
Derivation of von Kármán plate theory in the framework of three-dimensional viscoelasticity.
\textit{Arch.~Ration.~Mech.~Anal.} \textbf{238}(1), 489--540 (2020)

\bibitem{MFLMDimension2D1D}
\newblock {\sc M.~Friedrich, L.~Machill:}
Derivation of a one-dimensional von K\'{a}rm\'{a}n theory for viscoelastic ribbons.
\textit{NoDEA Nonlinear Differential Equations Appl.}
\newblock {\bf 29}, Article number: 11 (2022)
 

\bibitem{MFLMDimension3D1D}
\newblock {\sc M.~Friedrich, L.~Machill:}
\newblock {One-dimensional viscoelastic von K{\'{a}}rm{\'{a}}n theories derived from nonlinear thin-walled beams}.
	\newblock \textit{Calc.\ Var.\ Partial Differential Equations.}
	\newblock {\bf 62}(7), Article number: 190 (2023)

 

\bibitem{FJM_hierarchy}
\textsc{G.~Friesecke, R.~D.~James, S.~Müller:}
A hierarchy of plate models derived from nonlinear elasticity by gamma-convergence.
\textit{Arch.~Ration.~Mech.~Anal.} \textbf{180}, 183--236 (2006)

	\bibitem{FrieseckeJamesMueller:02}
	\textsc{G.~Friesecke, R.~D.~James, S.~Müller:}
	\newblock {A theorem on geometric rigidity and the derivation of nonlinear plate theory from three-dimensional elasticity}.
	\newblock \textit{Comm.\ Pure Appl.\ Math.}
	\newblock {\bf 55}(11), 1461--1506 (2002)

 

	\bibitem{FriJaMoMu}
	{\sc G.~Friesecke, R.~D.~James, M.~G.~Mora, S.~Müller:}
	\newblock {Derivation of nonlinear bending theory for shells from three-dimensional nonlinear elasticity by $\Gamma$-convergence.}
	\newblock \textit{C.\ R.\ Math.\ Acad.\ Sci.\ Paris}.   
	\newblock {\bf 336}(8), 697--702 (2003)




\bibitem{gahn}
\textsc{M.~Gahn:}
Multi-scale techniques and homogenization for viscoelastic non-simple materials at large strains.
\textit{Preprint:}  https://arxiv.org/abs/2308.15155 (2023)




\bibitem{plateref2}
\textsc{C.~Giorgi, M.~G.~Naso:}
 Mathematical models of thin thermoviscoelastic plates.
\textit{Q.\ J.\ Mech.\ Appl.\ Math.} \textbf{53}(3), 363--374 (2000)


\bibitem{plateref1}
\textsc{M.~Grasselli, J.~E.~Mu\~{n}oz~Rivera, M.~Squassina:}
Asymptotic behavior of a thermoviscoelastic
plate with memory effects.
\textit{Asymptot.\ Anal.} \textbf{63}, 55--84 (2009)




\bibitem{GurtinFriedAnand10Themechanics}
\textsc{M.~E.~Gurtin, E.~Fried, L.~Anand:}
The mechanics and thermodynamics of continua.
\textit{Cambridge University Press} (2010)


\bibitem{HealeyKroemer09Injective}
\textsc{T.~J.~Healey, S.~Kr\"omer:}
Injective weak solutions in second-gradient nonlinear elasticity.
\textit{ESAIM Control Optim.\ Cal.\ Var.} \textbf{15}(4), 863--871 (2009)


\bibitem{kohnvogelius1085} \textsc{R.~V.~Kohn, M.~Vogelius:} A new model for thin plates with rapidly varying thickness. \textit{International Journal of Solids and Structures} \textbf{20}(4),  333--350 (1984)

\bibitem{kroemrou}
\textsc{S.~Kr\"omer, T.~Roub\'{\i}\v{c}ek:}
Quasistatic viscoelasticity with self-contact at large strains.
\textit{J.\ Elasticity} \textbf{142}, 433--445 (2020)


\bibitem{KruzikRoubicek19Rate-independent}
\textsc{M.~Kru\v{z}\'ik, T.~Roub\'{\i}\v{c}ek:}
Mathematical methods in continuum mechanics of solids.
\textit{Springer}, Cham (2019)




\bibitem{Lankeit2013}
{\sc J.~Lankeit, P.~Neff, D.~Pauly:}
\newblock {Uniqueness of integrable solutions to ${\nabla \zeta=G \zeta, \zeta|_\Gamma = 0}$ for integrable tensor coefficients G and applications to elasticity}.
\newblock \textit{Z.\ Angew.\ Math.\ Phys.}
\newblock {\bf 64}, 1679--1688 (2013)


\bibitem{lecumberry}
{\sc M.~Lecumberry, S.~M{\"u}ller:}
\newblock { Stability of slender bodies under compression
	and validity of the von K\'arm\'an theory}.
\newblock \textit{Arch.\ Ration.\ Mech.\ Anal.}
\newblock {\bf 193}(2), 255--310 (2009)


\bibitem{Lewick}
\textsc{M.~Lewicka, P.~B.~Mucha:}
A local existence result for system of viscoelasticity with physical viscosity. \textit{Evolution Equations \& Control Theory} \textbf{2}(2), 337--353 (2013)

\REV
\bibitem{Meo}
{\sc S.~Meo, A.~Boukamel, O.~Debordes:}
\newblock { Analysis of a thermoviscoelastic model in large strain}.
\newblock \textit{Computers and Structures}
\newblock {\bf 80}(27), 2085--2098 (2002)
\EEE
 
	\bibitem{Mielke}
	{\sc A.~Mielke:}
	\newblock {On evolutionary $\Gamma$-convergence for gradient systems (ch. 3)}.
	\newblock  \GGG \textit{In Macroscopic and Large Scale Phenomena: Coarse Graining, Mean Field Limits and Ergodicity. Lecture Notes in Applied Math.\ Mechanics Vol.\ 3,} editors: {\sc A.~Muntean, J.~Rademacher, A.~Zagaris}, 187--249, \textit{Springer}, Cham (2016) \EEE


 


\bibitem{MielkeNaumann}
\textsc{A.~Mielke, J.~Naumann:}
On the existence of global-in-time weak solutions and scaling laws for Kolmogorov's two-equation model of turbulence.
\textit{Z.\ Angew.\ Math.\ Mech.} \textbf{102}(9), \GGG e202000019 \EEE (2022) 



\REV

\bibitem{MielkeRossiSavare}
\textsc{A.~Mielke, R.~Rossi, G.~Savaré:}
Global existence results for viscoplasticity at finite strain.
\textit{Arch.\ Ration.\ Mech.\ Anal.} \textbf{227},  423--475 (2018)

 \EEE

\bibitem{MielkeRoubicek16Rateindependent}
\textsc{A.~Mielke, T.~Roub\'{\i}\v{c}ek:}
Rate-independent elastoplasticity at finite strains and its numerical approximation.
\textit{Math.\ Models \& Methods in Appl.\ Sci.} \textbf{26}(12), 2203--2236 (2016)


\bibitem{MielkeRoubicek20Thermoviscoelasticity}
\textsc{A.~Mielke, T.~Roub\'{\i}\v{c}ek:}
Thermoviscoelasticity in Kelvin-Voigt rheology at large strains.
\textit{Arch.\ Ration.\ Mech.\ Anal.} \textbf{238}(1), 1--45 (2020)


\REV
\bibitem{Ulisse}
\textsc{Mielke, A., Stefanelli, U.:}
Linearized plasticity is the evolutionary $\Gamma$-limit of finite plasticity,
\textit{J.\ Eur.\ Math.\ Soc. (JEMS)} \textbf{15}, 923--948 (2013) 
\EEE

	\bibitem{mora-scardia}
	{\sc  M.~G.~Mora, L.~Scardia:}
	\newblock { Convergence of equilibria of thin elastic plates under physical growth conditions for the energy density}.
	\newblock \textit{J.\ Differential Equations}
	\newblock {\bf 252}(1),  35--55 (2012)


		\bibitem{Mora}
	{\sc  M.~G.~Mora, S.~M\"uller:}
	\newblock {Derivation of the nonlinear bending-torsion theory for inextensible rods by $\Gamma$-convergence}.
	\newblock \textit{Calc.\ Var.\ Partial Differential Equations}
	\newblock {\bf 18}, 287--305 (2003)


	\bibitem{Mora2}
		{\sc M.~G.~Mora, S.~M\"uller:}
	\newblock { A nonlinear model for inextensible rods as a low energy $\Gamma$-limit of three-dimensional nonlinear elasticity}.
	\newblock \textit{Ann.\ Inst.\ H.\ Poincar\'e Anal.\ Non Lin\'eaire}.
	\newblock {\bf 21}, 271--293 (2004)



\bibitem{LewickaMueller}
\textsc{S.~Müller, M.~Lewicka:}
The uniform Korn--Poincaré inequality in thin domains.
\textit{Ann.\ Inst.\ H.\ Poincar\'e Anal.\ Non Lin\'eaire}. \textbf{28}, 443--469 (2011)

\bibitem{MP_thin_plates}
\textsc{S.~Müller, M.~R.~Pakzad:}
Convergence of equilibria of thin elastic plates -- the von Kármán case.
\textit{Comm.~Partial Differential Equations}. \textbf{33}(6), 1018--1032 (2008)



\bibitem{Neff}
{\sc P.~Neff:}
\newblock { On Korn’s first inequality with non-constant coefficients}.
\newblock \textit{Proc.\ R.\ Soc.\
Edinb., Sect.\ A, Math.}
\newblock {\bf 132}(1), 221--243 (2002)




\bibitem{Podio02Contact}
\textsc{P.~Podio-Guidugli:}
Contact interactions, stress, and material symmetry for nonsimple elastic materials.
\textit{Theor.\ Appl.\ Mech.} \textbf{28--29}, 261--276 (2002)


\bibitem{Pompe}
{\sc W.~Pompe:}
\newblock {Korn’s first inequality with variable coefficients and its generalization}.
\newblock \textit{Comment.\ Math.\ Univ.\ Carolinae}.
\newblock {\bf 44}, 57--70 (2003)

\bibitem{Roubicek23Eulerian}
\textsc{T.~Roub\'{\i}\v{c}ek:}
Interaction of finitely-strained viscoelastic multipolar solids and fluids by an Eulerian approach.
\textit{J.~Math.~Fluid Mech.} \textbf{25}, Article number: 81 (2023)

\bibitem{Roubicek98Nonlinear}
\textsc{T.~Roub\'{\i}\v{c}ek:}
Nonlinear heat equation with {$L^1$}-data.
\textit{NoDEA Nonlinear Differential Equations Appl.} \textbf{5}(4), 517--527 (1998)



\bibitem{Roubicek23Eulerian2}
\textsc{T.~Roub\'{\i}\v{c}ek:}
Thermodynamics of viscoelastic solids, its Eulerian formulation, and existence of weak solutions.
\textit{Preprint:}  https://arxiv.org/abs/2203.06080 (2022)



\bibitem{Roubicek23Eulerian3}
\textsc{T.~Roub\'{\i}\v{c}ek:}
Visco-elastodynamics at large strains Eulerian.
\textit{Z.~Angew.~Math.~Phys.} \textbf{73}, Article number: 80 (2022)


\bibitem{Roubicek23Eulerian4}
\textsc{T.~Roub\'{\i}\v{c}ek, U.~Stefanelli:}
Viscoelastodynamics of swelling porous solids at large strains by an Eulerian approach.
\textit{SIAM Journal on Mathematical Analysis.} \textbf{55}(4), 2677--2703 (2023)






	\bibitem{S1}
	{\sc E.~Sandier, S.~Serfaty:}
	\newblock {Gamma-convergence of gradient flows with applications to Ginzburg-Landau}.
	\newblock \textit{Comm.\ Pure Appl.\ Math.}
	\newblock {\bf 57}(12),  1627--1672 (2004)



	\bibitem{S2}
	{\sc S.~Serfaty:}
	\newblock {Gamma-convergence of gradient flows on Hilbert and metric spaces and applications}.
	\newblock \textit{Discrete Contin.\ Dyn.\ Syst.\ Ser.\ A}.
	\newblock {\bf 31}(4),  1427--1451 (2011)


\bibitem{Simon}
{\sc J.~Simon:}
\newblock {  Compact sets in the space $L^p(0, T ; B)$}.
\newblock \textit{Ann.\ Mat.\ Pura Appl.}
\newblock {\bf 146}, 65--96 (1986)


\bibitem{Stein1970}
	{\sc E.~M.~Stein:}
	\newblock {Singular integrals and differentiability properties of functions}.  
	\newblock \textit{Princeton university press} (1970)



\bibitem{Toupin62Elastic}
\textsc{R.~A.~Toupin:}
Elastic materials with couple stresses.
\textit{Arch.\ Ration.\ Mech.\ Anal.} \textbf{11}, 385--414 (1962)


\bibitem{Toupin64Theory}
\textsc{R.~A.~Toupin:}
Theory of elasticity with couple stress.
\textit{Arch.\ Ration.\ Mech.\ Anal.} \textbf{17}, 85--112 (1964)





\bibitem{liero}
\textsc{W.~J.~M.~van~Oosterhout, M.~Liero:}
Finite-strain poro-visco-elasticity with degenerate mobility.
\textit{Preprint:}  https://arxiv.org/abs/2306.14622 (2023)


















 





 


%
%
%
%
%
%
%
%
%
%
%
%
%
%
%
%
%
%
%
%
%
%
%
%
%
%
%
%
%
%
%
%
%
%
%
%
%
%
%
%

%
%
%
%
%
%
%
%
%
%
%
%
%
%
%
%
%
%
%
%
%
%
%
%
%
%
%
%
%
%
%
%
%
%
%
%
%
%
%
%
%
%
%
%
%
%
%
%
%
%
%
%
%
%
%
%
%
%
%
%
%
%
%
%
%
%
%
%
%
%
%



%
%
%


%














 

 



 
  




%
%
%
\end{document}